\documentclass[reqno]{amsart}
\usepackage{amssymb,mathrsfs, amsmath,amssymb,amsfonts, amsthm, dsfont, tikz}
\usepackage{hyperref}
\usepackage[inner=1.0in,outer=1.0in,bottom=1.0in, top=1.0in]{geometry}

\usepackage{bbm}%\\mathbbm{1}

\usepackage{graphicx}
\usepackage{subcaption}

\usepackage{transparent}%\Inkscape transparancy use

\usepackage{tikz-cd,tikz}
\usepackage{todonotes}

\usepackage{comment}
\usepackage{mdwlist}%\suspend\resume{enumerate}

\newtheorem{theorem}{Theorem}[section]

\newtheorem{proposition}[theorem]{Proposition}

\newtheorem{definition}[theorem]{Definition}
\newtheorem{lemma}[theorem]{Lemma}

\theoremstyle{definition}
\newtheorem{remark}[theorem]{Remark}

\numberwithin{equation}{section}

\DeclareMathOperator*{\arsinh}{arsinh}
\DeclareMathOperator*{\arcsec}{arcsec}
\DeclareMathOperator*{\Arg}{Arg}
\newcommand{\abs}[1]{\left\lvert #1 \right\rvert}

\begin{document}

\title{Sharp effective equidistribution of closed geodesics in compact hyperbolic surfaces}

\author{Junehyuk Jung}
\address{Department of Mathematics, Brown University, Providence, RI 02912}

\email{junehyuk\_jung@brown.edu}

\author{Insung Park}
\address{Institute for Mathematical Sciences, Stony Brook University, Stony Brook, NY 11794-3660}

\email{insung.park@stonybrook.edu}

\author{Peter Zenz}
\address{Department of Mathematics, Brown University, Providence, RI 02912}

\email{peter\_zenz@brown.edu}

\thanks{}

\begin{abstract}
We prove a sharp effective equidistribution theorem for closed geodesics on a compact hyperbolic surface. This is achieved by carefully analyzing the Selberg trace formula twisted by a Maass eigenfunction, which we refer to as Zelditch's trace formula, following Zelditch's observation that the geometric side can be expressed as a sum of period integrals of eigenfunctions on closed geodesics. The main technical input is a sharp upper bound for the matrix coefficients $\langle \phi, \varphi^2 \rangle$ uniformly in $t_\phi\to \infty$ and $t_\varphi \to \infty$, where $t_\phi$ and $t_\varphi$ are the eigenparameters corresponding to Maass forms $\phi$ and $\varphi$ on $X$.  
\end{abstract}

\maketitle
%\tableofcontents
\section{Introduction}
Let $X$ be a compact hyperbolic surface. For any closed geodesic $\gamma$ on $X$, let $\gamma_0$ denote its corresponding primitive closed geodesic. Bowen's theorem \cite{Bowen_Equidist} implies the equidistribution of the closed geodesics on $X$, i.e., for $f\in C^\infty (X)$, we have
\[
    \frac{\sum_{l(\gamma)<T} \int_{\gamma_0} f ds}{\sum_{l(\gamma)<T} \int_{\gamma_0} 1 ds} = \frac{1}{\mathrm{vol}(X)}\int_X f d\mu + o(1)
\]
as $T \to \infty$, where $ds$ is the arc length differential and $\mu$ is the hyperbolic area measure.  In this article, we prove a sharp effective equidistribution theorem. Let $0=\lambda_0<\lambda_1\le \dots$ denote the eigenvalues of the Laplace--Beltrami operator $-\Delta$ on $X$. Define $t_i\in [0,\infty)\cup i[0,\frac12]$ by $\lambda_i=\frac14+t_i^2$.

\begin{theorem}\label{theorem:main}
Let $X$ be a closed hyperbolic surface and let $f$ be a smooth function on $X$. Let $\lambda_1$ denote the smallest positive eigenvalue of the Laplace--Beltrami operator on $X$.
\begin{enumerate}
\item For every $\epsilon>0$, we have
\[
    \frac{\sum_{l(\gamma)<T} \int_{\gamma_0} f ds}{\sum_{l(\gamma)<T} \int_{\gamma_0} 1 ds} =  \frac{\int_X f d\mu}{{\rm Vol}(X)}+
    \|f\|_{C^{D(\epsilon)}}
    O_{X,\epsilon}\left(\max \left\{e^{\left(-\frac{1}{2}+\Im(t_1)\right)T},e^{(-\frac{1}{4}+\epsilon)T}\right\}\right),
\]
where $D(\epsilon)\sim 20/\epsilon$ as $\epsilon\to 0$ and $\|\cdot \|_{C^r}$ denotes the $C^r$-norm on $C^{\infty}(X)$.

\item Suppose that $\lambda_1 < 3/16$ and $\lambda_1$ is a simple eigenvalue.\footnote{The first positive eigenvalue $\lambda_1$ is simple for a generic smooth Riemannian metric \cite{Uhlenbeck_GenericProp}. Its genericity within the subspace of metrics of constant curvature $-1$ is unknown.} Then there exists $f\in C^\infty (X)$ with $\int_X fd\mu=0$ such that
\[
		\left|\frac{\sum_{l(\gamma)<T} \int_{\gamma_0} f ds}{\sum_{l(\gamma)<T} \int_{\gamma_0} 1 ds}\right| \gg e^{\left(-\frac{1}{2}+\Im(t_1)\right)T}
\]
as $T \to \infty$.
\end{enumerate}
\end{theorem}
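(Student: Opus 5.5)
The plan is to expand $f$ in an orthonormal basis of Maass forms, $f=\sum_j \langle f,\phi_j\rangle \phi_j$, and treat each $\phi_j$ separately through the Selberg trace formula twisted by $\phi_j$ (Zelditch's trace formula). For $j=0$ this is the ordinary trace formula, and it gives the denominator $\sum_{l(\gamma)<T}\int_{\gamma_0}1\,ds=\sum_{l(\gamma)<T} l(\gamma_0) \sim e^{T}$, together with the standard lower-order terms. For $j\ge 1$ we want to estimate $S_j(T)=\sum_{l(\gamma)<T}\int_{\gamma_0}\phi_j\,ds$.

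Zelditch's formula has two sides. The geometric side is $\sum_{\gamma} \frac{\hat h(l(\gamma))}{2\sinh(l(\gamma)/2)}\int_{\gamma_0}\phi_j\,ds$ for an even test function $h$. The spectral side is $\sum_k h(t_k)\langle \phi_j \varphi_k,\varphi_k\rangle$, possibly multiplied by an explicit factor depending only on $t_j$ that comes from the Selberg transform of the twisted kernel.

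To obtain a sharp cutoff, choose $\hat h$ to be a smoothed version of $2\sinh(x/2)\mathbf 1_{|x|<T}$, with smoothing window $e^{-\delta T}$. Then:
\begin{itemize}
\item The eigenvalues $t_k$ with $\Im t_k>0$ (small eigenvalues) contribute terms of size $e^{(1/2+\Im t_k)T}|\langle\phi_j,\varphi_k^2\rangle|$.
\item The eigenvalue $t_0=i/2$ contributes $\langle \phi_j,1\rangle=0$.
\item The tempered spectrum contributes $e^{T/2}\sum_k |h(t_k)|\,|\langle\phi_j,\varphi_k^2\rangle|$, where $h$ decays only beyond $|t_k|\gg e^{\delta T}$.
\end{itemize}
After dividing by $e^T$, the exceptional term with $k=1$ gives exactly the rate $e^{(-1/2+\Im t_1)T}$.

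The key input is the sharp bound for $\langle \phi_j,\varphi_k^2\rangle$, uniform in both $t_j$ and $t_k$. The goal is something like $\sum_{|t_k|\le K}|\langle\phi_j,\varphi_k^2\rangle| \ll_\epsilon t_j^{A}K^{1+\epsilon}$, or a pointwise bound of the form $\ll t_j^{A}|t_k|^{-1/2+\epsilon}$ with exponential decay once $t_j\gg 2t_k$. I would prove it by the Rankin--Selberg/triple product approach, or by writing $\varphi_k^2$ through Zelditch's formula itself and using a Kuznetsov-style bound on the average over $k$. With $K=e^{\delta T}$, the tempered contribution becomes $e^{T/2}e^{(1/2+\epsilon)\delta T}t_j^{A}$. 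Meanwhile, the smoothing error on the geometric side costs $e^{T}e^{-\delta T}$ by the prime geodesic count. Balancing gives $\delta=1/3$ roughly, but careful balancing together with the sharp matrix-coefficient bound should yield $e^{(3/4+\epsilon)T}$, hence the relative error $e^{(-1/4+\epsilon)T}$.

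Two further bookkeeping steps remain:
\begin{itemize}
\item Convert $\sum_{\gamma}\frac{\hat h(l)}{2\sinh(l/2)}$ into the unweighted sum over $l(\gamma)<T$ by partial summation. Non-primitive and primitive closed geodesics are both counted, matching the statement.
\item Sum over $j$ using $|\langle f,\phi_j\rangle|\ll \|f\|_{C^{2m}}t_j^{-2m}$. The polynomial loss $t_j^{A}$, whose exponent grows like $1/\epsilon$ once $\delta$ is pushed toward the optimum, forces $D(\epsilon)\sim 20/\epsilon$.
\end{itemize}
The main obstacle is the uniform bound on $\langle\phi_j,\varphi_k^2\rangle$. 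Its polynomial dependence on $t_j$ must be explicit, and its $t_k$-dependence must be sharp enough, essentially square-root cancellation on average, to reach exponent $3/4$.

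For part (2), take $f=\phi_1$, the unique eigenfunction for $\lambda_1$. In the expansion above, the spectral term for $k=1$ is $c(T)\,e^{(1/2+\Im t_1)T}\langle\phi_1,\phi_1^2\rangle$, with $c(T)$ explicit and bounded below. Every other term is $O(e^{(3/4+\epsilon)T})$, which is smaller because $\lambda_1<3/16$ means $\Im t_1>1/4$. The remaining issue is nonvanishing of $\int_X\phi_1^3\,d\mu$. If it vanishes, I would instead take $f=\phi_j$ for some $j$ with $\langle\phi_j,\phi_1^2\rangle\neq 0$. Such a $j\ge1$ exists because $\phi_1^2$ is nonconstant: a constant $\phi_1^2$ would force $|\phi_1|$ constant, which is impossible since the real eigenfunction $\phi_1$ has mean zero and so vanishes somewhere. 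For this $f$, the $k=1$ term is the dominant one, and simplicity of $\lambda_1$ ensures that no cancellation can occur among several eigenfunctions sharing the eigenvalue $\lambda_1$.
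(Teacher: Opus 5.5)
\textbf{There is a genuine gap, and it is exactly where the paper does its main work.}

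\textbf{First gap: the transform in Zelditch's formula depends on $\tau$.} You treat Zelditch's formula as if the geometric weight were the standard Selberg weight $\hat h(l(\gamma))/(2\sinh(l(\gamma)/2))$, with at most a scalar factor depending on the twisting eigenvalue. That is not the case. Write the twisting eigenfunction as $\varphi$ with parameter $\tau$ (your $\phi_j$, $t_j$), and the squared one as $\phi$ with parameter $t$ (your $\varphi_k$, $t_k$). The integral of $\varphi$ along the equidistant curves to $\gamma_0$ equals $\bigl(\int_{\gamma_0}\varphi\,ds\bigr)\,{}_2F_1(\kappa_+,\kappa_-;\tfrac12;-u^2)$ plus an odd part, where $\kappa_\pm=\tfrac14\pm\tfrac{i\tau}{2}$. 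So the map between geometric and spectral test functions is a $\tau$-dependent transform with Mellin kernel $\Theta^t_\tau(s)$. This kernel contains $1/(\Gamma(s-\kappa_+)\Gamma(s-\kappa_-))$ and does not factor.

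As a result, the spectral coefficients are exponentially large in $\tau$. The residue at $s=\tfrac12\pm it$ has size $(1+\tau)^{1/2}\exp\bigl(\tfrac{\pi}{2}(|t+\tfrac{\tau}{2}|+|t-\tfrac{\tau}{2}|-2|t|)\bigr)$, which is $e^{\pi(\tau/2-|t|)}$ for $|t|<\tau/2$. The main-term coefficients $\gamma_{k,j}$ are of size $e^{\pi\tau/2}$ up to powers of $\tau$.

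You need an error polynomial in $\tau$ to sum against $\langle f,\phi_k\rangle$ for merely smooth $f$. For that, the required input is a bound for $\langle\varphi,\phi^2\rangle$ whose exponential decay in $\tau$ exactly compensates this growth. For the exceptional $\phi$ this is Sarnak's fixed-$\phi$ bound. The tempered $\phi$, however, range over $|t|$ up to a power of $e^{T}$, so the bound is needed uniformly in $t$. That is Theorem~\ref{thm:TripleProduct}, proved by making Sarnak's method uniform through Dunster's asymptotics for conical functions.

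No decay in $t$ is needed; Weyl's law handles that sum. The input you propose instead is decay like $|t_k|^{-1/2+\epsilon}$ in the parameter of the squared eigenfunction, with polynomial dependence on the twist. It targets the wrong aspect, and it is out of reach anyway. For a fixed twist it is a Lindel\"of-strength rate in QUE, which is open even for arithmetic surfaces.

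\textbf{Second gap: the residues at $s=-m$.} Even granting Theorem~\ref{thm:TripleProduct}, shifting the contour produces residues at $s=-m$ for $1\le m\le\lfloor\tau\rfloor$. These carry the factor $1/(\Gamma(-\kappa_+-m)\Gamma(-\kappa_--m))$, of size $e^{\pi\tau/2}$ times a power of $\tau$. For $|t|\ge\tau/2$ the triple-product bound gives no exponential decay. So these terms can only be controlled by cancellation across the spectrum, which no termwise estimate detects.

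The paper handles them in four steps:
\begin{enumerate}
\item It subtracts these residues from the spectral test function.
\item It inverts them to geometric test functions with $\sum_m\psi_m(u)=Cu^{-1}+O(u^{-3/2})$.
\item It justifies the trace formula for these non-admissible test functions, which have poles at $t=\pm i/2$.
\item It bounds the resulting conditionally convergent sum $\sum_\gamma e^{-l(\gamma)}\int_{\gamma_0}\varphi\,ds$ by $O((1+|\tau|)^3)$. This uses Proposition~\ref{prop:Zelditch_Equidist}, a uniform Zelditch theorem with $e^{\pi|\tau|/2}$ loss, together with a cutoff at length $\asymp\tau$.
\end{enumerate}
Your plan has no counterpart to this step. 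Without it, each eigenfunction carries an error $e^{\pi|\tau|/2}(1+|\tau|)^3\sqrt{T}e^{3T/4}$, which cannot be summed over the expansion of $f$.

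The polynomial loss $(1+|\tau|)^{A(\delta)}$, with $A(\delta)=\lfloor 20/\delta\rfloor$, comes from the threshold $e^{T}\ge(1+|\tau|)^{A(\delta)}$ that these contour-shift estimates require. That threshold, not a loss in a triple-product bound, is the source of $D(\epsilon)\sim 20/\epsilon$.

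\textbf{Part (2).} Your argument is fine and agrees with the paper. It only needs the asymptotic for one fixed $\phi_{k_0}$ with $\langle\phi_{k_0},\phi_1^2\rangle\neq0$, together with $\gamma_{k_0,1}>0$.
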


We begin by discussing related work in the literature.

%Closed geodesics can be considered as periodic orbits of the geodesic flow. 

%The equidistribution of periodic orbits has been extensively studied for even more general dynamical systems, e.g., \cite{Margulis_AnosovSystemBOOK, Bowen_Equidist, ParrPoll_ZetaFtn}.

% Let us review previous work on geodesic counting and equidistribution, followed by an outline of the proof of Theorem \ref{theorem:main}.

\subsection{Background}
Let $X$ be a compact hyperbolic surface. For any $T>0$, let $\pi_X(T)$ be the number of primitive closed geodesics of length less than $T$.

Huber obtained the first such result in 1959 \cite{Huber_PrimeGeodSurf1} using Selberg's trace formula which relates the lengths of closed geodesics to the spectrum of the Laplace--Beltrami operator on $X$. Let us denote by $0=\lambda_0 < \lambda_1 \leq \lambda_2 \leq \ldots$ the complete set of eigenvalues of the Laplace--Beltrami operator on $X$. Because Selberg's trace formula dualizes a summation over the closed geodesics to a summation over $\{\lambda_j\}$, one may analyze $\pi_X(T)$ in terms of the spectral information of the given manifold. Huber carried out the details in a series of papers \cite{Huber_PrimeGeodSurf1,Huber_PrimeGeodSurf2,Huber_PrimeGeodSurf2(Improved)} and proved
\begin{equation}\label{eq:PGT}
    \pi_X(T) = \mathrm{li} \left(e^T\right) + \sum_{j:0<\lambda_j<3/16} \mathrm{li} \left( e^{\left(\frac{1}{2} + \Im(t_j)\right)T}\right) + O_X\left(T^{-1/2}e^{\frac{3}{4}T}\right),
\end{equation}
where $\mathrm{li}$ is the logarithmic integral function
\[
    \mathrm{li}(x) = \frac{x}{\log x} +\frac{x}{(\log x)^2}+\frac{2x}{(\log x)^3}+\cdots+\frac{(N-1)!x}{(\log x)^N}+O_N\left(\frac{x}{(\log x)^{N+1}}\right) .
\]

The problem of interest in this paper is to understand how the set of closed geodesics of length less than $T$ is distributed in $X$ as $T \to \infty$. To be precise, given a closed geodesic $\gamma$, let $\gamma_0$ be the corresponding primitive closed geodesic, and define the measure $\mu_\gamma: C_0^\infty (X) \to \mathbb{R}$ to be
\[
    \mu_\gamma: \phi\mapsto \int_{\gamma_0} \phi(s) ds,
\]
where $ds$ is the arc length element of $\gamma_0$. In particular, $\mu_\gamma (1) = l(\gamma_0)$, where $l(\cdot)$ is the length of the given piecewise smooth curve. The first result concerning the distribution of closed geodesics appears in Margulis' thesis \cite{Margulis_AnosovSystemBOOK}, which states that for any fixed $\epsilon>0$ and for any fixed $f \in C_0^\infty (X)$, we have
\[
    \frac{\sum_{T-\epsilon <l(\gamma) < T+\epsilon} \mu_\gamma(f)}{\sum_{T-\epsilon <l(\gamma) < T+\epsilon} \mu_\gamma(1)} = \frac{1}{\mathrm{vol}(X)}\left(\int_X f d\mu \right) +o(1),
\]
as $T \to \infty$, where $d\mu$ is the hyperbolic volume form on $X$. Note that the summation runs over all closed geodesics which are not necessarily primitive. Bowen's Theorem often appears in a weaker form in the literature:
\begin{equation}\label{eq:Bowen}
\frac{\sum_{l(\gamma) < T} \mu_\gamma(f)}{\sum_{l(\gamma) < T} \mu_\gamma(1)} = \frac{1}{\mathrm{vol}(X)} \left(\int_X f d\mu\right) +o(1),
\end{equation}
as $T \to \infty$, where the summation is taken over \textit{all} closed geodesics.

\begin{remark}
In \cite{Bowen_Equidist}, Bowen studied the closed orbits of Anosov flows on compact Riemannian manifolds. See also \cite{ParrPoll_ZetaFtn}, where these results were generalized using thermodynamic formalism.
\end{remark}

Bowen's theorem for hyperbolic surfaces of finite area was later revisited by Zelditch \cite{Zelditch_EquidistCompact} and \cite{Zelditch_EquidistNonCompact} using spectral geometric techniques. The key idea of Zelditch is to investigate the trace of a composition between a pseudo-differential operator $\mathrm{Op}(a)$ and an invariant Hilbert--Schmidt operator in the trace class. Note that Selberg's trace formula computes the trace of an invariant  Hilbert--Schmidt operator in the trace class, which essentially leads to a summation involving only the length of the closed geodesics.\footnote{Selberg's trace formula for non-spherical representations captures holonomy of the closed geodesics as well. See \cite{DeverMilic_AmbientPrimeGeodesic}.} Because of this limitation, it is not plausible to study the expression \eqref{eq:Bowen} using Selberg's trace formula as is. What Zelditch observed is that the trace of the invariant operator when composed with a pseudo-differential operator $\mathrm{Op}(a)$ becomes a summation involving
\[
    \int_{\gamma_0} a(s) ds
\]
and the lengths of the closed geodesics, \textit{if} $a$ is chosen to be a Maass form. Such an expression is suitable for studying \eqref{eq:Bowen}, and so by carefully choosing the test function Zelditch proved that for any $A>0$ one has
\[
    \frac{\sum_{l(\gamma) < T} \mu_\gamma(f)}{\sum_{l(\gamma) < T} \mu_\gamma(1)} = \frac{1}{\mathrm{vol}(X)}\int_X f d\mu + O_{X,A,f}\left(T^{-A}\right)
\]
as $T \to \infty$. 

A related generalized Selberg trace formula was developed by Bir\'o \cite{Biro_Gen_STF}, who evaluates the diagonal of an 
automorphic kernel against a fixed Laplace eigenfunction.
Bir\'o subsequently applied this formula to local averages
in the hyperbolic circle problem \cite{Biro_LocalAverage}.

Margulis--Mohammadi--Oh \cite{MMO_Holonomy} obtained an equidistribution result with an exponential error $e^{-\epsilon T}$ for geometrically finite hyperbolic manifolds with a suitable lower bound on the critical exponent, where the exponent $\epsilon$ is implicit.

The equidistribution of holonomies of closed geodesics with an exponentially decaying error term was established by Sarnak--Wakayama \cite{SarnakWakayama_EquidistHol} for the finite volume case using Selberg's Trace Formula and by Margulis--Mohammadi--Oh \cite{MMO_Holonomy} for the geometrically finite case using the exponential mixing of the frame flow.

\subsection{Revisiting Zelditch's approach and structure of the proof}
Recall from \cite{Zelditch_EquidistCompact} that for any non-constant eigenfunction $\phi_k$, 
\begin{equation}\label{eqn:Equidist_k}
    \sum_{l(\gamma) < T} \mu_\gamma(\phi_k) = \sum_{j:\lambda_j<3/16} \gamma_{k,j}\langle\phi_k,\phi_j^2\rangle e^{(\frac12+\Im(t_j))T}+O_{X,k}(Te^{\frac34T}),
\end{equation}
where $\lambda_i=\tfrac14+t_i^2$ and
\[
    \gamma_{k,j}= \frac{2^{1-2\Im(t_j)}\sqrt{\pi}\,\Gamma\left(2\Im(t_j)\right)}{\Gamma\left(\frac14+\Im(t_j)+\frac12i t_k\right)\Gamma\left(\frac14+\Im(t_j)-\frac12i t_k \right)\left(\frac12+\Im(t_j)\right)}
\]
for $j$'s with $\lambda_j<3/16$.
Note that this already implies our main theorem (Theorem \ref{theorem:main}) when the test function $f$ is a finite linear combination of eigenfunctions. Proving the theorem for all smooth test functions requires resolving two major challenges:

\begin{enumerate}
    \item[(i)] Establishing a sharp uniform upper bound for the matrix coefficients $\langle\phi_k,\phi_j^2\rangle$ in terms of $t_k$ and $t_j$.
    \item[(ii)] Analyzing the explicit dependence of the remainder term $O_{X,k}(Te^{\frac34T})$ on $t_k$.
\end{enumerate}

The main contribution of the article is resolving these two challenges and we present the proof as follows.

Firstly, in Section \ref{sec:Zelditch_Trace_Formula}, for the sake of completeness and a self-contained discussion, we formulate Zelditch's trace formula (Theorem \ref{thm:ZTF}) \cite{Zelditch_EquidistCompact,Zelditch_EquidistNonCompact}. 

Regarding the first difficulty (i), we prove in Section \ref{sec:Triple_Product} a uniform estimate for the matrix coefficient $\langle \phi_k,\phi_j^2\rangle$ in the following form.
\begin{theorem}\label{thm:TripleProduct}
Let $Y$ be a compact hyperbolic surface. Let $\varphi$ and $\phi$ be $L^2$-normalized eigenfunctions of the Laplace--Beltrami operator on $Y$ with eigenvalues $\frac{1}{4}+\tau^2$ and $\frac{1}{4}+t^2$, respectively. Then
\[
\int_Y \varphi(z)|\phi(z)|^2\,d\mu(z)
\ll_Y
(1+|\tau|)\bigl(1+\log(1+|\tau|)\bigr)
\exp\left(
-\frac{\pi}{4}
\left( |2t+\tau|+|2t-\tau|-4|t| \right)
\right),
\]
uniformly in $\phi$ and $\varphi$.
\end{theorem}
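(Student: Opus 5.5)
The plan is to use Sarnak's approach through the invariant trilinear form, made quantitative by a local polar expansion of $\phi$. First note what the bound says. If $|\tau|\le 2|t|$, then $|2t+\tau|+|2t-\tau|=4|t|$, and the claim is the polynomial bound $(1+|\tau|)\log(1+|\tau|)$. If $|\tau|>2|t|$, the exponent becomes $-\frac{\pi}{2}(|\tau|-2|t|)$. So there are two regimes. In the \emph{non-decay} regime one only needs a uniform polynomial bound. In the \emph{decay} regime one must exploit that $|\phi|^2$ has spectral content essentially supported in $|\tau|\le 2|t|$. Both should come out of a single identity.

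\textbf{Step 1 (unfolding against the circular average of $|\phi|^2$).} Fix $w\in\mathbb{H}$ and expand $\phi$ in geodesic polar coordinates $(r,\theta)$ about $w$:
\[
\phi(z)=\sum_{n\in\mathbb{Z}} a_n(w)\,P_n(r)\,e^{in\theta},
\]
where $P_n(r)=P^{n}_{-1/2+it}(\cosh r)$ are the associated Legendre functions, suitably normalized. Averaging over the circle of radius $r$ gives
\[
M(w,r):=\frac{1}{2\pi}\int_0^{2\pi}|\phi(r,\theta)|^2\,d\theta=\sum_n |a_n(w)|^2|P_n(r)|^2 .
\]
Then write
\[
\langle\varphi,|\phi|^2\rangle=\frac{1}{h(\tau)}\int_Y\varphi(w)\int_{\mathbb{H}}k(d(z,w))|\phi(z)|^2\,d\mu(z)\,d\mu(w),
\]
where $k$ is a radial point-pair invariant with Selberg transform $h$. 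Choosing $k$ to be (a truncation of) the spherical function itself, this reduces to
\[
\int_Y\varphi(w)\sum_n|a_n(w)|^2\,c_n(t,\tau)\,d\mu(w),
\qquad
c_n(t,\tau)=\int_0^\infty |P_n(r)|^2\,\Phi_\tau(r)\sinh r\,dr .
\]
Here $\Phi_\tau$ is the spherical function of parameter $\tau$, and the integral is regularized, or equivalently $c_n$ is defined as the spherical transform of $|P_n|^2$. This is the local form of the uniqueness of the invariant trilinear functional.

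\textbf{Step 2 (bounding the coefficients $c_n$).} Estimate $c_n(t,\tau)$ uniformly in $n$. The function $r\mapsto|P_n(r)|^2$ is a combination of exponentials $e^{(\pm 2it-1)r}$ and $e^{-r}$. So its spherical transform extends analytically, and by contour shifting (a Paley--Wiener argument in a strip of width governed by $\Im$ of the frequencies), one expects
\[
c_n(t,\tau)\ll (1+|\tau|)^{-1}\cdot(\text{polynomial})\cdot e^{-\frac{\pi}{4}(|2t+\tau|+|2t-\tau|-4|t|)} .
\]
In the explicit $n=0$ case this is a ratio of Gamma functions $\Gamma(\tfrac14\pm\tfrac{i}{2}(\tau\pm 2t))$ divided by $\Gamma(\tfrac14\pm\tfrac{i}{2}\tau)$-type factors. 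Applying Stirling's formula to that ratio gives exactly the exponent in the statement.

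\textbf{Step 3 (summing over $n$ and integrating in $w$).} After dividing by $|h(\tau)|$, which is of size comparable to $(1+|\tau|)^{-1/2}$, the sum over $n$ and the integral over $w$ are controlled by the identity
\[
\int_Y\sum_n|a_n(w)|^2\,\omega_n\,d\mu(w)\ll\|\phi\|_2^2 ,
\]
valid for suitable weights $\omega_n$ normalizing $P_n$, together with $\|\varphi\|_\infty\ll_Y(1+|\tau|)^{1/2}$. The logarithmic factor should arise from summing over the range $|n|\lesssim|\tau|$, where the Legendre asymptotics change type.

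\textbf{Main obstacle.} I expect the main obstacle to be Step 2: obtaining bounds for $c_n(t,\tau)$ that are uniform in $n$, $t$, and $\tau$ simultaneously, since the associated Legendre functions pass through turning points when $|n|\approx|t|\sinh r$. I would first try to handle $c_n$ via the Mellin--Barnes (Gamma-product) representation of the integral of a product of three Legendre functions. This would reduce the problem to Stirling estimates, with the cases $|n|\le|t|$ and $|n|>|t|$ treated separately.
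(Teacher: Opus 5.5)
\textbf{Step 2 is asserted rather than proved, and the mechanism you propose for it does not work.} This is the decisive estimate.

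The function $|P_n(r)|^2$ is not a combination of $e^{(\pm 2it-1)r}$ and $e^{-r}$; that is only its behaviour as $r\to\infty$. That behaviour gives holomorphy of the transform in a strip of the \emph{spectral} variable, not decay in $\tau$. Exponential decay in $\tau$ comes from holomorphy in the \emph{radius}. The circular average $M(w,r)$, i.e.\ $\sum_n D_\phi(n)\,\mathcal{C}_{t,n}(r)\mathcal{C}_{-t,n}(r)$ in the paper's notation, continues holomorphically to $|\Im r|<\pi/2$. The gain $e^{-\pi|\tau|/2}$ reflects the width of that strip, and the loss $e^{\pi|t|}$ is the size of $\mathcal{C}_{t,n}\mathcal{C}_{-t,n}$ near $\Im r=\pm\pi/2$.

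The $n=0$ Gamma formula is not representative either. Stirling gives $\exp\bigl(-\tfrac{\pi}{2}(|2t+\tau|+|2t-\tau|-4|t|)\bigr)$ for it, which is the \emph{square} of the bound in the theorem. So the theorem's exponent is produced by the modes $n\neq0$, for which there is no single Gamma product; one gets ${}_3F_2$-type sums whose asymptotics, uniform in $n,t,\tau$, are exactly the problem.

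What is missing is the content of Proposition~\ref{prop:C_shifted_bounds} and Lemma~\ref{lem:ratio_bounds_for_B}:
on $\Im r=\pm(\pi/2-1/T)$ one has $|\mathcal{C}_{t,n}(r)\mathcal{C}_{-t,n}(r)|\ll e^{\pi t}R_{n,t}(Z)^n/(n+t)$ uniformly in $n$ and $t$. The paper proves this from Dunster's uniform $J$- and $K$-Bessel expansions of conical functions, with error control along progressive paths that avoid the turning point. It also uses $R_{n,t}(Z)\le R_{t,t}(Z)<1$ for $n>t$ and $1-R_{t,t}(Z)\gg T^{-1}e^{-2|x|}$, where $x=\Re r$. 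The logarithm in the theorem is $\sum_{n>t}R_{t,t}(Z)^n/n\approx\log\frac{1}{1-R_{t,t}(Z)}$, not a change of type of the Legendre asymptotics.

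\textbf{The framework around Step 2 also needs repair.}

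Step 1 fails because the spherical function is not an admissible kernel. $|\Phi_\tau(r)|\sinh r$ is of size $e^{r/2}$, while $M(w,r)\to\|\phi\|_2^2/\mathrm{vol}(Y)$ as $r\to\infty$. Hence $\int_{\mathbb{H}}\Phi_\tau(d(z,w))|\phi(z)|^2\,d\mu(z)$ diverges. Moreover, the spherical transform of $\Phi_\tau$ is a point mass, not a number $h(\tau)\asymp(1+|\tau|)^{-1/2}$. Any regularization must keep $h(\tau)$ of moderate size while the $c_n$ stay exponentially small, and that is where the work lies. The paper takes $h_T(\tau')=e^{-(\tau'-T)^2}+e^{-(\tau'+T)^2}$ with $T=\tau$, uses Parseval, and shifts the contour of the Abel-transformed integral to $\Im w=\pm(\pi/2-1/T)$.

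Step 3 fails as well. In the normalization with $\sum_n|P_n(r)|^2=1$, integrating $M(w,r)$ over $Y$ gives $\|\phi\|_2^2$ for every $r$. Comparing coefficients as $r\to0$ then gives $\int_Y(|a_n|^2+|a_{-n}|^2)\,d\mu=2\|\phi\|_2^2$ for every $n\ge1$. So the $w$-integration supplies no decay in $n$, and a bound on $c_n$ that is merely uniform in $n$ produces a divergent series.

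If you want to keep your integrate-over-$w$ structure, the rigorous form of Step 1 is the mean-value identity $\int_Y\varphi(w)M(w,r)\,d\mu(w)=\Phi_\tau(r)\langle\varphi,|\phi|^2\rangle$. Continue it holomorphically to $r=i(\pi/2-1/T)$ with $T=|\tau|$, where $|\Phi_\tau(r)|\asymp(1+|\tau|)^{-1/2}e^{\pi|\tau|/2}$. Since $\|\varphi\|_{L^1}\ll_Y1$, the range $|\tau|>2|t|+2$ with real $t\ge t_0(Y)$ then reduces to the uniform bound $\sup_w|M(w,r)|\ll e^{\pi|t|}(1+\log T)$ at that point. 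That bound is Lemma~\ref{Bhor} at $x=0$, whose proof is uniform in the centre.
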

We achieve this by following and generalizing the strategy of \cite{Sarnak_Int_of_Prod_of_Eftns}, where Theorem \ref{thm:TripleProduct} for fixed $\phi$ is obtained. It is necessary to obtain a uniform estimate for certain associated Legendre functions, known as conical functions. To this end, we make use of a series of works by Dunster \cite{Dunster_Concial}, \cite{Dunster_Prep}, \cite{Dunster_K_Bessel_by_Exp}, partly in collaboration with Boyd \cite{BoydDunster_EstimateLegendra}, Gil, and Segura \cite{DunsterGilSegura_BesselbyAiry}, on the Liouville--Green approximation of conical functions by Bessel functions. In Appendix \ref{AppendixConical}, we carry out careful estimates of the error terms and verify that this method yields the desired estimate for the conical functions.

\begin{remark}
When a non-constant $\varphi$ is fixed, improving the trivial bound 
    \[
\int_Y \varphi(z)|\phi(z)|^2\,d\mu(z) \ll_{\varphi} 1
    \]
is related to the Quantum Unique Ergodicity conjecture by Rudnick and Sarnak \cite{Rudnick_Sarnak}. With additional arithmetic assumptions on $Y$ and $\phi$, Lindenstrauss \cite{Lindenstrauss} and Soundararajan \cite{Soundararajan_QUE} proved the estimate $o_\varphi(1)$, which is referred to as the Arithmetic Quantum Unique Ergodicity theorem.

When $\phi$ is fixed, the problem was investigated in various contexts \cite{Bernstein_Reznikov},  \cite{KroSta_HoloExtRep}, \cite{Sarnak_Int_of_Prod_of_Eftns} concerning the correct exponential decay in $\tau$. 
\end{remark}

Regarding the second issue (ii), we explicitly identify that certain residues on the spectral side of the trace formula grow exponentially fast in $t_k$. Since there are no such exponential terms on the geometric side, they must, a priori, cancel each other. Since these cancellations are not apparent on the spectral side, we transfer the problematic terms to the geometric side and verify that the required cancellations indeed occur. This will be addressed in Section \ref{sec:Equidistrib}.

\subsection*{Notations}
\begin{itemize}
    \item For positive functions $f,g$,
        \begin{itemize}
            \item $f(x)\ll g(x)$ if there exists $C>0$ such that $f(x)< Cg(x)$ uniformly for all $x$.
            \item $f(x)\asymp g(x)$ if $f(x)\ll g(x)$ and $g(x)\ll f(x)$.
            \item $f(x)\sim g(x)$ as $x\to 0$ (resp.\@ $x\to \infty$) if $f(x)/g(x)\to 1$ as $x\to 0$ (resp.\@ $x\to \infty$).
        \end{itemize}
    \item Since we fix a compact hyperbolic surface $X$, we sometimes omit the dependence on $X$ from the notation, writing simply $\ll$ instead of $\ll_X$, unless this dependence needs to be emphasized.
    \item For $s\in \mathbb{C}$ and $n\in \mathbb{Z}_{\ge0}$,
    \[
        [s]_n:=s(s+1)\cdots (s+(n-1)).
    \]
    \item For any $z\in \mathbb{C}$, we write $z=\Re(z)+i\Im(z)$.
    \item Although our estimates are uniform in both spectral parameters $t_j$ and $t_k$, these parameters play completely different roles. To make the notations easier to read, we also write
    \[
        \phi_k=\varphi
        \qquad\text{and}\qquad
        t_k=\tau.
    \]
\end{itemize}

\subsection*{Acknowledgments} J.J. thanks Peter Sarnak, Federico Pasqualotto, Peter Humphries, Hee Oh, and Sug Woo Shin for helpful discussions. I.P. thanks Misha Lyubich for helpful comments and the Institute for Mathematical Sciences at Stony Brook University for travel support during this project. P.Z. is grateful to Mark Dunster and Kannan Soundararajan for insightful and helpful discussions, and to the Stanford Mathematics Department for its hospitality during part of this work.

\subsection*{AI disclosure} The authors used generative AI tools for limited assistance with language editing, rewriting, and routine computations. All mathematical ideas, arguments, and results are the authors' own. Any AI-assisted content was independently verified by the authors, who take full responsibility for the content of the paper.

\section{Preliminary}
\subsection{Hyperbolic plane}
Let $\mathbb{H} = \{z=x+iy~:~ y>0\}$ be the upper half space endowed with the hyperbolic Riemannian metric $g=g_{ij} = y^{-2}\delta_{ij}$.

The group of orientation-preserving isometries of $\mathbb{H}$ is $ \mathrm{Isom}^+ (\mathbb{H})\cong {\rm PSL}_2(\mathbb{R})$ where we realize the Iwasawa decomposition $\mathrm{PSL}_2(\mathbb{R}) \cong NAK$ by
\begin{align*}
        A & = \{ a_t \colon z \mapsto e^t z~:~ t\in \mathbb{R}\}\\
        N & = \{n_{u}\colon z \mapsto z+u ~:~ u\in \mathbb{R}\}\\
        K & = \mathrm{Stab}(i).
\end{align*}

\subsection{AN-coordinate system}
Observe that $AN=NA$. We introduce an $AN$-coordinate system for $\mathbb{H}$ with $N\cong \mathbb{R}$. For $u,t\in \mathbb{R}$, consider the map to $\mathbb{H}$ given by
\[
(u,t) \mapsto e^t (u+i).
\]
Then the hyperbolic metric $g_{ij}=y^{-2}\delta_{ij}$ in these coordinates is given by
\[
g_{ij}' = g_{kl} \frac{\partial x_k}{\partial u_i}\frac{\partial x_l}{\partial u_j}= y^{-2} \frac{\partial x_k}{\partial u_i}\frac{\partial x_k}{\partial u_j}=(I+U)(I+U)^{T}
\]
where
\[
U=\begin{pmatrix}
 0 & 0\\
 u  & 0\\
\end{pmatrix}.
\]
Here we define $x_1=x$, $x_2=y$, $u_1=u$ and $u_2=t$ for notational convenience. In particular, the volume form in this coordinate system is given by $dudt$.

\subsection{Orthonormal eigenbasis}
We identify eigenfunctions of the Laplace--Beltrami operator on a compact hyperbolic surface $Y= \Gamma \backslash \mathbb{H}$ with the eigenfunctions of $\Delta_{\mathbb{H}}$ on $\mathbb{H}$ that are $\Gamma$-invariant, where $\Gamma$ is a torsion-free cocompact lattice of ${\rm PSL}(2,\mathbb{R})$. For instance, an integral over $\Gamma \backslash \mathbb{H}$ may be regarded as an integral over a fundamental domain of $\Gamma$. We let $\{\phi_j\}_{j=0}^\infty$ be a real-valued orthonormal Laplacian eigenbasis of $L^2(\Gamma \backslash \mathbb{H})$, i.e.,
\begin{itemize}
\item $\{\phi_j\}_{j=0}^\infty$ spans $L^2(\Gamma \backslash \mathbb{H})$,
\item $\langle\phi_j,\phi_k\rangle = \delta_{j,k}$,
\item $-\Delta_\mathbb{H} \phi_j = \lambda_j \phi_j$,
\end{itemize}
where we assume $0=\lambda_0 < \lambda_1 \leq \lambda_2 \leq \ldots$. Let $\lambda_j = \frac{1}{4}+t_j^2$ with $t_j\in [0,\infty)\cup i(0,1/2]$ and $t_0=i/2$. Any $f \in L^2(Y)$ satisfies
\[
f(z) = \sum_{j=0}^\infty \langle f,\phi_j\rangle \phi_j(z)
\]
in the sense of $L^2$. Note that if $f$ is assumed to be smooth, the convergence is pointwise and uniform.

\subsection{Mellin transform}
%There are a number of integral transforms that one may encounter when analyzing Zelditch's trace formula. We review some of the basic integral transforms here.

%We denote the Fourier transform of $f:\mathbb{R} \to \mathbb{C}$ by $\mathcal{F}f$ so that
%\[
%    \mathcal{F}f (t)=\int_{-\infty}^\infty f(r) e^{-irt} dr.
%\]
%The inverse Fourier transform is given by
%\[
%    \mathcal{F}^{-1}:f \mapsto  \frac{1}{2\pi} \int_{-\infty}^\infty f(r) e^{irt} dr.
%\]
Denote by
\[
    M: \phi \mapsto \int_0^\infty \phi(y) y^s \frac{dy}{y}
\]
the Mellin transform of $\phi:[0,\infty) \to \mathbb{C}$, whose inverse transform is given by
\[
M_\sigma^{-1}: F \mapsto \frac{1}{2\pi i }\int_{(\sigma)} F(s) y^{-s} ds
\]
for some $\sigma \in \mathbb{R}$, where $(\sigma)$ is the vertical contour $\{z\in \mathbb{C}~:~ \mathrm{Re}(z) =\sigma\}$ pointing upward.

\begin{proposition}\label{prop:melplan}
    Assume that $M\phi(s)$ is absolutely convergent for $\sigma_1<\mathrm{Re}(s)<\sigma_2$. For $\sigma\in \mathbb{R}$ such that $\sigma_1<\sigma<\sigma_2$, if
\[
    \phi(y)y^{\sigma}\in L^2((0,\infty),d^\times y),
\]
then we have
\[
    \int_0^\infty |\phi(y)|^2 y^{2\sigma} \frac{dy}{y}
=\frac{1}{2\pi}\int_{-\infty}^\infty \left|M\phi(\sigma+it)\right|^2 dt.
\]
\end{proposition}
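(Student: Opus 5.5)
This is the Mellin--Plancherel identity, and I would reduce it to the classical Plancherel theorem on $\mathbb{R}$ by the change of variables $y=e^{x}$. Set $g(x):=\phi(e^{x})e^{\sigma x}$ for $x\in\mathbb{R}$. Since $d^\times y = dy/y = dx$, the hypothesis $\phi(y)y^{\sigma}\in L^2((0,\infty),d^\times y)$ says exactly that $g\in L^2(\mathbb{R},dx)$. The same substitution gives
\[
\int_0^\infty |\phi(y)|^2 y^{2\sigma}\frac{dy}{y}=\int_{-\infty}^{\infty}|g(x)|^2\,dx .
\]

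Next I identify the Mellin transform on the line $\Re(s)=\sigma$ with a Fourier transform of $g$. For $s=\sigma+it$,
\[
M\phi(\sigma+it)=\int_0^\infty \phi(y)y^{\sigma}y^{it}\frac{dy}{y}=\int_{-\infty}^{\infty} g(x)e^{itx}\,dx=\widehat{g}(-t),
\]
where $\widehat g(\xi)=\int g(x)e^{-i\xi x}dx$. This integral converges absolutely because $\sigma_1<\sigma<\sigma_2$ and $M\phi$ is assumed absolutely convergent in that strip; equivalently, $g\in L^1(\mathbb{R})$. So $g\in L^1\cap L^2$.

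For $g\in L^1\cap L^2$, the $L^2$-Fourier transform agrees almost everywhere with the pointwise integral $\widehat g$. Plancherel's theorem then gives $\int|g|^2dx=\frac{1}{2\pi}\int|\widehat g(\xi)|^2d\xi$. The reflection $t\mapsto -t$ does not change the integral, so combining this with the two displays yields the claimed identity.

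The only point requiring any care is justifying that the pointwise Mellin integral coincides with the $L^2$-Fourier transform, rather than some $L^2$-limit of truncations. The absolute-convergence hypothesis is exactly what handles this, via the standard fact that the Fourier transform on $L^1\cap L^2$ extends to the $L^2$ isometry. Everything else is routine substitution.
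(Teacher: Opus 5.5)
Your argument is correct and complete: the substitution $y=e^{x}$ turns the hypotheses into $g\in L^1(\mathbb{R})\cap L^2(\mathbb{R})$ and identifies $M\phi(\sigma+it)$ with $\widehat g(-t)$. Plancherel with the $\frac{1}{2\pi}$ normalization then finishes it, and only absolute convergence on the single line $\Re(s)=\sigma$ is actually used. The paper states this Mellin--Plancherel identity as a standard fact without proof, and your reduction to the Fourier--Plancherel theorem is exactly the standard justification.
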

\subsection{Gamma function asymptotics and identities}

For $\epsilon>0$, suppose $z\in \mathbb{C}$ satisfies
\[
    \arg(z)\in(-\pi+\epsilon,\pi-\epsilon).
\]
Then, it follows from Stirling's formula that for all $|z|\gg1$, we have
    \begin{equation}\label{eqn:Stirling}
        |\Gamma(z)|\asymp_{\epsilon}\sqrt{2\pi}|z|^{\Re(z)-\frac{1}{2}}\exp(-\Im(z)\arg(z)-\Re(z)).
    \end{equation}
As a consequence, we have, for $x,y\in \mathbb{R}$,
\begin{equation}\label{eqn:Stirling_vertical}
    |\Gamma(x+iy)|\asymp_x e^{-\pi|y|/2}|y|^{x-1/2}
    \quad
    \text{as }
    |y|\to \infty,
\end{equation}
and
\begin{equation}\label{eqn:Gamma_ratio}
    \left|\frac{\Gamma(z+a)}{\Gamma(z+b)}\right|\asymp_{a,b}\left|z^{a-b}\right|
    \quad
    \text{as }
    |z|\to \infty.
\end{equation}
    
When the condition $\arg(z)\in (-\pi+\epsilon,\pi-\epsilon)$ is not satisfied, we use Euler's reflection formula
\begin{equation}\label{eqn:reflection}
    \Gamma(z)=\frac\pi{\Gamma(1-z)\,\sin \pi z}.
\end{equation}

We also record Legendre's duplication formula
\begin{equation}\label{eqn:duplication}
    \Gamma(z)\Gamma(z+\tfrac12)=2^{1-2z}\sqrt{\pi}\,\Gamma(2z).
\end{equation}

% \begin{lemma}\label{lem:Gamma_ratio_real_para}
%     For $a,b>0$, we have
%     \[
%         \left|\frac{\Gamma(a+ib)}{\Gamma(a)}\right|
%         \le
%         \begin{cases}
%             \exp(-b^2/4a)& a>b\\
%             \exp(-b/4)&a\le b.
%         \end{cases}
%     \]
% \end{lemma}
% \begin{proof}
%     By \cite[8.326-(2)]{GR_Table}, we have
%     \begin{align*}
%         \left|\frac{\Gamma(a+ib)}{\Gamma(a)}\right|^2
%         =\prod_{m=0}^\infty \left|1+\frac{b^2}{(a+m)^2}\right|^{-1}.
%     \end{align*}
%     It follows that
%     \begin{align*}
%         -2\log \left|\frac{\Gamma(a+ib)}{\Gamma(a)}\right|
%         &=\sum_{m\ge0} \log\left(1+\frac{b^2}{(a+m)^2}\right)\\
%         &\ge \int_0^\infty \log\left(1+\frac{b^2}{(a+x)^2}\right)dx\\
%         &=2b\arctan\frac{b}{a}-a\log\left(1+\frac{b^2}{a^2}\right)\\
%         &=2a\,g(b/a),
%     \end{align*}
%     where $g(t):=t\arctan t -\frac12\log(1+t^2)$. It is easy to show
%     \[
%         g(t)\ge
%         \begin{cases}
%             \frac{1}{4}t^2 & t\in(0,1)\\
%             \frac{1}{4}t & t\in [1,\infty).
%         \end{cases}
%     \]
%     Then the desired estimate follows.
% \end{proof}

\subsection{Weyl's law}

\begin{lemma}[Weyl's law]\label{lem:Weyl Law}
    For any closed Riemannian manifold $X$ of dimension $n$,
    \[
        N(T)=\frac{\omega_n}{(2\pi)^n} {\rm Vol}(X)\, T^{n/2}+O_X\left(T^{\frac{n-1}{2}}\right),
    \]
    where $N(T):=\#\{j:\lambda_j\le T\}$ counts eigenvalues
    with multiplicity, and $\omega_n$ is the volume of the $n$-dimensional Euclidean unit ball.
\end{lemma}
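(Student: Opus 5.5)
This is the classical sharp Weyl law of Avakumović, Levitan and Hörmander, and in the paper it would be enough to cite it. If a proof is wanted, I would follow Hörmander's wave-kernel method. Write $\mu_j=\sqrt{\lambda_j}$ and $\mu=\sqrt{T}$, so that $N(T)=\#\{j:\mu_j\le \mu\}$. The target is then
\[
\#\{j:\mu_j\le\mu\}=\frac{\omega_n}{(2\pi)^n}\mathrm{Vol}(X)\,\mu^n+O(\mu^{n-1}).
\]
The finitely many small eigenvalues, including $\lambda_0=0$, contribute $O(1)$ and can be ignored.

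\textbf{Step 1 (smoothed trace).} Fix $\epsilon>0$ smaller than the injectivity radius of $X$. Choose an even Schwartz function $\rho$ with $\rho\ge0$, $\rho(0)>0$, and $\hat\rho$ supported in $(-\epsilon,\epsilon)$; one can take $\rho=|\psi|^2$ with $\hat\psi$ compactly supported. Then
\[
\sum_j \rho(\mu-\mu_j)=\frac1{2\pi}\int \hat\rho(t)\,e^{it\mu}\,\mathrm{Tr}\,e^{-it\sqrt{-\Delta}}\,dt .
\]
For $|t|<\epsilon$ I would write the half-wave propagator $e^{-it\sqrt{-\Delta}}$ in local coordinates as a Fourier integral operator (Lax–Hörmander parametrix). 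Its phase is $\langle x-y,\xi\rangle-t|\xi|_{g(y)}$ up to $O(|x-y|^2|\xi|)$ corrections, and its symbol is $1+S^{-1}$. Taking the trace, setting $x=y$ and applying stationary phase in $t$ and $|\xi|$ should give
\[
\sum_j\rho(\mu-\mu_j)=\frac{n\,\omega_n}{(2\pi)^n}\mathrm{Vol}(X)\,\mu^{n-1}+O(\mu^{n-2}).
\]
The restriction on $\epsilon$ guarantees that only the singularity at $t=0$ of the wave trace, coming from zero-length loops, is seen.

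\textbf{Step 2 (Tauberian argument).} Positivity of $\rho$ together with $\rho(0)>0$ gives, from Step 1, the local bound
\[
\#\{j:|\mu_j-\mu|\le 1\}\ll \mu^{n-1}.
\]
Next, integrate the smoothed identity from $-\infty$ to $\mu$:
\[
\sum_j\int_{-\infty}^{\mu}\rho(s-\mu_j)\,ds=\frac{\omega_n}{(2\pi)^n}\mathrm{Vol}(X)\,\mu^n+O(\mu^{n-1}),
\]
after normalizing $\int\rho=1$. Then compare the left side with $N(\mu^2)$. The difference is controlled by $\sum_j \min\bigl(1,(1+|\mu-\mu_j|)^{-M}\bigr)$, using the rapid decay of $\rho$. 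Dyadically summing the local bound shows this is $O(\mu^{n-1})$. Finally, substituting $\mu=T^{1/2}$ gives the stated form.

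\textbf{Main obstacle.} The substantive work is constructing the parametrix and justifying the stationary-phase expansion uniformly on $X$. In particular, one must confirm that the lower-order terms of the phase and the symbol only affect the $O(\mu^{n-2})$ remainder. I would simply invoke Hörmander's construction of the parametrix, which is standard.

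For the case actually used in the paper, $n=2$ and $X$ hyperbolic, a self-contained alternative is the Selberg trace formula with the test function $h(r)=\rho(\mu-r)+\rho(\mu+r)$. Its identity term gives $\frac{\mathrm{Vol}(X)}{4\pi}\int h(r)\,r\tanh(\pi r)\,dr\sim \frac{\mathrm{Vol}(X)}{2\pi}\mu$, which is exactly $\frac{n\,\omega_n}{(2\pi)^n}\mathrm{Vol}(X)\,\mu^{n-1}$ for $n=2$. Since $\hat h$ has small support, the hyperbolic terms vanish once $\epsilon$ is below the shortest geodesic length. The same Tauberian step then finishes the proof.
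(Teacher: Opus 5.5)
The paper does not prove this lemma. It states it and cites Weyl's 1911 paper (planar domains, leading term only) and Ivrii's survey, so your sketch supplies what that citation merely points to. Your attribution of the $O(T^{(n-1)/2})$ remainder to Avakumovi\'c, Levitan and H\"ormander is the accurate one.

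Your wave-kernel outline is the standard proof and is sound: positivity of $\rho$ gives the local bound, and then integration plus the rapid decay of $\rho$ passes to the sharp cutoff. The only slip is that for $n=1$, integrating an $O(\mu^{n-2})=O(\mu^{-1})$ remainder costs a factor $\log\mu$. This is harmless, since the circle can be handled by hand.

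For this paper your Selberg variant is the more natural route. Once $\hat h$ is supported below the systole, $\sum_j h(t_j)$ equals the identity term exactly. That term is $\frac{\mathrm{Vol}(X)}{2\pi}\mu+O(\mu^{-N})$, so no parametrix is needed.

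It is also worth noticing that the paper only ever uses the upper bound $N(T)\ll_X T$. It appears in the partial summation of Lemma~\ref{lem:sum(1+t)^delta} and in counting $t_j<e^{(\frac14-\frac\delta2)T}$ in Lemma~\ref{lem:h_tau1}. That bound already follows from the first half of your Step 2, by summing $\#\{j:|t_j-\mu|\le 1\}\ll 1+\mu$ over unit intervals. So neither the sharp remainder nor the Tauberian comparison is needed downstream.
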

Weyl's law was first proven for bounded domains in $\mathbb{R}^2$ \cite{Weyl_1911}. See \cite{Ivrii_100YrsWeylLaw} for a comprehensive exposition of Weyl's law.

\begin{lemma}\label{lem:sum(1+t)^delta} For any closed Riemannian surface $Y$ and $\delta>0$, we have
\[
    \sum_{|t_j|\le T}(1+|t_j|)^{-\delta}
    =
    \begin{cases}
        O_{Y,\delta}(1)&\delta> 2\\
        O_Y(\log T) &\delta=2\\
        O_{Y,\delta}(T^{2-\delta})&\delta< 2
    \end{cases}
\]
as $T\to \infty$, where $\lambda_j=\frac{1}{4}+t_j^2$. Moreover, when $\delta> 2$, we have
\[
    \sum_{t_j> T}(1+|t_j|)^{-\delta}
    =
            O_{Y,\delta}(T^{2-\delta}).
\]
\end{lemma}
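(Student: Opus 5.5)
The plan is a Stieltjes-integral argument built on Weyl's law (Lemma \ref{lem:Weyl Law}) in dimension $n=2$. Write $N(\lambda)=\#\{j:\lambda_j\le\lambda\}$. Weyl's law gives $N(\lambda)=c_Y\lambda+O_Y(\lambda^{1/2})$, so $N(\lambda)\ll_Y 1+\lambda$. Passing to the parameter $t_j$, set $\widetilde N(r):=\#\{j:|t_j|\le r\}$. The finitely many exceptional indices with $t_j\in i(0,1/2]$ contribute only $O_Y(1)$, and these all have $(1+|t_j|)^{-\delta}\le 1$. For real $t_j$ we have $\lambda_j=\frac14+t_j^2$. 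Hence
\[
\widetilde N(r)\le N\!\left(\tfrac14+r^2\right)+O_Y(1)\ll_Y (1+r)^2 .
\]
This counting bound is the only spectral input I need; everything else is partial summation.

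\textbf{The sum over $|t_j|\le T$.} Write
\[
\sum_{|t_j|\le T}(1+|t_j|)^{-\delta}=\int_{0^-}^{T}(1+r)^{-\delta}\,d\widetilde N(r)
\]
and integrate by parts. This gives
\[
(1+T)^{-\delta}\widetilde N(T)+\delta\int_0^T(1+r)^{-\delta-1}\widetilde N(r)\,dr .
\]
Insert $\widetilde N(r)\ll(1+r)^2$:
\begin{itemize}
\item The boundary term is $\ll (1+T)^{2-\delta}$.
\item The integral is $\ll \delta\int_0^T(1+r)^{1-\delta}dr$.
\end{itemize}
Now split by cases. For $\delta>2$ the integral converges, the boundary term is $O(1)$, and the total is $O_{Y,\delta}(1)$. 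For $\delta=2$ the integral is $\ll\log(1+T)\ll\log T$ for $T\ge2$, and the boundary term is bounded, so the total is $O_Y(\log T)$. For $\delta<2$ both pieces are $\ll (1+T)^{2-\delta}/(2-\delta)\ll_\delta T^{2-\delta}$. One small point: for $\delta\le 0$ or small $T$ the statement is interpreted as $T\to\infty$, so constants absorb everything.

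\textbf{The tail for $\delta>2$.} Apply the same integration by parts on $(T,R]$ and let $R\to\infty$. The boundary term at $R$ is $\ll R^{2-\delta}\to0$. The boundary term at $T$ is $-(1+T)^{-\delta}\widetilde N(T)\le 0$, so it can be discarded for an upper bound; in any case it is $O(T^{2-\delta})$. The remaining integral is $\ll\delta\int_T^\infty(1+r)^{1-\delta}dr\ll_\delta T^{2-\delta}$. This gives the claimed tail bound.

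There is no real obstacle here. The only points requiring care are converting Weyl's law from $\lambda$ to $t$, and handling the finitely many small eigenvalues with imaginary $t_j$, which contribute $O_Y(1)$ and are harmless in every case.
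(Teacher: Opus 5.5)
Your proof is correct and is essentially the paper's argument: Riemann--Stieltjes partial summation against the Weyl counting function, with the finitely many small eigenvalues contributing $O_Y(1)$. The only cosmetic difference is that you work directly in the $t$-variable with $\widetilde N(r)\ll(1+r)^2$. The paper instead works in the $\lambda$-variable, using $(1+|t_j|)^{-\delta}\le\lambda_j^{-\delta/2}$, and substitutes $T\mapsto T^2$ at the end.
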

\begin{proof}
Suppose $T>1$ is sufficiently large. Since $\sqrt{x}\le 1+\sqrt{x-1/4}$ for all $x\ge 1$, we have
\[
    \sum_{\lambda_j<T}(1+|t_j|)^{-\delta}
    \le O_Y(1)+\sum_{1<\lambda_j\le T}\lambda_j^{-\delta/2}.
\]
Hence, for $\delta\neq 2$, we have
\begin{align*}
    \sum_{1<\lambda_j\le T}\lambda_j^{-\delta/2}
    & = \int_1^T x^{-\delta/2} dN(x)\\
    & =\frac{N(T)}{T^{\delta/2}}-N(1)+\frac{\delta}{2} \int_1^T N(x) \frac{1}{x^{\delta/2+1}}dx \\
    & \ll_{Y,\delta} 1+T^{1-\delta/2},
\end{align*}
where $\int dN$ is the Riemann–Stieltjes integral with respect to $N$. In the last line, we use Lemma~\ref{lem:Weyl Law}.

When $\delta=2$, by a similar argument, we obtain
\[
    \sum_{1<\lambda_j\le T}\lambda_j^{-\delta/2}
    \ll_{Y,\delta} \log T.
\]

Similarly, when $\delta>2$, we obtain
\[
    \sum_{\lambda_j> T}(1+|t_j|)^{-\delta}\le\sum_{\lambda_j> T}\lambda_j^{-\delta/2}
     = \int^\infty_T x^{-\delta/2} dN(x)=O_{Y,\delta}(T^{1-\delta/2}).
\]

Since $\lambda_j=1/4+t_j^2$, by replacing $T$ with $T^2$, we obtain the lemma.
\end{proof}

\subsection{Weighted geodesic length sum}
Let $X$ be a compact hyperbolic surface. For any closed geodesic $\gamma$, denote by $\gamma_0$ its primitive closed geodesic. There is an asymptotic formula of the geodesic length sum: For all $T\gg1$, 
\begin{equation}\label{eqn:PrimLengthSum}
    \sum_{l(\gamma)<T} l(\gamma_0) =e^T + \sum_{\lambda_j\in(0,3/16)}\frac{e^{(1/2+|\Im(t_j)|)T}}{1/2+|\Im(t_j)|}+ O_{X}\left(\sqrt{T}\,e^{3T/4}\right).
\end{equation}
The closed geodesics $\gamma$ in $X=\Gamma\backslash\mathbb{H}$ are counted with orientation, equivalently as nontrivial conjugacy classes $\{\gamma \}$ in $\Gamma$.

\begin{lemma}\label{lem:wPrimLengthSum}
We have
\[
    \sum_{l(\gamma)<T} l(\gamma_0)e^{-\delta l(\gamma)}
    =
    \left\{
    \begin{aligned}
        &O_{X,\delta}(1) &&\delta>1\\
        &T+O_X(1)   &&\delta=1.
    \end{aligned}
    \right.
\]
Moreover, for $\delta>1$, we have
\[
    \sum_{l(\gamma)\ge T} l(\gamma_0)e^{-\delta l(\gamma)}
    =O_{X,\delta}(e^{-(\delta-1)T}).
\]
\end{lemma}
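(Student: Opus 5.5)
Write $S(T):=\sum_{l(\gamma)<T} l(\gamma_0)$. The plan is to treat the sum as a Riemann--Stieltjes integral against $dS$ and use the asymptotic \eqref{eqn:PrimLengthSum} only in the crude form $S(T)=e^T+O_X(e^{(1/2+\eta)T})$ for some $\eta<1/2$. This is available because there are only finitely many small eigenvalues, all with $|\Im(t_j)|<1/2$. Since $l(\gamma)$ ranges over a discrete set bounded below by the systole $l_0>0$, $S(x)=0$ for $x<l_0$, and all integrals below start at $l_0$.

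First, by partial summation,
\[
\sum_{l(\gamma)<T} l(\gamma_0)e^{-\delta l(\gamma)}=\int_{l_0}^{T} e^{-\delta x}\,dS(x)= S(T)e^{-\delta T}+\delta\int_{l_0}^T S(x)e^{-\delta x}\,dx .
\]
Substitute $S(x)=e^x+E(x)$ with $|E(x)|\ll_X e^{(1/2+\eta)x}$ for $x\ge l_0$; on the compact interval near $l_0$ this is trivially true after enlarging the constant. The boundary term is $e^{(1-\delta)T}+O(e^{(1/2+\eta-\delta)T})$. The main integral is $\delta\int_{l_0}^T e^{(1-\delta)x}dx$. The error integral is $\delta\int_{l_0}^T |E(x)|e^{-\delta x}dx\ll \int_{l_0}^\infty e^{(1/2+\eta-\delta)x}dx=O_X(1)$ for every $\delta\ge1$, because $1/2+\eta<1$.

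The two cases then read as follows.
\begin{itemize}
\item For $\delta>1$: the boundary term is $O(1)$, and the main integral is at most $\delta/(\delta-1)\cdot e^{(1-\delta)l_0}=O_\delta(1)$, so the sum is $O_{X,\delta}(1)$.
\item For $\delta=1$: the boundary term is $1+o(1)$, and the main integral is exactly $T-l_0$, so the sum is $T+O_X(1)$.
\end{itemize}

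For the tail with $\delta>1$, write
\[
\int_T^\infty e^{-\delta x}dS(x)=\lim_{R\to\infty}\Bigl[S(x)e^{-\delta x}\Bigr]_T^R+\delta\int_T^\infty S(x)e^{-\delta x}dx .
\]
The limit at $R$ vanishes since $S(R)\ll e^R$. The term $-S(T)e^{-\delta T}$ is $O(e^{-(\delta-1)T})$ in absolute value. The integral satisfies $\delta\int_T^\infty S(x)e^{-\delta x}dx\ll\delta\int_T^\infty e^{(1-\delta)x}dx=\frac{\delta}{\delta-1}e^{-(\delta-1)T}$. Alternatively, one can bound the tail crudely by positivity, using only $S(x)\ll e^x$.

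No step is a real obstacle. The only care needed is to justify the Stieltjes integration by parts for the step function $S$, which is routine, and to check that the error exponent in \eqref{eqn:PrimLengthSum} is strictly less than $1$, so that the error contributes $O(1)$ even at $\delta=1$. That is where the compactness of $X$, which gives finitely many exceptional eigenvalues, each with $|\Im(t_j)|<1/2$, enters.
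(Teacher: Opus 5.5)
Your proposal is correct and is essentially the paper's proof: partial summation of $e^{-\delta x}$ against $dS$, with $S(T)=\sum_{l(\gamma)<T}l(\gamma_0)$, using \eqref{eqn:PrimLengthSum} in the form $S(T)=e^T+O_X(\sqrt{T}\,e^{(1-\eta_X)T})$, where $\eta_X=\min\{1/4,1/2-|\Im(t_1)|\}$. The same integration by parts handles the tail for $\delta>1$. Your ``crude form'' $e^{(1/2+\eta)T}$ with $1/2+\eta<1$ simply absorbs the $\sqrt{T}$ factor into a slightly worse exponent, which changes nothing.
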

\begin{proof}
    By \eqref{eqn:PrimLengthSum}, we have
    \[
        S(T):=\sum_{l(\gamma)<T} l(\gamma_0)=e^T+O_X(\sqrt{T}e^{(1-\eta_X)T}),
    \]
    where $\eta_X=\min\{1/4,1/2-|\Im(t_1)|\}$. Then 
    \begin{align*}
        \sum_{l(\gamma)< T} l(\gamma_0)e^{-\delta l(\gamma)}
        &=\int_0^Te^{-\delta x}\,dS(x)\\
        &= e^{-\delta T}S(T)+\delta\int_0^Te^{-\delta x} S(x)\,dx\\
        &=\begin{cases}
            O_{X,\delta}(1) & \delta>1\\
            T+O_X(1) & \delta=1.
        \end{cases}
    \end{align*}
    On the other hand, for $\delta>1$,
    \begin{align*}
        \sum_{l(\gamma)\ge T} l(\gamma_0)e^{-\delta l(\gamma)}
        &=\int_T^\infty e^{-\delta x}\,dS(x)\\
        &= -e^{-\delta T}S(T)+\delta\int_T^\infty e^{-\delta x} S(x)\,dx\\
        &=O_{X,\delta} (e^{-(\delta-1)T}).
    \end{align*}
\end{proof}

\section{Zelditch's Trace Formula}\label{sec:Zelditch_Trace_Formula}
In this section, we derive Zelditch's trace formula for compact hyperbolic surfaces. Although the fundamental arguments are based on \cite{Zelditch_EquidistCompact}, we reproduce all necessary details in this section to make it self-contained. See also Bir\'o's generalized Selberg trace formula
\cite{Biro_Gen_STF}, which studies the same type of
eigenfunction-weighted diagonal kernel.
\begin{theorem}\label{thm:ZTF}
Let $\varphi$ be a non-constant eigenfunction of the Laplace--Beltrami operator on a compact hyperbolic surface $Y$ with eigenvalue $\lambda=\frac{1}{4}+\tau^2>0$.  Let $\kappa_\pm = \frac{1}{2}\left(\frac{1}{2}\pm i\tau\right)$. Suppose $\psi \in C_c^\infty((0,\infty))$ and $M\psi(0)=0$. Then, we have
\[
\sum_{j=1}^\infty \langle \varphi\phi_j,\phi_j\rangle h(t_j)  =   \sum_{\{\gamma\}} \psi\left(\frac{(e^{l(\gamma)}-1)^2}{e^{l(\gamma)}}\right)\int_{\gamma_0}\varphi(s)ds,
\]
where both sides are absolutely convergent. Moreover,
\[
    h(t) =  \frac{1}{2\pi i}\int_{(\sigma)}\Theta_\tau^t (s) M \psi(s) ds
\]
where
\[
    \Theta^t_\tau(s)=\frac{2^{2-2s} \sqrt{\pi}}{\sin(\pi s)}\frac{\Gamma\left(-\frac{1}{2}+it+s\right)\Gamma\left(-\frac{1}{2}-it+s\right)}{\Gamma\left(-\frac{1}{2}\left(\frac{1}{2}+i\tau\right)+s\right)\Gamma\left(-\frac{1}{2}\left(\frac{1}{2}-i\tau\right)+s\right)}\cosh(\pi t)
\]
and $\frac{1}{2}+\Im(t)<\sigma <1$.
\end{theorem}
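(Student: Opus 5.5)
We will prove Theorem~\ref{thm:ZTF} by computing, in two different ways, the trace of the operator $M_\varphi\circ K$. Here $M_\varphi$ is multiplication by $\varphi$, and $K$ is the invariant integral operator on $Y=\Gamma\backslash\mathbb{H}$ whose point-pair kernel is
\[
k(z,w)=\psi\bigl(u(z,w)\bigr),\qquad u(z,w)=\frac{|z-w|^2}{\Im z\,\Im w},
\]
automorphized to $K(z,w)=\sum_{\gamma\in\Gamma}k(z,\gamma w)$. Since $\psi\in C_c^\infty((0,\infty))$, the kernel $K$ is smooth. The operator $K$ is diagonal in the basis $\{\phi_j\}$ with eigenvalues given by the Selberg transform $h(t_j)$.

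\textbf{Spectral side.} Computing $\int_Y \varphi(z)K(z,z)\,d\mu(z)$ in the eigenbasis gives $\sum_j \langle\varphi\phi_j,\phi_j\rangle h(t_j)$.
\begin{itemize}
\item The $j=0$ term is $\langle\varphi,1\rangle h(i/2)/\mathrm{vol}(Y)=0$, because $\varphi$ is orthogonal to constants.
\item Absolute convergence follows from the rapid decay of $h$ in $t$, together with $|\langle\varphi\phi_j,\phi_j\rangle|\le\|\varphi\|_\infty$ and Weyl's law (Lemma~\ref{lem:Weyl Law}).
\end{itemize}

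\textbf{Geometric side.} We group $\gamma\in\Gamma\setminus\{1\}$ into conjugacy classes and unfold each class to the quotient $\Gamma_{\gamma}\backslash\mathbb{H}$. The centralizer $\Gamma_\gamma$ is generated by the primitive element $\gamma_0$.
\begin{itemize}
\item Conjugate $\gamma$ to $a_{l}$ with $l=l(\gamma)$ and use the $AN$-coordinates $z=e^t(u+i)$, in which $d\mu=du\,dt$.
\item A fundamental domain for $\langle a_{l(\gamma_0)}\rangle$ is $0\le t<l(\gamma_0)$, $u\in\mathbb{R}$.
\item For $z=e^t(u+i)$ we have $u(z,e^{l}z)=(e^{l}-1)^2e^{-l}(1+u^2)$.
\end{itemize}
The contribution of $\{\gamma\}$ is therefore
\[
\int_0^{l(\gamma_0)}\!\int_{\mathbb{R}}\varphi\bigl(e^t(u+i)\bigr)\,\psi\bigl(c_\gamma(1+u^2)\bigr)\,du\,dt,\qquad c_\gamma=\frac{(e^{l}-1)^2}{e^{l}}.
\]
For fixed $u$, the curve $t\mapsto e^t(u+i)$ is an equidistant curve (hypercycle) to the axis $i\mathbb{R}_+$. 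The key step, following Zelditch, is to evaluate the $t$-average of $\varphi$ over such a hypercycle.

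\textbf{Hypercycle average.} The function $F(u)=\int_0^{l(\gamma_0)}\varphi(e^t(u+i))\,dt$ is invariant under the dilations $a_s$ in the $t$-direction. Write $u=\sinh r$, where $r$ is the signed distance to the axis. Applying $\Delta\varphi=-\lambda\varphi$ and integrating over $t$ shows that $F$ satisfies a second-order ODE in $u$ (a Legendre-type equation), with initial data $F(0)=\int_{\gamma_0}\varphi\,ds$ and $F'(0)=0$ after symmetrizing $u\mapsto-u$. The odd part is annihilated in any case, since $\psi(c(1+u^2))$ is even in $u$. Hence
\[
F(u)=\Bigl(\int_{\gamma_0}\varphi\,ds\Bigr)\,P_\tau(u),
\]
where $P_\tau$ is an explicit hypergeometric function ${}_2F_1(\kappa_+,\kappa_-;\tfrac12;-u^2)$, the even solution with value $1$ at $u=0$. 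This also shows the $\gamma$-term equals $\psi(c_\gamma)$ times the period only after $h$ is defined suitably. To arrange that, we define the test function by
\[
\Psi(c)=\int_{\mathbb{R}}\psi\bigl(c(1+u^2)\bigr)P_\tau(u)\,du,
\]
and then replace $\psi$ by a function whose transform is the given $\psi$.

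\textbf{Mellin inversion.} Equivalently, we start with $k=\tilde\psi$ and solve $\Psi=\psi$ via the Mellin transform. The Mellin transform of $c\mapsto\int\tilde\psi(c(1+u^2))P_\tau(u)\,du$ factors as $M\tilde\psi(s)$ times the Mellin transform of $(1+u^2)^{-s}$ against $P_\tau$. This latter factor is a ratio of Gamma functions: the denominator $\Gamma(s-\kappa_+)\Gamma(s-\kappa_-)$ arises from the hypergeometric parameters. The Selberg transform of $\tilde\psi$ is likewise a Mellin integral, since the spherical function is $P_{-1/2+it}$, whose Mellin pairing yields $\Gamma(s-\tfrac12\pm it)$, together with the $\cosh(\pi t)$ and $\sin(\pi s)$ factors from the reflection formula \eqref{eqn:reflection} and the duplication factor $2^{2-2s}\sqrt\pi$ from \eqref{eqn:duplication}. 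Composing the two gives $h(t)=\frac1{2\pi i}\int_{(\sigma)}\Theta_\tau^t(s)M\psi(s)\,ds$. The condition $M\psi(0)=0$ is needed so that the inverted $\tilde\psi$ decays at $0$ and the kernel remains admissible (no pole contribution at $s=0$). The constraint $\frac12+\Im t<\sigma<1$ keeps the contour to the right of the poles of $\Gamma(s-\tfrac12\pm it)$ and to the left of the pole of $1/\sin\pi s$ at $s=1$.

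The main obstacle is this last step: tracking the exact Gamma-factor bookkeeping, and justifying contour shifts and convergence, using the Stirling bounds \eqref{eqn:Stirling}–\eqref{eqn:Gamma_ratio} and the rapid decay of $M\psi$ on vertical lines. Absolute convergence of the geometric side then follows from the compact support of $\psi$, since only finitely many $\gamma$ have $c_\gamma\in\mathrm{supp}\,\psi$.
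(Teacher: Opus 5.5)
This is essentially the paper's proof: you compute the trace of $\mathrm{Op}(\varphi)\circ T_K$ spectrally and by unfolding, use Zelditch's ODE $\bigl((1+u^2)\tfrac{d^2}{du^2}+2u\tfrac{d}{du}+\lambda\bigr)F=0$ for the hypercycle average with even solution ${}_2F_1(\kappa_+,\kappa_-;\tfrac12;-u^2)$, invert $H_\tau$ by Mellin transform, and use a Mellin form of the Selberg/Harish-Chandra transform (your pairing against $P_{-1/2+it}$ repackages the paper's two Beta integrals through $Q$ and $g$). Two side remarks need correcting, though neither breaks the argument: the point-pair function is $k=H_\tau^{-1}\psi$, which is compactly supported but only continuous at $u=0$ with a generally nonzero limit there (neither smooth nor decaying, unlike $\psi$); and $h$ is not rapidly decreasing, since the residue at $s=-1$ contributes a term of size $|t|^{-4}$ when $M\psi(-1)\neq0$, so absolute convergence rests on the admissibility bound $|h(t)|\ll(1+|t|)^{-2-\delta}$, which is exactly where $M\psi(0)=0$ enters by removing the $|t|^{-2}$ residue at $s=0$ (the paper cites Bir\'o for this).
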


We refer to $h$ and $\psi$ as the \emph{spectral} and \emph{geometric test functions}, respectively. 

A spectral test function $h(t)$ is said to be \emph{admissible} if there exist $\epsilon,\delta>0$ such that
\begin{enumerate}
    \item $h$ is even,
    \item $h$ is holomorphic in
    $\{t\in\mathbb C:|\Im t|<1/2+\epsilon\}$, and
    \item $|h(t)|\ll (1+|t|)^{-(2+\delta)}$ uniformly in this strip.
\end{enumerate}
See \cite[(1.63)]{Iwaniec_SpectralMethods_Book} or \cite[Condition (B)]{Biro_Gen_STF}.
If $h$ is admissible, then both sides of the Selberg trace formula converge absolutely; see \cite[Theorem 10.2 and Remarks on p.\@ 152]{Iwaniec_SpectralMethods_Book}.

If $\psi\in C_c^\infty((0,\infty))$ and $M\psi(0)=0$,
then the corresponding spectral test function $h$ is admissible;
see \cite[Theorem 2]{Biro_Gen_STF}. See also Remark~\ref{rem:role of Mpsi(0)=0} for the role of $M\psi(0)=0$.

We obtain Theorem~\ref{thm:ZTF} by multiplying the Selberg pre-trace formula by $\varphi$ and then integrating over $Y$. Since $|\varphi|_\infty \ll (1+|\tau|)^{1/2}$ \cite{SoggeZelditch}, the absolute convergence of both sides of Theorem~\ref{thm:ZTF} follows from the corresponding absolute convergence in the Selberg pre-trace formula.

\subsection{The spectral side}
For the spectral and geometric trace identities below, let $h$ be
an admissible spectral test function, and let
$k:[0,\infty)\to\mathbb C$ be its inverse Selberg/Harish-Chandra
transform, with the normalization in
\eqref{eqn:Selberg/Harish-Chandra}.
Define the associated point-pair invariant by
\begin{equation}
    k(z_1,z_2)
    :=k\left(\frac{|z_1-z_2|^2}{y_1y_2}\right).
\end{equation}
A point-pair invariant gives rise to a symmetric integral kernel on $\Gamma \backslash \mathbb{H}$ via averaging over $\Gamma$
\[
K(z_1,z_2) = \sum_{\gamma \in \Gamma} k(\gamma z_1, z_2)
\]
and the associated linear operator $T_K: L^2 (\Gamma \backslash \mathbb{H}) \to L^2 (\Gamma \backslash \mathbb{H})$
\begin{equation}\label{eq:TK}
T_K: f \mapsto \int_{\Gamma \backslash \mathbb{H}} K(z_1,z_2) f(z_2) d\mu_{z_2}= \int_{\Gamma \backslash \mathbb{H}} K(z_2,z_1) f(z_2) d\mu_{z_2}.
\end{equation}
Zelditch's Trace Formula \cite{Zelditch_EquidistCompact} expresses the trace of the composition of a pseudo-differential operator $\mathrm{Op}(\sigma)$ with the symbol $\sigma \in C^\infty (SY)$ and $T_K$ in two different ways. Here $SY$ is the unit tangent bundle of $Y$. Theorem \ref{thm:ZTF} is a specialization of Zelditch's Trace Formula, where the symbol is assumed to be an eigenfunction on $Y$. In this case, $\mathrm{Op}(\varphi)$ is simply a multiplication operator given by
\[
\mathrm{Op}(\varphi): f\mapsto \varphi f,
\]
and  $\mathrm{Op}(\varphi) \circ T_K$ is a Hilbert--Schmidt operator with the integral kernel $\varphi(z_1) K(z_1,z_2)$.

To compute its trace, we recall Selberg's pre-trace formula \cite{Selberg_TraceFormula1956}
\begin{equation}\label{selpretrace}
K(z_1,z_2) = \sum_{j=0}^\infty h(t_j)  \phi_j(z_1) \overline{\phi_j(z_2)},
\end{equation}
where the Selberg/Harish-Chandra transform $k \leftrightarrow h$ is given by
\begin{align}
\begin{split}\label{eqn:Selberg/Harish-Chandra}
    Q(v) &= \int_{v}^\infty k(u) (u-v)^{-1/2} \,du\\
    g(r) &= Q\left(\left(2\sinh \frac{r}{2}\right)^2\right)\\
    h(t) &= \int_{\mathbb{R}} g(r) \cos rt \,dr.    
\end{split}
\end{align}
By multiplying \eqref{selpretrace} by $\varphi$ and then integrating over the diagonal, we have
\begin{equation}\label{spec}
\mathrm{Tr} \left(\mathrm{Op}(\varphi) \circ T_K\right) = \int_{\Gamma \backslash \mathbb{H}} \varphi(z) K(z,z)  d\mu_{z} = \sum_{j=0}^\infty h(t_j)  \langle \varphi\phi_j,\phi_j \rangle.
\end{equation}

\subsection{The geometric side}
The most important observation in Zelditch's work \cite{Zelditch_EquidistCompact} is that when the base manifold is a hyperbolic surface one can express the trace of $\mathrm{Op}(\varphi) \circ T_K$ as a weighted summation of integrals of $\varphi$ over the closed orbits of the geodesic flow on the unit tangent bundle.

Let $\mathcal{F}$ be a fundamental domain of $\Gamma$. Then we first have
\begin{align*}
\mathrm{Tr}\left(\mathrm{Op}(\varphi) \circ T_K\right) &= \int_{\Gamma \backslash \mathbb{H}} \varphi(z) K(z,z) d\mu_z\\
&= \int_{\mathcal{F}} \varphi(z) \sum_{\gamma \in \Gamma}k(\gamma z,z) d\mu_z\\
&= \sum_{\gamma \in \Gamma} \int_{\mathcal{F}} \varphi(z) k(\gamma z,z) d\mu_z.
\end{align*}
We partition $\Gamma$ into conjugacy classes
\[
\{\gamma\} := \{\tau^{-1}\gamma \tau~:~ \tau \in \Gamma_\gamma \backslash \Gamma  \},
\]
where $\Gamma_\gamma$ is the centralizer of $\gamma$ in $\Gamma$. Then we consider the summation taken over each conjugacy class $\{\gamma\}$:
\begin{align*}
\mathrm{Tr}_{\{\gamma\}}&=\sum_{\tau \in \Gamma_\gamma \backslash \Gamma } \int_{\mathcal{F}} \varphi(z) k(\tau^{-1}\gamma \tau z,z) d\mu_z\\
&= \sum_{\tau \in \Gamma_\gamma \backslash \Gamma } \int_{\mathcal{F}} \varphi(\tau z) k(\gamma \tau z,\tau z) d\mu_z\\
&= \sum_{\tau \in \Gamma_\gamma \backslash \Gamma } \int_{\tau\mathcal{F}} \varphi(z) k(\gamma z,z) d\mu_z\\
&=  \int_{\Gamma_\gamma \backslash \mathbb{H}} \varphi(z) k(\gamma z,z) d\mu_z.
\end{align*}
Because we assume $Y=\Gamma \backslash \mathbb{H}$ is a compact hyperbolic surface, any element in $\Gamma$ is either the identity or a hyperbolic element. Thus we only consider these two cases, under the assumption that $\varphi$ is an eigenfunction of the Laplace--Beltrami operator with the eigenvalue $\lambda>0$.

The first case is when $\gamma$ is the identity. Because $\varphi$ is assumed to be a non-constant eigenfunction, we see that the contribution from the identity element is trivial:
\[
\mathrm{Tr}_{\{\mathrm{Id}\}} = \int_{\mathcal{F}} \varphi(z)k(z,z) d\mu_z = k(0) \int_{\Gamma \backslash \mathbb{H}} \varphi(z) d\mu_z = 0.
\]
Hence for the rest of the section, let $\gamma$ be a hyperbolic element. Let $\gamma_0$ be the generator of $\Gamma_\gamma$. Pick $\alpha \in {\rm PSL}_2(\mathbb{R})$ such that  $\alpha \gamma_0\alpha^{-1}= a_{0}  $ for some $a_0 \in A$. Let $\alpha \gamma\alpha^{-1} = a_\gamma \in A$. Then we have
    \begin{align*}
        \mathrm{Tr}_{\{\gamma\}} & = \int_{ \alpha \Gamma_\gamma \alpha^{-1} \backslash \mathbb{H}} \varphi(\alpha^{-1} z) k(\gamma \alpha^{-1} z, \alpha^{-1} z) \,d \mu\\
        &= \int_{\alpha \Gamma_\gamma \alpha^{-1} \backslash \mathbb{H}} \varphi(\alpha^{-1} z)  k( \alpha \gamma \alpha^{-1} z, z) \,d \mu\\
        &= \int_{\langle a_0 \rangle \backslash \mathbb{H}} \varphi(\alpha^{-1} z)  k( a_\gamma  z, z) \,d \mu.
    \end{align*}
We choose the fundamental domain of $\langle a_0 \rangle \backslash \mathbb{H}$ to be
\[
\{z\in \mathbb{H}~:~ 1<y<e^{l(\gamma_0)}\},
\]
which is the same as $\{(u,t)~:~ 0<t<l(\gamma_0)\}$ in the $AN$-coordinate system. We have $(u,t) = a_t (u,0)$ in the $AN$-coordinate system. Because $A$ is abelian, we have
\[
k(a_\gamma  az,az )= k(a_\gamma  z,z )
\]
for any $a\in A$. Therefore the integral evaluated in the $AN$-coordinate system is equal to
\begin{align*}
\mathrm{Tr}_{\{\gamma\}}=&\int_{-\infty}^\infty \int_0^{l(\gamma_0)} \varphi(\alpha^{-1} (u,t))  k( a_\gamma  (u,t), (u,t)) dtdu\\
= &\int_{-\infty}^\infty \int_0^{l(\gamma_0)} \varphi(\alpha^{-1} (u,t))  k( a_\gamma  a_t(u,0), a_t (u,0)) dtdu\\
= &\int_{-\infty}^\infty  k( a_\gamma  (u,0), (u,0)) \left(\int_0^{l(\gamma_0)} \varphi(\alpha^{-1} (u,t))  dt\right)du\\
=&\int_{-\infty}^\infty  k\left( \frac{\left|e^{l(\gamma)}(u+i) - (u+i)\right|^2}{e^{l(\gamma)}}\right) \left(\int_0^{l(\gamma_0)} \varphi(\alpha^{-1} (u,t))  dt\right)du\\
=&\int_{-\infty}^\infty  k\left( \frac{(e^{l(\gamma)}-1)^2}{e^{l(\gamma)}} (u^2+1)\right) \left(\int_0^{l(\gamma_0)} \varphi(\alpha^{-1} (u,t))  dt\right)du.
\end{align*}
In \cite{Zelditch_EquidistCompact}, Zelditch observed that because $\varphi$ is assumed to be an eigenfunction of the Laplace--Beltrami operator, the integral
\[
    I(u) = \int_0^{l(\gamma_0)} \varphi(\alpha^{-1} (u,t))  dt
\]
must be a solution of the following second-order ordinary differential equation
\[
    \left((1+u^2)\frac{d^2}{du^2}  + 2u \frac{d}{du} + \lambda\right) I = 0.
\]
Recall that $\lambda=\frac{1}{4}+\tau^2$ and $\kappa_\pm = \frac{1}{2}\left(\frac{1}{2}\pm i\tau\right)$.
The differential equation has two linearly independent solutions 
\[
    I_1 (u) =  {_2F_1}\left(\kappa_+,\kappa_-,\frac{1}{2},-u^2\right)
\]
and
\[
    I_2(u) = u\cdot{_2F_1}\left(\frac{1}{2}+\kappa_+,\frac{1}{2}+\kappa_-,\frac{3}{2},-u^2\right).
\]
If we write
\[
    I(u) = c_1I_1(u) + c_2I_2(u),
\]
then
\[
    c_1 = I(0) = \int_{\gamma_0} \varphi \, ds.
\]
Also, since $k\left( \frac{(e^{l(\gamma)}-1)^2}{e^{l(\gamma)}} (u^2+1)\right) $ is an even function in $u$ while $I_2(u)$ is odd, we have
\[
    \int_{-\infty}^\infty  k\left( \frac{(e^{l(\gamma)}-1)^2}{e^{l(\gamma)}} (u^2+1)\right) I_2(u) du =0.
\]
Therefore we obtain
\begin{equation}\label{geom}
\mathrm{Tr}_{\{\gamma\}} = \left( \int_{\gamma_0} \varphi  ds\right)\left(\int_{-\infty}^\infty  k\left( \frac{(e^{l(\gamma)}-1)^2}{e^{l(\gamma)}} (u^2+1)\right) {_2F_1}\left(\kappa_+,\kappa_-,\frac{1}{2},-u^2\right) du\right).
\end{equation}

\subsection{\texorpdfstring{The integral transform $H_\tau: k\mapsto \psi$}{The integral transform H}}
Define

\[
    (H_\tau k)(u) = 2\int_0^\infty k(u(r^2+1)) {_2F_1}\left(\kappa_+,\kappa_-,\frac{1}{2},-r^2\right) dr.
\]
Then by combining \eqref{spec} and \eqref{geom}, we have
\[
\sum_{j=0}^\infty h(t_j)  \langle \varphi\phi_j,\phi_j \rangle  = \mathrm{Tr} \left(\mathrm{Op}(\varphi) \circ T_K\right) =  \sum_{\{\gamma\}} \left\{\left(H_\tau k\left(\frac{(e^{l(\gamma)}-1)^2}{e^{l(\gamma)}} \right)\right) \left(\int_{\gamma_0} \varphi  ds\right)\right\}.
\]
To complete the proof of Theorem \ref{thm:ZTF}, for $\psi \in C_0^\infty(0,\infty)$ with $k=H_\tau^{-1}\psi$, we express $h(\cdot)$ in terms of $\psi$. To this end, we first need to invert $H_\tau$.  Note that $0<\mathrm{Re}(\kappa_\pm)<\frac{1}{2}$.
\begin{proposition}\label{prop:philambda}
$H_\tau=M_\sigma^{-1} \Phi_\tau M$ where $\sigma > 1/2$, and
\begin{equation}\label{eqn:Phi_lambda}
    \Phi_\tau (s) = \frac{\Gamma\left(\frac{1}{2}\right)\Gamma\left(-\kappa_++s\right)\Gamma\left(-\kappa_-+s\right)}{\Gamma(s)^2}.
\end{equation}
\end{proposition}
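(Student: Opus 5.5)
We compute the Mellin transform of $H_\tau k$ directly and show that it equals $\Phi_\tau(s)\,Mk(s)$ on a vertical strip. The inversion formula $M_\sigma^{-1}$ then gives $H_\tau = M_\sigma^{-1}\Phi_\tau M$.

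\textbf{Step 1: Fubini and rescaling.} For $k$ smooth with enough decay (say compactly supported in $(0,\infty)$, or decaying fast enough for absolute convergence), write
\[
M(H_\tau k)(s)=2\int_0^\infty\!\!\int_0^\infty k(u(r^2+1))\,{}_2F_1\!\left(\kappa_+,\kappa_-,\tfrac12,-r^2\right)dr\,u^{s}\frac{du}{u}.
\]
Swap the integrals and substitute $v=u(1+r^2)$ in the inner $u$-integral. This factors the expression as
\[
M(H_\tau k)(s)=Mk(s)\cdot 2\int_0^\infty (1+r^2)^{-s}\,{}_2F_1\!\left(\kappa_+,\kappa_-,\tfrac12,-r^2\right)dr .
\]
Fubini is justified by the behaviour of the hypergeometric function at infinity. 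As $r\to\infty$,
\[
{}_2F_1\!\left(\kappa_+,\kappa_-,\tfrac12,-r^2\right)=O\!\left(r^{-2\Re\kappa_\pm}\right)=O\!\left(r^{-1/2}\right).
\]
Hence the $r$-integral converges absolutely once $2\Re s+\tfrac12>1$, that is, for $\Re s>\tfrac14$. In particular it converges for $\Re s=\sigma>1/2$.

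\textbf{Step 2: evaluate the $r$-integral.} Set $x=r^2$, so that $dr=\tfrac12 x^{-1/2}dx$. The integral becomes
\[
\int_0^\infty x^{-1/2}(1+x)^{-s}\,{}_2F_1\!\left(\kappa_+,\kappa_-,\tfrac12,-x\right)dx .
\]
This is a standard integral; we either use a table formula (Gradshteyn--Ryzhik 7.512-type) or derive it by Mellin--Barnes. For the derivation, expand
\[
{}_2F_1\!\left(\kappa_+,\kappa_-,\tfrac12,-x\right)=\frac{\Gamma(\tfrac12)}{\Gamma(\kappa_+)\Gamma(\kappa_-)}\frac{1}{2\pi i}\int \frac{\Gamma(\kappa_++w)\Gamma(\kappa_-+w)\Gamma(-w)}{\Gamma(\tfrac12+w)}\,x^{w}\,dw .
\]
Integrate against $x^{-1/2}(1+x)^{-s}$ using the Beta integral
\[
\int_0^\infty x^{w-1/2}(1+x)^{-s}dx=\frac{\Gamma(w+\tfrac12)\,\Gamma(s-w-\tfrac12)}{\Gamma(s)} .
\]
The factor $\Gamma(\tfrac12+w)$ cancels, which leaves a Barnes first-lemma integral in $w$:
\[
\frac{1}{2\pi i}\int\Gamma(\kappa_++w)\Gamma(\kappa_-+w)\Gamma(-w)\Gamma(s-\tfrac12-w)\,dw
=\frac{\Gamma(\kappa_+)\Gamma(\kappa_-)\Gamma(s-\tfrac12+\kappa_+)\Gamma(s-\tfrac12+\kappa_-)}{\Gamma(s-\tfrac12+\kappa_++\kappa_-)} .
\]
Since $\kappa_++\kappa_-=\tfrac12$, the denominator is $\Gamma(s)$. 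The factors $\Gamma(\kappa_+)\Gamma(\kappa_-)$ also cancel. Combining, we get
\[
\Gamma(\tfrac12)\,\frac{\Gamma(s-\tfrac12+\kappa_+)\,\Gamma(s-\tfrac12+\kappa_-)}{\Gamma(s)^2}.
\]
Now $\kappa_++\kappa_-=\tfrac12$ gives $-\tfrac12+\kappa_\pm=-\kappa_\mp$, so this is exactly $\Phi_\tau(s)$.

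\textbf{Step 3: invert.} We have shown $M(H_\tau k)=\Phi_\tau\cdot Mk$ on the strip $\Re s>1/2$, intersected with the region where $Mk$ converges. Applying $M_\sigma^{-1}$ on a line $\Re s=\sigma$ inside this region, we obtain $H_\tau k=M_\sigma^{-1}\Phi_\tau Mk$. The function is recovered because it is smooth with adequate decay, and because $\Phi_\tau$ grows only polynomially on vertical lines: by \eqref{eqn:Gamma_ratio}, $|\Phi_\tau(\sigma+it)|\asymp|t|^{-1/2}$. The answer is independent of $\sigma$ by contour shifting, since $\Phi_\tau$ is holomorphic for $\Re s>\max\Re\kappa_\pm=\tfrac14$.

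\textbf{Main obstacle.} The main obstacle is the justification of the Barnes contour manipulations. The $w$-contour must separate the poles of $\Gamma(\kappa_\pm+w)$ from those of $\Gamma(-w)\Gamma(s-\tfrac12-w)$. This requires $-\Re\kappa_\pm<\Re w<\min(0,\Re s-\tfrac12)$, which is possible exactly because $\Re\kappa_\pm=\tfrac14>0$ and $\Re s>1/2$. With $w$ on such a line, the interchange of the $x$- and $w$-integrals is justified by Stirling \eqref{eqn:Stirling_vertical}.
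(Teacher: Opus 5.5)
Your proposal is correct and is essentially the paper's argument. The paper reduces everything to the same multiplier $2\int_0^\infty (1+r^2)^{-s}\,{}_2F_1(\kappa_+,\kappa_-,\tfrac12,-r^2)\,dr=\Phi_\tau(s)$, though it quotes this from Gradshteyn--Ryzhik 7.512(10) instead of deriving it via Barnes' first lemma. It then applies $H_\tau$ to the Mellin inversion of $k$ using $H_\tau u^{-s}=\Phi_\tau(s)u^{-s}$, which is the dual of your Fubini computation of $M(H_\tau k)$.

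One small correction: $\Re\kappa_\pm=\tfrac14$ holds only for real $\tau$. For $\tau\in i(0,\tfrac12)$ one has $0<\Re\kappa_\pm<\tfrac12$, the $r$-integral converges for $\Re s>\tfrac14+\tfrac12|\Im\tau|$, and the Barnes contour needs $-\min\Re\kappa_\pm<\Re w<0$. All of this still works for $\sigma>\tfrac12$.
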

\begin{proof}
    By applying \cite[7.512-(10) with $\alpha=\kappa_+,\beta=\kappa_-,\gamma=\tfrac{1}{2},\sigma=s$]{GR_Table}, we have
    \begin{align*}
    &\int_0^\infty (r^2+1)^{-s}{_2F_1}\left(\kappa_+,\kappa_-,\frac{1}{2},-r^2\right) dr\\
    =&\frac{1}{2}\int_0^\infty x^{-\frac{1}{2}}(x+1)^{-s}{{}_2F_1}\left(\kappa_+,\kappa_-,\frac{1}{2},-x\right) dx\\
    =&\frac{\Gamma\left(\frac{1}{2}\right)\Gamma\left(-\kappa_++s\right)\Gamma\left(-\kappa_-+s\right)}{2\Gamma\left(s\right)^2},
    \end{align*}
    where $\mathrm{Re}(s) > \frac{1}{4}+\frac12|\Im(\tau)|$. This implies that if we let $\phi_s(u) = u^{-s}$ with $\mathrm{Re}(s) > \frac{1}{2}$, then
    \[
    (H_\tau \phi_s)(u) = \Phi_\tau(s) \phi_s(u).
    \]
    Therefore, we have
    \[
    (H_\tau k)(u) = H_\tau \frac{1}{2\pi i}\int_{(\sigma)} (Mk)(s) u^{-s} ds = \frac{1}{2\pi i}\int_{(\sigma)} (Mk)(s)\Phi_\tau(s) u^{-s} ds = (M_\sigma^{-1} \Phi_\tau M k)(u)
    \]
    provided that $\sigma> \frac{1}{4}+\frac12|\Im(\tau)|$. Since $|\Im(\tau)|<1/2$, the formula is valid for all $\sigma>1/2$.
\end{proof}

\begin{remark}\label{rem:role of Mpsi(0)=0}
    Let us briefly discuss the role of the condition $M\psi(0)=0$, for which we previously simply cited \cite[Theorem~2]{Biro_Gen_STF}. Under the assumption $\psi\in C_c^\infty((0,\infty))$, its Mellin transform $M\psi$ is entire. 
    Since $\Gamma(s)^2$ has a double pole at $s=0$, the condition $M\psi(0)=0$ reduces the pole of $Mk(s)$ at $s=0$ to at most a simple pole. Recall that $k=M_\sigma^{-1}(Mk)$ for $\sigma>0$.

    When $M\psi(0)=0$, shifting the contour $(\sigma)$ to $(-1/2)$ and picking up the residue at $s=0$ gives
    \[
        k(u)
        =
        \underbrace{\frac{(M\psi)'(0)}{\sqrt{\pi}\Gamma(-\kappa_+)\Gamma(-\kappa_-)}}_{={\rm Res}_{s=0}Mk(s)} + O_{\psi,\tau}(u^{1/2})
    \]
    as $u\to0^+$. Thus $k$ extends continuously to $u=0$. In particular, the ordinary Selberg identity term $k(0)\operatorname{Area}(Y)$ is well-defined. In the weighted trace formula considered here, the identity contribution is
    $k(0)\int_Y\varphi\,d\mu=0$.

    On the other hand, if $M\psi(0)\neq0$, then $Mk(s)$ has a double pole at $s=0$. When the contour is shifted past $s=0$, this double pole produces a $\log(1/u)$ term in $k(u)$ as $u\to0^+$. Thus $k(u)$ is logarithmically singular at $u=0$.    
\end{remark}

\subsection{Selberg/Harish-Chandra transform}
By using the Selberg/Harish-Chandra transform (in the Selberg trace formula), we obtain the explicit formula for
\[
    \mathcal{H}_\tau:\psi\mapsto h,
\]
the transform between the geometric and spectral test functions in Theorem~\ref{thm:ZTF}, which we also refer to as the Selberg/Harish-Chandra transform.

\begin{lemma}\label{lem:psi_to_h}
Suppose $\psi\in C_c^\infty((0,\infty))$. Suppose that $t\in \{ z\in \mathbb{C} : |\Im(z)|<1/2\}$ and $1/2+|\Im(t)|<\sigma<1$. Then, we have
\[
    h(t) =  \frac{1}{2\pi i}\int_{(\sigma)}\Theta^t_\tau(s) M \psi(s) ds,
\]
and
\begin{equation}\label{eqn:Theta}
    \Theta_\tau^t (s)
    =
    \frac{2^{2-2s} \sqrt{\pi}}{\sin(\pi s)}\frac{\Gamma\left(-\frac{1}{2}+it+s\right)\Gamma\left(-\frac{1}{2}-it+s\right)}{\Gamma\left(-\frac{1}{2}\left(\frac{1}{2}+i\tau\right)+s\right)\Gamma\left(-\frac{1}{2}\left(\frac{1}{2}-i\tau\right)+s\right)}\cosh(\pi t).
\end{equation}
\end{lemma}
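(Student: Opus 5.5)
We compose the three transforms $\psi\mapsto k\mapsto g\mapsto h$ on the Mellin side and evaluate each step as a Mellin multiplier. First, by Proposition~\ref{prop:philambda}, $k=H_\tau^{-1}\psi$ has Mellin transform
\[
Mk(s)=\frac{M\psi(s)}{\Phi_\tau(s)}=\frac{\Gamma(s)^2\,M\psi(s)}{\sqrt{\pi}\,\Gamma(s-\kappa_+)\Gamma(s-\kappa_-)} .
\]
Since $\psi\in C_c^\infty((0,\infty))$, $M\psi$ is entire and rapidly decreasing on vertical lines. Stirling \eqref{eqn:Gamma_ratio} gives $|\Gamma(s)^2/\Gamma(s-\kappa_+)\Gamma(s-\kappa_-)|\asymp |s|^{1/2}$ on vertical lines. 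Hence $k(u)=\frac1{2\pi i}\int_{(\sigma)}Mk(s)u^{-s}\,ds$ is well defined, smooth, and decays like $u^{-\sigma}$ for any $\sigma>0$. So every subsequent integral converges absolutely, and we may interchange orders of integration freely.

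Next we push the monomials $u^{-s}$ through the Selberg/Harish-Chandra transform \eqref{eqn:Selberg/Harish-Chandra}.
\begin{itemize}
\item For $Q$: the substitution $u=v(1+x)$ gives
\[
\int_v^\infty u^{-s}(u-v)^{-1/2}\,du=v^{1/2-s}\,\frac{\Gamma(1/2)\Gamma(s-1/2)}{\Gamma(s)}
\qquad(\Re s>1/2).
\]
\item For $g$: we get $g(r)=Q((2\sinh\frac r2)^2)$, a superposition of $(2\sinh\frac r2)^{1-2s}$.
\item For $h$: we use the classical cosine transform
\[
\int_{\mathbb R}\Bigl(2\sinh\tfrac{|r|}{2}\Bigr)^{1-2s}\cos(rt)\,dr
=\frac{\Gamma(s-\frac12+it)\,\Gamma(s-\frac12-it)\,\Gamma(2-2s)}{\Gamma(\frac12+it)\Gamma(\frac12-it)}\cdot c,
\]
with $c$ an explicit constant. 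This is obtained from $\int_0^\infty \sinh^{\mu}(x)\cos(ax)\,dx$ via the Beta integral (the Gradshteyn--Ryzhik type formula 3.512). It is valid for $1/2+|\Im t|<\Re s<1$, which is exactly why $\sigma$ is restricted to that range. The constraint $\Re s<1$ comes from integrability of $(\sinh)^{1-2s}$ at $r=0$ after the $Q$-step; the other constraint ensures convergence at infinity.
\end{itemize}

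Multiplying the three factors, we obtain $\Gamma(s)^2$ cancelling against the $\Gamma(s)$ from the $Q$-step and $\Gamma(s-\frac12)$. The remaining gamma factors are simplified as follows.
\begin{itemize}
\item Apply the duplication formula \eqref{eqn:duplication} to $\Gamma(2-2s)$ together with $\Gamma(s)\Gamma(s-\frac12)$. This produces the factor $2^{2-2s}$.
\item Apply the reflection formula \eqref{eqn:reflection} to turn $\Gamma(s)\Gamma(1-s)$ into $\pi/\sin(\pi s)$.
\item Use $\Gamma(\frac12+it)\Gamma(\frac12-it)=\pi/\cosh(\pi t)$, which yields $\cosh(\pi t)$.
\end{itemize}
This is exactly $\Theta_\tau^t(s)M\psi(s)$ under a single contour integral. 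Fubini is justified by the absolute bounds above, uniformly for $\Re s=\sigma$ in the stated range.

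The main obstacle is bookkeeping the constants (powers of $2$, $\sqrt\pi$) and making sure a single contour $\Re s=\sigma$ lies simultaneously in the convergence region of all three steps:
\begin{itemize}
\item the $Q$-step needs $\sigma>1/2$;
\item the cosine-transform step needs $1/2+|\Im t|<\sigma<1$;
\item the Mellin inversion of $k$ needs $\sigma>\frac14+\frac12|\Im\tau|$.
\end{itemize}
All three are satisfied for $1/2+|\Im t|<\sigma<1$. I would check the constant by testing against the known Selberg transform of $k(u)=(1+u/4)^{-s}$.
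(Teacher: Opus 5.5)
Your overall route is the same as the paper's: you push $u^{-s}$ through $k\mapsto Q\mapsto g\mapsto h$ as Mellin multipliers. Your $Q$-step factor $v^{1/2-s}\Gamma(\frac12)\Gamma(s-\frac12)/\Gamma(s)$ is correct, and so are the convergence range $\frac12+|\Im t|<\sigma<1$ and the Fubini justification. The problem is the one step that carries the actual content: the cosine transform of $(2\sinh\frac{|r|}{2})^{1-2s}$. Its correct value is
\[
\int_{\mathbb R}\Bigl(2\sinh\tfrac{|r|}{2}\Bigr)^{1-2s}\cos(rt)\,dr
=\frac{2\,\Gamma(2-2s)\,\Gamma(s-\frac12+it)\,\Gamma(s-\frac12-it)}{\Gamma(s-\frac12)\,\Gamma(\frac32-s)}\cosh(\pi t)
=\frac{\Gamma(s-\frac12+it)\,\Gamma(s-\frac12-it)}{\Gamma(2s-1)\,\sin(\pi s)}\cosh(\pi t).
\]
So in your normalization the ``constant'' must be $c=-2\cos(\pi s)$, which depends on $s$. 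The paper gets this by putting $a=e^{r/2}$, splitting at $a=1$ and setting $x=a^2$. This yields $\frac12B(2-2s,s-\frac12+it)+\frac12B(2-2s,s-\frac12-it)$. Combining the two Beta terms with the reflection formula is exactly what produces the factor $\sin(\pi(s-\frac12))/\pi=1/(\Gamma(s-\frac12)\Gamma(\frac32-s))$.

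This factor is not bookkeeping. Its $1/\Gamma(s-\frac12)$ is what cancels the $\Gamma(s-\frac12)$ from the $Q$-step. You invoke that cancellation, but your displayed formula does not contain it. After the cancellation, $2\Gamma(s)\Gamma(2-2s)/\Gamma(\frac32-s)=2^{2-2s}\sqrt\pi/\sin(\pi s)$ gives $\Theta^t_\tau$.

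If instead $c$ is independent of $s$, the product of your three factors is fixed, and duplication and reflection cannot change it. Since $\Gamma(s)\Gamma(s-\frac12)\Gamma(2-2s)=-2^{2-2s}\pi^{3/2}/\sin(2\pi s)$, that product equals $-\frac{c}{2\cos(\pi s)}\,\Theta^t_\tau(s)M\psi(s)$. For $t\neq0$ this has poles at $s\in\frac12+\mathbb{Z}$ that $\Theta^t_\tau$ does not have. So the claim that the result ``is exactly $\Theta^t_\tau(s)M\psi(s)$'' fails as written.

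Your proposed test against $k(u)=(1+u/4)^{-s}$ would detect the discrepancy. But the discrepancy is an $s$-dependent factor, not a constant to fix at the end. You need to replace the quoted transform by the explicit Beta-integral evaluation above.
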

\begin{proof}
It follows from $\psi\in C_c^\infty((0,\infty))$ that $M\psi$ is an entire function and for all $N>0$, we have
\[
    |M\psi(x+iy)|\ll_{\psi,x,N}(1+|y|)^{-N}.
\]
Recall the Selberg/Harish-Chandra transformation formula and the definition of $Q$ in \eqref{eqn:Selberg/Harish-Chandra}. We have
\begin{align}
\begin{split}\label{eqn:q and k prep}
    Q(v)
    &= \int_0^\infty    k(u+v) u^{-\frac{1}{2}}du\\
    &= v^{\frac{1}{2}}\int_0^\infty    k((u+1)v) u^{-\frac{1}{2}}du\\
    &= v^{\frac{1}{2}}\frac{1}{2\pi i}\int_0^\infty \left(\int_{(\sigma)}    \Phi_\tau^{-1}(s)M\psi(s)((u+1)v)^{-s} u^{-\frac{1}{2}}ds\right)du\\
    &= v^{\frac{1}{2}}\frac{1}{2\pi i} \int_{(\sigma)} \Phi_\tau^{-1}(s)M\psi(s)v^{-s} \left(\int_0^\infty   (u+1)^{-s} u^{-\frac{1}{2}}du\right) ds.
\end{split}
\end{align}
Since $\sigma=\Re(s)>1/2$, by \cite[8.380-(3)]{GR_Table}, we have
\[
    \int_0^\infty (u+1)^{-s} u^{-\frac{1}{2}}du
    =B\left(s-\tfrac12,\tfrac12\right)
    =\frac{\Gamma\left(s-\frac{1}{2}\right)\Gamma\left(\frac{1}{2}\right)}{\Gamma(s)}.
\]
Then, by using \eqref{eqn:Phi_lambda}, we obtain
\begin{equation}\label{eqn:q to Mpsi}
    Q(v) = v^{1/2} M_{\sigma}^{-1} \left(M\psi(s) \,\frac{\,\Gamma(s)\Gamma(s-\frac{1}{2})}{\Gamma(-\kappa_++s)\Gamma(-\kappa_-+s)}\right)(v).
\end{equation}

From \eqref{eqn:Selberg/Harish-Chandra}, we have
\[
    h(t)
    =
    \int_{-\infty}^{\infty} Q\left(\left(2\sinh\frac r2\right)^2\right)e^{irt}\,dr.
\]
With the substitution $e^{r/2}=a$, this becomes
\[
    h(t)
    =
    2\int_0^\infty Q\left(\left(a-\frac1a\right)^2\right) a^{2it}\frac{da}{a}.
\]
Using \eqref{eqn:q to Mpsi}, we obtain
\[
    h(t)
    =
    2\frac{1}{2\pi i} \int_{(\sigma)} M\psi(s) \frac{\Gamma(s)\Gamma(s-\frac12)}{\Gamma(s-\kappa_+)\Gamma(s-\kappa_-)} J(s,t)\,ds,
\]
where $\frac{1}{2}<\sigma$ and 
\[
J(s,t)
:=
\int_0^\infty
\left|a-\frac1a\right|^{1-2s}
a^{2it}\frac{da}{a}.
\]
Since $\frac12+|\Im(t)|<\Re(s)<1$, this integral for $J(s,t)$ is absolutely convergent.
Splitting at $a=1$ and using $a\mapsto a^{-1}$ in the second integral gives
\[
\begin{aligned}
J(s,t)
&=
\int_0^1
\left(\frac1a-a\right)^{1-2s}
\left(a^{2it}+a^{-2it}\right)
\frac{da}{a}  \\
&=
\int_0^1
(1-a^2)^{1-2s}
a^{2s-1}
\left(a^{2it}+a^{-2it}\right)
\frac{da}{a}.
\end{aligned}
\]
After the substitution $x=a^2$, we get
\[
J(s,t)
=
\frac12
\int_0^1
(1-x)^{1-2s}
x^{s-\frac12}
\left(x^{it}+x^{-it}\right)
\frac{dx}{x}.
\]
Therefore, we have
\[
\begin{aligned}
J(s,t)
&=
\frac12 B\left(2-2s,s-\frac12+it\right)
+
\frac12 B\left(2-2s,s-\frac12-it\right) \\
&=
\Gamma(2-2s)
\frac{
\Gamma(s-\frac12+it)\Gamma(s-\frac12-it)
\cosh \pi t
}{
\Gamma(\frac32-s)\Gamma(s-\frac12)
},
\end{aligned}
\]
and thus
\[
    h(t) = \frac{1}{2\pi i} \int_{(\sigma)} \Theta_\tau^t(s)M\psi(s)\,ds,
\]
where
\[
    \Theta_\tau^t(s)
    =
    \frac
    {
    2\Gamma(s)\Gamma(2-2s) \Gamma(s-\frac12+it) \Gamma(s-\frac12-it)\cosh\pi t
    }
    {
    \Gamma(\frac32-s)\Gamma(s-\kappa_+)\Gamma(s-\kappa_-)
    }.
\]
By using the reflection formula \eqref{eqn:reflection} and the duplication formula \eqref{eqn:duplication}, one can obtain
\[
    \frac{\Gamma(2-2s)\Gamma(s)}{\Gamma(\frac{3}{2}-s)}=\frac{2^{1-2s}\sqrt{\pi}}{\sin\pi s}.
\]
Hence,
\[
    \Theta_\tau^{t} (s) = \frac{2^{2-2s} \sqrt{\pi}}{\sin\pi s}\frac{\Gamma\left(-\frac{1}{2}+it+s\right)\Gamma\left(-\frac{1}{2}-it+s\right)}{\Gamma\left(-\frac{1}{2}\left(\frac{1}{2}+i\tau\right)+s\right)\Gamma\left(-\frac{1}{2}\left(\frac{1}{2}-i\tau\right)+s\right)}\cosh\pi t.\qedhere
\]
\end{proof}

\subsection{Inverse Selberg/Harish-Chandra transform}
Later, we need the inverse Selberg/Harish-Chandra transform formula. Let us express $M\psi$ in terms of $h$.

\begin{lemma}\label{lem:h_to_Mpsi}
Suppose that $h(t)$ is even and holomorphic on the strip $\{t:|\Im(t)|<\frac12+\epsilon_0\}$, where $\epsilon_0\ge0$, such that for every $a\in (0,\frac12+\epsilon_0)$, we have $|h(t)|\ll_a(1+|t|)^{-2}$ uniformly in the strip $|\Im(t)|\le a$. Then, $M\psi(s)$ is holomorphic on the strip $\{ s:\frac12<\Re(s)<1+\epsilon_0\}$ and
\begin{equation}\label{eqn:h_to_Mpsi_1}
    M\psi(s)=-\frac{2^{2s-1}}{\pi^{3/2}}\Gamma(s-\kappa_+)\Gamma(s-\kappa_-)\left(\frac{1}{2\pi i}\int_{(-\eta)}A(s')\, ds'\right),
\end{equation}
where $\eta\in(\Re(s)-\frac12,\frac12+\epsilon_0)$ and
\[
    A(s'):=\pi \,h(-is')\frac{\Gamma(\tfrac12-s-s')s'}{\Gamma(\frac12+s-s')}.
\]
Moreover, if $\frac12<\Re(s)<1+\min\{\epsilon_0,\frac12\}$ with $s\neq 1$, then $M\psi(s)$ can also be expressed by
\begin{equation}\label{eqn:h_to_Mpsi_2}
    M\psi(s)
    =\frac{2^{2s-1}}{\pi^{3/2}}\Gamma(s-\kappa_+)\Gamma(s-\kappa_-)\sin(\pi s)
    \left(2\,{\rm Res}_{s'=\frac12-s}F(s')-
    \frac{1}{2\pi i}\int_{(0)} F(s') ds' \right),
\end{equation}
where
\[
    F(s'):=h(-is')\Gamma(\tfrac12-s-s')\Gamma(\tfrac12-s+s')s' \sin(\pi s').
\]
\end{lemma}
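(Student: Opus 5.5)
The plan is to run the computation in the proof of Lemma~\ref{lem:psi_to_h} backwards. That proof showed, through \eqref{eqn:q to Mpsi}, that
\[
M\psi(s)\,\frac{\Gamma(s)\Gamma(s-\frac12)}{\Gamma(s-\kappa_+)\Gamma(s-\kappa_-)}
=\int_0^\infty Q(v)\,v^{-1/2}\,v^{s}\,\frac{dv}{v}
\]
for $\Re(s)>\frac12$. So it suffices to compute the Mellin transform of $v^{-1/2}Q(v)$ directly from $h$, and then multiply by $\Gamma(s-\kappa_+)\Gamma(s-\kappa_-)/(\Gamma(s)\Gamma(s-\frac12))$.

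\emph{Step 1: recover $g$ from $h$ with a contour shift.} I invert the last line of \eqref{eqn:Selberg/Harish-Chandra} to get $g(r)=\frac{1}{2\pi}\int_{\mathbb R}h(t)e^{-irt}\,dt$. I then write this as a vertical integral in $s'=it$,
\[
g(r)=\frac{1}{2\pi i}\int_{(0)}h(-is')\,e^{-rs'}\,ds'.
\]
By holomorphy of $h$ in the strip and the bound $(1+|t|)^{-2}$, I can shift to $\Re(s')=\eta$ for $r>0$ (equivalently to $(-\eta)$ after using evenness). This gives $g(r)\ll e^{-\eta r}$ for every $\eta<\frac12+\epsilon_0$.

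\emph{Step 2: compute the Mellin transform in $v$.} I substitute $v=(2\sinh\frac r2)^2$ and $x=e^{-r}$. Then $v^{s-3/2}\,dv$ becomes $(1-x)^{2s-2}$ times powers of $x$, times a factor $(1+x)$ coming from $\cosh\frac r2$. Inserting the contour representation of $g$ and interchanging the integrals reduces the inner integral to a sum of two Beta integrals:
\[
B\left(2s-1,\tfrac12-s-s'\right)\quad\text{and}\quad B\left(2s-1,\tfrac32-s-s'\right).
\]
This interchange is the step that forces $\eta>\Re(s)-\frac12$, which is needed for convergence at $r=\infty$. Combining the two terms gives
\[
\Gamma(2s-1)\,\frac{\Gamma(\frac12-s-s')}{\Gamma(\frac12+s-s')}\cdot(\text{linear factor in }s').
\]
Symmetrizing with the evenness of $h$ should reduce that linear factor to $2s'$, up to constants. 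The duplication formula \eqref{eqn:duplication} gives
\[
\frac{\Gamma(2s-1)}{\Gamma(s)\Gamma(s-\frac12)}=\frac{2^{2s-2}}{\sqrt\pi}.
\]
Collecting these constants yields \eqref{eqn:h_to_Mpsi_1}. Holomorphy of $M\psi$ on $\frac12<\Re(s)<1+\epsilon_0$ follows because, for $s$ in any compact subset, one fixed $\eta$ works and the integrand converges locally uniformly.

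\emph{Step 3: derive \eqref{eqn:h_to_Mpsi_2}.} I move the contour in \eqref{eqn:h_to_Mpsi_1} from $(-\eta)$ to $(0)$. The only pole crossed is the pole of $\Gamma(\frac12-s-s')$ at $s'=\frac12-s$, whose real part lies in $(-\eta,0)$. The restriction $\Re(s)<\frac32$ ensures that the next pole $s'=\frac32-s$ is not crossed, and $s\neq1$ avoids degeneracies. I then apply the reflection formula \eqref{eqn:reflection} to write
\[
\frac{1}{\Gamma(\frac12+s-s')}=\frac{\Gamma(\frac12-s+s')\,\sin\pi(\frac12+s-s')}{\pi}.
\]
Expanding $\sin\pi(\frac12+s-s')$ and discarding the piece that is odd in $s'$ (by evenness of $h$ on $(0)$) leaves $\sin\pi s\,\sin\pi s'$ times $F(s')$. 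The same manipulation, applied to the residue, gives $2\,\mathrm{Res}_{s'=\frac12-s}F$, with the factor $2$ coming from the symmetric pole at $s'=s-\frac12$.

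I expect the main obstacle to be the bookkeeping in Step 2: tracking the two Beta integrals, the constants, and the signs so that the exact prefactor $-2^{2s-1}\pi^{-3/2}$ emerges. A secondary difficulty is justifying the interchange of integrals given only the $(1+|t|)^{-2}$ decay of $h$. For that I would use the Stirling bound \eqref{eqn:Gamma_ratio}, which gives $|\Gamma(\frac12-s-s')/\Gamma(\frac12+s-s')|\asymp|s'|^{-2\Re s}$, so that $A(s')\ll|s'|^{-1-2\Re(s)}$ is integrable.
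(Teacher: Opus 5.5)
This is essentially the paper's proof: it also computes $M(v^{-1/2}Q)$ via $v=(2\sinh\frac r2)^2$, shifts the $t$-contour to $\mathbb{R}+i\eta$ with $\eta>\Re(s)-\frac12$ to obtain the two Beta integrals, uses duplication together with $M\psi=\Phi_\tau Mk$ and \eqref{eqn:Mk by Mq}, and then moves $(-\eta)$ to $(0)$ using reflection and the evenness of $h$. Two corrections: the Beta terms combine exactly to $-2s'\,\Gamma(2s-1)\Gamma(\frac12-s-s')/\Gamma(\frac12+s-s')$ with no symmetrization needed; and the factor $2$ in $2\,{\rm Res}_{s'=\frac12-s}F$ does not come from a pole at $s'=s-\frac12$ (where $A$ is regular, and which lies outside the strip swept by the shift) but from $\cos\pi(s-s')\big|_{s'=\frac12-s}=\sin 2\pi s=2\sin\pi s\cos\pi s$, i.e.\ ${\rm Res}_{s'=\frac12-s}A=2\sin(\pi s)\,{\rm Res}_{s'=\frac12-s}F$, which is exactly why $s\neq1$ is needed (at $s=1$, $\sin\pi s=0$ while $F$ has a double pole at $s'=-\frac12$).
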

\begin{proof}
Recall $H_\tau k=\psi$. By Proposition \ref{prop:philambda}, we have $M\psi(s)=\Phi_\tau(s)\,Mk(s)$.
By \eqref{eqn:q and k prep},  we have
\begin{align*}
    M(v^{-1/2}Q(v))(s)
    &= \int_0^\infty \left(\int_0^\infty    k((u+1)v) u^{-\frac{1}{2}}du \right)v^{s-1}dv\\
    &= \int_0^\infty \left(\int_0^\infty    k((u+1)v) v^{s-1}dv \right)u^{-\frac{1}{2}}du.
\end{align*}
By the substitution $w=(u+1)v$, 
\[
    \int_0^\infty    k((u+1)v) v^{s-1}dv=(u+1)^{-s}\int_0^\infty k(w) w^{s-1}dw=(u+1)^{-s}Mk(s).
\]
Hence, by \cite[8.380-(3) with $x=\tfrac12$ and $y=s-\tfrac12$]{GR_Table}, we obtain
\[
    M(v^{-1/2}Q(v))(s)
    = \,Mk(s)\int_0^\infty (1+u)^{-s}u^{-1/2}du
    =\,Mk(s)\,B(\tfrac12,s-\tfrac12)
    =\sqrt{\pi}\,\frac{\Gamma(s-\tfrac12)}{\Gamma(s)}\,Mk(s),
\]
which is valid for $\Re(s)>1/2$. Hence
\begin{equation}\label{eqn:Mk by Mq}
    Mk(s)=\frac{1}{\sqrt{\pi}}\frac{\Gamma(s)}{\Gamma(s-\tfrac12)}M(v^{-1/2}Q(v))(s).
\end{equation}
Now, we consider $M(v^{-1/2}Q(v))(s)$.
\begin{align*}
    M(v^{-\frac{1}{2}}Q(v))(s)
    &= \int_0^\infty Q(v) v^{s-\frac{1}{2}}\frac{dv}{v}\\
    &=4\int_{0}^\infty Q\left(\left(2\sinh \frac{r}{2}\right)^2\right) \left(2\sinh \frac{r}{2}\right)^{2(s-\frac{1}{2}-1)} \sinh \frac{r}{2} \cosh \frac{r}{2} dr\\
    &=4\int_{0}^\infty g(r) \left(2\sinh \frac{r}{2}\right)^{2(s-\frac{1}{2}-1)} \sinh \frac{r}{2} \cosh \frac{r}{2} dr.
\end{align*}
By using
\[
    g(r)=\frac{1}{2\pi}\int_\mathbb{R}h(t)e^{-irt}dt=\frac{1}{2\pi}\int_\mathbb{R}h(t)e^{irt}dt
\]
and the substitution $e^{r}=a$, we obtain
\begin{align*}
    M(v^{-\frac{1}{2}}Q(v))(s)
    &=\frac{2}{\pi} \int_{0}^\infty \int_{-\infty}^\infty  h(t) e^{irt} \left(2\sinh \frac{r}{2}\right)^{2(s-\frac{1}{2}-1)} \sinh \frac{r}{2} \cosh \frac{r}{2} dtdr\\
    &=\frac{1}{2\pi} \int_{1}^\infty \int_{-\infty}^\infty  h(t) a^{it} \left(a^{1/2}-\frac{1}{a^{1/2}}\right)^{2s-3} \left(a-\frac{1}{a}\right) dt\frac{da}{a}\\
    &=\frac{1}{2\pi} \int_{0}^1 \int_{-\infty}^\infty  h(t) a^{-it} \left(\frac{1}{a^{1/2}}-a^{1/2}\right)^{2s-3} \left(\frac{1}{a}-a\right) dt\frac{da}{a}\\
    &=\frac{1}{2\pi} \int_{0}^1 \int_\mathbb{R}  h(t) a^{-\frac12-s-it}(1-a)^{2s-2}(1+a)\, dtda.
\end{align*}

We want to apply the Fubini theorem, but the integral
\[
    I(s,t):=\int_0^1 a^{-\frac12-s-it}(1-a)^{2s-2}(1+a)\,da
\]
diverges when $t\in \mathbb{R}$. It absolutely converges when $\Im(t)>\Re(s)-\tfrac12$. Note that $\int_{\mathbb{R}}h(x+iy)a^{-i(x+iy)} dx$ is absolutely convergent for $|y|<1/2+\epsilon_0$. Accordingly, we shift the contour for $t$ from $\mathbb{R}$ to $\mathbb{R}+i\eta$ with $\Re(s)-1/2<\eta<1/2+\epsilon_0$ and obtain
\[
    M(v^{-\frac{1}{2}}Q(v))(s)=\frac{1}{2\pi} \int_{0}^1 \int_{\mathbb{R}+i\eta}  h(t) a^{-\frac12-s-it}(1-a)^{2s-2}(1+a)\, dtda.
\]
Since $\eta>\Re(s)-1/2$ and $\Re(s)>1/2$, for $t\in \mathbb{R}+i\eta$, we have
\begin{align*}
    I(s,t)
    &=\int_0^1 a^{-\frac12-s-it}(1-a)^{2s-2}(1+a)\,da\\
    &=\int_0^1 a^{-\frac12-s-it}(1-a)^{2s-2}\,da+\int_0^1 a^{\frac12-s-it}(1-a)^{2s-2}\,da\\
    &=B(\tfrac12-s-it,2s-1)+B(\tfrac32-s-it,2s-1).
\end{align*}
By using the identity $B(z,w)=\frac{\Gamma(z)\Gamma(w)}{\Gamma(z+w)}$ and the reflection formula 
\eqref{eqn:reflection}, for $t\in \mathbb{R}+i\eta$, we have
\begin{align*}
    I(s,t)
    &=\Gamma(2s-1)\left(\frac{\Gamma(\frac12-s-it)}{\Gamma(-\frac12+s-it)}+\frac{\Gamma(\tfrac32-s-it)}{\Gamma(\frac12+s-it)}\right)\\
    &=-2it
    \frac{\Gamma(\tfrac12-s-it)\Gamma(2s-1)}{\Gamma(\frac12+s-it)}\\
    &=-\frac{2it}\pi \Gamma(\tfrac12-s-it)\Gamma(\tfrac12-s+it)\Gamma(2s-1)\sin\left(\pi(\tfrac12-s+it)\right).
\end{align*}

Hence, by the Fubini theorem and the change of coordinate $s'=it$, we obtain
\[
    M(v^{-\frac{1}{2}}Q(v))(s)
    =\frac{1}{2\pi} \int_{\mathbb{R}+i\eta}  h(t) I(s,t)\,dt
    =\frac{i}{\pi^2}\Gamma(2s-1)\int_{(-\eta)}A(s') \,ds',
\]
where
\begin{align*}
    A(s')
    &:=h(-is')\Gamma(\tfrac12-s-s')\Gamma(\tfrac12-s+s')s'\sin(\pi(\tfrac12-s+s'))\\
    &=\pi \,h(-is')\frac{\Gamma(\tfrac12-s-s')s'}{\Gamma(\frac12+s-s')}.
\end{align*}
Then the formula \eqref{eqn:h_to_Mpsi_1} follows from \eqref{eqn:duplication}, \eqref{eqn:Mk by Mq}, and Proposition \ref{prop:philambda}.

If $\Re(s)\in(1/2,3/2)$, then shifting the contour from $(-\eta)$ to $(0)$ picks up the residue only at $s'=\frac12-s$. Hence,
\[
    \frac{1}{2\pi i}\int_{(-\eta)}A(s') \,ds'= -{\rm Res}_{s'=\frac12-s}A(s')+\frac{1}{2\pi i}\int_{(0)}A(s')\, ds'.
\]
Since
\[
    \sin(\pi(\tfrac12-s+s'))=\cos(\pi(s-s'))=\cos(\pi s)\cos(\pi s')+\sin(\pi s)\sin(\pi s')
\]
and $h$ is even, it follows that
\[
    \int_{(0)} A(s')\,ds'
    =\sin(\pi s)\int_{(0)}F(s')\, ds',
\]
where
\[
    F(s')=
    h(-is')\Gamma(\tfrac12-s-s')\Gamma(\tfrac12-s+s')s' \sin(\pi s').
\]
It is easy to show that
\[
    {\rm Res}_{s'=\frac12-s}A(s')=2\sin(\pi s)\,{\rm Res}_{s'=\frac12-s}F(s')
\]
except at $s\in\mathbb{Z}_{> 0}$.
Therefore, we obtain \eqref{eqn:h_to_Mpsi_2}.
\end{proof}

\section{Uniform estimates on matrix coefficients}\label{sec:Triple_Product}

As motivation, first consider the arithmetic surface
\(Y=\mathrm{PSL}_2(\mathbb Z)\backslash\mathbb H\), and let
\(\varphi_i\) be \(L^2\)-normalized Maass--Hecke cusp forms
with spectral parameters \(\tau_i\).
Combining Watson's formula with standard arithmetic inputs, including \cite{Hoffstein_Lockhart}, and the explicit representation of the Archimedean factor $L_\infty$ in terms of Gamma functions, we find that there exists a sufficiently large constant $A>0$ such that
\begin{multline*}
\int_Y \varphi_1\varphi_2\varphi_3 d\mu\ll_Y \prod_{j=1}^3(1+|\tau_j|)^A  \\
\times \exp\left(-\frac{\pi}{4} \left(|\tau_1+\tau_2+\tau_3|+|-\tau_1+\tau_2+\tau_3|+|\tau_1-\tau_2+\tau_3|+|\tau_1+\tau_2-\tau_3|-2|\tau_1|-2|\tau_2|-2|\tau_3|\right)\right)
\end{multline*}
uniformly in $\tau_1$, $\tau_2$, and $\tau_3$.
This arithmetic discussion is only motivational. The proof below is geometric and uses no Hecke operators or arithmetic input. From this perspective, for an arbitrary compact hyperbolic surface $Y$, the best bound in terms of the exponential factor one may expect for $\langle \varphi\phi,\phi \rangle$ is
\begin{equation}\label{uniform}
\langle \varphi\phi,\phi \rangle \ll_Y ((1+|\tau|)(1+|t|))^N \exp\left(-\frac{\pi}{4} \left(|2t+\tau|+|2t-\tau|-4|t|\right)\right)
\end{equation}
for some $N>0$, where $\tfrac14+\tau^2$ and $\tfrac14+t^2$ denote the eigenvalues of $\varphi$ and $\phi$, respectively.

Note that when $\varphi$ is fixed, this is trivially true because
\[
\abs{\langle \varphi\phi,\phi \rangle} \leq \|\varphi\|_{L^\infty} \langle \phi,\phi \rangle = \|\varphi\|_{L^\infty} = O(1).
\]
On the other hand, when $\phi$ is fixed, this is a theorem due to Sarnak \cite{Sarnak_Int_of_Prod_of_Eftns}.

In this section, we prove \eqref{uniform} for an arbitrary compact hyperbolic surface $Y$. We expect that the proof could be generalized to arbitrary dimension, but we do not pursue this in this article. To be precise, we prove Theorem \ref{thm:TripleProduct}:
\[
\int_Y \varphi(z)|\phi(z)|^2\,d\mu(z)
\ll_Y
(1+\tau)\bigl(1+\log(1+\tau)\bigr)
\exp\left(
-\frac{\pi}{4}
\left( |2t+\tau|+|2t-\tau|-4t \right)
\right),
\]
as $t,\tau \to \infty$. 

For fixed $\phi$, the estimate of
\cite[Section 0.2, Corollary]{Bernstein_Reznikov} (see also \cite{Sarnak_Int_of_Prod_of_Eftns}) gives the desired result.
Thus it remains to consider real $t$ above a fixed threshold.

To prove the estimate, we follow the geometric approach pioneered by Sarnak \cite{Sarnak_Int_of_Prod_of_Eftns} (see also \cite{Kiral_Subc} for an application and more explicit computations). The overarching strategy relies on reducing the global integral over the surface $Y$ to local estimates on the universal cover $\mathbb{H}$. 

By unfolding the integral, the problem transforms into bounding the localized matrix coefficients of the principal series representations of $\text{PSL}_2(\mathbb{R})$. The exponential decay of the triple product is intrinsically governed by the decay of these coefficients along the physical geodesics. 

The main technical hurdle is achieving strict uniformity in both spectral parameters $t$ and $\tau$ (note that in Sarnak's case $t$ is fixed). To handle this, we will isolate the matrix coefficient into a specific spectral sum, denoted by $\mathcal{B}(r)$. Bounding $\mathcal{B}(r)$ uniformly requires a discrete asymptotic analysis of conical functions. To achieve the desired bounds we will make crucial use of Dunster's work \cite{Dunster_Concial} on asymptotic expansions of conical functions where both parameters can be large.

\subsection{Uniform estimates on matrix coefficients}

We follow the initial steps of Sarnak \cite{Sarnak_Int_of_Prod_of_Eftns} (see also \cite[Sections 8--10]{Kiral_Subc} for an explicit exposition in the two-dimensional case). Let $k(z, z') = k(d(z,z'))$ be a point-pair invariant on $\mathbb{H}\times \mathbb{H}$ that is rapidly decreasing at infinity. The associated symmetric automorphic integral kernel is defined by
\[
    K(z, z') = \sum_{\gamma \in \Gamma} k(\gamma z, z').
\]
Spectrally expanding $K$, we obtain
\begin{equation}\label{FourierK}
    K(z, z')= \sum_{j=0}^\infty h(\tau_j)\phi_j(z)\overline{\phi_j(z')},
\end{equation}
where the eigenvalues are $\lambda_j = \frac{1}{4}+\tau_j^2$ and $h$ is the Selberg/Harish-Chandra transform of $k$. We consider the operator $K$ acting on the square of our eigenfunction $\phi$:
$$K\abs{\phi}^2(z):= \int_{Y} K(z, z')\abs{\phi(z')}^2d\mu_{z'}.$$
Using the spectral expansion in \eqref{FourierK}, we have
$K\abs{\phi}^2(z)=\sum_{j=0}^\infty h(\tau_j)\phi_j(z)\langle \abs{\phi}^2, \phi_j \rangle.$
To estimate the triple products \(\langle |\phi|^2,\phi_j\rangle\) from
this relation, we use Parseval's formula:
\[
\|K|\phi|^2\|_2^2
=
\sum_{j=0}^\infty
|h(\tau_j)\langle |\phi|^2,\phi_j\rangle|^2.
\]
We localize the spectral parameter near \(T\geq2\) by taking
\[
h_T(\tau)=e^{-(\tau-T)^2}+e^{-(\tau+T)^2}.
\]
Since \(h_T(\tau_j)\gg1\) when \(|\tau_j-T|\leq1\), we obtain

\[
\sum_{|\tau_j-T|\leq1}
|\langle |\phi|^2,\phi_j\rangle|^2
\ll_Y
\|K_T|\phi|^2\|_2^2
\ll_Y
\|K_T|\phi|^2\|_\infty^2.
\]

In the subsequent sections we are interested in the case when $T > 2t+ O(1)$. 
The complementary range \(T\leq 2t+2\) requires no exponential decay:
in this range the exponential factor in \eqref{uniform} is bounded below by
an absolute constant. The standard $L_\infty$ bound (see e.g. \cite{SoggeZelditch})

\begin{equation}\label{Sobolev}
|\langle |\phi|^2,\phi_j\rangle|
\leq
\|\phi_j\|_\infty\|\phi\|_2^2
\ll_Y
(1+\tau_j)^{1/2}
\end{equation}
is therefore sufficient after increasing the polynomial exponent in
\eqref{uniform}. Thus it remains to prove the sharp estimate in the hard
range \(T>2t+O(1)\), which follows from the following supremum-norm bound.

\begin{proposition}\label{prop:K_inf_bound}
Let $t\geq t_0(Y)$ be real, where $t_0(Y)$ is a sufficiently
large fixed constant, and let $T>2t+2$.
For the localization $h_T$ defined above, we have
\[
\|K_T|\phi|^2\|_\infty
\ll_Y
T(1+\log T)
\exp\left(-\frac{\pi}{2}T+\pi \abs{t}\right),
\]
uniformly for real $t\geq t_0(Y)$ and $T>2t+2$.
\end{proposition}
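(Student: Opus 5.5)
Following Sarnak \cite{Sarnak_Int_of_Prod_of_Eftns}, the plan is to unfold $K_T|\phi|^2$ to $\mathbb{H}$ and write it as an integral of $|\phi|^2$ against the point-pair invariant $k_T$. Fix $z\in Y$ and lift to $z\in\mathbb{H}$. Then
\[
K_T|\phi|^2(z)=\int_{\mathbb{H}} k_T(d(z,z'))\,|\phi(z')|^2\,d\mu_{z'}
=\int_0^\infty k_T(r)\,\sinh r\,\mathcal{B}(r)\,dr ,
\]
where $\mathcal{B}(r)$ is the average of $|\phi|^2$ over the hyperbolic circle of radius $r$ about $z$. The localization $h_T$ is a Gaussian in the spectral parameter, so $k_T(r)$ is given by the inverse Selberg/Harish-Chandra transform \eqref{eqn:Selberg/Harish-Chandra}. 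Equivalently, $k_T(r)=\int h_T(\nu)P_{-1/2+i\nu}(\cosh r)\,\nu\tanh(\pi\nu)\,d\nu$. This kernel oscillates like $e^{\pm iTr}$ with amplitude roughly $T^{1/2}e^{-r/2}$, and it is not compactly supported. For the circle average I expand $\phi$ in polar coordinates about $z$:
\[
\phi(re^{i\theta})=\sum_{n}a_n\,P^{n}_{-1/2+it}(\cosh r)\,e^{in\theta}.
\]
Then $\mathcal{B}(r)=\sum_n|a_n|^2\,|P^n_{-1/2+it}(\cosh r)|^2$. The coefficients satisfy the local Weyl/Parseval bound $\sum_{|n|\le N}|a_n|^2|P^n_{-1/2+it}(\cosh r)|^2\ll_Y 1+N$ on a fixed ball. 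By pre-trace formula arguments this gives, roughly, $|a_n|^2\,|\Gamma(\tfrac12+it+n)/\Gamma(\tfrac12+it)|^{2}$-type normalizations that are uniformly controlled after dividing by the size of the conical function at an injectivity-radius point.

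Inserting this expansion, I interchange the sum and the integral:
\[
K_T|\phi|^2(z)=\sum_n|a_n|^2\int_0^\infty k_T(r)\,|P^n_{-1/2+it}(\cosh r)|^2\sinh r\,dr .
\]
The inner integral is a spectral projection of the product of two conical functions. Via the addition theorem or Mellin–Barnes it becomes a Gamma-ratio expression: its $\nu$-spectral density is the local triple product coefficient. It is essentially
\[
\frac{\prod_{\pm,\pm}\Gamma\bigl(\tfrac14+\tfrac{i}{2}(\pm\nu\pm 2t)\bigr)\cdots}{\Gamma(\tfrac12+i\nu)\cdots},
\]
with shifts by $n$. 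For $n=0$, Stirling \eqref{eqn:Stirling_vertical} gives the exponent
\[
-\tfrac{\pi}{4}\bigl(|\nu+2t|+|\nu-2t|\bigr)+\tfrac{\pi}{2}|\nu|+\pi|t|\cdot(\text{normalization}).
\]
Convolving with $h_T$ concentrated at $\nu\approx T>2t$ then yields $\exp(-\tfrac{\pi}{2}T+\pi t)$ after dividing by the Plancherel factor. The polynomial prefactor $T(1+\log T)$ comes from the $\nu\tanh\pi\nu$ weight and from the $n$-summation, which contributes a harmonic-type sum, giving the $\log$.

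The main obstacle is uniformity in $n$ (up to about $t$ and beyond) together with large $t$ and $T$. This requires uniform asymptotics for $P^{-n}_{-1/2+it}(\cosh r)$ in both parameters. Here I would use Dunster's Liouville–Green/Bessel approximations \cite{Dunster_Concial}, with the error control of Appendix~\ref{AppendixConical}, to show two things. First, in the oscillatory region $\sinh r\lesssim t/n$ the normalized functions behave like $J$-Bessel with amplitude $(t^2\sinh^2r-n^2)^{-1/4}$. Second, beyond the turning point they decay exponentially. This lets me bound each $n$-term of the spectral projection by the $n=0$ exponential rate times a factor $(1+|n|)^{-1}$ on the range $|n|\le T$, and by rapid decay for $|n|>T$. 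Summing with the local Weyl bound on $|a_n|^2$ then gives $T(1+\log T)e^{-\pi T/2+\pi t}$ uniformly in $z$, which is the claimed sup-norm bound.
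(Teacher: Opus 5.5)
Your reduction (unfolding, polar expansion, the $L^2$ bound on the angular coefficients, Dunster's uniform expansions) uses the same ingredients as the paper. However, the step that is supposed to produce $e^{-\frac{\pi}{2}T+\pi t}$ uniformly in the angular mode $n$ does not work as described. A Gamma-product formula for the spectral transform of $|C_t(n,\cosh r)|^2$ is available for $n=0$, since that is the spherical Archimedean triple-product integral. For $n\neq 0$, though, the coefficient attached to the $K$-types $(n,-n,0)$ is not a Gamma ratio ``with shifts by $n$''. It is a hypergeometric sum in $n$, the non-compact analogue of a $3j$-symbol with nonzero $m$-entries. Running Stirling on it uniformly for $n\le T$ and $t<T/2$ is the entire difficulty, and you do not do it. Your fallback cannot replace it: Dunster's approximations on the real $r$-axis give $J$-Bessel oscillation before the turning point and exponential decay after it. What you need is that a spectral transform at $\nu\approx T>2t$ is exponentially small in $T-2t$. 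Real-axis asymptotics with relative errors $O(t^{-1})$ or $O(n^{-1})$ say nothing at that scale. Exponential smallness at frequency $T$ comes from holomorphy of the integrand in the strip $|\Im r|<\pi/2$, with controlled growth at its edge, and this idea is missing from your proposal.

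The paper supplies it as follows.
\begin{enumerate}
\item It continues the circle average to $\mathcal B(r)=\sum_m D_\phi(m)\mathcal C_{t,m}(r)\mathcal C_{-t,m}(r)$, which is holomorphic for $|\Im r|<\pi/2$.
\item Via the Abel inversion of $k_T$, it rewrites $K_T|\phi|^2(z')$ as $\sqrt{2/\pi}\int_0^\infty H(w)e^{-w^2/4}\bigl(T\sin(Tw)+\frac w2\cos(Tw)\bigr)\,dw$, with $H$ odd and holomorphic.
\item It shifts the $e^{\pm iTw}$ parts to $\Im w=\pm(\frac{\pi}{2}-\frac1T)$, which yields $e^{-\frac{\pi}{2}T}$ at once.
\item It applies Dunster's $J$- and $K$-range expansions on these complex lines, with error terms controlled along progressive paths (Appendix~\ref{AppendixConical}). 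This gives $|\mathcal C_{t,m}\mathcal C_{-t,m}|\ll e^{\pi t}R_{m,t}(Z)^m/(m+t)$, where $R_{m,t}(Z)<1$ decreases in $m$ and $1-R_{t,t}(Z)\gg T^{-1}e^{-2|x|}$.
\item The factor $e^{\pi t}$ is just the size of the conical factors near $\Im r=\pi/2$. Summing against $\sum_{m<N}D_\phi(m)\ll N+t$ gives $|\mathcal B|\ll e^{\pi t}(|x|+\log T+1)$, and a maximum-modulus argument handles the vertical part of $H$.
\end{enumerate}
Separately, your per-mode factor $(1+|n|)^{-1}$ is too weak for the low modes. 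The coefficient bound allows mass of size $t$ there, so you would get $\log T+t$ instead of $\log T$. The correct size on the shifted lines is $(|n|+t)^{-1}$.
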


\begin{proof}[Proof of Theorem \ref{thm:TripleProduct} assuming Proposition
\ref{prop:K_inf_bound}]
The case in which $t$ is below the fixed threshold has already been
handled by the Bernstein--Reznikov estimate above.

Now suppose that $t\ge t_0(Y)$ is real. If $\tau\le 2t+2$, then the
exponential factor in Theorem~\ref{thm:TripleProduct} is bounded below
by an absolute constant, and \eqref{Sobolev} gives the desired estimate.
If $\tau>2t+2$, take $T=\tau$ in Proposition~\ref{prop:K_inf_bound}.
Since $h_T(\tau)\gg1$, Parseval gives the claimed bound.
\end{proof}

We defer the proof of Proposition \ref{prop:K_inf_bound} to Section \ref{subsec:Proof_Prop_K_inf_bound}, after establishing the necessary estimates in the remainder of this section.

Let $z' \in \mathbb{H}$ be a point where the maximum of $|K_T \abs{\phi}^2|$ is attained. Unfolding the kernel, we have:
$$ K_T \abs{\phi}^2(z') = \int_Y K_T(z', z)\abs{\phi}^2(z)d\mu_z = \int_{\mathbb{H}}k_T(z', z)\abs{\phi}^2(z)d\mu_z. $$
The eigenfunction $\phi \in L^2(Y)$ for $\Delta$ with eigenvalue $-(1/4+t^2)$ satisfies $\Delta\phi = - (\frac{1}{4} + t^2)\phi$. 
To evaluate this integral, we expand $\phi$ into geodesic polar coordinates $(r, \theta)$ centered at $z'$. We obtain the following:
$$ K_T \abs{\phi}^2(z') = 2\pi \int_0^\infty k_T(r) \left( \frac{1}{2\pi} \int_0^{2\pi} \abs{\phi(r, \theta)}^2 d\theta \right) \sinh r \, dr. $$
The inner integral represents the average of $\abs{\phi}^2$ over a circle of radius $r$, which we analyze using the following expansion.

\begin{lemma}[{Geodesic Polar Coordinates \cite[p. 142]{Terras_HarmonicAnal}}]
\label{lem:PolarExp}
Fix $z_0 \in \mathbb{H}$ and consider geodesic polar coordinates $(r, \theta)$ centered at $z_0$. Any \(L^2\)-eigenfunction \(\phi=\phi_t\) satisfying \(\Delta\phi=-(1/4+t^2)\phi\) admits the expansion
$$\phi(r, \theta)= \sum_{m\in \mathbb{Z}} a_\phi(m)C_t(|m|, \cosh r ) e^{im\theta},$$
where the normalized functions $C_t(|m|,x)$ are defined in terms of the associated Legendre function of the first kind (also referred to as the conical function) $P_{\nu}^{\mu}(x)$ by
$$C_t(|m|,x) = \frac{|\Gamma(1/2+|m|+it)|}{|\Gamma(1/2+it)|} P^{-|m|}_{-1/2+it}(x) \quad \text{for } x > 1.$$
\end{lemma}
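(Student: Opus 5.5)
The plan is to reduce the statement to the classical theory of eigenfunctions on the hyperbolic plane, by separating variables in geodesic polar coordinates and solving the resulting radial ODE.

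\textbf{Step 1 (Fourier expansion in the angle).} Fix $z_0$ and move it to $i$ by an isometry, so that the Laplacian in polar coordinates $(r,\theta)$ centred at $z_0$ reads
\[
\Delta=\partial_r^2+\coth r\,\partial_r+\frac{1}{\sinh^2 r}\partial_\theta^2 .
\]
Since $\phi$ is smooth on $\mathbb{H}$, for each fixed $r>0$ the function $\theta\mapsto\phi(r,\theta)$ is smooth and $2\pi$-periodic. Hence it has a Fourier series $\phi(r,\theta)=\sum_m f_m(r)e^{im\theta}$, which converges absolutely and uniformly on compact subsets of $r>0$. Differentiation under the integral defining $f_m$ is allowed, and projecting the eigenvalue equation onto $e^{im\theta}$ gives the radial ODE
\[
f_m''+\coth r\, f_m'-\frac{m^2}{\sinh^2 r}f_m+\left(\tfrac14+t^2\right)f_m=0 .
\]

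\textbf{Step 2 (solving the radial ODE).} Substitute $x=\cosh r$. The ODE becomes the associated Legendre equation
\[
(1-x^2)y''-2xy'+\left[\nu(\nu+1)-\frac{m^2}{1-x^2}\right]y=0,\qquad \nu=-\tfrac12+it,
\]
since $\nu(\nu+1)=-\tfrac14-t^2$. This equation has two independent solutions on $x>1$, namely $P^{-|m|}_{\nu}(x)$ and $Q^{|m|}_{\nu}(x)$ (or another second solution). Their behaviour as $x\to1^+$, i.e. $r\to0$, is
\[
P^{-|m|}_\nu(x)\sim \frac{1}{\Gamma(1+|m|)}\left(\frac{x-1}{2}\right)^{|m|/2},
\]
which is regular, whereas the second solution is singular: it behaves like $(x-1)^{-|m|/2}$ for $m\ne0$ and like $\log(x-1)$ for $m=0$.

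On the other hand, $f_m(r)=\frac1{2\pi}\int\phi(r,\theta)e^{-im\theta}d\theta$ is bounded as $r\to0$. In fact it tends to $\phi(z_0)\delta_{m0}$, and smoothness of $\phi$ at $z_0$ gives $f_m(r)=O(r^{|m|})$. This forces the coefficient of the singular solution to vanish, so
\[
f_m(r)=c_m P^{-|m|}_{-1/2+it}(\cosh r).
\]

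\textbf{Step 3 (normalization).} Set $a_\phi(m)=c_m|\Gamma(1/2+it)|/|\Gamma(1/2+|m|+it)|$. The quotient of Gamma functions never vanishes: for real $t$ the arguments lie off the poles, and for $t\in i(0,1/2]$ we have $1/2+|m|+it\ge|m|>0$ unless $m=0$, where the two factors cancel. The case $t=i/2$ is excluded because $\phi$ is an eigenfunction with $\lambda>0$; if it were allowed, one would handle it directly. With this choice of $a_\phi(m)$, Step 2 gives exactly the stated expansion $\phi(r,\theta)=\sum_m a_\phi(m)C_t(|m|,\cosh r)e^{im\theta}$. The $L^2$ hypothesis on $Y$ is used only through the smoothness of $\phi$.

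The only delicate point is the regularity argument in Step 2 that excludes the second solution, together with the index conventions for $P^{-|m|}_\nu$. Everything else is standard; this is the cited formula in \cite[p.~142]{Terras_HarmonicAnal}.
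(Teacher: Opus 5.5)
Your proposal is correct and matches the standard argument behind the citation: Fourier expansion in $\theta$, reduction of the radial equation to the associated Legendre equation in $x=\cosh r$ with $\nu=-\tfrac12+it$, exclusion of the solution singular at $x=1$ because $f_m$ is bounded as $r\to0$, and then the Gamma-ratio normalization. The paper gives no proof of its own and only cites Terras (p.~142), so your sketch fills in exactly the classical reasoning that reference relies on.
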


Applying Parseval's identity on the circle to the expansion in Lemma
\ref{lem:PolarExp}, define first the circular average in the geodesic-radius
variable \(r\):
\begin{equation}\label{Adef}
A(r)
:=
\frac{1}{2\pi}\int_0^{2\pi}|\phi(r,\theta)|^2\,d\theta.
\end{equation}
Since the radial factor only depends on \(|m|\), it is convenient to group the
\(m\) and \(-m\) angular coefficients. For \(m\geq0\), define
\[
D_\phi(0):=|a_\phi(0)|^2,
\qquad
D_\phi(m):=|a_\phi(m)|^2+|a_\phi(-m)|^2
\quad(m\geq1).
\]
For real \(r\geq0\), Parseval gives
\begin{equation}\label{eq:A_real}
A(r)
=
\sum_{m\geq0}
D_\phi(m)|C_t(m,\cosh r)|^2.
\end{equation}

The Fourier coefficients $D_\phi(m)$ are controlled by the following $L^2$-estimate:
\begin{lemma}[{\cite[Lemma 3.4]{JJ_Sharp}}]\label{L2_coeff}
For all $T\geq 1$, we have
\begin{equation}\label{eqn:L2_Coeff}
\sum_{0\leq m < T} D_\phi(m) \ll_Y T+t.
\end{equation}
\end{lemma}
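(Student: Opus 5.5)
The plan is to compare an upper bound for the $L^2$-mass of $\phi$ on a hyperbolic ball of radius $R\approx\log(T+t)$ with a lower bound for the radial integrals of the functions $C_t(m,\cosh r)$, $0\le m<T$.

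\emph{Setup.} Fix the centre $z_0$ of the polar coordinates and a radius $R$ to be chosen. Integrating the definition \eqref{Adef} of $A(r)$ against $\sinh r\,dr$ and using \eqref{eq:A_real} gives
\[
\int_{B_R(z_0)}|\phi|^2\,d\mu
=2\pi\int_0^R A(r)\sinh r\,dr
=2\pi\sum_{m\ge0}D_\phi(m)\int_0^R|C_t(m,\cosh r)|^2\sinh r\,dr .
\]
Every term on the right is nonnegative. Hence it suffices to prove two bounds:
\[
\text{(a)}\quad \int_{B_R(z_0)}|\phi|^2\,d\mu\ll_Y e^R,
\qquad\qquad
\text{(b)}\quad \int_0^R|C_t(m,\cosh r)|^2\sinh r\,dr\gg 1 \quad\text{uniformly for } 0\le m<T,
\]
with $R=\log(T+t)+C_0$ for a suitable constant $C_0$. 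Dropping the terms $m\ge T$ then yields $\sum_{m<T}D_\phi(m)\ll e^R\ll T+t$.

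\emph{Step (a).} The ball $B_R(z_0)$ is covered by the translates $\gamma\mathcal F$ of a fixed compact fundamental domain $\mathcal F$ that meet it. These are exactly the $\gamma$ with $d(\gamma z_1,z_0)\le R+\mathrm{diam}(\mathcal F)$ for a fixed $z_1\in\mathcal F$. By the hyperbolic lattice point count (or simply a volume comparison) their number is $\ll_Y e^R$. Since $\|\phi\|_{L^2(Y)}=1$, each translate contributes at most $1$, which gives (a).

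\emph{Step (b), the main obstacle.} This needs uniform asymptotics of the normalized conical functions in both $m$ and $t$, and I would take them from Dunster's Liouville--Green approximations \cite{Dunster_Concial}. The equation satisfied by $C_t(m,\cosh r)$ has a turning point at $r_m$ with $\sinh r_m\asymp m/t$, so $r_m\le\log(1+m/t)+O(1)\le\log(T+t)-\log t+O(1)$. Beyond the turning point, the oscillatory regime should give
\[
C_t(m,\cosh r)=\frac{a_{m,t}(r)}{\sqrt{\sinh r}}\,\Bigl(\cos\bigl(\Xi_{m,t}(r)\bigr)+\text{error}\Bigr),
\]
where the phase $\Xi_{m,t}$ has derivative $\asymp t$ once $r\ge r_m+1$. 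The main point is that the amplitude $a_{m,t}(r)$ is $\asymp(1+m^2/t^2)^{-1/4}$ times a quantity bounded above and below uniformly in $m$ and $t$. The Gamma ratio $|\Gamma(\tfrac12+m+it)|/|\Gamma(\tfrac12+it)|$ in the definition of $C_t$ is exactly what cancels the growth of $P^{-m}_{-1/2+it}$, so I expect this normalization to give amplitude of size $1$ in the region $r\ge r_m+1$.

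Granting this, on $[r_m+1,R]$ the integrand $|C_t(m,\cosh r)|^2\sinh r$ is $\asymp\cos^2(\Xi_{m,t}(r))$ up to errors. Since $t\ge t_0(Y)$ is large, the cosine oscillates, so the integral is $\gg R-r_m-1$, and this is $\gg1$ once $C_0$ is large. The delicate points are showing that the Dunster error terms remain small uniformly as $m/t$ ranges over $[0,\infty)$, and handling the transition zone near the turning point. For the transition zone I would first try simply discarding the region $r\le r_m+1$, which is allowed since the integrand is nonnegative, and using only the oscillatory region, where the Airy-type error bounds are cleanest.
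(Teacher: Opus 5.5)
Your overall scheme (positivity of the expansion, the ball bound (a), and a lower bound for the radial integrals) is the right one, and step (a) is fine. Note that the paper gives no proof of this lemma; it imports it from \cite{JJ_Sharp}.

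The gap is in step (b): the normalized conical functions do not have amplitude of size $1$. Already for $m=0$ you have $C_t(0,\cosh r)=P_{-1/2+it}(\cosh r)\sim\sqrt{2/(\pi t\sinh r)}\,\cos(tr-\pi/4)$. For every fixed $m$, the large-argument connection formula for $P^{-m}_{-1/2+it}$ gives
\[
|C_t(m,\cosh r)|^2\sinh r=\frac{2\coth(\pi t)}{\pi t}\cos^2\bigl(t\log(2\cosh r)+c_{m,t}\bigr)+o(1)\qquad(r\to\infty).
\]
The Gamma-ratio normalization removes the dependence on $m$, but it does not remove the factor $t^{-1}$. In the oscillatory zone the integrand is approximately $\frac{2}{\pi}\bigl(t^2-m^2/\sinh^2 r\bigr)^{-1/2}\cos^2(\cdots)$. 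Hence $\int_0^R|C_t(m,\cosh r)|^2\sinh r\,dr\asymp (R-r_m)/t$ for $R\ge r_m+1$, not $\gg R-r_m$. With your radius $R=\log(T+t)+C_0$, the true lower bound in (b) is only of order $(\log t)/t$ for $m$ near $T$, while the bound in (a) is of size $T+t$. The argument therefore proves only $\sum_{m<T}D_\phi(m)\ll t(T+t)/\log t$. No choice of $C_0$ rescues this, since enlarging $C_0$ inflates $e^R$ exponentially but the radial integrals only linearly.

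The repair is to choose the radius relative to the turning points rather than to $T+t$. Take $R=\log(1+T/t)+C_0$, so that $e^R\ll 1+T/t$. Since $r_m\le\log(1+m/t)+O(1)$, the interval $[r_m+1,R]$ still has length $\gg1$ for every $m<T$. The correct form of (b) is
\[
\int_0^R|C_t(m,\cosh r)|^2\sinh r\,dr\gg \frac1t\qquad\text{uniformly in } 0\le m<T.
\]
Together with (a) this gives $\sum_{m<T}D_\phi(m)\ll t\,e^R\ll T+t$. The summand $t$ is exactly the $m=0$ contribution $D_\phi(0)=|\phi(z_0)|^2\ll t$, which is what the argument yields when $T\le t$ and $R=O(1)$. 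The paper's own estimates carry the same $1/t$ normalization: see the factor $1/(m+t)$ in Proposition~\ref{prop:C_shifted_bounds} and the term $\frac1t\sum_{m\le t}D_\phi(m)$ in Lemma~\ref{Bhor}.

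For the corrected (b) you need not rely on Dunster's uniform error terms. The function $w=\sqrt{\sinh r}\,C_t(m,\cosh r)$ solves $w''+(t^2-V)w=0$ with $V=(m^2-\frac14)/\sinh^2 r$. The energy $E=w'^2+(t^2-V)w^2$ satisfies $E'=-V'w^2$. Comparing $E$ with its exact limit $E(\infty)=\frac{2t}{\pi}\coth(\pi t)$ shows $E\asymp t$ once $\sinh r\ge\sqrt5\,(m+1)/t$. A Pr\"ufer-angle argument then gives $\int w^2\,dr\gg(\text{length})/t$ on such intervals.
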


\begin{definition}[Holomorphic Extension of $A(r)$]
Let $S$ be a horizontal strip defined by
\[
    S:=\{r\in\mathbb C:|\Im r|<\pi/2\}.
\]
For \(m\geq0\), \(\sigma\in\mathbb R\), and \(r\in S\), define
\begin{equation}\label{eq:calC_def}
    \mathcal C_{\sigma,m}(r)
    :=
    \frac{|\Gamma(\frac12+m+i\sigma)|}{|\Gamma(\frac12+i\sigma)|\Gamma(m+1)}
    \tanh(r/2)^m
    {}_2F_1\left( \frac12-i\sigma,\frac12+i\sigma, m+1; -\sinh^2(r/2) \right).
\end{equation}
For real \(r>0\), this agrees with the normalized conical factor:
\[
\mathcal C_{\sigma,m}(r)=C_\sigma(m,\cosh r).
\]

We define the holomorphic extension of the circular average by
\begin{equation}\label{eq:mathcalB_def}
\mathcal B(r)
:=
\sum_{m\geq0}
D_\phi(m)
\mathcal C_{t,m}(r)\mathcal C_{-t,m}(r),
\qquad r\in S.
\end{equation}
For real \(r\geq0\), this agrees with \(A(r)\), since $
\mathcal C_{-t,m}(r)=\overline{\mathcal C_{t,m}(r)}.$
Thus
\[
\mathcal B(r)=A(r)
\qquad (r\geq0).
\]
\end{definition}

\begin{remark}
The function \(\mathcal B\) is holomorphic in \(S\). Indeed,
\(\tanh(r/2)\) is holomorphic and satisfies
\[
|\tanh(r/2)|<1
\]
on compact
subsets of \(S\), while the hypergeometric argument
\(-\sinh^2(r/2)\) avoids the branch cut \([1,\infty)\). The coefficient
bound in Lemma \ref{L2_coeff}, together with normal convergence of the
series on compact subsets of \(S\), gives holomorphicity. Moreover,
\(\mathcal B\) is even in \(r\), because
\[
\mathcal C_{\sigma,m}(-r)=(-1)^m\mathcal C_{\sigma,m}(r).
\]
\end{remark}

\subsection{Integral Representation and the Geometric Integral}

To evaluate the operator $K_T \abs{\phi}^2(z')$ explicitly, we use the relationship between the test function $h_T$ and the point-pair invariant $k_T$. Recall that for a localization $h_T(\tau)=e^{-(\tau-T)^2}+e^{-(\tau+T)^2}$, the associated function $g_T$ is given by the Fourier transform:
\begin{equation}
    g_T(u) = \frac{1}{2\pi}\int_{-\infty}^\infty h_T(\tau)e^{-i \tau u} d\tau = \frac{1}{\sqrt{\pi}}\cos(uT)e^{-u^2/4}.
\end{equation}
Following \eqref{eqn:Selberg/Harish-Chandra}, the point-pair invariant $k_T(r)$ is expressed via the transform $Q(e^u+e^{-u}-2) = g_T(u)$ as:
\begin{equation}\label{eq:k_integral_rep}
    k_T(r) = -\frac{1}{\pi \sqrt{2}}\int_r^\infty \frac{g'_T(w)}{\sqrt{\cosh w - \cosh r}} dw,
\end{equation}
where $r = d(z,z')$ is the hyperbolic distance. Differentiating $g_T$ yields:
\begin{equation}\label{derg}
    g'_T(w) = -\frac{1}{\sqrt{\pi}} \left( T \sin(wT) + \frac{w}{2}\cos(wT) \right) e^{-w^2/4}.
\end{equation}
Substituting the geodesic polar expansion of $\phi$ into the integral $I = \int_{\mathbb{H}} k_T(z', z)|\phi(z)|^2 d\mu_z$, we obtain:
\begin{align*}
    I &= 2\pi \int_0^\infty k_T(r) \mathcal{B}(r) \sinh r \, dr.
\end{align*}
Using the representation in \eqref{eq:k_integral_rep} and changing the order of integration, the integral $I$ becomes:
\begin{equation}\label{eq:I_H_abelform}
I
=
\sqrt{\frac2\pi}
\int_0^\infty
H(w)e^{-w^2/4}
\left(T\sin(Tw)+\frac w2\cos(Tw)\right)\,dw,
\end{equation}
where $H(w)$ is an Abel-type transform of the spectral sum $\mathcal{B}$ defined by
\begin{equation}\label{eq:H_def}
    H(w) := \int_0^w \frac{\mathcal{B}(r) \sinh r}{\sqrt{\cosh w - \cosh r}} dr.
\end{equation}

The strategy for proving Proposition \ref{prop:K_inf_bound} relies on shifting the integration in \eqref{eq:I_H_abelform} into the complex $w$-plane. This is justified by the following analytic properties of $H(w)$:

\begin{lemma}\label{lem:H_holomorphic}
    The function $H(w)$ is an odd function of $w$ and is holomorphic in the strip $|\Im(w)| < \pi/2$. 
\end{lemma}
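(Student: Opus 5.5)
The natural approach is to remove the endpoint singularity and the $w$-dependent upper limit by substituting $r = w\,x$ with $x\in[0,1]$. This gives
\[
H(w)=\int_0^1 \frac{\mathcal B(wx)\,\sinh(wx)\,w}{\sqrt{\cosh w-\cosh(wx)}}\,dx .
\]
The denominator still degenerates at $x=1$, and the square root needs a branch choice for complex $w$. To handle both issues, I will factor
\[
\cosh w-\cosh(wx)=2\sinh\!\Big(\frac{w(1+x)}{2}\Big)\sinh\!\Big(\frac{w(1-x)}{2}\Big),
\]
and write $\sinh(w(1-x)/2)=\frac{w(1-x)}{2}\,E\big(w(1-x)/2\big)$, where $E(u)=\sinh(u)/u$ is entire and even. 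Similarly, $\sinh(w(1+x)/2)=\frac{w(1+x)}{2}E\big(w(1+x)/2\big)$. The integrand then becomes
\[
\frac{w\,\mathcal B(wx)\,\sinh(wx)}{w\sqrt{(1-x^2)}\,\sqrt{E(w(1+x)/2)E(w(1-x)/2)}} .
\]
The factor $(1-x^2)^{-1/2}$ is independent of $w$ and integrable on $[0,1]$.

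Holomorphy. For $|\Im w|<\pi/2$ and $x\in[0,1]$, the arguments $u=w(1\pm x)/2$ satisfy $|\Im u|<\pi/2$. On that strip, $E(u)=\sinh u/u$ has no zeros, since the zeros of $\sinh u$ other than $u=0$ are at $u=i\pi k$ with $|k|\ge1$. I still need a holomorphic square root of $E(u_1)E(u_2)$ on the simply connected domain $\{(w,x)\}$ that is positive for real $w$. To get it, I will check that $\Re$ of this product stays in a slit plane. If that is awkward, I will instead use that the domain in $w$ is simply connected for each fixed $x$ and that the product is nonvanishing, so a continuous branch exists that equals the positive root on the real axis.

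$\mathcal B(wx)$ is holomorphic in the strip $S$ by the preceding remark, because $wx\in S$. Likewise $\sinh(wx)$ is entire. The integrand is therefore holomorphic in $w$ for each $x$. It is also locally bounded uniformly in $x$ by $C(1-x^2)^{-1/2}$ on compact subsets of the strip: $\mathcal B$ is bounded on compact subsets of $S$ via normal convergence, and the product $E(u_1)E(u_2)$ is bounded away from zero on compacts. Morera's theorem combined with dominated convergence (or differentiation under the integral sign) then gives holomorphy of $H$ on $|\Im w|<\pi/2$. This formula agrees with the original definition for real $w>0$.

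Oddness. $\mathcal B$ is even, $\sinh(wx)$ is odd in $w$, $E$ is even, and the prefactor $w/w$ cancels. Hence the rewritten integrand, and with it $H$, is odd in $w$. This also shows the extension is consistent at $w=0$, where $H(0)=0$, as the original definition demands.

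The main obstacle I expect is the branch of the square root. It must be shown that $E(u_1)E(u_2)$ never crosses the chosen cut, or the continuous-branch argument on the simply connected product domain must be made rigorous. I would try the latter first.
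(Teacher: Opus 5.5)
Your proposal is correct and is essentially the paper's argument. The paper also substitutes $r=w\xi$ and uses $\cosh w-\cosh(w\xi)=2\sinh\bigl(\tfrac{w(1+\xi)}{2}\bigr)\sinh\bigl(\tfrac{w(1-\xi)}{2}\bigr)$. This writes $H$ as an integral over $[0,1]$ whose integrand is holomorphic in the strip, odd in $w$, and has only an integrable $(1-\xi)^{-1/2}$ singularity. Your rewritten integrand drops a harmless constant factor $\sqrt2$.

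Your extra care about the square-root branch, which the paper passes over, can be settled directly without a topological argument. Since $\sinh u/u=\int_0^1\cosh(su)\,ds$ has positive real part for $|\Im u|<\pi/2$, the principal roots $\sqrt{E(u_1)}\,\sqrt{E(u_2)}$ give a holomorphic choice that is even in $w$ and positive on the real axis.
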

\begin{proof}
    By the transformation $r = w\xi$, we may write
    \begin{equation}
        H(w) = \frac{1}{\sqrt{2}}\int_0^1 \frac{\mathcal{B}(w\xi) \sinh(w\xi)}{\sqrt{\frac{\sinh(w\frac{1+\xi}{2})}{w} \frac{\sinh(w\frac{1-\xi}{2})}{w}}} d\xi.
    \end{equation}
    The integrand is holomorphic for $|\Im(w)| < \pi/2$ and uniformly integrable in $\xi$. Since $\mathcal{B}(r)$ is an even function of $r$, it follows from the definition that $H(w)$ is odd.
\end{proof}

We will require sharp bounds for $H(w)$ on a shifted contour. To motivate the contour shift in the first place we prove the following crude bound for $\mathcal{B}(r).$

\begin{lemma}[Crude bound for \(\mathcal B\)]\label{trivialB}
For all $r\in \mathbb{C}$ with \(|\Im r|<\pi/2\), we have
\[
    |\mathcal B(r)|
    \ll_t
    \left(1-\left|\tanh\frac r2\right|^2\right)^{-1}.
\]
\end{lemma}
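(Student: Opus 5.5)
The plan is to bound each term of the series \eqref{eq:mathcalB_def} separately and then sum the bounds using the coefficient estimate of Lemma \ref{L2_coeff}. Since $t$ is fixed, every constant may depend on $t$; only the dependence on $m$ and on $r$ needs care. Write $q:=\tanh(r/2)$, so that $|q|<1$ on $S$.

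The first step is to bound the hypergeometric factor uniformly in $m$. I would use the Euler integral representation
\[
{}_2F_1(a,b;c;x)=\frac{\Gamma(c)}{\Gamma(b)\Gamma(c-b)}\int_0^1 s^{b-1}(1-s)^{c-b-1}(1-xs)^{-a}\,ds,
\]
with $a=\tfrac12\mp i t$, $b=\tfrac12\pm it$, $c=m+1$ and $x=-\sinh^2(r/2)$. This is valid for $m\ge1$; the finitely many small $m$ can be absorbed into the constant.

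The quantity $1-xs = 1+s\sinh^2(r/2)$ stays away from $(-\infty,0]$ for $r\in S$. However, it can approach $0$ as $|\Im r|\to\pi/2$, so $|(1-xs)^{-a}|$ has to be handled. A cleaner route is Pfaff's transformation:
\[
{}_2F_1(a,b;c;x)=(1-x)^{-a}\,{}_2F_1\Bigl(a,c-b;c;\tfrac{x}{x-1}\Bigr).
\]
Here $1-x=\cosh^2(r/2)$ and $\frac{x}{x-1}=\tanh^2(r/2)=q^2$. The expansion around $q^2$ is natural because $|q|<1$ exactly on $S$. Writing $\tanh(r/2)^m\,{}_2F_1$ as a power series in $q^2$ with coefficients
\[
\frac{[a]_n[c-b]_n}{[c]_n\, n!},
\]
and using $|[c-b]_n/[c]_n|\le C_t$ (since $c-b=m+\tfrac12\mp it$ is close to $c$) and $|[a]_n|/n!\ll_t n^{-1/2}$, I get a bound of the form
\[
\ll_t \sum_n n^{-1/2}|q|^{2n}\ll (1-|q|^2)^{-1/2}.
\]

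The prefactor $|\cosh(r/2)|^{-2a}$ is harmless up to a constant depending on $t$, because $|\cosh(r/2)|^{-1}=|1-q^2|^{1/2}$ is bounded. Indeed, $|\cosh(r/2)^{-1\pm 2it}|$ is at most $|\cosh(r/2)|^{-1}e^{2|t|\cdot|\arg\cosh(r/2)|}$, and the argument is bounded by $\pi/4$ on $S$. Combining these pieces gives
\[
|\mathcal C_{\pm t,m}(r)|\ll_t \frac{|\Gamma(\frac12+m+it)|}{\Gamma(m+1)}\,|q|^m(1-|q|^2)^{-1/2}.
\]
By \eqref{eqn:Gamma_ratio}, the Gamma ratio is $\ll_t m^{-1/2}$. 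A sharper argument that keeps $|q|^{m}$ attached to each series term is also possible, but it is not needed.

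Multiplying the two factors, each term of \eqref{eq:mathcalB_def} is
\[
\ll_t D_\phi(m)\, m^{-1}|q|^{2m}(1-|q|^2)^{-1}.
\]
For the final summation I would apply partial summation with Lemma \ref{L2_coeff}, which gives $\sum_{m<M}D_\phi(m)\ll_t M$. Hence
\[
\sum_m D_\phi(m)m^{-1}|q|^{2m}\ll_t \sum_m |q|^{2m}\ll(1-|q|^2)^{-1}.
\]
This produces $(1-|q|^2)^{-2}$ overall, which is one power too many.

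So the main obstacle is to gain a factor in the single-term estimate. I would remove the wasted $(1-|q|^2)^{-1/2}$ per factor by doing the power-series bound jointly. The product $\mathcal C_{t,m}\mathcal C_{-t,m}$ equals $q^{2m}$ times a product of two series in $q^2$ whose coefficients are $O_t(n^{-1/2})$ and $O_t(m^{-1})$. Their Cauchy product has coefficients
\[
\ll_t m^{-1}\sum_{j\le n} j^{-1/2}(n-j)^{-1/2}\ll_t m^{-1},
\]
uniformly in $n$. Summing over $m$ with weights $D_\phi(m)$ then involves
\[
\sum_{m,n} D_\phi(m)m^{-1}|q|^{2m+2n}.
\]
After partial summation via Lemma \ref{L2_coeff}, this is
\[
\ll_t \sum_{m,n}|q|^{2(m+n)}\ll (1-|q|^2)^{-2}.
\]
This is still one power too many. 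To fix it, I would exploit that in the Cauchy-product coefficient the factor $\sum_{j\le n} j^{-1/2}(n-j)^{-1/2}$ is really bounded by a constant $\pi$, combined with the extra decay of $[c-b]_n/[c]_n-1$ for $n\ll m$. If that still fails, I would fall back on bounding $\mathcal B(r)$ by Cauchy--Schwarz as $\bigl(\sum D_\phi|\mathcal C_{t,m}|^2\bigr)^{1/2}\bigl(\sum D_\phi|\mathcal C_{-t,m}|^2\bigr)^{1/2}$, and showing that each factor is $\ll_t(1-|q|^2)^{-1}$ by the single-series estimate with the $m^{-1/2}$ Gamma decay.
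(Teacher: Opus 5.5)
There is a genuine gap: as written, your argument only gives $|\mathcal B(r)|\ll_t(1-|q|^2)^{-2}$, and none of the proposed repairs improves this. The loss comes from majorizing the Pfaff-transformed series ${}_2F_1(a,m+a;m+1;q^2)$ by the absolute values of its Taylor coefficients. For large $m$ this function is essentially $(1-q^2)^{-a}$, whose coefficients have modulus $\asymp_t n^{-1/2}$. Your majorant $\sum_n n^{-1/2}|q|^{2n}\asymp(1-|q|^2)^{-1/2}$ therefore discards all cancellation. For example, at $r=iy$ with $y\to\pi/2$ one has $q^2\to-1$: the function stays bounded there, while the majorant blows up. Bookkeeping of absolute coefficient bounds (the Cauchy product, the decay of $[c-b]_n/[c]_n-1$) cannot recover this, because the loss is in the majorant itself. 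The Cauchy--Schwarz fallback is vacuous. The formula \eqref{eq:calC_def} is even in $\sigma$, so $\mathcal C_{-t,m}=\mathcal C_{t,m}$, and your single-term estimate makes each factor $\ll_t(1-|q|^2)^{-2}$ again.

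What is missing is a bound on the hypergeometric factor that is $O_t(1)$ uniformly in $m$ and $r\in S$. The Euler integral you abandoned gives exactly this, because your worry about $1-xs$ is unfounded. With $x=-\sinh^2(r/2)=(1-\cosh r)/2$ and $0\le s\le1$,
\[
\Re(1-xs)=1-\tfrac{s}{2}+\tfrac{s}{2}\Re\cosh r\ \ge\ \tfrac12,
\]
since $\Re\cosh r=\cosh(\Re r)\cos(\Im r)>0$ on $S$. Hence $|(1-xs)^{-a}|\le\sqrt2\,e^{\pi|t|/2}$, and
\[
|{}_2F_1|\ll_t\frac{\Gamma(m+1)}{|\Gamma(m+\frac12\mp it)|}\int_0^1 s^{-1/2}(1-s)^{m-1/2}\,ds=\frac{\sqrt\pi\,\Gamma(m+\frac12)}{|\Gamma(m+\frac12\mp it)|}\ll_t 1.
\]
This gives $|\mathcal C_{\pm t,m}(r)|\ll_t(m+1)^{-1/2}|q|^m$, and your summation step then yields $(1-|q|^2)^{-1}$. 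The paper reaches the same per-term bound by applying Pfaff first and then Euler's integral. There, the prefactor $(1-w)^a$, which you treated as merely bounded, is paired with $(1-wu)^{-a}$ via $|1-w|\le2|1-wu|$. Incidentally, your weaker $(1-|q|^2)^{-2}$ would still suffice for the lemma's only use in Lemma~\ref{lem:H_vertical_direct}, where it is absorbed by $e^{-r^2}$, but it does not prove the statement.
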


\begin{proof}
Put $
    q=\tanh(z/2)$ and $w=q^2.$ Since $|\Im z|<\pi/2$, we have $|q|<1$.

We first record the crude radial-factor estimate
\begin{equation}\label{eq:calC_crude}
|\mathcal C_{\sigma,m}(z)|
\ll_\sigma
(m+1)^{-1/2}|q|^m,
\qquad m\geq0.
\end{equation}
Indeed, by the hypergeometric formula for \(\mathcal C_{\sigma,m}\) (see \cite[Eq. 14.3.8]{NIST}) and
Pfaff's transformation (see \cite[Eq. 15.8.1]{NIST}), with \(a=\frac12-i\sigma\), we have
\[
\mathcal C_{\sigma,m}(z)
=
\frac{|\Gamma(\frac12+m+i\sigma)|}
     {|\Gamma(\frac12+i\sigma)|\Gamma(m+1)}
q^m
(1-w)^a
{}_2F_1(a,m+a;m+1;w).
\]
Euler's integral formula (see \cite[Eq. 15.6.1]{NIST}) gives
\[
{}_2F_1(a,m+a;m+1;w)
=
\frac{\Gamma(m+1)}
     {\Gamma(m+a)\Gamma(1-a)}
\int_0^1
u^{m+a-1}(1-u)^{-a}(1-wu)^{-a}\,du.
\]
For \(|w|<1\) and \(0\leq u\leq1\),
\[
|1-wu|\geq1-u,
\qquad
|1-w|\leq2|1-wu|.
\]
Hence
\[
\left|(1-w)^a(1-wu)^{-a}\right|\ll_\sigma1.
\]
Therefore
\[
\begin{aligned}
\left|
(1-w)^a{}_2F_1(a,m+a;m+1;w)
\right|
&\ll_\sigma
\frac{\Gamma(m+1)}
     {|\Gamma(m+a)|}
\int_0^1u^{m-1/2}(1-u)^{-1/2}\,du \\
&=
\frac{\Gamma(m+1)}
     {|\Gamma(m+\frac12-i\sigma)|}
\frac{\Gamma(m+\frac12)\Gamma(\frac12)}{\Gamma(m+1)}
\ll_\sigma1.
\end{aligned}
\]
Together with Stirling's formula,
\[
\frac{|\Gamma(\frac12+m+i\sigma)|}{\Gamma(m+1)}
\ll_\sigma
(m+1)^{-1/2},
\]
this proves \eqref{eq:calC_crude}.

Using the definition \eqref{eq:mathcalB_def} and applying
\eqref{eq:calC_crude} with \(\sigma=t\) and \(\sigma=-t\), we get
\[
|\mathcal B(z)|
\ll_t
\sum_{m\geq0}
D_\phi(m)
\frac{|q|^{2m}}{m+1}.
\]
By Lemma \ref{L2_coeff}, for fixed \(t\) we have $
D_\phi(m)\ll_t m+1.$ Thus
\[
|\mathcal B(z)|
\ll_t
\sum_{m\geq0}|q|^{2m}
=
\frac{1}{1-|q|^2}.
\]
Since \(q=\tanh(z/2)\), this gives the claimed bound.
\end{proof}

\subsection{
\texorpdfstring
{Uniform Estimates for $\mathcal{B}(z)$ via Dunster's Expansions}
{Uniform Estimates for B(z) via Dunster's Expansions}
}

The trivial bound for $\mathcal{B}(z)$ in Lemma \ref{trivialB} is sufficient for the initial contour shift, but to obtain sharp bounds for $H(w)$, we will need bounds uniform in the parameters $m$ and $t$, both of which can be large. The main ingredient to achieve the desired estimates for $\mathcal{B}(z)$ will be Dunster's work \cite{Dunster_Concial} on uniform asymptotic expansions of conical functions with both parameters large. After asymptotically expanding the conical function in terms of $J$-Bessel and $K$-Bessel functions, we will approximate those special functions in terms of exponentials. With a careful analysis of the involved arguments we can thus bound $P^{-\abs{m}}_{-1/2+it}(z),$ which in turn yields sharp bounds for $\mathcal{B}(z).$

\subsection{Conical factors on shifted contours}

Fix $T_0\geq2$ sufficiently large for the appendix estimates
with $A=B=1$, and enlarge $t_0(Y)$ so that
$2t_0(Y)+2\geq T_0$. In the remaining hard range $T>2t+2$,
we therefore have $T\geq T_0$. We set
\[
\sigma_T:=\frac{\pi}{2}-\frac1T,
\qquad
\mathcal L_T^\pm:=\{x\pm i\sigma_T:x\in\mathbb R\}.
\]
We write
\[
Z=\cosh r.
\]
Thus \(r\) denotes the complexified geodesic radius, while \(Z\) denotes the
argument of the conical function. See Figure \ref{fig:r and z planes}.
\begin{figure}
	\centering
    \begin{subfigure}[b]{0.3\textwidth}
        \centering
            \includegraphics[width=0.9\linewidth]{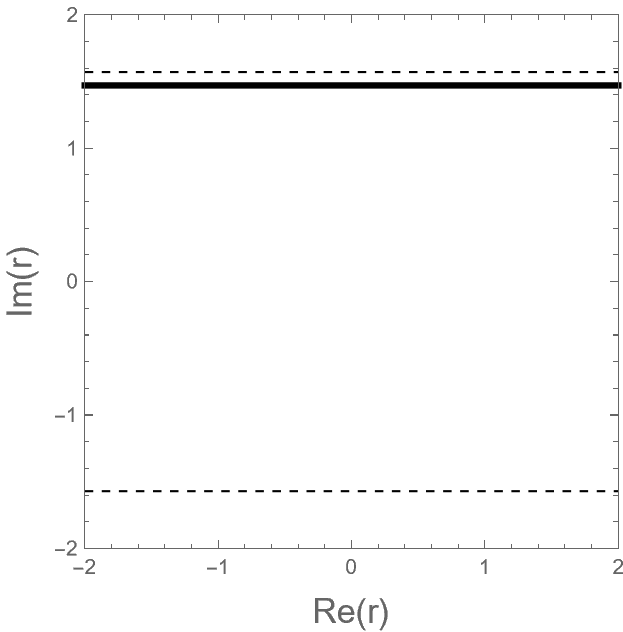}
    \end{subfigure}
    \hspace{20pt}
    \begin{subfigure}[b]{0.3\textwidth}
        \centering
            \includegraphics[width=0.9\linewidth]{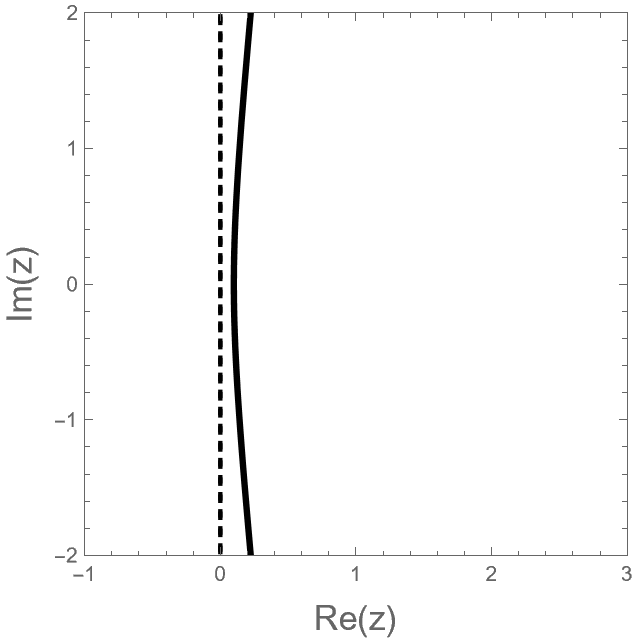}
    \end{subfigure}
    \caption{The $r$- and $z$-planes, where $z=Z=\cosh(r)$. The locus $\Im(r)=\pi/2-0.1$ is drawn in bold.}
    \label{fig:r and z planes}
\end{figure}

For the remainder of this section, we work in the range
$t\geq t_0(Y)\geq1$ and $T\geq T_0$ fixed above.
For $m\geq1$, the $J$-range is $1\leq m\leq t$, while the
$K$-range is $m>t$. The angular mode $m=0$ is treated separately
below. Bounded real and complementary spectral parameters are
covered by the reduction at the beginning of the section.

For real $m,t>0$ and $r\in\mathcal L_T^+\cup\mathcal L_T^-$,
put $Z=\cosh r$ and define
\begin{equation}\label{eq:p_R_def}
\begin{aligned}
p_{m,t}(r)&:=\sqrt{m^2-t^2\sinh^2 r} \quad \text{and} \quad 
R_{m,t}(Z)&:=
\left|\frac{mZ-p_{m,t}(r)}{mZ+p_{m,t}(r)}\right|.
\end{aligned}
\end{equation}
The square root is chosen to have positive real part. This is
well-defined on the shifted contours, since
$\Re(m^2-t^2\sinh^2 r)>0$ for $T\geq2$. The definitions allow
real $m>0$; the angular modes themselves remain integers.

For the phase functions below, put
\[
s_\alpha(Z):=\sqrt{Z^2-1-\alpha^2},
\qquad
Z_\beta:=\sqrt{1+\beta^{-2}},
\qquad
q_\beta(Z):=\sqrt{1+\beta^2-\beta^2Z^2}.
\]
The branch of $s_\alpha$ is positive for real
$Z>\sqrt{1+\alpha^2}$, and the branch of $q_\beta$ is positive
for real $0<Z<Z_\beta$. They are continued into the two open
quadrants $\Re Z>0$, $\pm\Im Z>0$. On the shifted contours,
$q_\beta$ has positive real part.

The phase functions in Lemma~\ref{lem:Dunster_mappings} are
normalized by $I_\alpha(\sqrt{1+\alpha^2})=0$ and
$U_\beta(Z_\beta)=0$. Their logarithms and inverse trigonometric
functions are continued from the corresponding real intervals
$Z>\sqrt{1+\alpha^2}$ and $1<Z<Z_\beta$, respectively.
Boundary values on a cut are always taken from the specified quadrant.

We first apply the special-function estimates with
$r=x\pm i\sigma_T$ and $x\geq0$. For $x>0$, the point
$Z=\cosh r$ lies in the upper half-plane for the plus sign and
in the lower half-plane for the minus sign. At the endpoints
$r=\pm i\sigma_T$, where $Z=\cos\sigma_T$, every branch-dependent
function $G(Z)$ is interpreted by the corresponding one-sided boundary value
\[
G^\pm(\cos\sigma_T)
:=
\lim_{x\downarrow0}G\bigl(\cosh(x\pm i\sigma_T)\bigr).
\]
The final absolute-value estimates on the full shifted contours
will follow from the symmetry $r\mapsto-r$.

\begin{proposition}[Conical factor bounds on the shifted contours]
\label{prop:C_shifted_bounds}
Let $r\in\mathcal L_T^+\cup\mathcal L_T^-$ and put $Z=\cosh r$.
Assume $T\geq T_0$, $t\geq1$, and $m\geq1$ is an integer. Then
\begin{equation}\label{eq:C_shifted_unified}
\begin{aligned}
|\mathcal C_{t,m}(r)\mathcal C_{-t,m}(r)|
\ll e^{\pi t}\frac{R_{m,t}(Z)^m}{|p_{m,t}(r)|}
\ll e^{\pi t}\frac{R_{m,t}(Z)^m}{m+t}.
\end{aligned}
\end{equation}
\end{proposition}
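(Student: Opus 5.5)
We plan to reduce the product bound to a bound on a single conical factor, $|\mathcal C_{t,m}(r)|$, since $\mathcal C_{-t,m}$ has the same Gamma normalization, and ${}_2F_1(\frac12+it,\frac12-it;m+1;\cdot)$ is symmetric in $t\leftrightarrow -t$. Hence $\mathcal C_{-t,m}(r)=\mathcal C_{t,m}(r)$ as holomorphic functions, and the left side of \eqref{eq:C_shifted_unified} equals $|\mathcal C_{t,m}(r)|^2$. It therefore suffices to prove
\[
|\mathcal C_{t,m}(r)|\ll e^{\pi t/2}\,\frac{R_{m,t}(Z)^{m/2}}{|p_{m,t}(r)|^{1/2}}
\]
for $r=x\pm i\sigma_T$ with $x\ge0$, and then extend to $x<0$ via $\mathcal C_{\sigma,m}(-r)=(-1)^m\mathcal C_{\sigma,m}(r)$. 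The second inequality in \eqref{eq:C_shifted_unified} should be elementary: on the shifted contours $-\sinh^2 r$ has real part $\ge \cos^2\sigma_T\sinh^2 x\ge0$ plus a bounded correction. So $\Re(m^2-t^2\sinh^2 r)\ge m^2+t^2\cos^2(\pi/2-1/T)\cdot(\dots)$, and one should get $|p_{m,t}(r)|\gg m+t\,|\sinh r|\gg m+t$ (using $|\sinh r|\ge|\cosh x|\sin\sigma_T\gg1$ there). We will verify this by writing $\sinh^2 r=\sinh^2x\cos^2\sigma_T-\cosh^2x\sin^2\sigma_T+i(\dots)$ and checking that $m^2-t^2\sinh^2 r$ stays in a sector $|\arg|\le\pi/2$ with modulus $\asymp m^2+t^2\cosh^2x$.

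For the first inequality we express $\mathcal C_{t,m}$ through $P^{-m}_{-1/2+it}(Z)$, continued off the real axis. We then use Lemma~\ref{lem:Dunster_mappings} (the Dunster/Liouville--Green representation from Appendix~\ref{AppendixConical}) and split into the $J$-range $1\le m\le t$ and the $K$-range $m>t$. In the $K$-range, write $\beta=t/m$. Dunster's expansion gives $P^{-m}_{-1/2+it}(Z)$ as a multiple of a modified Bessel function $I_m(m\zeta)$, times the factor $\bigl((\zeta^2\text{-Jacobian})\bigr)^{1/4}$, plus an error that is relatively small uniformly. Using the exponential approximation $I_m(m\zeta)\approx e^{m\sqrt{1+\zeta^2}}\cdots$, the exponent $m\,U_\beta(Z)$ should reduce to $\frac m2\log$ of an expression like $\frac{mZ-p}{mZ+p}$ up to $t\arctan$-type terms. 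Taking absolute values produces $R_{m,t}(Z)^{m/2}$ times $e^{t\cdot\Im(\text{angle})}$, with angle at most $\pi/2$. The Jacobian factor gives $|p_{m,t}|^{-1/2}$, and the Gamma prefactor is absorbed by Stirling's formula \eqref{eqn:Stirling}. In the $J$-range, with $\alpha=m/t$, the Bessel-$J$ representation together with $|J_\nu(w)|\ll|w|^{-1/2}e^{|\Im w|}$ (uniformly away from the turning point) gives the analogous bound. The exponent $\Im(t\,I_\alpha(Z))$ combines with the Stirling ratio $|\Gamma(\frac12+m+it)|/|\Gamma(\frac12+it)|$ to yield $e^{\pi t/2}R^{m/2}$.

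The main obstacle will be the bookkeeping of branches and phases on the contour $\Im r=\sigma_T$, near $Z=\cos\sigma_T$ and near the turning points $Z_\beta$ and $\sqrt{1+\alpha^2}$. Concretely, we must show that the real part of the Liouville--Green exponent equals exactly $\frac m2\log R_{m,t}(Z)+\frac{\pi t}{2}$ plus $O(1)$, with no loss of a factor like $e^{ct}$. We would first verify the identity
\[
\Re\bigl(\text{phase}\bigr)=\tfrac m2\log R_{m,t}(Z)+t\,\Im\bigl(\arctan\text{-term}\bigr),
\]
by differentiating in $Z$ (both sides have derivative $\asymp p_{m,t}/(Z^2-1)$) and matching at a boundary point. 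We would then bound the arctan term by $\pi/2$ using the quadrant conventions fixed above. The uniform error terms near the turning points would be taken directly from the appendix estimates with $A=B=1$, which is why $T\ge T_0$ was imposed.
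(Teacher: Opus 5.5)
You have the paper's architecture right: reduce to $|\mathcal C_{t,m}(r)|^2$ via $\mathcal C_{-t,m}=\mathcal C_{t,m}$ and $r\mapsto -r$, split at $m=t$, combine Dunster's expansion (Lemma~\ref{lem:Dunster_conical}) with an exponential form of the Bessel factor, read off $|p_{m,t}(r)|^{-1/2}$ from the Jacobian factor, and bound the phase. The second inequality is also fine: the exact identity $-\Re(\sinh^2 r)=\cos(2/T)\sinh^2x+\sin^2\sigma_T$ gives $|p_{m,t}(r)|^2\ge\Re(p_{m,t}(r)^2)\ge m^2+\cos^2(1/2)\,t^2$ at once.

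\textbf{The gap is in the $J$-range.} There you propose $|J_\nu(w)|\ll|w|^{-1/2}e^{|\Im w|}$. That is a fixed-order bound whose exponent is $t|\Im\zeta^{1/2}|$, not $t\,\varepsilon\Im I_\alpha(Z)$, and it is exponentially too weak.

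To see this, take the endpoint $r=i\sigma_T$ and let $T\to\infty$. Then $Z\to0^+$, $R_{m,t}(Z)\to1$ and $\zeta\to-c<0$, where $u=\sqrt{c+\alpha^2}$ solves $u-\alpha\,\mathrm{artanh}(\alpha/u)=\pi/2$. Since $\mathrm{artanh}\,y>y$, this forces $\pi/2<c/u<\sqrt c$ for every $\alpha>0$ (numerically $\sqrt c\approx1.84$ at $\alpha=1$). So your bound yields $e^{t\sqrt c}$. The statement demands $e^{\pi t/2}$, and the true size, by Debye's expansion of $|J_m(it\sqrt c)|=I_m(t\sqrt c)$, is indeed $e^{\pi t/2}$ up to powers of $t$. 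The loss $e^{(\sqrt c-\pi/2)t}$ is fatal, and the bound does not produce $R_{m,t}(Z)^{m/2}$ either.

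What you need is the uniform large-order form
\[
J_m(mz)=(2\pi m)^{-1/2}(1-z^2)^{-1/4}e^{-m\xi}\bigl(1+O(m^{-1})\bigr),
\]
for $z=\zeta^{1/2}/\alpha$ bounded away from $[1,\infty)$ (Lemma~\ref{lem:J_bessel_contribution}). This makes the exponent exactly $\mp itI_\alpha(Z)$. The closed form of $I_\alpha$ together with $\Im\arctan v=-\tfrac12\log\bigl|\tfrac{1+iv}{1-iv}\bigr|$ then gives $\varepsilon\Im I_\alpha=\varepsilon\Arg(Z+s_\alpha)+\tfrac\alpha2\log R_{m,t}(Z)$ with $0<\varepsilon\Arg(Z+s_\alpha)<\pi/2$ (Lemma~\ref{lem:J_exponential_ratio}). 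In this range the ratio $|\Gamma(\frac12+m+it)|/|\Gamma(\frac12+it)|$ cancels Dunster's prefactor exactly. So the whole factor $e^{\pi t/2}$ comes from $\Arg(Z+s_\alpha)$, not from the Gamma normalizations.

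\textbf{In the $K$-range}, Lemma~\ref{lem:Dunster_conical}(K) is an expansion in $K_{it}(m\eta^{1/2})$, not in $I_m(m\zeta)$, and the bookkeeping of the factors $e^{\pm\pi t/2}$ depends on this. After $|\Gamma(\frac12+m+it)|$ cancels, $\mathcal C_{t,m}$ retains $1/|\Gamma(\frac12+it)|\asymp e^{\pi t/2}$. This is cancelled exactly by the factor $e^{-\pi t/2}$ in the exponential form of $K_{it}$ (Lemma~\ref{KBesselExp}). The remaining $e^{\pi t/2}R_{m,t}(Z)^{m/2}$ comes from $-\Re U_\beta=\tfrac12\log R_{m,t}(Z)+\beta\Re\arctan\bigl(q_\beta/(\beta Z)\bigr)$. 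The arctangent's argument lies in the right half-plane by Lemma~\ref{lem:sqrt_geometry}, so its real part is at most $\pi/2$.

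Your phase heuristic in this range matches that. With an $I_m$-based expansion, however, you would have to supply yourself a uniform expansion valid along the whole contour, together with its normalizing constants. Saying the prefactor is ``absorbed by Stirling'' does not rule out a discrepancy of the form $e^{ct}$, and any such discrepancy would again destroy the statement.
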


The mode $m=0$ is estimated separately below. The proof of Proposition \ref{prop:C_shifted_bounds} is given at the end of this subsection, after the necessary uniform expansion of conical functions due to Dunster, and some estimates for Bessel functions.

\begin{lemma}[{\cite[eqs.~3.1 and 4.14]{Dunster_Concial} and Appendix \ref{AppendixConical}}]
\label{lem:Dunster_conical}
Fix $A,B>0$. Let $m\geq1$, $t\geq1$, and
$T\geq T_0(A,B)$, where $T_0(A,B)$ is sufficiently large.
Let $r=x\pm i\sigma_T$ with $x\geq0$, and put $Z=\cosh r$.

\begin{enumerate}
\item[\textup{(J)}]
If \(\alpha=m/t\le A\), then
\[
P^{-m}_{-1/2+it}(Z)
=
\frac{|\Gamma(\frac12+it)|}{|\Gamma(\frac12+m+it)|}
\left(\frac{\zeta-\alpha^2}{Z^2-1-\alpha^2}\right)^{1/4}
J_m(t\zeta^{1/2})
\left(1+O_A(t^{-1})\right).
\]

\item[\textup{(K)}]
If \(\beta=t/m\le B\), then
\[
P^{-m}_{-1/2+it}(Z)
=
\left(\frac2\pi\right)^{1/2}
\frac{1}{|\Gamma(\frac12+m+it)|}
\left(\frac{\eta-\beta^2}{1+\beta^2-\beta^2Z^2}\right)^{1/4}
K_{it}(m\eta^{1/2})
\left(1+O_B(m^{-1})\right).
\]
\end{enumerate}
The parameters \(\zeta\) and \(\eta\) are defined by the integral
relations below. The lower-contour estimates follow from the
upper-contour estimates by complex conjugation.
\end{lemma}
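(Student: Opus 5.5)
The plan is to derive both parts from Dunster's uniform Liouville--Green expansions of conical functions \cite{Dunster_Concial}, as the citation indicates. The paper's real contribution here is the error control on the shifted contours $\mathcal L_T^\pm$, which Dunster states only on real or restricted domains.

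\textbf{Starting point.} Associated Legendre functions of the first kind satisfy the Legendre equation
\[
(1-Z^2)w''-2Zw'+\bigl(\nu(\nu+1)-m^2/(1-Z^2)\bigr)w=0, \qquad \nu=-\tfrac12+it .
\]
I will rewrite it in Liouville normal form with a large parameter. There are two regimes:
\begin{itemize}
\item $u=t$ with $\alpha=m/t\le A$ in case (J);
\item $u=m$ with $\beta=t/m\le B$ in case (K).
\end{itemize}
In case (J) the transformed equation has a turning point at $Z=\sqrt{1+\alpha^2}$ and a double pole at $Z=1$. Near the pole, the comparison equation is Bessel's equation of order $m=\alpha u$. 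The change of variable $\zeta(Z)$ is defined by equating $\int\sqrt{\zeta-\alpha^2}\,d\zeta/(2\zeta)$ with $\int s_\alpha(Z)\,dZ/(Z^2-1)$, normalized at the turning point. Case (K) is analogous, with $\eta(Z)$ built from $q_\beta$ and the turning point $Z_\beta$, and the comparison solution is $K_{it}(m\eta^{1/2})$.

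\textbf{Steps.}
\begin{enumerate}
\item Invoke Dunster's theorem (eqs.~3.1 and 4.14 of \cite{Dunster_Concial}). It gives $P^{-m}_{-1/2+it}(Z)$ as a constant multiple of $\bigl((\zeta-\alpha^2)/(Z^2-1-\alpha^2)\bigr)^{1/4}J_m(t\zeta^{1/2})$ times $(1+\varepsilon)$. The constant is fixed by matching the recessive behaviour $\sim Z$-power$/m!$ as $Z\to1^+$. This matching produces the Gamma ratio $|\Gamma(\tfrac12+it)|/|\Gamma(\tfrac12+m+it)|$ in (J) and $(2/\pi)^{1/2}/|\Gamma(\tfrac12+m+it)|$ in (K), by comparison with the small-argument Bessel asymptotics.
\item Write the error $\varepsilon$ in Olver's form, bounded by $\exp(\mathcal V(F)/u)-1$. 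Here $\mathcal V(F)$ is the variation of the error-control function $F$ along a progressive path from $Z=1$ to $Z$.
\item Show that for $Z=\cosh(x\pm i\sigma_T)$, $x\ge0$, there exist progressive paths, with $\Re$ of the Bessel phase monotone along them, on which $\mathcal V(F)$ is bounded uniformly in $\alpha\le A$ (or $\beta\le B$) and in $T\ge T_0$. This gives the $1+O_A(t^{-1})$ and $1+O_B(m^{-1})$ factors.
\item Obtain the lower contour from the upper one by complex conjugation, using that $P^{-m}_{-1/2+it}$ is real on $(1,\infty)$ and the Schwarz reflection principle.
\end{enumerate}

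\textbf{Main obstacle.} Step~3 is the hard part, and it is what Appendix~\ref{AppendixConical} must do. As $T\to\infty$ the contour approaches $\Im r=\pi/2$, that is, $Z$ approaches the imaginary axis and passes near $Z=0$, where the equation has no singularity but the branch choices for $s_\alpha$ and $q_\beta$ matter. One must verify three things:
\begin{itemize}
\item the maps $\zeta$ and $\eta$ stay conformal and single-valued on the quadrant;
\item $F$ is integrable at infinity and at the turning point uniformly in the parameters;
\item progressive paths exist all the way to the contour.
\end{itemize}
I would first parametrize the paths by the image curves in the $\zeta$-plane. I would then check monotonicity directly, using the explicit closed forms of $\zeta$ and $\eta$ (logarithms and arctangents). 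The $T_0(A,B)$ threshold absorbs the compact-parameter cases where uniformity is obtained only by continuity.
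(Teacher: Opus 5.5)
Your architecture is the paper's: Dunster's expansions \cite[eqs.~3.1, 4.14]{Dunster_Concial}, constants fixed at the recessive end $Z\to1^+$, Olver-type error bounds through total variation along progressive paths, and a uniform variation estimate on $\mathcal L_T^\pm$. The gap is in Step~3. You correctly call it the crux, but you propose to settle it by checking monotonicity ``directly, using the explicit closed forms of $\zeta$ and $\eta$,'' and that cannot work as stated.

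In the $J$-range, ``progressive'' in Boyd--Dunster's Theorem~3 means that $\Im\Phi^{(0)}_m(t\zeta^{1/2})$ is non-increasing (the imaginary part, not the real part). Here $\Phi^{(0)}_m(w)=\int_{X_{\hat m}}^{w}(s^2-X_{\hat m}^2)^{1/2}\,ds/s$ is built on $X_{\hat m}$, the least zero of $J_{\hat m}+Y_{\hat m}$ at a shifted order $\hat m$, and not on $m$. The paper's path runs from $0$ up to $i\sigma_T$ in the $u$-plane, then horizontally to $x+i\sigma_T$. Along it, $d\,\Im\Phi^{(0)}_m/ds=t\,\Im(F_1F_2)$, with $F_1=\sqrt{\zeta-\hat\alpha^2}/\sqrt{\zeta-\alpha^2}$, $F_2=\sqrt{\sinh^2u-\alpha^2}/\sinh u$, and $\hat\alpha=X_{\hat m}/t$.
\begin{itemize}
\item $F_2$ has argument in $(-\pi,-\pi/2)$, but the argument tends to $-\pi$ at $x=0$, and $\Im F_2$ decays like $e^{-2x}$ as $x\to\infty$.
\item $\Im F_1$ decays only polynomially, and it has the sign of $\hat\alpha-\alpha$.
\end{itemize}
So there is no slack. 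Monotonicity holds because $\hat\alpha>\alpha$, and it would fail for large $x$ otherwise. The inequality $X_{\hat\nu}>\nu$ for all $\nu\ge0$ is a separate, nontrivial fact about zeros of cylinder functions. The paper proves it in Lemma~\ref{lem:lower bound of zero of cylinder ftn} from Watson's formula for $dX_\nu/d\nu$, and nothing in your outline supplies it. The $K$-range is analogous: there one needs monotonicity of $\Re\Phi^{(0)}_\tau(m\eta^{1/2})$, built on a zero $X_{\hat\tau}$, and the last piece of the path runs along one of its level curves.

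Two further points need repair.
\begin{itemize}
\item With Bessel comparison functions, the Boyd--Dunster bound controls the error relative to the envelope $E_m^{(0)}(t\zeta^{1/2})^{-1}M_m^{(\pm1)}(t\zeta^{1/2})$, not relative to $J_m(t\zeta^{1/2})$. Your ``relative error $\le\exp(\mathcal V/u)-1$'' skips this step. Getting the factor $1+O_A(t^{-1})$ requires $|E^{-1}M|\ll|J_m|$ on the contour, which the paper takes from \cite[(5.19)]{BoydDunster_EstimateLegendra}.
\item Uniformity as $\alpha\to0$ or $\beta\to0$ does not follow from compactness alone, and the threshold $T_0$ only serves the path geometry. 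In these limits the turning point merges with the double pole at $\zeta=0$ (resp.\ $\eta=0$), and in the $K$-range the path cannot be kept away from it. The two terms of $\psi$ (resp.\ $\hat\psi$) are then individually of size $|\zeta|^{-1}$, which against the weight $|\zeta-\alpha^2|^{-1/2}$ is not integrable. The paper exhibits their cancellation via $\zeta=2(Z-1)+O_A(\alpha^2|Z-1|+|Z-1|^2)$ (resp.\ $\xi=\eta+O_B(\beta^2|\eta|+|\eta|^2)$), which gives $\psi,\hat\psi=O(1)$. It then controls the level-curve segment by $|d\eta|\le\sqrt2\,|d\rho|$, reducing the variation to $\int_0^{R_B}\rho^{-1/2}\,d\rho$.
\end{itemize}

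Two minor corrections to Step~1. Dunster's constant is the elementary $(t^2+m^2)^{-m/2}e^{m-t\arctan(m/t)}$, which becomes the Gamma ratio only up to $1+O_A(t^{-1})$ via Stirling. In (K), the matching at $Z\to1^+$ uses the large-argument behaviour of $K_{it}$, not small-argument asymptotics, since $\eta\to+\infty$ there.
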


\begin{lemma}[Phase functions]
\label{lem:Dunster_mappings}
Let \(\alpha,\beta>0\).

For the \(J\)-range, define
\[
I_\alpha(Z)
:=
\int_{\sqrt{1+\alpha^2}}^Z
\frac{(u^2-1-\alpha^2)^{1/2}}{u^2-1}\,du.
\]
Then \(\zeta\) is determined by
\[
\int_{\alpha^2}^{\zeta}
\frac{(\xi-\alpha^2)^{1/2}}{2\xi}\,d\xi
=
I_\alpha(Z).
\]
Moreover, on the chosen branch,
\[
I_\alpha(Z)
=
\log\bigl(s_\alpha(Z)+Z\bigr)
-\frac12\log(1+\alpha^2)
-\alpha\arctan\left(\frac{s_\alpha(Z)}{\alpha Z}\right),
\]
where \(s_\alpha(Z)=\sqrt{Z^2-1-\alpha^2}\).

For the \(K\)-range, put $
\xi=(Z^2-1)^{-1}$
and define
\[
U_\beta(Z)
:=
\int_{\beta^2}^{\xi}
\frac{(s-\beta^2)^{1/2}}{2s(1+s)^{1/2}}\,ds.
\]
Then the variable \(\eta\) is determined by
\begin{equation}\label{UBeta}
\int_{\beta^2}^{\eta}
\frac{(s-\beta^2)^{1/2}}{2s}\,ds
=
U_\beta(Z).
\end{equation}
Finally, on the chosen branch,
\begin{equation}\label{eq:Ubeta_explicit}
U_\beta(Z)
=
\frac12
\log\left(\frac{Z+q_\beta(Z)}{Z-q_\beta(Z)}\right)
-
\beta\arctan\left(\frac{q_\beta(Z)}{\beta Z}\right).
\end{equation}
\end{lemma}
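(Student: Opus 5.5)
The plan is to verify the two closed forms by differentiation. Each proposed expression will be shown to have the correct derivative in $Z$ and to vanish at the base point. The identities then follow on the real interval and extend to the quadrants by analytic continuation. The statements about $\zeta$ and $\eta$ are handled the same way, once the left-hand integrals are written in closed form.

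\emph{The $J$-range.} Put $s=s_\alpha(Z)$, so that $ds/dZ=Z/s$. First, $\frac{d}{dZ}\log(s+Z)=\frac{Z/s+1}{s+Z}=\frac1s$. Next, set $w=s/(\alpha Z)$. Then
\[
w'=\frac{Z^2-s^2}{\alpha Z^2 s}=\frac{1+\alpha^2}{\alpha Z^2 s},
\qquad
1+w^2=\frac{(1+\alpha^2)(Z^2-1)}{\alpha^2Z^2}.
\]
Hence $\frac{d}{dZ}\arctan w=\frac{\alpha}{(Z^2-1)s}$. Combining the two pieces, the derivative of the right-hand side is
\[
\frac1s-\frac{\alpha^2}{(Z^2-1)s}=\frac{Z^2-1-\alpha^2}{(Z^2-1)s}=\frac{s_\alpha(Z)}{Z^2-1},
\]
which is the integrand of $I_\alpha$. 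At $Z=\sqrt{1+\alpha^2}$ we have $s=0$, so the expression equals $\log\sqrt{1+\alpha^2}-\tfrac12\log(1+\alpha^2)=0$, matching $I_\alpha(\sqrt{1+\alpha^2})=0$.

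The left-hand integral in the definition of $\zeta$ has the elementary antiderivative $\sqrt{\xi-\alpha^2}-\alpha\arctan\bigl(\sqrt{\xi-\alpha^2}/\alpha\bigr)$. This function is strictly monotone on $(\alpha^2,\infty)$ and has a nonvanishing derivative away from $\xi=0$ and $\xi=\alpha^2$. It is locally conformal, with a square-root-type branch point at $\alpha^2$ matching the one of $I_\alpha$ at $\sqrt{1+\alpha^2}$. So $\zeta$ is well defined, first on the real interval and then by continuation.

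\emph{The $K$-range.} I would change variables $\xi=(Z^2-1)^{-1}$, so that $d\xi/dZ=-2Z/(Z^2-1)^2$. In terms of $Z$,
\[
(\xi-\beta^2)^{1/2}=\frac{q_\beta(Z)}{\sqrt{Z^2-1}},
\qquad
(1+\xi)^{1/2}=\frac{Z}{\sqrt{Z^2-1}}.
\]
Substituting, $\frac{d}{dZ}U_\beta=-\frac{q_\beta(Z)}{Z(Z^2-1)}$. Now differentiate the right-hand side of \eqref{eq:Ubeta_explicit}, using $q'=-\beta^2Z/q$.
\begin{itemize}
\item For the logarithm, a short computation gives $\frac{Zq'-q}{Z^2-q^2}$. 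Since $Z^2-q^2=(1+\beta^2)(Z^2-1)$ and $Zq'-q=-(1+\beta^2)/q$, this term equals $-\frac{1}{q(Z^2-1)}$.
\item For the arctangent term, the same algebra as in the $J$-range gives $\frac{d}{dZ}\arctan\frac{q}{\beta Z}=-\frac{\beta}{qZ^2(Z^2-1)}\cdot\frac{\ldots}{\ldots}$. I will carry this out explicitly and check that the sum equals $-q/(Z(Z^2-1))$; this amounts to the identity $Z^2-\beta^2=\ldots$ from $q^2=1+\beta^2-\beta^2Z^2$.
\end{itemize}
At $Z=Z_\beta$ we have $q=0$, so both terms vanish, matching $U_\beta(Z_\beta)=0$. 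The equation \eqref{UBeta} for $\eta$ is inverted exactly as in the $J$-range.

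\emph{Branches.} The identities first hold on the real intervals $Z>\sqrt{1+\alpha^2}$ and $1<Z<Z_\beta$. There every branch is the principal real one and the arguments of $\log$ and $\arctan$ stay in their natural domains. I then continue both sides into each open quadrant $\Re Z>0$, $\pm\Im Z>0$. Each quadrant is simply connected and avoids the singularities $Z=\pm1$ and the branch points. By the convention fixed before the lemma, both sides are continued along the same paths, so they are holomorphic there and agree on a set with an accumulation point. The identity theorem then gives equality, and boundary values follow by taking one-sided limits.

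The main obstacle is this branch bookkeeping. One must make sure the continued $\log$ and $\arctan$ never cross their cuts inside a quadrant, in particular near $Z=1$ and $Z=0$. I would check this by noting that $s_\alpha+Z$ and $(Z+q)/(Z-q)$ stay off $(-\infty,0]$, and that $s/(\alpha Z)$ and $q/(\beta Z)$ avoid $\pm i[1,\infty)$ in each quadrant.
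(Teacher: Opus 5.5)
Your $J$-range verification is correct and is exactly the paper's argument. The gap is in the $K$-range. You leave that half unfinished, and the target derivative you wrote down is wrong, so the check you propose would fail.

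At $s=\xi=(Z^2-1)^{-1}$ the integrand of $U_\beta$ equals
\[
\frac{(\xi-\beta^2)^{1/2}}{2\xi(1+\xi)^{1/2}}
=\frac{q_\beta(Z)\,(Z^2-1)}{2Z}.
\]
Multiplying by $d\xi/dZ=-2Z/(Z^2-1)^2$ gives $\frac{d}{dZ}U_\beta(Z)=-q_\beta(Z)/(Z^2-1)$. This has no factor $1/Z$, unlike your $-q_\beta(Z)/(Z(Z^2-1))$, so matching against your target would be off by a factor of $Z$.

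Your logarithmic term $-1/(q(Z^2-1))$ is right. For the arctangent, put $w=q/(\beta Z)$. Then $w'=(Zq'-q)/(\beta Z^2)=-(1+\beta^2)/(\beta qZ^2)$ and $1+w^2=(1+\beta^2)/(\beta^2Z^2)$, so $\frac{d}{dZ}\arctan w=-\beta/q$. Hence the derivative of the right-hand side of \eqref{eq:Ubeta_explicit} is
\[
-\frac{1}{q(Z^2-1)}+\frac{\beta^2}{q}
=\frac{\beta^2Z^2-1-\beta^2}{q(Z^2-1)}
=-\frac{q}{Z^2-1},
\]
using $q^2=1+\beta^2-\beta^2Z^2$. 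This matches the corrected $dU_\beta/dZ$. Together with your base-point check at $Z=Z_\beta$, it completes the $K$-range.

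For comparison, the paper derives \eqref{eq:Ubeta_explicit} rather than verifying it by differentiation. The substitution $v=((s-\beta^2)/(1+s))^{1/2}$ turns the integrand into $\frac{1}{1-v^2}-\frac{\beta^2}{\beta^2+v^2}$. This gives $U_\beta=\operatorname{arctanh}v-\beta\arctan(v/\beta)$ with $v=q_\beta(Z)/Z$ at the endpoint, and $\operatorname{arctanh}(q/Z)=\frac12\log\frac{Z+q}{Z-q}$. That avoids guessing the antiderivative, but either route works once the algebra is right.

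On branches, the bookkeeping is easier than you fear. In the open quadrants:
\begin{itemize}
\item $s_\alpha+Z$ and $Z\pm q_\beta$ never vanish;
\item $s_\alpha/(\alpha Z)=\pm i$ forces $Z^2=1$, which is excluded;
\item $q_\beta/(\beta Z)=\pm i$ is impossible.
\end{itemize}
So all continuations are automatically holomorphic there. However, the real base intervals lie on the boundary of the quadrants, not inside them, so the identity theorem as you state it does not directly apply. Instead, note that both sides have the same derivative on each quadrant, hence differ by a constant, and the constant vanishes by taking one-sided limits to the real interval.
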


\begin{proof}
The formula for \(I_\alpha\) is checked by differentiating the right-hand side and noting that both sides vanish at \(Z=\sqrt{1+\alpha^2}\). For \(U_\beta\), use the substitution
\[
v=\left(\frac{s-\beta^2}{1+s}\right)^{1/2}.
\]
Then
\[
s=\frac{\beta^2+v^2}{1-v^2},
\qquad
ds=\frac{2v(1+\beta^2)}{(1-v^2)^2}\,dv,
\]
and hence
\[
\frac{(s-\beta^2)^{1/2}}{2s(1+s)^{1/2}}\,ds
=
\left(\frac{1}{1-v^2}
-
\frac{\beta^2}{\beta^2+v^2}\right)\,dv.
\]
We can write
\[
U_\beta(Z)
=
\operatorname{arctanh}v
-
\beta\arctan\left(\frac v\beta\right).
\]
At the endpoint \(s=\xi=(Z^2-1)^{-1}\), one has
\[
v^2
=
\frac{\xi-\beta^2}{1+\xi}
=
\left(\frac{q_\beta(Z)}{Z}\right)^2.
\]
Using the branch for which \(v=q_\beta(Z)/Z\) on \(1<Z<Z_\beta\), and then
continuing along the chosen contour, gives \eqref{eq:Ubeta_explicit}.
\end{proof}

Now that we have approximated the conical functions in terms of $J$-Bessel and $K$-Bessel functions, we will give sharp upper bounds for those special functions in terms of exponentials. We are estimating the special functions on a contour that avoids the transition point, where such expansions would otherwise become problematic.

\begin{lemma}[\(J\)-Bessel contribution]
\label{lem:J_bessel_contribution}
Let $m\geq1$, $t\geq1$, and $\alpha=m/t\in(0,A]$.
Assume $T\geq T_0(A)$, let $r=x\pm i\sigma_T$ with $x\geq0$,
and put $Z=\cosh r$. Then
\[
\left|
\left(\frac{\zeta-\alpha^2}{Z^2-1-\alpha^2}\right)^{1/4}
J_m(t\zeta^{1/2})
\right|
\ll_A
\frac{1}{\sqrt m}
\frac{\alpha^{1/2}}{|Z^2-1-\alpha^2|^{1/4}}
\left|\exp(\mp i t I_\alpha(Z))\right|,
\]
where the minus sign is used for $r=x+i\sigma_T$, and the plus sign
for $r=x-i\sigma_T$.
\end{lemma}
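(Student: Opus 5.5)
The plan is to bound $J_m(t\zeta^{1/2})$ by a uniform large-order exponential estimate for the Bessel function. We then match the exponent with $t I_\alpha(Z)$ through the defining relation for $\zeta$ in Lemma~\ref{lem:Dunster_mappings}. Write $\nu=m$ and $t\zeta^{1/2}=m\,w$ with $w=\zeta^{1/2}/\alpha$, so that $J_m(t\zeta^{1/2})=J_\nu(\nu w)$. The standard Debye-type uniform bound (Olver's uniform expansion in the $\zeta$-variable, equivalently the Liouville--Green form away from the turning point $w=1$) gives
\[
|J_\nu(\nu w)|\ll \frac{1}{\sqrt{\nu}}\,\frac{1}{|1-w^2|^{1/4}}\,\bigl|\exp(\pm i\nu(\sqrt{w^2-1}-\operatorname{arcsec} w))\bigr|,
\]
uniformly for $w$ in a closed sector avoiding a neighbourhood of $w=\pm1$ and of the negative axis. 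Near $w=1$ one uses the Airy form instead, and the same right-hand side remains an upper bound up to constants; this holds uniformly for large $\nu$ and, by compactness, for all $\nu\geq1$. We keep the exponential with the sign that dominates in the relevant half-plane: for $\Im w$ of one sign, $J_\nu$ is comparable to the Hankel function $H^{(1)}$ or $H^{(2)}$ that grows there.

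Next we identify the exponent. Substituting $\xi=\alpha^2 v^2$ in $\int_{\alpha^2}^{\zeta}\frac{(\xi-\alpha^2)^{1/2}}{2\xi}d\xi$ gives
\[
\alpha\int_1^{w}\frac{\sqrt{v^2-1}}{v}\,dv=\alpha\bigl(\sqrt{w^2-1}-\operatorname{arcsec} w\bigr).
\]
Hence $t\alpha(\sqrt{w^2-1}-\operatorname{arcsec}w)=tI_\alpha(Z)$, and the exponential factor becomes $|\exp(\mp itI_\alpha(Z))|$. The sign is fixed by checking which Hankel branch dominates when $Z=\cosh(x+i\sigma_T)$ lies in the upper half-plane. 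Concretely, we trace the image of $\mathcal L_T^+$ under $Z\mapsto\zeta$ starting from the real-axis normalization and verify $\Im(tI_\alpha)\geq 0$ on the growing side.

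For the prefactor, note $(\zeta-\alpha^2)^{1/4}=\alpha^{1/2}(w^2-1)^{1/4}$. This cancels the factor $|1-w^2|^{-1/4}$ from the Bessel bound, leaving
\[
\frac{1}{\sqrt m}\,\frac{\alpha^{1/2}}{|Z^2-1-\alpha^2|^{1/4}},
\]
which is the claimed bound. Since the $(w^2-1)^{1/4}$ factors cancel, the estimate is insensitive to whether $w$ is near the turning point. The Airy-region bound $|J_\nu(\nu w)|\ll\nu^{-1/3}$ together with $|w^2-1|\ll\nu^{-2/3}$ is consistent with the same form, because $\zeta_{\rm Olver}^{1/4}\mathrm{Ai}(\nu^{2/3}\cdot)$ is bounded.

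The main obstacle is uniformity of the complex-domain Bessel bound along the whole image curve. One has to show that as $x$ ranges over $[0,\infty)$ with $\Im r=\sigma_T$, the point $w$ stays in a region where Olver's expansion has uniformly bounded error (away from the negative real axis and from $w=0$ issues), uniformly in $\alpha\le A$ and $T\geq T_0(A)$. This requires the branch analysis of $\zeta(Z)$ near $Z=\cos\sigma_T\approx 1/T$ and as $x\to\infty$. I would handle it using Appendix~\ref{AppendixConical} and the Dunster--Gil--Segura error bounds, taking $T_0(A)$ large so that the contour stays inside the admissible domain.
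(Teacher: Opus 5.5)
Your outline agrees with the paper's proof in its essentials. You use a large-order exponential approximation of $J_m(t\zeta^{1/2})=J_m(mw)$ with $w=\zeta^{1/2}/\alpha$. You identify the exponent with $tI_\alpha(Z)$ via the substitution $\xi=\alpha^2v^2$. You cancel $|\zeta-\alpha^2|^{1/4}=\alpha^{1/2}|w^2-1|^{1/4}$ against the Liouville--Green prefactor.

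The real difference is how the exponential is produced. The paper uses the Dunster--Gil--Segura one-term form $J_m(mw)=(2\pi m)^{-1/2}(1-w^2)^{-1/4}e^{-m\xi}(1+O(m^{-1}))$ (Lemma~\ref{lem:J-Bessel by Exp}). This form is valid on the whole right half-plane away from $[1,\infty)$, \emph{including} the eye-shaped region around $(0,1)$. Continuing the branch therefore gives exactly $\exp(\mp itI_\alpha(Z))$, with no condition on the sign of $\Im I_\alpha$. You instead bound $|J_\nu|\le\frac12(|H^{(1)}_\nu|+|H^{(2)}_\nu|)$ via Debye bounds, which yields $e^{t|\Im I_\alpha(Z)|}$. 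This equals the claimed $|\exp(\mp itI_\alpha(Z))|$ only if $\pm\Im I_\alpha(Z)\ge0$ on $\mathcal L_T^{\pm}$.

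Your reason for that sign is false in general. You argue that $J_\nu$ is comparable to whichever Hankel function grows in the given half-plane. But inside the eye (bounded by the curve through $1$ and $\pm0.6627i$), $J_\nu(\nu w)$ is exponentially smaller than both Hankel functions. There $\Im I_\alpha<0$ on the upper side, so your bound would produce the wrong exponential. So the sign check is a genuine geometric condition, namely that the image of $\mathcal L_T^+$ avoids the eye, not a branch convention. It is the step your proposal leaves unproved.

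It does hold for $T\ge T_0(A)$:
\begin{itemize}
\item Write $Z=\epsilon+iy$ with $\epsilon=\cosh x\sin(1/T)$, and take $s=s_\alpha(Z)$ in the first quadrant. As in Lemma~\ref{lem:J_exponential_ratio}, $\Im I_\alpha(Z)=\Arg(Z+s)+\frac{\alpha}{2}\log R_{m,t}(Z)$.
\item From $s^2=Z^2-1-\alpha^2$ one gets $\Im s\gg\cosh x+\alpha$ and $\Re s\le 2\epsilon$.
\item Hence $\frac{\pi}{2}-\Arg(Z+s)=O(1/T)$ and $1-R_{m,t}(Z)^2\le 4\alpha\epsilon(1+\alpha^2)/(\Im s)^3=O(\alpha/T)$.
\item Therefore $\Im I_\alpha(Z)\ge\frac{\pi}{2}-O_A(1/T)>0$.
\end{itemize}
The same lower bound keeps $w$ a fixed distance from the turning point $w=1$ (where $I_\alpha=0$) and from the eye. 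So your Airy-region discussion is unnecessary, and the uniformity you flag as the main obstacle reduces to standard Debye bounds for $H^{(1)}_\nu,H^{(2)}_\nu$ on that region. With this inserted, your route works. It trades the paper's single continued exponential, which needs only that $w$ avoids $[1,\infty)$, for Hankel bounds plus this extra geometric check.
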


\begin{proof}

Recall that \(t=m/\alpha\). We use the exponential form of the
large-order \(J\)-Bessel expansion with simplified error terms by \cite[Eq. 5.19]{DunsterGilSegura_BesselbyAiry} (see also Lemma \ref{lem:J-Bessel by Exp}):
\begin{equation}\label{eq:bessel_debye}J_m\left(\frac{m}{\alpha}\zeta^{1/2}\right) = \left(\frac{\alpha^2}{\alpha^2-\zeta}\right)^{1/4} \frac{1}{\sqrt{2\pi m}} \exp\left(-m \int_{\zeta^{1/2}/\alpha}^1\frac{\sqrt{1-u^2}}{u} du\right)(1+O_A(m^{-1})).\end{equation}
We focus on the integral in the exponent. Using the substitution $\xi = \alpha^2 u^2$, we get
\begin{equation}\label{aftersubstitution}
    -\int_{\zeta^{1/2}/\alpha}^1 \frac{\sqrt{1-u^2}}{u} \, du =  \frac{1}{\alpha}\int^{\zeta}_{\alpha^2} \frac{\sqrt{\alpha^2-\xi}}{2\xi} \, d\xi.
\end{equation}

For $x>0$, the upper and lower continuations give
$\sqrt{\alpha^2-\xi}=\mp i\sqrt{\xi-\alpha^2}$, respectively.
Thus \eqref{aftersubstitution} equals
\begin{equation}
\mp \frac{i}{\alpha} \int_{\alpha^2}^{\zeta} \frac{\sqrt{\xi-\alpha^2}}{2\xi} \, d\xi = \mp \frac{i}{\alpha}I_{\alpha}(Z).
\end{equation}

Substituting this result back into the exponent of \eqref{eq:bessel_debye}, we get $\exp(\mp i \frac{m}{\alpha} I_{\alpha}(Z))$. The final result follows upon taking into account the pre-factor $\left(\frac{\zeta-\alpha^2}{Z^2-1-\alpha^2}\right)^{1/4}$, taking into account the error and putting absolute values.

\end{proof}

\begin{lemma}[\(K\)-Bessel contribution]
\label{lem:K_bessel_contribution}
Let $m\geq1$, $t\geq1$, and $\beta=t/m\in(0,1]$.
Let $r=x\pm i\sigma_T$ with $x\geq0$, and put $Z=\cosh r$.
We have
\[
\left|
\left(\frac{\eta-\beta^2}{1+\beta^2-\beta^2Z^2}\right)^{1/4}
K_{it}(m\eta^{1/2})
\right|
\ll
\frac{e^{-\pi t/2}}{\sqrt t}
\frac{\beta^{1/2}}{|1+\beta^2-\beta^2Z^2|^{1/4}}
|\exp(-mU_\beta(Z))|.
\]
\end{lemma}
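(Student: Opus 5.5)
The argument mirrors Lemma~\ref{lem:J_bessel_contribution}, with the Debye-type exponential expansion of $J_m$ replaced by the analogous exponential form of $K_{it}$ of large argument and imaginary order. Write $K_{it}(m\eta^{1/2})=K_{i m\beta}(m\eta^{1/2})$. The order is $\nu=i m\beta$ and the argument is $m\eta^{1/2}$, and the turning point sits at $\eta=\beta^2$. Dunster's expansion of the $K$-Bessel function of purely imaginary order in terms of exponentials (\cite{Dunster_K_Bessel_by_Exp}, cf.\ the appendix) gives, on the side of the turning point away from the oscillatory region,
\[
K_{im\beta}\bigl(m\eta^{1/2}\bigr)
=\left(\frac{\pi}{2m}\right)^{1/2}
e^{-\pi m\beta/2}
\left(\frac{\beta^2}{\eta-\beta^2}\right)^{1/4}
\exp\!\left(-m\int_{\beta^2}^{\eta}\frac{(s-\beta^2)^{1/2}}{2s}\,ds\right)\bigl(1+O(m^{-1})\bigr).
\]
This holds uniformly for $\beta\in(0,1]$ and for $\eta$ in the region swept out by the shifted contour. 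The factor $e^{-\pi m\beta/2}=e^{-\pi t/2}$ is the usual normalisation of $K_{i\nu}$; it is visible from the large-argument asymptotics $K_{i\nu}(x)\sim\sqrt{\pi/2x}\,e^{-x}$ once one matches across the turning point with the connection coefficients of the Liouville--Green solutions.

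The steps, in order, are as follows.

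\begin{enumerate}
\item Insert this expansion and cancel the $(\eta-\beta^2)^{1/4}$ factors. What remains is
\[
(\pi/2m)^{1/2}\,\beta^{1/2}\,e^{-\pi t/2}\,\bigl(1+\beta^2-\beta^2Z^2\bigr)^{-1/4}\exp\bigl(-m\,(\cdots)\bigr).
\]
\item Use the defining relation \eqref{UBeta} to identify the exponent integral with $U_\beta(Z)$, giving $\exp(-mU_\beta(Z))$.
\item Observe that $m^{-1/2}=\beta^{1/2}t^{-1/2}\le t^{-1/2}$, since $\beta\le1$. This converts the prefactor into $t^{-1/2}\beta^{1/2}$, possibly after absorbing a harmless extra $\beta^{1/2}$, which only helps.
\item Take absolute values. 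The $(1+O(m^{-1}))$ factor is bounded because $m\ge t\ge1$.
\end{enumerate}

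The main obstacle is justifying that the exponential form, with no oscillatory companion term, is valid uniformly along the whole contour $r=x\pm i\sigma_T$, $x\ge0$. There are two concerns. First, one must check that the image of the contour under $Z\mapsto\eta$ lies in a region where $e^{-m\int}$ is the recessive (or at least dominant) Liouville--Green solution. Second, the error bound's variation functional must stay bounded there, including near $Z=\cos\sigma_T$, where $q_\beta$ and $\eta$ approach the branch cut.

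I would handle this as in the appendix. On the shifted contours $\Re q_\beta>0$, so $\eta$ stays a distance $\gg 1/T$ from the turning point $\beta^2$. The path from $\beta^2$ to $\eta$ can then be chosen monotone in $\Re\int$, which is exactly the admissibility condition for Olver's error bounds. The choice $T\ge T_0$ ensures that the distance $1/T$ does not spoil the bound, since only $m\ge1$ enters the error.
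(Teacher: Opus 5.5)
Your route is the same as the paper's: apply the exponential-form expansion of $K_{it}$ (Lemma~\ref{KBesselExp}), identify the exponent with $mU_\beta(Z)$ via \eqref{UBeta}, then take absolute values. However, the expansion you display has the wrong normalization; it is off by a factor $\beta^{1/2}$. Lemma~\ref{KBesselExp} with $\tau=t$ and $z=\eta^{1/2}/\beta$ (so $tz=m\eta^{1/2}$ and $(z^2-1)^{-1/4}=\beta^{1/2}(\eta-\beta^2)^{-1/4}$) gives
\[
K_{it}(m\eta^{1/2})=\left(\frac{\pi}{2t}\right)^{1/2}\frac{\beta^{1/2}}{(\eta-\beta^2)^{1/4}}\,e^{-\pi t/2}\exp\bigl(-mU_\beta(Z)\bigr)\bigl(1+O(t^{-1})\bigr).
\]
Its prefactor is $(\pi/2m)^{1/2}(\eta-\beta^2)^{-1/4}$, not $(\pi/2m)^{1/2}\beta^{1/2}(\eta-\beta^2)^{-1/4}$. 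You can check this at large $\eta$: there $m\int_{\beta^2}^{\eta}\frac{(s-\beta^2)^{1/2}}{2s}\,ds=m\eta^{1/2}-\pi t/2+o(1)$, and the formula must match $K_{it}(x)\sim(\pi/2x)^{1/2}e^{-x}$ with $x=m\eta^{1/2}$. Consequently your intermediate bound, with prefactor $t^{-1/2}\beta$, is false as $\beta\to0$. The ``harmless extra $\beta^{1/2}$'' you absorb in Step 3 is exactly this error. With the correct normalization, the identity $m^{-1/2}=\beta^{1/2}t^{-1/2}$ gives the stated right-hand side exactly, with no slack. The final claim survives, but only because the absorption in Step 3 happens to cancel the mistake.

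Your justification of uniformity is also off, though this does not affect the conclusion. The expansion should be used with large parameter $t$ in the rescaled variable $z=\eta^{1/2}/\beta$. The geometric input needed is that the image of the shifted contour lies in $\Re\eta<0$. This keeps $z$ in the fixed sector $\pi/4<|\arg z|<\pi/2$, at a fixed angular distance from the turning point $z=1$, uniformly in $\beta$, $T$ and $|z|$. That handles both $\beta\to0$ and $\eta\to0$ (the latter happens as $x\to\infty$). The relative error is then $O(t^{-1})$, which is bounded since $t\ge1$, and boundedness is all the lemma needs.

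Your claimed relative error $O(m^{-1})$, uniform in $\beta\in(0,1]$, does not hold. With $t$ bounded and $m\to\infty$, contour points with $|\eta|^{1/2}\asymp\beta$ give bounded order and bounded argument, where the Liouville--Green form is only $O(1)$-accurate. Likewise, ``$\eta$ stays $\gg 1/T$ from $\beta^2$'' is not the mechanism. In the $\eta$-plane one only has $|\eta-\beta^2|\ge\beta^2$, which is not bounded below as $\beta\to0$. It is the rescaling to $z$ that makes the variation bound uniform.
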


\begin{proof}
   The proof is similar to the proof of Lemma \ref{lem:J_bessel_contribution}.
   We recall that $m=t/\beta$. Using the asymptotic expansion of the $K$-Bessel function given in Lemma \ref{KBesselExp}, we obtain

        $$K_{it}\Big(t \frac{\eta^{1/2}}{\beta}\Big) = \Big(\frac{\pi}{2t}\Big)^{1/2}\Big(\frac{\beta^{1/2}}{(\eta-\beta^2)^{1/4}}\Big)e^{-\frac{\pi}{2}t}\exp\Big(-t \int_1^{\eta^{\frac{1}{2}}/\beta} \frac{\sqrt{s^2-1}}{s}ds\Big)(1+O(t^{-1})).$$
        Focusing on the integral in the exponential we see that 
        $$\int_1^{\eta^{\frac{1}{2}}/\beta} \frac{\sqrt{s^2-1}}{s}ds=\frac{1}{\beta}\int_{\beta^2}^\eta \frac{(s-\beta^2)^{1/2}}{2s}ds.$$ Using the integral relation \eqref{UBeta} we further simplify        $$\frac{1}{\beta}\int_{\beta^2}^\eta \frac{(s-\beta^2)^{1/2}}{2s}ds =\frac{1}{\beta}U_\beta(Z).$$
        
        The lemma follows by applying absolute values, absorbing the error term in the implicit constant and simplifying. 
\end{proof}

We now analyze the exponential factor arising in Lemma \ref{lem:J_bessel_contribution} in more detail.

\begin{lemma}[\(J\)-range exponential factor]
\label{lem:J_exponential_ratio}
Let $\alpha,t>0$, put $m=\alpha t$, and let
$r=x+i\varepsilon\sigma_T$, where $x\geq0$ and
$\varepsilon\in\{-1,1\}$. Put $Z=\cosh r$ and use the
corresponding branch of $I_\alpha$. Then
\[
\left|\exp(-i\varepsilon t I_\alpha(Z))\right|
\leq e^{\pi t/2}R_{m,t}(Z)^{m/2}.
\]
\end{lemma}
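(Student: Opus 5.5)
We take the explicit formula for $I_\alpha$ from Lemma~\ref{lem:Dunster_mappings} and compute $\Re(-i\varepsilon t I_\alpha(Z))=\varepsilon t\,\Im I_\alpha(Z)$. Writing $s=s_\alpha(Z)$, the formula is
\[
I_\alpha(Z)=\log(s+Z)-\tfrac12\log(1+\alpha^2)-\alpha\arctan\!\left(\frac{s}{\alpha Z}\right).
\]
The real constant term does not contribute to the imaginary part, so the goal is
\[
\varepsilon t\,\Im I_\alpha(Z)=\varepsilon t\,\Im\log(s+Z)-\varepsilon m\,\Im\arctan\!\left(\frac{s}{\alpha Z}\right)\le \frac{\pi t}{2}+\frac m2\log R_{m,t}(Z).
\]
The two pieces are handled separately: the logarithmic piece will supply the $e^{\pi t/2}$, and the arctangent piece will supply $R^{m/2}$.

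\textbf{Identifying the arctangent piece with $R_{m,t}$.} Since $Z^2-1=\sinh^2 r$, we have $\alpha^2 s^2=\alpha^2(Z^2-1)-\alpha^4$. Compare this with $p_{m,t}(r)^2=m^2-t^2\sinh^2 r=t^2(\alpha^2-(Z^2-1))$. This gives $s=\pm i\,p/t$ up to the branch choice. Using $\arctan w=\frac{1}{2i}\log\frac{1+iw}{1-iw}$ with $w=s/(\alpha Z)$, we get
\[
\frac{1+iw}{1-iw}=\frac{\alpha Z+is}{\alpha Z-is}=\left(\frac{mZ\mp p}{mZ\pm p}\right)^{\pm1}.
\]
Hence $\Im\arctan w=-\tfrac12\log\bigl|\tfrac{1+iw}{1-iw}\bigr|$, which equals $\pm\tfrac12\log R_{m,t}(Z)$. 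The first step is to fix the branches consistently with the continuation from $Z>\sqrt{1+\alpha^2}$ into the quadrant $\varepsilon\Im Z>0$. After that, the task is to show that the sign is such that $-\varepsilon m\,\Im\arctan(\cdot)=\tfrac m2\log R_{m,t}(Z)$ exactly. Here $R\le1$ because $\Re p>0$ and $\Re(mZ)\ge0$ on the contour (for $x\ge0$, $\Re\cosh r=\cosh x\cos\sigma_T>0$). This matches the expected decay.

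\textbf{The logarithmic piece.} It remains to show $\varepsilon\,\Im\log(s+Z)\le\pi/2$, i.e.\ that $\arg(s+Z)$ lies in $[0,\pi/2]$ for the upper contour and in $[-\pi/2,0]$ for the lower one. By conjugation symmetry it suffices to treat $\varepsilon=1$. There, for $\Re Z>0$ and $\Im Z>0$, the branch of $s$ continued from the positive real axis also lies in the closed first quadrant, so $s+Z$ does too, giving $|\arg(s+Z)|\le\pi/2$.

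\textbf{Main obstacle.} The main difficulty is branch bookkeeping. One must check that along the path of continuation the arctangent's logarithm does not cross its cut, so that no extra multiple of $\pi$ appears in the real part, which would spoil the clean identity with $\log R$. One must also check that $s$ stays in the first quadrant; it can reach the imaginary axis, for example near the endpoint $Z=\cos\sigma_T$ where $Z^2-1-\alpha^2<0$. I would handle this by verifying on the boundary values that $\Re s\ge0$ and $\Im s\ge0$. Continuity then propagates this along $x\ge0$, since $s^2=Z^2-1-\alpha^2$ and $Z^2$ stays in the upper half plane when $0<\arg Z<\pi/2$. Any leftover discrepancy would only improve the inequality, which is stated as $\le$.
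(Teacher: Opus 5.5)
Your proposal is correct and is essentially the paper's proof: with $s=s_\alpha(Z)$ in the $\varepsilon$-quadrant one has $p_{m,t}(r)=-i\varepsilon t s$, hence $\varepsilon t\,\Im I_\alpha(Z)=\varepsilon t\Arg(Z+s)+\frac m2\log R_{m,t}(Z)$, where the logarithm is principal because $Z+s$ stays in the right half-plane; then $0\le\varepsilon\Arg(Z+s)\le\pi/2$ finishes it, and the endpoint $x=0$ follows by one-sided limits. Two minor remarks: (i) the arctangent-branch concern is moot, since $\Im\arctan v=-\frac12\log\left|\frac{1+iv}{1-iv}\right|$ holds on every branch; (ii) your aside that $R_{m,t}(Z)\le1$ is not needed for this lemma, and it does not follow from $\Re p>0$ and $\Re Z\ge0$ separately — it requires $\Re(Z\overline{p})>0$, which is what Lemma~\ref{lem:sqrt_geometry} supplies in Lemma~\ref{lem:ratio_bounds_for_B}.
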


\begin{proof}
Suppose first that $x>0$, and write $s=s_\alpha(Z)$.
The points $Z$ and $s$ lie in the first quadrant if
$\varepsilon=1$, and in the fourth quadrant if $\varepsilon=-1$.
Consequently,
\[
p_{m,t}(r)=-i\varepsilon t s,
\qquad
R_{m,t}(Z)=
\left|\frac{\alpha Z+i\varepsilon s}
           {\alpha Z-i\varepsilon s}\right|.
\]
The logarithm of $Z+s$ in the formula for $I_\alpha$ is its
principal branch, since $Z+s$ stays in the right half-plane.
Using the logarithmic formula for the arctangent, in the form
\[
\Im\arctan v
=-\frac12\log\left|\frac{1+iv}{1-iv}\right|,
\]
we obtain
\[
\varepsilon\Im I_\alpha(Z)
=\varepsilon\Arg(Z+s)+\frac{\alpha}{2}\log R_{m,t}(Z).
\]
Since $0<\varepsilon\Arg(Z+s)<\pi/2$, exponentiation gives
\[
\left|\exp(-i\varepsilon t I_\alpha(Z))\right|
=\exp\bigl(\varepsilon t\Im I_\alpha(Z)\bigr)
\leq e^{\pi t/2}R_{m,t}(Z)^{m/2}.
\]
At $x=0$, writing $c_T=\cos\sigma_T$, the boundary values are
\[
s_\alpha^\varepsilon(c_T)
=i\varepsilon\sqrt{1+\alpha^2-c_T^2}.
\]
Both give the same positive value of $p_{m,t}(i\varepsilon\sigma_T)$
and hence the same $R_{m,t}(c_T)$. Thus the estimate at $x=0$ follows by taking the one-sided limit from the upper or lower quadrant, according to the sign of
$\varepsilon$.
\end{proof}

The following elementary geometric observation will be useful for controlling
the relevant branches in the $K$-Bessel range and, later, the ratio
$R_{m,t}(Z)$.

\begin{lemma}[Square-root geometry in the right half-plane]
\label{lem:sqrt_geometry}
Let \(c>0\), and let
\[
p_c(Z):=\sqrt{c^2-Z^2}
\]
be the branch which is positive on \(0<Z<c\), with boundary values obtained
by continuation from the upper or lower half-plane. If \(\Re Z>0\) and
\(Z\notin[c,\infty)\), then
\[
\Re\left(\frac{Z}{p_c(Z)}\right)>0.
\]
Consequently,
\[
\Re\left(\frac{p_c(Z)}{Z}\right)>0.
\]
For the branch of \(\arctan\) obtained by continuation from the positive real
axis, this implies
\[
\left|\Re\arctan\left(\frac{p_c(Z)}{Z}\right)\right|
\leq \frac{\pi}{2}.
\]
The same assertions hold for the corresponding boundary values on
\(0<Z<c\).
\end{lemma}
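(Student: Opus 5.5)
The proof uses the Möbius map $w=Z/p_c(Z)$ on the slit domain
\[
\Omega:=\{Z:\Re Z>0\}\setminus[c,\infty).
\]
The strategy is to show that $W(Z):=Z/p_c(Z)$ maps $\Omega$ into the open right half-plane. The other two assertions then follow formally.

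\emph{Step 1: $\Omega$ is simply connected and $p_c$ is well defined.} The domain $\Omega$ is the right half-plane with a ray removed from its boundary side, so it is simply connected. The function $c^2-Z^2$ vanishes in $\overline{\Omega}$ only at $Z=c$, which is excluded. Hence $p_c$ is holomorphic on $\Omega$ with the stated normalization.

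\emph{Step 2: $W$ never takes purely imaginary values.} Suppose $W(Z)=iy$ with $y\in\mathbb R$. Squaring gives $Z^2=-y^2(c^2-Z^2)$, so
\[
Z^2(1-y^2)=-y^2c^2 .
\]
There are three cases. If $y=0$, then $Z=0\notin\Omega$. If $|y|<1$, then $Z^2<0$, so $Z$ is purely imaginary and $\Re Z=0$, again not in $\Omega$. If $|y|\ge 1$, then $Z^2=y^2c^2/(y^2-1)\geq c^2$ (with $|y|=1$ impossible), so $Z\in[c,\infty)$, which is also excluded.

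\emph{Step 3: fix the sign.} Since $\Re W$ never vanishes on $\Omega$, it has constant sign there by continuity and connectedness. On $0<Z<c$ we have $W>0$, so $\Re W>0$ throughout $\Omega$. For the boundary values on $0<Z<c$, continuity from either half-plane gives $W$ real and positive there.

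\emph{Step 4: the reciprocal and the arctangent.} Since $\Re(1/W)=\Re W/|W|^2$, we get $\Re(p_c/Z)>0$. For the last assertion, $v=p_c(Z)/Z$ lies in the open right half-plane, and this is a simply connected region avoiding the branch points $\pm i$ of $\arctan$. On it, the continuation from the positive real axis agrees with the principal branch
\[
\arctan v=\tfrac{1}{2i}\log\frac{1+iv}{1-iv}.
\]
The map $v\mapsto (1+iv)/(1-iv)$ sends the right half-plane into the unit disc slit along $(-1,0]$? More precisely, I will check that it sends $\Re v>0$ into $\{|u|<1\}$? This fails: for $v=1$ one gets $u=i$.

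So instead I will use the more direct identity $\Re\arctan v=\tfrac12\Arg\frac{1+iv}{1-iv}$ and show that $\frac{1+iv}{1-iv}$ avoids the negative real axis when $\Re v>0$. If $\frac{1+iv}{1-iv}=-\rho$ with $\rho\ge0$, then
\[
iv=\frac{-\rho-1}{1-\rho},
\]
which is real. So $v$ is purely imaginary, a contradiction. Hence the argument lies in $(-\pi,\pi)$, and therefore $|\Re\arctan v|<\pi/2$. This gives the stated bound, with equality permitted only in the boundary limits.

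The main care point is Step 4's branch bookkeeping: checking that the continuation along the path defining $v$ matches the principal branch. This holds because the image of the right half-plane is simply connected and contains the positive reals.
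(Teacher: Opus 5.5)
Your proof is correct, but it takes a different route from the paper's.

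\emph{The paper's route.} It does direct sector bookkeeping. For $Z=\rho e^{i\theta}$ with $0<\theta<\pi/2$, it notes that $c^2-Z^2=c^2+\rho^2e^{i(2\theta-\pi)}$ has argument in $(2\theta-\pi,0)$. Hence $\theta-\pi/2<\Arg p_c(Z)<0$, so $\Arg(Z/p_c(Z))\in(\theta,\pi/2)$, and the mirror statement holds for $\theta<0$. The arctangent claim is then simply asserted: the continued branch maps the right half-plane into $|\Re w|<\pi/2$.

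\emph{Your route.} You argue softly. Solving $Z/p_c(Z)=iy$ forces either $Z\in i\mathbb{R}$ or $Z$ real with $|Z|>c$, and both are excluded from $\Omega$. Connectedness plus the normalization on $(0,c)$ then fixes the sign.

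\emph{Comparison.} Your version never identifies $p_c$ with the principal root and needs no angle estimates, only continuity and the normalization, so it transfers unchanged to other algebraic branches. The paper's version is shorter and quantitative: it places $Z/p_c(Z)$ in the same open quadrant as $Z$. For the arctangent, your check via $\arctan v=\frac{1}{2i}\log\frac{1+iv}{1-iv}$, together with the observation that $\frac{1+iv}{1-iv}$ avoids $(-\infty,0]$, supplies exactly the justification the paper leaves implicit.

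\emph{Cleanup.}
\begin{itemize}
\item $Z\mapsto Z/p_c(Z)$ is not a M\"obius map, so drop that word.
\item Delete the abandoned unit-disc attempt in Step~4.
\item It is simpler to note that $\Im\frac{1+iv}{1-iv}=\frac{2\Re v}{|1-iv|^2}>0$ for $\Re v>0$. This gives $0<\Re\arctan v<\pi/2$ at once, which is the form actually quoted in the proof of Lemma~\ref{lem:K_exponential_ratio}.
\item If you keep your negative-axis computation, treat $\rho=1$ separately before dividing by $1-\rho$; that case is trivially impossible.
\end{itemize}
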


\begin{proof}
Write \(Z=\rho e^{i\theta}\), with
\(-\pi/2<\theta<\pi/2\). If \(0<\theta<\pi/2\), then
\[
c^2-Z^2=c^2+\rho^2e^{i(2\theta-\pi)}
\]
has argument between \(2\theta-\pi\) and \(0\). Hence
\[
\theta-\frac{\pi}{2}<\Arg p_c(Z)<0,
\]
and therefore
\[
0<\Arg\left(\frac{Z}{p_c(Z)}\right)<\frac{\pi}{2}.
\]
If \(-\pi/2<\theta<0\), the same argument gives
\[
-\frac{\pi}{2}
<
\Arg\left(\frac{Z}{p_c(Z)}\right)
<
0.
\]
The real interval \(0<Z<c\) is immediate from the chosen branch. This proves
\(\Re(Z/p_c(Z))>0\). Taking reciprocals gives
\(\Re(p_c(Z)/Z)>0\). Finally, the branch of \(\arctan\) continued from the
positive real axis maps the right half-plane into the strip
\(|\Re w|<\pi/2\), giving the last assertion.
\end{proof}

Next, we analyze the case where $0 < \beta = t/m$ is bounded, i.e. the conical function is approximated by the $K$-Bessel function. 

\begin{lemma}[The \(K\)-Bessel exponential factor]
\label{lem:K_exponential_ratio}
Let $0<\beta\leq1$, let $t>0$, and put $m=t/\beta$.
Let $r=x\pm i\sigma_T$ with $x\geq0$, put $Z=\cosh r$, and
use the corresponding branch of $U_\beta$. Then
\[
|\exp(-mU_\beta(Z))|
\leq e^{\pi t/2}R_{m,t}(Z)^{m/2}.
\]
\end{lemma}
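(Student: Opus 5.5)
We follow the proof of Lemma~\ref{lem:J_exponential_ratio}: we compute $\Re(-mU_\beta(Z))$ directly from the explicit formula \eqref{eq:Ubeta_explicit}. The aim is to split it into a "logarithmic" part, which produces $R_{m,t}(Z)^{m/2}$, and an "arctangent" part, which is bounded by $\pi t/2$. Since $m=t/\beta$, we have
\[
-mU_\beta(Z)=-\frac{m}{2}\log\left(\frac{Z+q_\beta(Z)}{Z-q_\beta(Z)}\right)+t\arctan\left(\frac{q_\beta(Z)}{\beta Z}\right),
\]
and hence
\[
|\exp(-mU_\beta(Z))|=\left|\frac{Z-q_\beta(Z)}{Z+q_\beta(Z)}\right|^{m/2}\exp\left(t\,\Re\arctan\frac{q_\beta(Z)}{\beta Z}\right).
\]
The logarithm is used on the branch continued from $1<Z<Z_\beta$. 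Its real part is always $\log|\cdot|$, whatever the branch, so the branch choice matters only for the imaginary part, which is irrelevant here.

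\textbf{Step 1 (identify the ratio).} We check that $p_{m,t}(r)=mq_\beta(Z)$. Indeed, $m^2-t^2\sinh^2 r=m^2(1-\beta^2(Z^2-1))=m^2q_\beta(Z)^2$. Both $p_{m,t}(r)$ and $q_\beta(Z)$ are chosen with positive real part on the shifted contours, so the branches agree. It follows that $\left|\frac{Z-q_\beta}{Z+q_\beta}\right|=\left|\frac{mZ-p_{m,t}}{mZ+p_{m,t}}\right|=R_{m,t}(Z)$, and the first factor is exactly $R_{m,t}(Z)^{m/2}$.

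\textbf{Step 2 (the arctangent).} We write $q_\beta(Z)/(\beta Z)=p_c(Z)/Z$ with $c=Z_\beta=\sqrt{1+\beta^{-2}}$, since $q_\beta(Z)=\beta\sqrt{c^2-Z^2}$. On the contours, $\Re Z=\cosh x\cos\sigma_T>0$. When $x>0$, $Z$ lies off the real axis, so $Z\notin[c,\infty)$. When $x=0$, $Z=\cos\sigma_T<1<c$, and boundary values are taken. Lemma~\ref{lem:sqrt_geometry} then gives $|\Re\arctan(p_c(Z)/Z)|\le\pi/2$, provided the branch of $\arctan$ in \eqref{eq:Ubeta_explicit} coincides with the branch continued from the positive real axis, and $p_c$ is the branch positive on $(0,c)$. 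Therefore $\exp(t\Re\arctan(\cdot))\le e^{\pi t/2}$. Multiplying the two factors gives the claim.

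\textbf{Main obstacle.} The delicate point is the branch bookkeeping: we must confirm that the continuation of $U_\beta$ along the contour (from $1<Z<Z_\beta$ into the upper or lower quadrant, or via the one-sided boundary values at $x=0$) uses the same branches for $q_\beta$ and $\arctan$ as Lemma~\ref{lem:sqrt_geometry}. In particular, $\Re(p_c(Z)/Z)>0$ keeps the arctangent argument in the right half-plane throughout, so no jump by $\pi$ can occur. We will argue by connectedness: the continuation path, together with the quadrant, lies in $\{\Re Z>0\}\setminus[c,\infty)$. On this set $p_c(Z)/Z$ is holomorphic with positive real part, so the arctangent composed with it is single-valued and agrees with the principal branch. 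The endpoint $x=0$ is then handled by one-sided limits, exactly as in Lemma~\ref{lem:J_exponential_ratio}.
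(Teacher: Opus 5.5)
Your proposal is correct and follows essentially the same route as the paper's proof. Both identify $p_{m,t}(r)=mq_\beta(Z)$ so that the real part of the logarithm gives $R_{m,t}(Z)^{m/2}$. Both then bound the real part of the arctangent via Lemma~\ref{lem:sqrt_geometry}; the paper notes that the branch is the principal one, with real part in $(0,\pi/2)$. Both handle $x=0$ by one-sided limits, using that only $\log|\cdot|$ enters the estimate.
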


\begin{proof}
On the chosen branches, $p_{m,t}(r)=m q_\beta(Z)$, so
\[
R_{m,t}(Z)=\left|\frac{Z-q_\beta(Z)}{Z+q_\beta(Z)}\right|.
\]
Taking real parts in \eqref{eq:Ubeta_explicit} gives
\[
-\Re U_\beta(Z)
=\frac12\log R_{m,t}(Z)
+\beta\Re\arctan\left(\frac{q_\beta(Z)}{\beta Z}\right).
\]
Here $\Re\log w=\log|w|$ is independent of the logarithm branch.
Moreover, $q_\beta(Z)/\beta=\sqrt{Z_\beta^2-Z^2}$, so
Lemma~\ref{lem:sqrt_geometry} gives
\[
\Re\left(\frac{q_\beta(Z)}{\beta Z}\right)>0.
\]
The arctangent is the branch agreeing with the real arctangent
on $1<Z<Z_\beta$. Since its argument remains in the right
half-plane, this is the principal branch, and its real part
lies between $0$ and $\pi/2$. Thus
\[
-\Re U_\beta(Z)
\leq\frac12\log R_{m,t}(Z)+\frac{\pi\beta}{2}.
\]
Exponentiating and using $m\beta=t$ proves the estimate.
At $x=0$, the estimate follows by taking the one-sided limit from
the upper or lower half-plane. Since only $\Re\log w=\log|w|$ enters the estimate, the two boundary values give the same bound.
\end{proof}

\begin{proof}[Proof of Proposition \ref{prop:C_shifted_bounds}]
The defining hypergeometric formula \eqref{eq:calC_def} gives
\[
\mathcal C_{-t,m}(r)=\mathcal C_{t,m}(r),
\qquad
\mathcal C_{t,m}(-r)=(-1)^m\mathcal C_{t,m}(r).
\]
Also, $p_{m,t}(-r)=p_{m,t}(r)$ and $\cosh(-r)=\cosh r$.
We may therefore first assume $\Re r\geq0$.

Suppose $1\leq m\leq t$, and put $\alpha=m/t$. Combining
Lemma~\ref{lem:Dunster_conical} in the $J$-range with
Lemmas~\ref{lem:J_bessel_contribution} and
\ref{lem:J_exponential_ratio}, we obtain
\[
|\mathcal C_{t,m}(r)|
\ll e^{\pi t/2}
\frac{R_{m,t}(Z)^{m/2}}
     {\sqrt t\,|Z^2-1-\alpha^2|^{1/4}}
=e^{\pi t/2}
\frac{R_{m,t}(Z)^{m/2}}{|p_{m,t}(r)|^{1/2}}.
\]

If $m>t$, put $\beta=t/m$. Combining
Lemma~\ref{lem:Dunster_conical} in the $K$-range with
Lemmas~\ref{lem:K_bessel_contribution} and
\ref{lem:K_exponential_ratio}, and using
$|\Gamma(\tfrac12+it)|^2=\pi/\cosh(\pi t)$, we obtain
\[
|\mathcal C_{t,m}(r)|
\ll e^{\pi t/2}
\frac{R_{m,t}(Z)^{m/2}}
     {\sqrt m\,|1+\beta^2-\beta^2Z^2|^{1/4}}
=e^{\pi t/2}
\frac{R_{m,t}(Z)^{m/2}}{|p_{m,t}(r)|^{1/2}}.
\]
Squaring and using the symmetry $r\mapsto-r$ proves the first
bound in \eqref{eq:C_shifted_unified} on the full shifted contours.

Finally, writing $r=x\pm i\sigma_T$ and using $T\geq2$, we have
\[
\Re(\sinh^2r)
=\cos(2\sigma_T)\sinh^2x-\sin^2\sigma_T
\leq-\cos^2(1/2).
\]
Consequently,
\[
|p_{m,t}(r)|^2
\geq\Re\bigl(p_{m,t}(r)^2\bigr)
\geq m^2+\cos^2(1/2)t^2
\gg(m+t)^2.
\]
This gives the second bound in \eqref{eq:C_shifted_unified}.
\end{proof}

\begin{lemma}[Ratio bounds for the summation]\label{lem:ratio_bounds_for_B}
Let $r\in\mathcal L_T^+\cup\mathcal L_T^-$, put $Z=\cosh r$, and
fix $t>0$. For every real $u>0$, we have
\[
0<R_{u,t}(Z)<1,
\]
and $u\mapsto R_{u,t}(Z)$ is strictly decreasing. In particular,
for every integer $m\geq1$,
\[
R_{m,t}(Z)^m\leq
\begin{cases}
1, & 1\leq m\leq t,\\
R_{t,t}(Z)^m, & m>t,
\end{cases}
\]
where $0<R_{t,t}(Z)<1$.
\end{lemma}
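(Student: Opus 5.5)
The plan is to reduce everything to the sign of $\Re\bigl(Z/p_{u,t}(r)\bigr)$, which Lemma~\ref{lem:sqrt_geometry} controls, and then to differentiate $\log R_{u,t}(Z)$ in $u$.

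\textbf{Setup on the contour.} Fix $r=x\pm i\sigma_T$ and $Z=\cosh r$. Then
\[
Z=\cosh x\cos\sigma_T\pm i\sinh x\sin\sigma_T ,
\]
so $\Re Z>0$. The point $Z$ is real only when $x=0$, where $Z=\cos\sigma_T<1$. In the proof of Proposition~\ref{prop:C_shifted_bounds} it was shown that $\Re(-\sinh^2 r)\ge\cos^2(1/2)>0$. Hence $u^2-t^2\sinh^2 r$ has positive real part for every real $u>0$. Consequently $p_{u,t}(r)$, taken with the branch of positive real part, is a real-analytic function of $u\in(0,\infty)$ that never vanishes. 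With $\beta=t/u$ we have
\[
p_{u,t}(r)=u\,q_\beta(Z)=t\sqrt{Z_\beta^2-Z^2},
\]
and $Z_\beta>1$. The branch in Lemma~\ref{lem:sqrt_geometry} also has positive real part: its argument lies in $(\theta-\pi/2,0)$, or the symmetric interval. So the two branches agree.

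\textbf{Bounds $0<R<1$.} For $u>0$, write
\[
R_{u,t}(Z)=\left|\frac{uZ-p}{uZ+p}\right| .
\]
Then $R<1$ is equivalent to $\Re(uZ\bar p)>0$, which is equivalent to $\Re(Z/p)>0$. This holds by Lemma~\ref{lem:sqrt_geometry} with $c=Z_\beta$, since $\Re Z>0$ and $Z\notin[Z_\beta,\infty)$. The boundary point $x=0$ gives $Z=\cos\sigma_T<Z_\beta$, which is also covered by that lemma.

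For $R>0$ we need $uZ\ne p$. If $uZ=p$, then squaring gives
\[
u^2Z^2=u^2-t^2(Z^2-1),
\]
that is, $(u^2+t^2)(Z^2-1)=0$. This forces $Z=\pm1$, which does not occur on the contour.

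\textbf{Monotonicity.} Set $v=p/(uZ)$, so that $\log R=\Re\log\frac{1-v}{1+v}$. Differentiating in $u$ gives
\[
\frac{d}{du}\log R=-2\,\Re\frac{v'}{1-v^2}.
\]
From $v^2=\bigl(u^2-t^2(Z^2-1)\bigr)/(u^2Z^2)$ we get
\[
1-v^2=\frac{(u^2+t^2)\sinh^2 r}{u^2Z^2},
\qquad
2vv'=\frac{2t^2\sinh^2 r}{u^3Z^2}.
\]
Combining these,
\[
\frac{v'}{1-v^2}=\frac{t^2}{u(u^2+t^2)}\cdot\frac1v,
\qquad
\frac{d}{du}\log R_{u,t}(Z)=-\frac{2t^2}{u(u^2+t^2)}\,\Re\frac{uZ}{p}<0,
\]
where the last inequality uses the previous step again. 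So $u\mapsto R_{u,t}(Z)$ is strictly decreasing.

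\textbf{Consequence for integer $m$.} For $1\le m\le t$ we have $R_{m,t}(Z)^m\le1$ simply because $R<1$. For $m>t$, monotonicity gives $R_{m,t}(Z)\le R_{t,t}(Z)<1$, and raising to the $m$-th power gives the claim.

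The only delicate point is branch bookkeeping. We must check that the positive-real-part branch of $p$ is the one governed by Lemma~\ref{lem:sqrt_geometry}, and that it varies analytically in $u$. Both facts follow from $\Re(u^2-t^2\sinh^2 r)>0$ on the contour.
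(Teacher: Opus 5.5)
Your proof is correct and follows the paper's argument essentially step for step. Both get $R_{u,t}(Z)<1$ from $\Re(Z/p)>0$ via Lemma~\ref{lem:sqrt_geometry}, get $R_{u,t}(Z)>0$ from $(u^2+t^2)(Z^2-1)\neq0$, and get strict monotonicity from the identity $\partial_u\log R_{u,t}(Z)=-\frac{2t^2}{u^2+t^2}\Re(Z/p)$. Your substitution $v=p/(uZ)$ only repackages the paper's direct differentiation using $\partial p/\partial u=u/p$, and your explicit check that the positive-real-part branch of $p$ is the branch in Lemma~\ref{lem:sqrt_geometry} and is analytic in $u$ is a welcome extra detail.
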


\begin{proof}
For real $u>0$, write $p=p_{u,t}(r)$ and
$c=\sqrt{1+(u/t)^2}$. Since $p=t\sqrt{c^2-Z^2}$ on the chosen
branch, Lemma~\ref{lem:sqrt_geometry} gives $\Re(Z/p)>0$. Hence
\[
|uZ+p|^2-|uZ-p|^2=4u\Re(Z\overline p)>0,
\]
which gives $R_{u,t}(Z)<1$. Also, $Z^2\neq1$ on the shifted
contours, so
\[
p^2-u^2Z^2=(u^2+t^2)(1-Z^2)\neq0.
\]
Thus $R_{u,t}(Z)>0$ and its logarithm is well-defined.

Keeping $t$ and $r$ fixed, we now differentiate the elementary ratio
with respect to the real variable $u$. Since $\Re(p_{u,t}(r)^2)>0$
for all $u>0$, the chosen square root is smooth in $u$ and
$\partial p/\partial u=u/p$. Therefore
\[
\begin{aligned}
\frac{\partial}{\partial u}\log R_{u,t}(Z)
&=\Re\left(
\frac{Z-u/p}{uZ-p}-\frac{Z+u/p}{uZ+p}
\right)\\
&=-\frac{2t^2}{u^2+t^2}\Re\left(\frac{Z}{p}\right)<0.
\end{aligned}
\]
Finally,
\[
R_{t,t}(Z)
=
\left|
\frac{Z-\sqrt{2-Z^2}}
     {Z+\sqrt{2-Z^2}}
\right|.
\]
The two bounds for $R_{m,t}(Z)^m$ follow from
$R_{m,t}(Z)<1$ and monotonicity, respectively.
\end{proof}

\begin{lemma}[Horizontal bound for \(\mathcal B\)]\label{Bhor}
Let \(r=x+iy\), where
$
|y|=\frac{\pi}{2}-\frac1T.
$
Then
\[
|\mathcal B(r)|
\ll
e^{\pi |t|}\bigl(|x|+\log T+1\bigr).
\]
\end{lemma}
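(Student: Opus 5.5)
We bound $\mathcal B(r)$ term by term from its definition \eqref{eq:mathcalB_def}. We split the sum into the mode $m=0$, the $J$-range $1\le m\le t$, and the $K$-range $m>t$, and then sum against the coefficient bound of Lemma~\ref{L2_coeff} by partial summation.

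For $1\le m\le t$, Proposition~\ref{prop:C_shifted_bounds} together with Lemma~\ref{lem:ratio_bounds_for_B} gives
\[
|\mathcal C_{t,m}(r)\mathcal C_{-t,m}(r)|\ll e^{\pi t}\frac{1}{m+t}\le \frac{e^{\pi t}}{t}.
\]
Summing with $\sum_{m\le t}D_\phi(m)\ll t$ yields a contribution $O(e^{\pi t})$. The mode $m=0$ needs a separate bound $|\mathcal C_{t,0}(r)|^2\ll e^{\pi t}$ on the shifted contours. I would obtain it either from Dunster's $J$-range expansion with $m=0$ (a Bessel $J_0$ bound), or from the Laplace-type integral representation $P_{-1/2+it}(\cosh r)=\frac1\pi\int_0^\pi(\cosh r+\sinh r\cos\theta)^{-1/2+it}d\theta$. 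The latter gives a factor $e^{t|\arg|}\le e^{\pi t/2}$, and the prefactor is harmless since $D_\phi(0)\ll 1+t$ and the normalization is 1; this again gives at most $e^{\pi t}$ times a polynomial factor, and I would expect a $1/t$ gain as in the $m\ge1$ case.

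For $m>t$ we use $R_{m,t}(Z)^m\le R^m$ with $R:=R_{t,t}(Z)<1$ and $|p_{m,t}|\gg m$, so the contribution is
\[
\ll e^{\pi t}\sum_{m>t}D_\phi(m)\frac{R^m}{m}.
\]
By partial summation with $\sum_{m<M}D_\phi(m)\ll M+t\ll M$ for $M>t$, this is $\ll e^{\pi t}\sum_{m>t}R^m/m\le e^{\pi t}\log\frac{1}{1-R}$ (up to $O(1)$). It therefore remains to show
\[
\log\frac1{1-R_{t,t}(\cosh r)}\ll |x|+\log T+1 .
\]

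This last estimate is the main obstacle. We have $R_{t,t}=\left|\frac{Z-\sqrt{2-Z^2}}{Z+\sqrt{2-Z^2}}\right|$ and $1-R^2=4\Re(Z\bar p)/|Z+p|^2$ with $p=\sqrt{2-Z^2}$. For $r=x\pm i\sigma_T$, the point $Z=\cosh r=\cosh x\cos\sigma_T\pm i\sinh x\sin\sigma_T$ is close to the imaginary axis, at distance $\asymp \cosh x/T$. I would estimate $\Re(Z/p)$ directly. For bounded $x$, $Z\approx \pm i\sinh x+O(1/T)$ and $p\approx\sqrt{2+\sinh^2 x}$ is close to real positive, so $\Re(Z\bar p)\gg \cosh x\cdot T^{-1}$; the worst case $x=0$ gives $Z=\cos\sigma_T\asymp1/T$ real and $p\approx\sqrt2$, hence $1-R\gg 1/T$. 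For large $x$, $|Z|,|p|\asymp e^{x}$ and the arguments of $Z$ and $p$ differ from $\pm\pi/2$ and $0$ by amounts $\gg 1/T$ and $\gg e^{-2x}$ respectively, giving $\Re(Z\bar p)/|Z+p|^2\gg e^{-2x}/T$ or similar. Either way $\log\frac1{1-R}\ll x+\log T$. Combining the three ranges gives $|\mathcal B(r)|\ll e^{\pi t}(|x|+\log T+1)$, and the case $x<0$ follows by the evenness of $\mathcal B$.
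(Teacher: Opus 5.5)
Your outline follows the paper's proof closely: the same split into $m=0$, $1\le m\le t$ and $m>t$, the same use of Proposition~\ref{prop:C_shifted_bounds} and Lemma~\ref{lem:ratio_bounds_for_B}, partial summation down to $e^{\pi t}\log\frac{1}{1-R_{t,t}(Z)}$, and the same identity $1-R^2=4\Re(Z\bar p)/|Z+p|^2$. However, two steps do not give the stated bound as you have written them.

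\emph{Zero mode.} The target $|\mathcal C_{t,0}(r)|^2\ll e^{\pi t}$ is too weak. $D_\phi(0)=|\phi(z_0)|^2$ can be as large as $1+t$, and $t$ is not bounded by $\log T$ (only $T>2t+2$ is assumed), so this target only gives $(1+t)e^{\pi t}$. You need $|\mathcal C_{t,0}(r)|^2\ll e^{\pi t}/(1+t)$, which you only say you expect. Your first option supplies it, and it is what the paper does. For $\alpha=0$ the Liouville variable is $\zeta=r^2$, so the $J$-range expansion gives $|\mathcal C_{t,0}(r)|\ll|r/\sinh r|^{1/2}|J_0(tr)|$. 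The bound $|J_0(w)|\ll|w|^{-1/2}e^{|\Im w|}$ (for $|\arg w|\le\pi/2$, $|w|\ge1$) then yields $|\mathcal C_{t,0}(r)|^2\ll e^{2t\sigma_T}/(t|\sinh r|)\ll e^{\pi t}/(1+t)$, since $|\sinh(x\pm i\sigma_T)|\ge\sin\sigma_T\gg1$. The Laplace integral estimated in absolute value gives no such gain; it would need a stationary-phase argument near $\theta=\pi$.

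\emph{The bound on $1-R_{t,t}$.} Your angular picture for large $x$ is not accurate. $\arg p$ is about $\mp 1/T$: $p$ is rotated by roughly the same amount that $Z$ is rotated away from $\pm i$, and in the direction that cancels it. So the angle between $Z$ and $p$ is $\frac{\pi}{2}-O(T^{-1}e^{-2|x|})$. Your description does not capture this cancellation, and it does not produce the factor $e^{-2x}$. Read literally, it would suggest $1-R^2\gg T^{-1}$, which is false for large $|x|$. The conclusion $1-R\gg T^{-1}e^{-2|x|}$ is true, but it needs the algebra.

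Write $Z=a+ib$ and $p=c+id$ with $a=\cosh x\sin(1/T)>0$ and $c>0$. The relation $p^2=2-Z^2$ gives $cd=-ab$ and $c^2-d^2=2-a^2+b^2$, hence $c^2-b^2=2c^2/(a^2+c^2)$. This quantity is $\ge1$ because $c\ge a$: if $a\le1$, use $c^2\ge\Re(p^2)\ge 2-a^2\ge 1$; if $a>1$, then $b^2\ge a^2$ for $T\ge2$, while $c^2>b^2$. Therefore $\Re(Z\bar p)=a(c^2-b^2)/c\ge a/c\gg\sin(1/T)$ uniformly in $x$, since $c\le|p|\ll\cosh x$. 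Combined with $|Z+p|^2\ll e^{2|x|}$, this is the paper's ``direct calculation''. It gives $\log\frac{1}{1-R_{t,t}}\ll|x|+\log T+1$, and with the corrected zero-mode bound your argument then proves the lemma.
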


\begin{proof}
Put \(Z=\cosh r\). By the definition of \(\mathcal B\),
\[
|\mathcal B(r)|
\leq
\sum_{m\geq 0}
D_\phi(m)
|\mathcal C_{t,m}(r)\mathcal C_{-t,m}(r)|.
\]
We split $
|\mathcal B(r)|\leq S_r^1+S_r^2,$
where
\[
S_r^1
:=
D_\phi(0)|\mathcal C_{t,0}(r)|^2
+
\sum_{1\leq m\leq t}
D_\phi(m)|\mathcal C_{t,m}(r)|^2,
\]
and
\[
S_r^2
:=
\sum_{m>|t|}
D_\phi(m)
|\mathcal C_{t,m}(r)\mathcal C_{-t,m}(r)|.
\]

For the zero mode, the transformation with $\alpha=0$ gives
$\zeta=r^2$. Thus Lemma~\ref{lem:Conical by JBessel} yields
\[
    |\mathcal C_{t,0}(r)|
    \ll
    \left|\frac{r}{\sinh r}\right|^{1/2}|J_0(tr)|.
\]
The standard fixed-order Bessel estimate
\[
    |J_0(z)|\ll |z|^{-1/2}e^{|\Im z|},
    \qquad |\arg z|\leq\pi/2,\quad |z|\geq1,
\]
follows from \cite[Section 10.17]{NIST}.
Applying this for $\Re r\geq0$ and then using the evenness of
$\mathcal C_{t,0}$ for $\Re r<0$, we obtain
\[
    |\mathcal C_{t,0}(x\pm i\sigma_T)|^2
    \ll
    \frac{e^{2t\sigma_T}}{t|\sinh(x\pm i\sigma_T)|}
    \ll
    \frac{e^{\pi t}}{1+t}.
\]
Since $D_\phi(0)=|\phi(z_0)|^2\ll_Y1+t$, the zero mode contributes
$O_Y(e^{\pi t})$.

By Proposition~\ref{prop:C_shifted_bounds},
Lemma~\ref{lem:ratio_bounds_for_B}, and the zero-mode estimate above,
\[
S_r^1
\ll_Y e^{\pi t}
\left(1+\frac1t\sum_{1\leq m\leq t}D_\phi(m)\right)
\ll_Y e^{\pi t},
\]
where the last step uses Lemma~\ref{L2_coeff}.

For the remaining modes, the same proposition and ratio lemma give
\[
S_r^2
\ll e^{\pi t}\sum_{m>t}
D_\phi(m)\frac{R_{m,t}(Z)^m}{m+t}
\leq e^{\pi t}\sum_{m>t}
D_\phi(m)\frac{R_{t,t}(Z)^m}{m}.
\]
By Lemma~\ref{L2_coeff},
\[
\sum_{t<m\leq u}D_\phi(m)\ll_Y u
\qquad(u\geq1),
\]
where the sum is empty when $u\leq t$. Since
$0<R_{t,t}(Z)<1$, partial summation therefore yields
\[
S_r^2
\ll_Y e^{\pi t}\sum_{m\geq1}\frac{R_{t,t}(Z)^m}{m}
=
e^{\pi t}\log\left(\frac1{1-R_{t,t}(Z)}\right).
\]

It remains to estimate \(1-R_{t,t}(Z)\). Write
\[
M(r):=R_{t,t}(Z)
=
\left|
\frac{Z-\sqrt{2-Z^2}}
     {Z+\sqrt{2-Z^2}}
\right|,
\qquad Z=\cosh r.
\]
By Lemma \ref{lem:ratio_bounds_for_B}, \(M(r)<1\). Hence
\[
1-M(r)\geq \frac{1-M(r)^2}{2}.
\]
If
\[
q:=\frac{p_{t,t}(r)}{t}=\sqrt{2-Z^2},
\]
where the square root has positive real part, then
\[
1-M(r)^2
=
\frac{4\Re(Z\overline q)}{|Z+q|^2}.
\]
Writing \(\delta=1/T\), so that
\(\sigma_T=\pi/2-\delta\), we have
\[
Z=\cosh x\sin\delta \pm i\sinh x\cos\delta .
\]
A direct calculation from \(q^2=2-Z^2\), with \(\Re q>0\), gives
\[
|Z+q|^2\ll e^{2|x|},
\qquad
\Re(Z\overline q)\gg \sin\delta\gg T^{-1}.
\]
Thus $
1-M(r)\gg T^{-1}e^{-2|x|}$ and consequently,
\[
\log\left(\frac1{1-R_{t,t}(Z)}\right)
\ll
|x|+\log T+1.
\]
We therefore obtain the bound
\[
S_r^2
\ll
e^{\pi |t|}
\bigl(|x|+\log T+1\bigr).
\]
Combining this with the estimate for \(S_r^1\) proves the lemma.
\end{proof}

\begin{lemma}[Vertical contribution to \(H\)]\label{lem:H_vertical_direct}
Let
\[
\sigma_T:=\frac{\pi}{2}-\frac1T,
\qquad
w=x+i\sigma_T.
\]
Define
\[
H_{\mathrm{vert}}(w)
:=
-1\int_0^{\sigma_T}
\frac{\mathcal B(iy)\sin y}
     {\sqrt{\cosh w-\cos y}}
\,dy.
\]
Then
\[
|H_{\mathrm{vert}}(w)|
\ll
e^{\pi |t|}(1+\log T).
\]
The same bound holds for the lower shifted line.
\end{lemma}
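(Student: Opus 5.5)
We need to bound $\int_0^{\sigma_T}\mathcal B(iy)\sin y\,(\cosh w-\cos y)^{-1/2}\,dy$ for $w=x+i\sigma_T$. The plan has three steps: bound $\mathcal B(iy)$ uniformly on the imaginary segment, bound the denominator from below, and integrate.

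\textbf{Step 1: bound for $\mathcal B(iy)$.} For $r=iy$ with $0\le y\le\sigma_T$ we have $Z=\cos y\in[\sin(1/T),1]$, which is real. Here $\sinh^2 r=-\sin^2 y$, so $p_{m,t}(iy)=\sqrt{m^2+t^2\sin^2y}$ is real and positive. Hence $R_{m,t}(\cos y)<1$, and by Lemma~\ref{lem:ratio_bounds_for_B} it is decreasing in $m$.

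I would rerun the argument of Lemma~\ref{Bhor} with the shifted contour replaced by the segment $r=iy$. The Dunster estimates of Lemma~\ref{lem:Dunster_conical} extend to $x=0$ as one-sided limits; these are exactly the endpoint values treated in Lemmas~\ref{lem:J_exponential_ratio} and~\ref{lem:K_exponential_ratio}. They give
\[
|\mathcal C_{t,m}(iy)|^2\ll e^{2ty}\,\frac{R_{m,t}^m}{m+t}\le e^{\pi t}\,\frac{R_{m,t}^m}{m+t},
\]
and the phase factor $e^{t\cdot\mathrm{Arg}}$ is at most $e^{\pi t/2}$ because $y\le\pi/2$. The zero mode is handled in the same way: since $|J_0(ity)|\ll e^{ty}/\sqrt{ty}$, it is bounded by $e^{\pi t}$. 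Thus
\[
\mathcal B(iy)\ll e^{\pi t}\Bigl(1+\log\frac1{1-R_{t,t}(\cos y)}\Bigr).
\]
With $Z=\cos y$ and $q=\sqrt{2-\cos^2y}\ge1$, we get $1-R\gg Zq/(Z+q)^2\gg\cos y\ge\sin(1/T)$. Therefore
\[
|\mathcal B(iy)|\ll e^{\pi t}(1+\log T),
\]
uniformly in $y\in[0,\sigma_T]$.

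\textbf{Step 2: the denominator.} We have
\[
\cosh w-\cos y=\cosh x\,\sin(1/T)-\cos y+i\sinh x\cos(1/T).
\]
I will lower-bound this by $c\,\max(\sinh x,\;1-\cos y)$ plus small terms, and show that $\int_0^{\sigma_T}\sin y\,|\cosh w-\cos y|^{-1/2}\,dy\ll1$ uniformly in $x$.

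\begin{itemize}
\item If $\sinh x\gg1$: the imaginary part has size $\gg\sinh x$, so the modulus is $\gg\sinh x$ and the integral is $O(1)$.
\item If $x$ is small: substitute $v=\cos y$, so $\sin y\,dy=dv$ and $v$ ranges over $[\sin(1/T),1]$. The quantity becomes $|a-v+ib|$ with $a=\cosh x\sin(1/T)$ and $b=\sinh x\cos(1/T)$. Then
\[
\int |a-v+ib|^{-1/2}\,dv\le\int_{-2}^{2}|u|^{-1/2}\,du\ll1,
\]
uniformly in $a$ and $b$.
\end{itemize}

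Branch issues do not affect absolute values, so this estimate needs only the modulus of the square root.

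\textbf{Step 3: combine.} Multiplying the two bounds gives $|H_{\mathrm{vert}}(w)|\ll e^{\pi t}(1+\log T)$. The lower line follows by conjugation symmetry.

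The main obstacle is Step 1, namely justifying Dunster's expansions uniformly on the imaginary segment, including near $y=0$ where $Z\to1$. For small $y$ I would first try bounding $\mathcal B(iy)$ directly: there $\tanh(iy/2)$ is small, and Lemma~\ref{trivialB}-type bounds together with $\sum D_\phi(m)$ growth should suffice, provided their constants can be made uniform in $t$. That uniformity is itself not obvious, so Step 1 is the step most likely to fail as written.
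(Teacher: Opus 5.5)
Your Steps 2 and 3 are fine. Step 2 is a harmless variant of the paper's observation that $|\cosh w-\cos y|^2$ is increasing in $\cosh x$, which gives $|\cosh w-\cos y|\ge \cos y-\cos\sigma_T$. The gap is Step 1.

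Lemma~\ref{lem:Dunster_conical}, Proposition~\ref{prop:C_shifted_bounds}, and Lemmas~\ref{lem:J_bessel_contribution}, \ref{lem:K_bessel_contribution}, \ref{lem:J_exponential_ratio}, \ref{lem:K_exponential_ratio} and~\ref{Bhor} are all proved only for $r$ on the shifted lines $\Im r=\pm\sigma_T$. The boundary values ``at $x=0$'' in those lemmas concern the single point $r=\pm i\sigma_T$, i.e.\ $Z=\cos\sigma_T$. They do not cover the segment $r=iy$, $0\le y\le\sigma_T$, where $Z=\cos y$ sweeps all of $[\sin(1/T),1]$.

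To run Dunster's approximations there, you would have to redo the progressive-path and total-variation analysis of Appendix~\ref{AppendixConical} for these new endpoints, uniformly as $y\to0$ (where $Z\to1$). For instance, on this segment $\zeta<0$, so $\zeta^{1/2}/\alpha$ is purely imaginary. That places it on the excluded boundary of the region $\Re z>0$ where the $J$-Bessel exponential lemma is stated. Your fallback via Lemma~\ref{trivialB} fails, as you note, because its constant depends on $t$.

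The missing idea is that no new special-function input is needed: the paper moves the horizontal bound inward by the maximum modulus principle.

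1. Put $F(r)=e^{-r^2}\mathcal B(r)$, which is holomorphic on $|\Im r|<\pi/2$.
2. On the horizontal sides $r=u\pm i\sigma_T$, Lemma~\ref{Bhor} gives $|F|\ll e^{\pi t}e^{-u^2+\sigma_T^2}(|u|+\log T+1)\ll e^{\pi t}(1+\log T)$, since the Gaussian absorbs the $|u|$.
3. On the vertical sides $\Re r=\pm R$, combine Lemma~\ref{trivialB} with $1-\bigl|\tanh\frac{u+iv}{2}\bigr|^2=\frac{2\cos v}{\cosh u+\cos v}$. This gives $|\mathcal B(u+iv)|\ll_{t,T}e^{|u|}$, so $F\to0$ uniformly as $R\to\infty$.
4. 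The $t$-dependent constant in step~3 is harmless, because it is used only qualitatively to discard the vertical sides.
5. Maximum modulus on rectangles, followed by $R\to\infty$, yields $|\mathcal B(iy)|=e^{-y^2}|F(iy)|\ll e^{\pi t}(1+\log T)$ for $|y|\le\sigma_T$.

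This is exactly the conclusion your Step 1 was after, and with it the rest of your argument goes through unchanged.
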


\begin{proof}
Put $\sigma=\sigma_T$ and $c_T=\cos\sigma$.
We first use the maximum-modulus principle and deduce a bound for $\mathcal B$ on the vertical segment
from Lemma~\ref{Bhor}. Indeed, the function $F(r):=e^{-r^2}\mathcal B(r)$ 
is holomorphic in $|\Im r|<\pi/2$. On the two lines
$r=u\pm i\sigma$, Lemma~\ref{Bhor} gives
\[
    |F(u\pm i\sigma)|
    \ll_Y
    e^{\pi t}e^{-u^2+\sigma^2}
    (|u|+\log T+1)
    \ll_Y e^{\pi t}(1+\log T).
\]
Moreover, for $|v|\leq\sigma$,
\[
    1-\left|\tanh\frac{u+iv}{2}\right|^2
    =
    \frac{2\cos v}{\cosh u+\cos v}.
\]
Hence Lemma~\ref{trivialB} implies
\[
    |\mathcal B(u+iv)|\ll_{t,T}e^{|u|}.
\]
Consequently, $F(\pm R+iv)\to0$ uniformly for $|v|\leq\sigma$
as $R\to\infty$.

Applying the maximum-modulus principle on rectangles and then
letting $R\to\infty$, we obtain
\[
    |\mathcal B(iy)|
    =e^{-y^2}|F(iy)|
    \ll_Y e^{\pi t}(1+\log T),
    \qquad |y|\leq\sigma.
\]

For $w=x+i\sigma$ and $0\leq y\leq\sigma$, we have
\[
    |\cosh w-\cos y|\geq \cos y-c_T.
\]
Indeed, the square of the left-hand side is increasing as a
function of $\cosh x\geq1$, and its value at $x=0$ is
$(\cos y-c_T)^2$. Therefore
\[
\begin{aligned}
    |H_{\mathrm{vert}}(w)|
    &\ll_Y e^{\pi t}(1+\log T)
       \int_0^\sigma
       \frac{\sin y}{(\cos y-c_T)^{1/2}}\,dy \\
    &=2e^{\pi t}(1+\log T)(1-c_T)^{1/2}\\
    &\ll_Y e^{\pi t}(1+\log T).
\end{aligned}
\]
The lower shifted line is treated in the same way.
\end{proof}

\begin{lemma}[Shifted-line bound for \(H\)]\label{lem:H_shifted_bound}
Let
\[
\sigma_T:=\frac{\pi}{2}-\frac1T,
\qquad
w=x+i\sigma_T.
\]
Then
\[
|H(w)|
\ll
e^{\pi |t|}e^{|x|}
\bigl(|x|+\log T+1\bigr).
\]
The same bound holds on the lower shifted line.
\end{lemma}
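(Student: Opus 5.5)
We bound $H(w)$ on the shifted line $w=x+i\sigma_T$ by deforming the defining integral \eqref{eq:H_def}. By Lemma~\ref{lem:H_holomorphic}, $H$ is holomorphic and odd in the strip, so it suffices to treat $x\ge0$; the lower line follows by conjugation symmetry, or by the identical argument. The representation from the proof of Lemma~\ref{lem:H_holomorphic},
\[
H(w)=\int_{[0,w]}\frac{\mathcal B(r)\sinh r}{\sqrt{\cosh w-\cosh r}}\,dr,
\]
holds along the straight segment from $0$ to $w$. By Cauchy's theorem, using holomorphy of the integrand away from $r=\pm w$ and the integrable square-root singularity at $r=w$, we deform this segment into two pieces: the vertical segment from $0$ to $i\sigma_T$, then the horizontal segment from $i\sigma_T$ to $x+i\sigma_T$. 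Along the vertical piece $r=iy$ we have $\sinh r\,dr=-\sin y\,dy$ and $\cosh r=\cos y$. This piece is exactly $H_{\mathrm{vert}}(w)$, which Lemma~\ref{lem:H_vertical_direct} bounds by $e^{\pi t}(1+\log T)$. One must check that the square-root branch stays consistent along the deformed path, i.e.\ that $\cosh w-\cosh r$ does not wind around $0$. I expect this is routine, since $\cosh r$ for $r$ on the path only meets $\cosh w$ at the endpoint.

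The horizontal piece is
\[
H_{\mathrm{hor}}(w)=\int_0^x\frac{\mathcal B(u+i\sigma_T)\sinh(u+i\sigma_T)}{\sqrt{\cosh(x+i\sigma_T)-\cosh(u+i\sigma_T)}}\,du.
\]
Lemma~\ref{Bhor} gives $|\mathcal B(u+i\sigma_T)|\ll e^{\pi t}(u+\log T+1)\le e^{\pi t}(x+\log T+1)$, and $|\sinh(u+i\sigma_T)|\ll e^{u}$. For the denominator, write $\cosh w-\cosh r=2\sinh\frac{w+r}{2}\sinh\frac{w-r}{2}$ with $w-r=x-u$ real and $w+r=x+u+2i\sigma_T$. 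Then $|\sinh\frac{x-u}{2}|\ge \frac{x-u}{2}$, and
\[
\left|\sinh\tfrac{x+u}{2}\cos\sigma_T+i\cosh\tfrac{x+u}{2}\sin\sigma_T\right|\gg e^{(x+u)/2}.
\]
The latter holds because $\sin\sigma_T\ge\sin(\pi/2-1/2)$ is bounded below. Hence $|\cosh w-\cosh r|\gg e^{(x+u)/2}\min(x-u,1)$, up to using $\sinh y\ge y$ and exponential growth for $x-u$ large.

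With these bounds the horizontal integral is at most
\[
e^{\pi t}(x+\log T+1)\int_0^x \frac{e^{u}}{e^{(x+u)/4}\min(x-u,1)^{1/2}}\,du.
\]
The integrand is $e^{3u/4-x/4}$ times an integrable singularity near $u=x$, so the integral is $\ll e^{x/2}\le e^{x}$. This yields $e^{\pi t}e^{x}(x+\log T+1)$, and adding the vertical contribution gives the lemma.

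The main point needing care is not the size estimates, which are crude, but justifying the deformation and branch choice, together with the lower bound on $|\cosh w-\cosh r|$ uniform in $T$. Both use only that $\sigma_T$ stays bounded away from $0$, not close to $\pi/2$. Near $u=0$ with $x$ small, the two factors cannot both be small, because $|\sinh\frac{w+r}{2}|\ge\sin\sigma_T$. So no $T$-dependence enters beyond the $\log T$ inherited from Lemma~\ref{Bhor} and Lemma~\ref{lem:H_vertical_direct}.
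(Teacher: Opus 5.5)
Your proposal is correct and follows essentially the same route as the paper. You split the deformed path into the vertical piece, handled by Lemma~\ref{lem:H_vertical_direct}, and the horizontal piece, bounded via Lemma~\ref{Bhor} and the factorization $\cosh w-\cosh r=2\sinh\frac{w+r}{2}\sinh\frac{w-r}{2}$. The only differences are minor: you use the sharper bound $|\sinh\frac{w+r}{2}|\gg e^{(x+u)/2}$ where the paper uses only $\gg 1$, which yields $e^{x/2}$ for the horizontal piece before you weaken it to $e^{|x|}$; and you treat $x<0$ via oddness together with the reflection symmetry $H(\bar w)=\overline{H(w)}$, rather than by reversing the path.
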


\begin{proof}
We prove the upper-line estimate. The lower-line estimate is identical.

Deform the path defining \(H(w)\) into the vertical segment from \(0\) to
\(i\sigma_T\) and the horizontal segment from \(i\sigma_T\) to
\(x+i\sigma_T\):
\[
H(w)=H_{\mathrm{vert}}(w)+H_{\mathrm{hor}}(w).
\]
By Lemma \ref{lem:H_vertical_direct},
\[
|H_{\mathrm{vert}}(w)|
\ll
e^{\pi |t|}(1+\log T).
\]

For the horizontal part, write \(r=u+i\sigma_T\), with \(0\leq u\leq x\) if
\(x\geq0\), and reverse the path if \(x<0\). Using Lemma \ref{Bhor}, the
factorization
\[
\cosh w-\cosh r
=
2\sinh\left(\frac{w+r}{2}\right)
 \sinh\left(\frac{w-r}{2}\right),
\]
and the estimates
\[
\left|\sinh\left(\frac{w+r}{2}\right)\right|\gg1,
\qquad
|\sinh(u+i\sigma_T)|\ll e^{|u|},
\]
one obtains
\[
|H_{\mathrm{hor}}(w)|
\ll
e^{\pi |t|}
\int_0^{|x|}
\frac{(u+\log T+1)e^u}
     {|\sinh((|x|-u)/2)|^{1/2}}
\,du .
\]
As before, the last integral is
\[
\ll
e^{|x|}\bigl(|x|+\log T+1\bigr),
\]
because the singularity at \(u=|x|\) is integrable and the remaining part is
exponentially decaying after the change of variables \(v=|x|-u\). Hence
\[
|H_{\mathrm{hor}}(w)|
\ll
e^{\pi |t|}e^{|x|}
\bigl(|x|+\log T+1\bigr).
\]
Combining the vertical and horizontal estimates gives the claim.
\end{proof}

\subsection{Proof of Proposition \ref{prop:K_inf_bound}}\label{subsec:Proof_Prop_K_inf_bound}
In this subsection, we finally prove Proposition \ref{prop:K_inf_bound}.

\begin{proof}[Proof of Proposition \ref{prop:K_inf_bound}]

We now complete the proof of Proposition \ref{prop:K_inf_bound}. Recall that our goal is to uniformly bound
\begin{equation}\label{eq:I_H_final}
 I=K_T \abs{\phi}^2(z')
=
\sqrt{\frac{2}{\pi}}
\int_0^\infty
H(w)e^{-w^2/4}
\left(
T\sin(Tw)+\frac w2\cos(Tw)
\right)
\,dw.
\end{equation}

Since \(H\) is odd, the integrand in \eqref{eq:I_H_final} is even. Hence
\[
I
=
\frac{1}{\sqrt{2\pi}}
\int_{-\infty}^{\infty}
H(w)e^{-w^2/4}
\left(
T\sin(Tw)+\frac w2\cos(Tw)
\right)
\,dw.
\]
Rewriting the sine and cosine functions in terms of the exponential function, we obtain
\[
I
=
\frac{1}{\sqrt{2\pi}}(I_+ + I_-),
\]
where
\[
I_+
:=
\int_{-\infty}^{\infty}
H(w)e^{-w^2/4}
\left(\frac{T}{2i}+\frac w4\right)e^{iTw}
\,dw
\]
and
\[
I_-
:=
\int_{-\infty}^{\infty}
H(w)e^{-w^2/4}
\left(-\frac{T}{2i}+\frac w4\right)e^{-iTw}
\,dw.
\]

Put $\sigma_T:=\frac{\pi}{2}-\frac1T.$ Using the holomorphicity of $H$ (see Lemma \ref{lem:H_holomorphic}), we shift the contour in \(I_+\) to the line $
\Im w=\sigma_T$
and the contour in \(I_-\) to the line $
\Im w=-\sigma_T.$

\begin{comment}
Indeed, applying Cauchy's theorem on rectangles and then letting the horizontal
length tend to infinity, the vertical side integrals vanish: on
\(w=R+iy\), with \(|y|\leq\sigma_T\),
\[
|e^{-w^2/4}|
=
e^{-(R^2-y^2)/4}
\ll e^{-R^2/4},
\]
while the shifted-line bound for \(H\) gives at most exponential growth
\(e^{|R|}\) times a polynomial factor. The Gaussian therefore dominates.
\end{comment}
On the upper shifted line \(w=x+i\sigma_T\), we have $|e^{iTw}|=e^{-\frac{\pi}{2} T+1}$ and $|e^{-w^2/4}|\ll
e^{-x^2/4}$. Together with the result of Lemma \ref{lem:H_shifted_bound},
\[
|H(x+i\sigma_T)|
\ll
e^{\pi |t|}e^{|x|}
\bigl(|x|+\log T+1\bigr),
\]
we therefore obtain
\[
\begin{aligned}
|I_+|
&\ll
e^{-\frac{\pi}{2} T+\pi |t|}
\int_{-\infty}^{\infty}
e^{-x^2/4+|x|}
(T+|x|+1)
\bigl(|x|+\log T+1\bigr)
\,dx.
\end{aligned}
\]
The same argument on the lower shifted line gives the identical estimate for
\(I_-\). Since the Gaussian integral has bounded polynomial moments, we obtain
\[
|I|
\ll
e^{-\frac{\pi}{2} T+\pi |t|}
T
\bigl(1+\log T\bigr).
\]

Since \(z'\) was chosen so that
\(|K_T|\phi|^2(z')|=\|K_T|\phi|^2\|_\infty\), this proves
\[
\|K_T|\phi|^2\|_\infty
\ll
T(1+\log T)
\exp\left(-\frac{\pi}{2}T+\pi t\right).
\]
This completes the proof of Proposition \ref{prop:K_inf_bound}.
\end{proof}

\section{Effective equidistribution of closed geodesics}\label{sec:Equidistrib}
One of the trickiest aspects of Zelditch's Trace Formula is that the spectral side
\[
    \langle \varphi\phi_j,\phi_j\rangle \frac{1}{2\pi i}\int_{(\sigma)}\Theta_\tau^{t_j}(s) M \psi(s) ds,
\]
where $\frac{1}{2}<\sigma<1$, can be exponentially large in terms of $\tau$. To see this, we shift the contour $(\sigma)$ to the left, picking up various residues of the integrand. One can show that the residues
\[
\langle \varphi\phi_j,\phi_j\rangle  M \psi\left(\frac{1}{2}\pm it_j -m\right) {\rm Res}_{s=\frac{1}{2}\pm it_j -m} \Theta_\tau^{t_j}(s)
\]
for $m=0,1,2,\ldots$ grow at most polynomially in $|\tau|$, hence these are harmless. However, the residues
\[
\langle \varphi\phi_j,\phi_j\rangle  M \psi\left( -m\right) {\rm Res}_{s=-m} \Theta_\tau^{t_j}(s)
\]
for $m=0,1,2,\ldots$ can become exponentially large in $|\tau|$ if $|t_j| > |\tau|/2$ even if we apply the optimal bound for $\langle \varphi\phi_j,\phi_j\rangle$ in Theorem \ref{thm:TripleProduct}.

The geometric side of Zelditch's Trace Formula grows at most polynomially in $\tau$, meaning that there is an exponential cancellation in the summation on the spectral side. See the proof of Proposition \ref{prop:Zelditch_Equidist}. No direct argument will capture such an extreme cancellation.

Accordingly, for some large $M>0$, we will express the problematic residue terms 
\begin{equation}\label{eqn:formidable terms}
    \sum_{j=1}^\infty \sum_{m=0}^{M} \langle \varphi\phi_j,\phi_j\rangle  M \psi\left( -m\right) {\rm Res}_{s=-m} \Theta_\tau^{t_j}(s)    
\end{equation}
on the spectral side as a geometric summation over the hyperbolic conjugacy classes on the geometric side, and then argue that this grows indeed polynomially fast in $\tau$.

To be precise, in the following subsections, we first invert the function $t \mapsto M \psi\left(-m\right) {\rm Res}_{s=-m} \Theta^{t}_\tau(s)$ on the spectral side to a function $\psi_m: u \mapsto \psi_m(u)$ on the geometric side using the inverse Selberg/Harish-Chandra transform. We then define $\psi_\tau := \psi - \sum_{m=1}^{M} \psi_m$ so that the Selberg/Harish-Chandra transform $h_\tau$ of $\psi_\tau$ does not contain the problematic terms in Equation \eqref{eqn:formidable terms}. In Section \ref{subsec:Spec_Side}, we show the spectral side has only polynomial growth in $\tau$.

On the geometric side, however, the new term $\psi_m(u)$ decays at order $u^{-1}$ (Lemma~\ref{lem:psi_m}), which leads to the failure of absolute convergence (Lemma~\ref{lem:wPrimLengthSum} with $\delta=1$). Nevertheless, we establish conditional convergence in Proposition~\ref{prop:Weighted_Length_Sum_marginal}. The proof relies on Proposition~\ref{prop:Zelditch_Equidist}, which is a version of Zelditch's equidistribution result with an error estimate uniform in $\tau$.

\subsection{\texorpdfstring{Geometric test function $\psi:=\psi_{T,\epsilon}$}{Geometric test function psi}}
For a large parameter $T>0$, we define
\[
    \psi_T(x) :=
    \left\{
        \begin{aligned}
            &1 &&\text{ if } e^{T/2}<x<e^T\\
            &-2 && \text{ if } e^{T/4}<x<e^{T/2}\\
            &0 &&\text{ otherwise}.
        \end{aligned}
    \right.
\]
Let $\varrho \in C_0^\infty(\mathbb{R}_+)$ be a non-negative function with ${\rm supp}(\varrho)\subset (1/e,e)$ such that
\[
    \int_0^\infty \varrho(x) \frac{dx}{x}=1.
\]
Then, for $\epsilon>0$, we define
\begin{equation}\label{eq:psi_Tepsilon}
    \psi_{T,\epsilon} (x) =\frac{1}{\epsilon} \int_0^\infty \psi_T (x/y) \varrho (y^{1/\epsilon}) \frac{dy}{y}.
\end{equation}
Then
\[
    M\psi_{T,\epsilon}(s)=M\psi_T(s)\,M\varrho(\epsilon s).
\]

It follows that
\begin{equation}\label{eqn:psi_T,epsilon}
    \psi_{T,\epsilon}(x)
    \begin{cases}
        =-2& x\in[e^{T/4+\epsilon},e^{T/2-\epsilon}]\\
        =1 & x\in[e^{T/2+\epsilon},e^{T-\epsilon}]\\
        \in[-2,1] & x\in[e^{T/4-\epsilon},e^{T/4+\epsilon}]\cup[e^{T/2-\epsilon},e^{T/2+\epsilon}]\cup [e^{T-\epsilon},e^{T+\epsilon}]\\
        =0 & {\rm otherwise}.
    \end{cases}
\end{equation}
We assume $T$ is large and $\epsilon$ is small enough so that the intervals in the above equation have pairwise disjoint interiors. 

If $s=0$, then
\begin{equation}\label{eqn:Mpsi_T_0}
    |M\psi_T(0)|= 0,
\end{equation}
and if $s\neq 0$, then
\begin{equation}\label{eqn:Mpsi_T}
    |M\psi_T(s)|= \left|\frac{e^{sT}-3e^{sT/2}+2e^{sT/4}}{s}\right|
        \ll
    \left\{
    \begin{aligned}
        &\frac{e^{\Re(s)T}}{|s|}&\text{if } \Re(s)\ge0\\
        &\frac{e^{\Re(s)T/4}}{|s|}&\text{if } \Re(s)<0
    \end{aligned}   
    \right.
\end{equation}
In particular, for $s\in \mathbb{R}_+$, we have
\begin{equation}\label{eqn:Mpsi_T_detail}
    M\psi_T(s)=\frac{e^{sT}}{s}+\frac{1}{s}O(e^{sT/2}).
\end{equation}

\begin{lemma}\label{lem:varrho}
    There exists $\varrho \in C_0^\infty (\mathbb{R}_+)$ with ${\rm supp}(\varrho)\subset (1/e,e)$ and $\int_0^\infty \varrho(x) \frac{dx}{x}=1$ such that for all $n\ge 1$ and $s\in \mathbb{C}$ with $[s]_n\neq 0$, we have
    \[
        \|\varrho^{(n)}\|_\infty \ll \left(n(\log (n+1))^2\right)^n
    \]
    and
    \[
        |M\varrho(s)|
        \ll \frac{1}{|[s]_n|}e^{|\Re(s)+n|} \left(n(\log (n+1))^2\right)^n
        \ll_n \frac{1}{|[s]_n|}e^{|\Re(s)|}.
    \]
    Moreover, for all $|s|<1$, we have
    \[
        M\varrho(s)= 1+ O(|s|),
    \]
    and, for all $s\in \mathbb{C}$, we have
    \[
        |M\varrho(s)|\ll e^{|\Re(s)|}.
    \]
    All implied constants are absolute unless their dependence is explicitly indicated.
\end{lemma}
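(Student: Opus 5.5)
We need to construct $\varrho$ with Gevrey-type derivative bounds $\|\varrho^{(n)}\|_\infty\ll (n(\log(n+1))^2)^n$. The natural tool is the Denjoy--Carleman construction: infinite convolution of normalized indicator functions.

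\textbf{Construction.} Work in the logarithmic variable $x=e^v$, so that the Mellin transform becomes a Fourier--Laplace transform:
\[
M\varrho(s)=\int_{\mathbb R}\varrho(e^v)e^{sv}\,dv .
\]
Choose positive numbers $a_k$ with $\sum_{k\ge1}a_k\le 1/2$, say $a_k=c/(k(\log(k+1))^2)$ with $c$ small. Let $\chi_k=(2a_k)^{-1}\mathbbm{1}_{[-a_k,a_k]}$ and set $g=\chi_1*\chi_2*\cdots$. This infinite convolution converges to a smooth, nonnegative, even function supported in $[-\sum a_k,\sum a_k]\subset(-1,1)$ with $\int g=1$. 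Define $\varrho(x)=g(\log x)$. Then $\operatorname{supp}\varrho\subset(1/e,e)$, $\varrho\ge0$, and
\[
\int_0^\infty\varrho\,\frac{dx}{x}=\int g=1 .
\]

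\textbf{Derivative bounds.} Write $g=\chi_1*\cdots*\chi_n*G_n$, where $G_n$ is the remaining convolution, a probability density. Each derivative falling on $\chi_k$ yields a difference operator of size $\le 1/a_k$ in sup norm, so
\[
\|g^{(n)}\|_\infty\le \prod_{k\le n}a_k^{-1}\le c^{-n}\prod_{k\le n}k(\log(k+1))^2\le C^n\bigl(n(\log(n+1))^2\bigr)^n .
\]
Passing to $\varrho^{(n)}(x)$ via the chain rule for $g(\log x)$ with $x\in(1/e,e)$ gives combinations of $g^{(j)}$, $j\le n$, with Stirling-number coefficients. These are bounded by $C^n n!$-type factors times $x^{-n}\le e^n$, and all such factors are absorbed into $(n(\log(n+1))^2)^n$ up to an exponential $C^n$. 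The implied constant in the stated bound is absolute, so an exponential factor $C^n$ can be absorbed only if we sharpen the choice of $a_k$. For instance, take $a_k$ slightly smaller on the first few indices, or more precisely note that $C^n\le (n(\log(n+1))^2)^n$ times a constant fails. This is the main obstacle to check carefully: the claimed bound is presumably meant up to constants $C^n$, or achievable by tuning $c$ so that $c^{-1}\prod$ matches. I would verify whether the stated normalization tolerates $C^n$, since the second form $\ll_n$ certainly does.

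\textbf{Mellin bounds.} Since $\varrho$ is compactly supported in $(0,\infty)$, integrate by parts $n$ times in
\[
M\varrho(s)=\int \varrho(x)x^{s-1}\,dx .
\]
Using $x^{s-1}=\bigl(x^{s+n-1}\bigr)^{(n)}/[s]_n$ with no boundary terms, we get
\[
M\varrho(s)=\frac{(-1)^n}{[s]_n}\int\varrho^{(n)}(x)\,x^{s+n-1}\,dx .
\]
On $(1/e,e)$ we have $|x^{s+n-1}|\le e^{|\Re s+n-1|}$, and the interval has length $<e$. Hence
\[
|M\varrho(s)|\ll \frac{e^{|\Re s+n|}}{|[s]_n|}\bigl(n(\log(n+1))^2\bigr)^n\ll_n\frac{e^{|\Re s|}}{|[s]_n|}.
\]

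\textbf{Remaining bounds.} The trivial bound is
\[
|M\varrho(s)|\le\int\varrho\, x^{\Re s}\,\frac{dx}{x}\le e^{|\Re s|}.
\]
For $|s|<1$, we have
\[
M\varrho(s)-1=\int\varrho(x)(x^s-1)\,\frac{dx}{x},
\]
and $|x^s-1|\le |s||\log x|e^{|s||\log x|}\ll|s|$ on the support, which gives $M\varrho(s)=1+O(|s|)$.
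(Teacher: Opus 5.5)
Your Mellin-side steps are correct and match the paper's: the $n$-fold integration by parts against $(x^{s+n-1})^{(n)}=[s]_n x^{s-1}$, the trivial bound $|M\varrho(s)|\le e^{|\Re s|}$, and $M\varrho(s)=1+O(|s|)$. The gap is the first estimate, $\|\varrho^{(n)}\|_\infty\ll (n(\log(n+1))^2)^n$ with an \emph{absolute} constant, which you leave open. Your argument only gives $C^n(n(\log(n+1))^2)^n$, and the first Mellin bound inherits that loss. The statement does require an absolute constant, and it is true.

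The missing idea is a built-in saving of $e^{-n}$. For $a_k\asymp 1/(k(\log k)^2)$, the product $\prod_{k\le n}a_k^{-1}$ is of size about $n!\,(\log n)^{2n}$, and Stirling gives $n!\asymp (n/e)^n\sqrt{n}$. So relative to $(n(\log(n+1))^2)^n$ you gain $e^{-n}$, which absorbs every polynomial factor and any exponential loss of base less than $e$. Your choice $a_k=c/(k\log^2(k+1))$ throws this away: $a_1\le\sum a_k\le\frac12$ forces $c\le(\log 2)^2/2<1/e$, so $c^{-n}$ more than cancels $e^{-n}$. The fix is to shift the index instead of scaling. The paper takes $a_k=1/((k+3)(\log(k+3))^2)$ for $k\ge0$, so that $\sum_k a_k<1/\log 2<e-1/e$ with no prefactor. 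It then applies H\"ormander's Theorem 1.3.5 (which is exactly your infinite-convolution construction) to get $\|f^{(n)}\|_\infty\le 2^n/(a_0\cdots a_n)$. Finally it checks, via $\log((n+3)!)\le n\log(n+1)+\frac72\log(n+1)-n+O(1)$, that the $e^{-n}$ beats the $2^n$ because $\log 2<1$.

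The passage from $g$ to $\varrho=g\circ\log$ is also not justified as written. We have $\varrho^{(n)}(x)=x^{-n}\sum_{j}s(n,j)g^{(j)}(\log x)$ and $\sum_j|s(n,j)|=n!$. So bounding by ``$n!$-type factors'' times $\max_{j\le n}\|g^{(j)}\|_\infty$ loses a full factor $n!$, which cannot be absorbed. You must pair the large coefficients (small $j$) with the small norms. One way is $\|g^{(j)}\|_\infty\le P(n)\,j!\,L^j$ with $L=(\log(n+4))^2$, together with $|s(n,j)|\,j!\le\binom{n-1}{j-1}n!$, which gives $\sum_j|s(n,j)|\,j!\,L^j\le n!\,L^n e^{n/L}$. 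You still pay $x^{-n}\le e^{n\sum a_k}$ on the support. With an unscaled shifted sequence ($\sum a_k<1$) all of this fits inside the $e^{-n}$ saving, but it has to be carried out.

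The paper avoids composition altogether. It builds $f\ge0$ on the $x$-line with $\mathrm{supp}\,f\subset(1/e,e)$ and $\int f\,dx=1$, and sets $\varrho(x)=xf(x)$. Then $\int\varrho\,dx/x=1$, and $\varrho^{(n)}=xf^{(n)}+nf^{(n-1)}$ costs only a factor $\ll n$.

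A smaller slip: $\|g^{(n)}\|_\infty\le\prod_{k\le n}a_k^{-1}$ is not right. The tail $G_n$ is a probability density, not a function bounded by $1$. Using $\|G_n\|_\infty\le\|\chi_{n+1}\|_\infty=1/(2a_{n+1})$ gives $\frac12\prod_{k\le n+1}a_k^{-1}$, which matches the $n+1$ factors in H\"ormander's bound.
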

\begin{proof}
    By \cite[Theorem 1.3.5] {Hormander_PDE_bumpftn_bdd}, for any positive non-increasing sequence $\{a_k\}_{k\ge0}$ with $\sum_{k\ge0} a_k < e-1/e$, there exists a non-negative function $f\in C_c^\infty(\mathbb{R}_+)$ with ${\rm supp}(f)\subset (1/e,e)$, $\int_\mathbb{R_+} f\,dx=1$ and $\|f^{(n)}(x)\|_\infty\le 2^n/(a_0a_1\dots a_n)$. Define $\varrho(x):=xf(x)$. Since $(xf(x))^{(n)}=xf^{(n)}(x)+nf^{(n-1)}(x)$, we have
    \[
        \|\varrho^{(n)}(x)\|_\infty\le \frac{2^n(e+na_n/2)}{a_0a_1\dots a_n}\ll \frac{2^nn}{a_0a_1\dots a_n}.
    \]
    
    Note that
    \[
        \sum_{k=3}^\infty \frac{1}{k(\log k)^{2}}
        \,<\,
        \int_2^\infty \frac{1}{x(\log x)^{2}} dx
        \,=\,
        \frac{1}{\log 2}
        \,<\,
        e-\frac1e.
    \]
    By taking $a_{k}= \frac{1}{(k+3)(\log (k+3))^2}$ for $k\ge 0$ and using the Stirling approximation, for all $n\ge 1$, we obtain
    \begin{align*}
        \log \|\varrho^{(n)}\|_\infty &\le n \log2+\log n +\log ((n+3)!)+2\sum_{k=3}^{n+3}\log \left(\log k\right)+O(1)\\
        &\le n\log 2+\log n+\left(n\log(n+1)+\frac{7}{2}\log(n+1)-n\right)+ 2(n+1)\log(\log(n+1))+O(1)\\
        &\le n\log (n+1)+ 2n\log(\log (n+1))+O(1),
    \end{align*}
    and thus
    \[
        \|\varrho^{(n)}\|_{\infty}\ll \left(n(\log (n+1))^2\right)^n.
    \]
    By integrating by parts \(n\) times, we obtain
    \[
        |M\varrho(s)|
        =
        \left|\int_{1/e}^e \frac{\varrho^{(n)}(x)}{[s]_n} x^{s+n-1}\,dx \right|
        \le
        \frac{\|\varrho^{(n)}\|_\infty}{|[s]_n|} \int_{1/e}^e x^{\Re(s)+n-1}\,dx
        \le
        2\frac{\|\varrho^{(n)}\|_\infty}{|[s]_n|} e^{|\Re(s)+n|}.
    \]
    On the other hand, for $|s|<1$, we have
    \[
        |M\varrho(s)-1|
        =\left|\int_{1/e}^e (x^s-1)\varrho(x)\frac{dx}{x}\right|
        \le \int_{1/e}^e |e^{s\log x}-1|\,\varrho(x)\frac{dx}{x}
        \ll \int_{1/e}^e |s\log x|\,\varrho(x)\frac{dx}{x}
        \le |s|.
    \]
    The bound $|M\varrho(s)|\ll e^{|\Re(s)|}$ for all $s\in \mathbb{C}$ is immediate.
\end{proof}

\subsection{\texorpdfstring{Estimate of ${\rm Res}_{s=\frac12-m\pm it_j}\Theta^{t_j}_\tau(s)M\psi_{T,\epsilon}(s)$}{Estimate of Res Theta Mpsi}}
In this section, we estimate 
\[
    {\rm Res}_{s=\frac12-m\pm it_j}\Theta^{t_j}_\tau(s)M\psi_{T,\epsilon}(s),
\]
where $0\le m \le  |\tau|$, which will be used in subsequent sections. The $\epsilon>0$ is sufficiently small so that $\epsilon(1+m)<1$.

Recall
\[
    \Theta^t_\tau(s)=\frac{2^{2-2s} \sqrt{\pi}}{\sin(\pi s)}\frac{\Gamma\left(-\frac{1}{2}+it+s\right)\Gamma\left(-\frac{1}{2}-it+s\right)}{\Gamma\left(-\frac{1}{2}\left(\frac{1}{2}+i\tau\right)+s\right)\Gamma\left(-\frac{1}{2}\left(\frac{1}{2}-i\tau\right)+s\right)}\cosh(\pi t).
\]
Since $\Theta^0_\tau(s)$ has a double pole at $s=\frac12-m$, we separately discuss the cases $t_j\neq 0$ and $t_j=0$.

\begin{lemma}\label{lem:res Theta at s=1/2+it_j-m, t_j nonzero}
We have
\[
    |M\psi_{T,\epsilon}(\tfrac12\pm it_j-m)|
    =
    \begin{cases}
        O(e^{(\frac12 \mp \Im(t_j))T})& m=0\\
        \frac{1}{m}O(e^{\frac14(\frac12-m\mp \Im(t_j))T}) & m>0.
    \end{cases}
\]
For $t_j\in i\mathbb{R}$ with $t_j\neq 0$, we have
\[
    |{\rm Res}_{s=\frac{1}{2}\pm it_j -m}\Theta^{t_j}_\tau(s)|
    \ll
    \frac{e^{2m}}{\sqrt{1+m}}(1+\tau)^{2m+1/2+2\Im(t_j)}e^{\pi\tau/2}.
\]
On the other hand, for $t_j\in \mathbb{R}_+$ with $t_j\neq 0$, we have
\[
    |{\rm Res}_{s=\frac{1}{2}\pm it_j -m}\Theta^{t_j}_\tau(s)|
    \ll
    \frac{1}{\sqrt{1+m}} \exp(-\tfrac{\pi}{2}(2|t_j|-|t_j+\tfrac\tau2|-|t_j-\tfrac\tau2|))\times
        \begin{cases}
            \left(\frac{12e^2(1+t_j)}{1+m}\right)^m &  \tau<2t_j\\
            \sqrt{1+\tau}\left(\frac{2e^2
            (1+\tau)^2}{1+m}\right)^m & \tau\ge 2t_j.
        \end{cases}
\]
In particular, for $m=0$ and $t_j\neq 0$, we have
\[
    \left|{\rm Res}_{s=\frac12\pm it_j}\Theta_{\tau}^{t_j}(s)\right|
    \ll
    \begin{cases}
        (1+|\tau|)^{1/2+2\Im(t_j)}e^{\pi|\tau|/2}& t_j\in i\mathbb{R}\\
        (1+|\tau|)^{1/2} \exp\left(-\tfrac{\pi}{2}(2|t_j|-|t_j+\tfrac12\tau|-|t_j-\tfrac12\tau|)\right) & t_j\in \mathbb{R}
    \end{cases}.
\]
\end{lemma}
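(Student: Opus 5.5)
The plan is to compute both quantities explicitly and then estimate the resulting Gamma factors with Stirling's formula \eqref{eqn:Stirling}, the vertical asymptotic \eqref{eqn:Stirling_vertical}, and the reflection formula \eqref{eqn:reflection}.

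\textbf{The Mellin factor.} We have $M\psi_{T,\epsilon}(s)=M\psi_T(s)M\varrho(\epsilon s)$. At $s=\tfrac12\pm it_j-m$ we use two inputs.
\begin{itemize}
\item The bound $|M\varrho(\epsilon s)|\ll e^{\epsilon|\Re s|}\ll 1$ from Lemma~\ref{lem:varrho}, using $\epsilon(1+m)<1$.
\item The bound \eqref{eqn:Mpsi_T}.
\end{itemize}
For $m=0$ the real part is $\tfrac12\mp\Im(t_j)\geq 0$, and $|s|\gg1$, so we get $e^{(\frac12\mp\Im t_j)T}$. For $m\geq1$ the real part is negative, and $|s|\geq m-\tfrac12\gg m$, which gives $\frac1m e^{\frac14(\frac12-m\mp\Im t_j)T}$. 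This part is routine.

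\textbf{The residue.} For $t_j\neq0$ the pole of $\Gamma(s-\tfrac12\mp it_j)$ at $s=\tfrac12\pm it_j-m$ is simple, with residue $(-1)^m/m!$. Hence
\[
{\rm Res}=\frac{(-1)^m}{m!}\,\frac{2^{1+2m\mp2it_j}\sqrt\pi\cosh(\pi t_j)\,\Gamma(\pm2it_j-m)}{\sin(\pi(\tfrac12\pm it_j-m))\,\Gamma(\tfrac14-\tfrac{i\tau}2\pm it_j-m)\,\Gamma(\tfrac14+\tfrac{i\tau}2\pm it_j-m)}.
\]
Note that $|\sin(\pi(\tfrac12\pm it_j-m))|=|\cosh(\pi t_j)|$, so the $\cosh$ factor cancels; for imaginary $t_j$ it is bounded anyway.

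Next we move all Gamma factors with argument of real part $\approx-m$ to positive real part using \eqref{eqn:reflection}. Since $|\sin(\pi(a-m))|=|\sin\pi a|$, this gives
\[
|\Gamma(\pm2it_j-m)|=\frac{\pi}{|\Gamma(1+m\mp2it_j)\sin(2\pi it_j)|}
\]
and
\[
\frac{1}{|\Gamma(\tfrac14\mp\tfrac{i\tau}2\pm it_j-m)|}=\frac{|\Gamma(\tfrac34+m\pm\tfrac{i\tau}2\mp it_j)\sin(\pi(\tfrac14\mp\tfrac{i\tau}2\pm it_j))|}{\pi}.
\]
The residue is therefore, up to a constant, $4^m/m!$ times a ratio of two Gamma functions $\Gamma(\tfrac34+m+i(\tfrac\tau2\mp t_j))$ and $\Gamma(\tfrac34+m-i(\tfrac\tau2\pm t_j))$ over $\Gamma(1+m\mp2it_j)$, times
\[
\frac{|\sin\pi(\tfrac14+i(t_j-\tfrac\tau2))\sin\pi(\tfrac14+i(t_j+\tfrac\tau2))|}{|\sinh 2\pi t_j|}.
\]

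\textbf{The sine factors.} These produce the exponential $\exp\bigl(\tfrac\pi2(|t_j+\tfrac\tau2|+|t_j-\tfrac\tau2|)-2\pi|t_j|\bigr)$. The $\Gamma(1+m\mp2it_j)$ in the denominator, via $e^{-\pi|t_j|}$, contributes another $e^{\pi|t_j|}$ in Stirling. Combined with the numerator Gammas, which give $e^{-\frac\pi2(|t_j+\tau/2|+|t_j-\tau/2|)}\cdot$(polynomial) in the vertical regime, the claimed exponential $\exp(-\tfrac\pi2(2|t_j|-|t_j+\tfrac\tau2|-|t_j-\tfrac\tau2|))$ should emerge. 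In fact we must be careful: the combined sine/Gamma bookkeeping must reproduce exactly that exponent with nothing left over, which needs a check of signs for each regime.

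For imaginary $t_j$, the $\sinh$ is replaced by $\sin(2\pi|t_j|)$, which is bounded below for $0<|t_j|\le\tfrac12$ with $t_j\neq 0$. Here $\Gamma(1+m\mp 2it_j)\asymp m!\,(1+m)^{\pm2\Im t_j}$ up to constants. Each numerator Gamma is bounded by $|\Gamma(\tfrac34+m+i\tau/2)|(1+\tau)^{\Im t_j}$ by \eqref{eqn:Gamma_ratio}-type estimates, and each sine is $\ll e^{\pi\tau/2}$. This yields $e^{\pi\tau/2}(1+\tau)^{2\Im t_j}\,4^m|\Gamma(\tfrac34+m+\tfrac{i\tau}2)|^2/(m!)^2$ times sine factors. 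Then Stirling gives $|\Gamma(\tfrac34+m+\tfrac{i\tau}2)|^2\ll e^{-\pi\tau/2}\,(m+\tau)^{2m+1/2}\cdots$; matching to $e^{2m}(1+\tau)^{2m+1/2}/\sqrt{1+m}$ uses $(m!)^{-1}\ll e^m m^{-m-1/2}$ together with $m\le\tau$.

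\textbf{The main obstacle.} The step I expect to be hardest is the uniform-in-$m$ Stirling estimate of the ratio
\[
\frac{|\Gamma(\tfrac34+m+ia)\Gamma(\tfrac34+m+ib)|}{m!\,|\Gamma(1+m+2it_j)|},\qquad a,b=\tfrac\tau2\pm t_j,
\]
in the two regimes $\tau<2t_j$ and $\tau\ge2t_j$. I would apply \eqref{eqn:Stirling} in the form $|\Gamma(x+iy)|\asymp |x+iy|^{x-1/2}e^{-|y|\arg}e^{-x}$ and bound crudely.
\begin{itemize}
\item Use $|x+iy|\le x+|y|$, so the numerator Gammas give $\le (m+1+\tau)^{m+1/4}$ each.
\item Use the denominator $\Gamma(1+m+2it_j)$ only to absorb $e^{\pi|t_j|}$ and a factor $(1+m)^{m+1/2}$ or $(1+t_j)^{m}$.
\item Use $\arg(x+iy)\ge \tfrac\pi2-x/|y|$ to convert the $e^{-|y|\arg}$ factors into $e^{-\pi|y|/2}e^{x}$, which accounts for the $e^{2m}$-type constants.
\end{itemize}
In the regime $\tau<2t_j$ one has $|a|,|b|\le 2t_j$, so the ratio of powers is $\ll(12e^2(1+t_j)/(1+m))^m$. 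In the regime $\tau\ge2t_j$ one bounds by $(1+\tau)^{2m}$, with the extra $\sqrt{1+\tau}$ from the exponents $m+\tfrac14$. The special case $m=0$ then follows by substitution.
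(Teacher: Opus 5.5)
Your route is the paper's: write the residue explicitly, cancel $\cosh\pi t_j$ against $\sin\pi(\tfrac12\pm it_j-m)$, reflect the Gamma factors whose real part is near $-m$, and apply Stirling together with $x\arctan(x/y)\ge\tfrac{\pi x}{2}-y$. The Mellin-factor part and the imaginary-$t_j$ case go through as you describe. In the real-$t_j$ case, however, two steps do not yet deliver the statement.

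\textbf{The sine factors.} You misstate their size. Since $|\sin\pi(\tfrac14+iy)|\le\cosh\pi y\le e^{\pi|y|}$, the two sines together with $1/|\sinh 2\pi t_j|\ll_X e^{-2\pi|t_j|}$ give $\exp\bigl(\pi(|t_j+\tfrac\tau2|+|t_j-\tfrac\tau2|-2|t_j|)\bigr)$, with $\pi$ rather than $\tfrac\pi2$. As written, your accounting subtracts the decay of the numerator Gammas twice and would end at $e^{-\pi|t_j|}$.

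With the correct factor the bookkeeping closes:
\begin{itemize}
\item the numerator Gammas contribute $e^{-\frac\pi2(|t_j+\tau/2|+|t_j-\tau/2|)}$, since the $e^{x}$ gained from the arctangent inequality only cancels Stirling's $e^{-x}$;
\item $1/|\Gamma(1+m\mp2it_j)|$ contributes $e^{\pi|t_j|+m+1}$.
\end{itemize}
This yields exactly the stated exponential and one factor $e^m$; the other $e^m$ comes from $1/m!$.

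\textbf{The constant $12e^2$.} The constants inside the $m$-th powers are part of the statement, and the crude bound $|x+iy|\le x+|y|$ does not reach $12e^2$ when $\tau<2t_j$. With $x=\tfrac34+m$, it gives $(x+|a|)(x+|b|)\le(x+t_j)^2$. For $m$ close to $2t_j$, the base $(x+t_j)^2/|1+m+2it_j|$ is then about $\tfrac{9}{2\sqrt2}\,t_j>3(1+t_j)$. You therefore lose an unbounded factor of roughly $(1.06)^m$.

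The paper instead uses
$|x+ia|\,|x+ib|\le\tfrac12\bigl(|x+ia|^2+|x+ib|^2\bigr)=x^2+t_j^2+\tfrac{\tau^2}{4}$
and divides by $|1+m+2it_j|\ge2t_j$. Using $m\le\tau<2t_j$, this bounds the base by $\tfrac{x^2}{2t_j}+t_j\le3t_j+2$.

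Here you must invoke the standing hypothesis $m\le|\tau|$ explicitly; the observation $|a|,|b|\le2t_j$ alone is not enough. Without $m\le\tau$ the bound is false: for $m\gg t_j,\tau$ the residue has size about $4^m m^{-1/2}$.

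For $\tau\ge2t_j$ your crude bound does suffice, because the leftover factor $(9/2)^m(1+m)^{-m}$ is bounded. Downstream only the shape $(C(1+t_j)/(1+m))^m$ with some absolute $C$ is used. So the constant issue affects the lemma as stated, not the main theorem.

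You should also mention the case $\tau\in i\mathbb{R}$. There $m=0$ is forced and the Gamma quotient is $O(1)$.
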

\begin{proof}
By \eqref{eqn:Mpsi_T}, we have
\[
    M\psi_T(\tfrac12\pm it_j-m)
    =
    \begin{cases}
        O\left(e^{(\frac12\mp \Im(t_j))T}\right)&m=0\\
        \frac{1}{m}O\left(e^{\frac14(\frac12-m\mp \Im(t_j))T}\right)&m>0.
    \end{cases}
\]
By Lemma \ref{lem:varrho},
\[
    |M\varrho(\epsilon(\tfrac12\pm it_j-m))|\le e^{|\epsilon(\frac12-m\mp \Im(t_j))|}=O(1).
\]
Hence, we obtain the desired estimate of $M\psi_{T,\epsilon}$.

For $t_j\neq 0$, the residue of the simple poles at $s=\tfrac12\pm it_j-m$ is given by
\begin{equation}\label{eqn:res_1/2+it_j-m_raw}
    {\rm Res}_{s=\frac12\pm it_j-m} \Theta_\tau^{t_j}(s)
    =
    \frac{2^{1+2m\mp 2it_j}\sqrt{\pi}}{m!}
    \frac{\Gamma(-m\pm 2it_j)}{\Gamma\left(\frac14-m\pm i\left(t_j-\frac{\tau}{2}\right)\right)\Gamma\left(\frac14-m\pm i\left(t_j+\frac{\tau}{2}\right)\right)},
\end{equation}
where we use $\cosh \pi t_j=(-1)^m\sin\pi(\frac12\pm it_j-m)$.

Since $|t_j|\gg_X 1$, the reflection formula \eqref{eqn:reflection} yields
\begin{align*}
    &|{\rm Res}_{s=\frac{1}{2}\pm it_j -m}\Theta^{t_j}_\tau(s)|\\
    &\ll_X \frac{1}{\sqrt{1+m}}\left(\frac{4e}{1+m}\right)^m \exp(\pi(|t_j+\tfrac\tau2|+|t_j-\tfrac\tau2|-2|t_j|))
    \underbrace{\frac{|\Gamma(\frac34+m \pm i(t_j+\frac\tau2))\Gamma(\frac34+m \pm i(t_j-\frac\tau2))|}{|\Gamma(1+m\pm 2i t_j)|}}_{:=A}.
\end{align*}

\noindent{\it Case 1: $t_j\in i\mathbb{R}$.}
Note that $0\le|t_j|<1/2$. Hence, if $|\tau|=O(1)$, as well, then all the terms are $O(1)$. Hence we suppose $\tau\gg1$.

Since $|t_j|=O(1)$, we have
\[
    \exp(\pi(|t_j+\tfrac\tau2|+|t_j-\tfrac\tau2|-2|t_j|))\asymp e^{\pi\tau}.
\]
By Stirling's approximation \eqref{eqn:Stirling}, we have
\begin{align*}
    A
    &=\frac{|\Gamma(\frac34+m \pm \Im(t_j) + i\tau/2)|^2}{|\Gamma(1+m\pm 2\Im(t_j))|}\\
    &\asymp \frac{((m+\frac34\pm \Im(t_j))^2+\tau^2/4)^{m+1/4\pm \Im(t_j)}}{(m+1\pm 2\Im(t_j))^{m+1/2\pm 2\Im(t_j)}}\exp\left(-m-\tau\arctan\left(\frac{\tau}{2(\frac34+m\pm \Im(t_j)
    )}\right)\right).
\end{align*}
Note that
\[
    \arctan\left(\frac{\tau}{2(\frac34+m\pm \Im(t_j)
    )}\right)
    =\frac\pi2-\arctan\left(\frac{2m+\frac{3}{2}\pm 2\Im(t_j)}{\tau}\right)
    \ge\frac\pi2-\frac{2m+5/2}{\tau},
\]
and
\[
    (m+1)^{m+\alpha}\asymp(m+1\pm2\Im(t_j))^{m+\alpha}\asymp (m+\tfrac34\pm \Im(t_j))^{m+\alpha}
\]
for $\alpha\in(0,1)$. Hence, we have
\[
    A\ll (m+1)^{m}\left(1+\frac{\tau^2}{4(m+1)^2}\right)^{m+1/4\pm \Im(t_j)} e^{m-\pi\tau/2}.
\]
Note that
\[
    1+\frac{\tau^2}{4(m+1)^2}\le 1+\frac{\tau^2}{4}\le \frac{(1+\tau)^2}{4}    
\]
for $\tau\ge 3/2$. Therefore,
\[
    A\ll \left(\frac{m+1}{4}\right)^m\left(1+\tau\right)^{2m+1/2+2\Im(t_j)} e^{m-\pi\tau/2}.
\]

\noindent{\it Case 2: $t_j\in \mathbb{R}$.}

Suppose $\tau\in \mathbb{R}$.
\begin{align*}
    A
    &\ll
    \underbrace{\frac{|\left(\frac34+m+i(t_j+\tfrac12\tau)\right)\left(\frac34+m+i(t_j-\tfrac12\tau)\right)|^{m+1/4}}{|1+m+2it_j|^{m+1/2}}}_{:=A_1}\\
    &\hspace{10pt}\times
    \underbrace{\exp\left(-m-(t_j+\tfrac12\tau)\arctan\tfrac{t_j+\tau/2}{3/4+m}-(t_j-\tfrac{1}{2}\tau)\arctan\tfrac{t_j-\tau/2}{3/4+m}+2t_j\arctan\tfrac{2t_j}{1+m}\right)}_{:=A_2}.
\end{align*}

Note that
\[
    A_1
    \ll
    \frac{1}{\left((1+m)^2+4t_j^2\right)^{1/8}}
    \left(\frac{(\tfrac34+m)^2+t_j^2+(\tau/2)^2}{\sqrt{(1+m)^2+4t_j^2}}\right)^{m+1/4}.
\]

If $\tau<2t_j$, we have
\[
    \frac{(\tfrac34+m)^2+t_j^2+(\tau/2)^2}{\sqrt{(1+m)^2+4t_j^2}}
    \le \frac{(\tfrac34+m)^2}{2t_j}+t_j
    \le \frac{9}{32t_j}+\frac32+\tau+t_j\le 3t_j+2,
\]
where $t_j\ge 9/16$. The case when $t_j,\tau=O(1)$ is trivial. 

If $\tau\ge 2t_j$, then we have
\[
    \frac{(\tfrac34+m)^2+t_j^2+(\tfrac\tau2)^2}{\sqrt{(1+m)^2+4t_j^2}}
    \le \left(\tfrac34+m\right)+\frac\tau4+\left(\frac{\tau^2/4}{1+m}\right)
    \le (\tau/2+2)^2.
\]
Hence,
\[
    A_1
    \ll
    \begin{cases}
        3^m(1+t_j)^{m} & \tau<2t_j\\
        2^{-m}(1+\tau)^{2m+1/2}& \tau\ge 2t_j.
    \end{cases}
\]

On the other hand, for $x,y>0$, we have
\[
    \frac{\pi x}{2}-y\le x\arctan\frac xy = \frac{\pi x}{2}-x\arctan\frac{y}{x}\le \frac{\pi x}2.
\]
It therefore follows that
\[
    A_2
    \ll \exp\left(\frac\pi2\left(-|t_j+\tfrac12\tau|-|t_j-\tfrac12\tau|+|2t_j|\right)+m\right).
\]

Suppose $\tau\in i\mathbb{R}$. Then $m=0$ and
\[
    A
    =\frac{|\Gamma(\frac34\pm\frac12\Im(\tau)+it_j) \Gamma(\frac34\mp\frac12\Im(\tau) +i t_j)|}{|\Gamma(1\pm 2i t_j)|}=O(1).\qedhere
\]
\end{proof}
\begin{lemma}\label{lem:res Theta at s=1/2+it_j-m, t_j zero}
    For $|\tau|\ge m\ge 0$, we have
    \[
        |{\rm Res}_{s=\frac12-m}M\psi_{T,\epsilon}(s)\Theta^0_\tau(s)|
        \ll
        \frac{e^{2m}(1+\tau)^{1/2+2m}e^{\pi|\tau|/2}}{\sqrt{1+m}}
        \times
        \begin{cases}
            \left(\log(1+|\tau|)+T\right)e^{\frac12T} &m=0\\
            \frac{1}{m}\left(\log(1+|\tau|)+T\right)e^{\frac14(\frac12-m)T} & m>0.
        \end{cases}
    \]
\end{lemma}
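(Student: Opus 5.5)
At $t_j=0$ the two simple poles $s=\tfrac12\pm it_j-m$ of $\Theta^{t_j}_\tau$ merge into a double pole at $s=\tfrac12-m$. The plan is to write
\[
\Theta^0_\tau(s)M\psi_{T,\epsilon}(s)=\frac{G(s)}{(s-\tfrac12+m)^2},
\]
where $G$ is holomorphic near $s=\tfrac12-m$. Then
\[
{\rm Res}_{s=\frac12-m}\bigl(\Theta^0_\tau M\psi_{T,\epsilon}\bigr)=G'(\tfrac12-m),
\]
and we control $G'$ through the logarithmic derivative, using $G'=G\cdot(\log G)'$. Concretely, the squared factor $\Gamma(s-\tfrac12)^2$ is replaced by $\Gamma(s-\tfrac12+m+1)^2/[s-\tfrac12]_{m}^2\,(s-\tfrac12+m)^{-2}$. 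Then $G(s)$ is a product of the following pieces:
\begin{itemize}
\item $2^{2-2s}\sqrt\pi/\sin(\pi s)$, which is nonvanishing and smooth near $\tfrac12-m$;
\item $\Gamma(s-\tfrac12+m+1)^2/\bigl(\prod_{k=1}^{m}(s-\tfrac12+m-k)\bigr)^2$;
\item the reciprocal Gamma factors $\Gamma(s-\kappa_\pm)^{-1}$;
\item $M\psi_T(s)$ and $M\varrho(\epsilon s)$.
\end{itemize}

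First, the value $G(\tfrac12-m)$ is exactly the quantity already estimated in Lemma~\ref{lem:res Theta at s=1/2+it_j-m, t_j nonzero}, specialized to $t_j=0$; the Case~1 argument there works verbatim with $\Im(t_j)=0$. The $\Theta$-part gives
\[
\ll \frac{e^{2m}}{\sqrt{1+m}}(1+\tau)^{2m+1/2}e^{\pi\tau/2},
\]
and the same Stirling computation handles real $\tau\ge m$. The $M\psi_{T,\epsilon}$ part gives $e^{T/2}$ for $m=0$ and $\frac1m e^{\frac14(\frac12-m)T}$ for $m>0$, by \eqref{eqn:Mpsi_T} and Lemma~\ref{lem:varrho}.

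Second, we bound the logarithmic derivative term by term at $s=\tfrac12-m$:
\begin{itemize}
\item $(\log 2^{2-2s})'$ and $(\log\sin\pi s)'=\pi\cot(\pi s)$ are $O(1)$, the latter since $\tfrac12-m$ is a half-integer.
\item $2\psi(s+\tfrac12+m)=2\psi(1)=O(1)$, and the finite product contributes $2\sum_{k=1}^m k^{-1}=O(\log(1+m))=O(\log(1+|\tau|))$, using $m\le|\tau|$.
\item $-\psi(s-\kappa_\pm)$ evaluated at $\tfrac14-m\pm\tfrac i2\tau$ is $O(\log(1+|\tau|)+\log(1+m))$. For $m$ large, the reflection formula
\[
\psi(1-z)-\psi(z)=\pi\cot\pi z
\]
moves the argument to the right half-plane. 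The cotangent term is bounded because $\Im z=\pm\tau/2$ keeps it away from poles, except for bounded $\tau$, which is trivial.
\item $(\log M\varrho(\epsilon s))'=O(\epsilon)$, since $M\varrho(\epsilon s)=1+O(\epsilon|s|)$ is holomorphic and close to $1$ in a disc of radius $\asymp 1$; we use $\epsilon(1+m)<1$ and Cauchy estimates.
\end{itemize}

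The main obstacle is $M\psi_T$. Its logarithmic derivative is not small in general, since $M\psi_T(s)=(e^{sT}-3e^{sT/2}+2e^{sT/4})/s$ may nearly vanish. So this term is handled by differentiating directly rather than through $(\log G)'$. Write
\[
G=G_0\cdot M\psi_{T,\epsilon},\qquad G'=G_0'\,M\psi_{T,\epsilon}+G_0\,(M\psi_{T,\epsilon})'.
\]
The first term is controlled by the two steps above. For the second, differentiating the explicit formula gives
\[
|(M\psi_T)'(s)|\ll T\,\frac{e^{\Re(s)T/4}}{|s|}+\frac{e^{\Re(s)T/4}}{|s|^2}\quad\text{for }\Re s<0,
\]
with $e^{\Re(s)T}$ in place of $e^{\Re(s)T/4}$ when $\Re s>0$, i.e.\ for $m=0$. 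This produces the factor $T$ in the claimed bound. Combining $|G'|\le |G_0|\,\bigl(|(\log G_0)'|\,|M\psi_{T,\epsilon}|+|(M\psi_{T,\epsilon})'|\bigr)$ yields the factor $(\log(1+|\tau|)+T)$ times the $m$-dependent exponentials, which is the stated estimate.
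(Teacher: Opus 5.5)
Your proposal is correct and is essentially the paper's proof. The paper computes the double-pole residue as the $t\to0$ limit of the two merging simple residues at $\tfrac12-m\pm it$, which yields exactly your $G'(\tfrac12-m)$: the combination $2\psi(m+1)-2\log 2-\psi(\tfrac14-m+\tfrac i2\tau)-\psi(\tfrac14-m-\tfrac i2\tau)$ multiplying $M\psi_{T,\epsilon}(\tfrac12-m)$, plus $(M\psi_{T,\epsilon})'(\tfrac12-m)$, all over $(m!)^2\Gamma(\tfrac14-m+\tfrac i2\tau)\Gamma(\tfrac14-m-\tfrac i2\tau)$. It then bounds the digammas by logarithms, the derivative of $M\psi_{T,\epsilon}$ by an extra factor $T$, and the reciprocal Gammas by Stirling using $m\le|\tau|$, just as you do.

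Three small points to tidy:
\begin{itemize}
\item Your bullet on $(\log M\varrho(\epsilon s))'$ is unnecessary, and $M\varrho(\epsilon s)$ need not be close to $1$ when $\epsilon m$ is near $1$. Since you differentiate $M\psi_{T,\epsilon}$ directly anyway, just include the harmless extra term $\epsilon\,M\psi_T(s)\,(M\varrho)'(\epsilon s)$.
\item What you borrow from the $t_j\neq0$ lemma is its Case~1 bound on the Gamma quotient, not its residue formula. That formula contains the factor $\Gamma(-m\pm 2it_j)$, which is singular at $t_j=0$.
\end{itemize}
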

\begin{proof}
The function $\Theta^0_\tau(s)$ has a double pole at $s=\tfrac12-m$ with 
\[
    {\rm Res}_{s=\frac12-m}M\psi_{T,\epsilon}(s)\,\Theta^0_\tau(s)
    =
    \lim_{t\to 0}
    \underbrace{\left(
        {\rm Res}_{s=\frac12-m+it}M\psi_{T,\epsilon}(s)\,\Theta^{t}_\tau(s)+{\rm Res}_{s=\frac12-m-it}M\psi_{T,\epsilon}(s)\,\Theta^{t}_\tau(s)
    \right)}_{:=R(t)}.
\]
By \eqref{eqn:res_1/2+it_j-m_raw}
\[
    R(t)
    =
    \frac{2^{1+2m}\sqrt{\pi}}{m!}
    \sum_{\pm}\frac{2^{\mp 2it}\Gamma(-m\pm 2it)\,M\psi_{T,\epsilon}(\frac12-m\pm it)}{\Gamma\left(\frac14-m\pm i\left(t-\frac{\tau}{2}\right)\right)\Gamma\left(\frac14-m\pm i\left(t+\frac{\tau}{2}\right)\right)}.
\]
By using
\begin{align*}
    2^{-2z}&=1-2(\log2)z+O(z^2)\\
    \Gamma(-m+2z)&=\frac{(-1)^m}{m!}\left(\tfrac12z^{-1}+\psi(m+1)+O_m(z)\right)\\
    \frac1{\Gamma\left(\frac14-m\pm i\frac{\tau}{2}+z\right)}&=\frac1{\Gamma\left(\frac14-m\pm i\frac{\tau}{2}\right)}\left(1-\psi(\tfrac14-m\pm i\tfrac{\tau}{2})z+O_{m,\tau}(z^2)\right)\\
    M\psi_{T,\epsilon}(\tfrac12-m+z)&=M\psi_{T,\epsilon}(\tfrac12-m)+(M\psi_{T,\epsilon})'(\tfrac12-m)\,z+O_{T,\epsilon,m}(z^2)
\end{align*}
for $\psi(z)=\Gamma'(z)/\Gamma(z)$ and $z=\pm it$,
we obtain
\begin{align*}
    &\lim_{t\to 0} R(t)
    =
    \frac{(-1)^m2^{1+2m}\sqrt{\pi}}{(m!)^2}\times\\
    &\quad
    \left(
    \frac{\left(2\psi(m+1)-2\log2-\psi(\tfrac14-m+ i\tfrac{\tau}{2})-\psi(\tfrac14-m- i\tfrac{\tau}{2})\right)M\psi_{T,\epsilon}(\frac12-m)+(M\psi_{T,\epsilon})'(\frac12-m)}{\Gamma\left(\frac14-m+i\frac{\tau}{2}\right)\Gamma\left(\frac14-m-i\frac{\tau}{2}\right)}
    \right).
\end{align*}
For
\[
    I_m:=(M\psi_{T,\epsilon})'(\tfrac12-m)=\int_0^\infty(\log x)\,\psi_{T,\epsilon}(x)x^{-\tfrac12-m}dx,
\]
by using \eqref{eqn:psi_T,epsilon}, we obtain
\[
    \left|I_m\right|
    \ll \left|\int_{e^{\frac14T-\epsilon}}^{e^{T+\epsilon}}(\log x)x^{-\frac12-m}dx\right|
    \ll
    \begin{cases}
        T\,e^{\frac12T}&m=0\\
        \frac1m T\,e^{\frac14(\frac12-m)T}&m>0.
    \end{cases}
\]
On the other hand, we have
\[
    M\psi_{T,\epsilon}(\tfrac12-m)
    =
    \begin{cases}
        O(e^{\frac 12 T})&m=0\\
        \frac{1}{m}O(e^{\frac14(\frac12-m)T})&m>0.
    \end{cases}
\]
By Stirling's approximation, for all $z$ with $\arg(z)\in[-\pi+\delta,\pi-\delta]$, we have
\[
    \psi(z)=\log z+O_\delta(|z|^{-1}).
\]
Therefore,
\[
    |{\rm Res}_{s=\frac12-m}M\psi_{T,\epsilon}(s)\Theta^0_\tau(s)|
    \ll
    \frac{2^{2m}}{(m!)^2}
    \left(\frac{\log(1+|\tau|)+T}{|\Gamma\left(\frac14-m+i\frac{\tau}{2}\right)|^2}\right)
    \times
    \begin{cases}
        e^{\frac12T} &m=0\\
        \frac{1}{m}e^{\frac14(\frac12-m)T} & m>0.
    \end{cases}
\]
Note $m!\asymp (1+m)^{m+1/2}e^{-m}$. Since $m\le |\tau|$, the argument of $\frac14-m+i\frac\tau2$ is uniformly away from $\pm \pi$. Hence, the Stirling approximation \eqref{eqn:Stirling} yields
\begin{align*}
    \frac1{|\Gamma\left(\frac14-m+i\frac{\tau}{2}\right)|^2}
    &\asymp |(\tfrac14-m)+i\tfrac{\tau}{2}|^{2m+1/2}\exp\left(-2m+\tau \arg\left(\frac14-m+i\tau/2\right)\right).
\end{align*}
Since
\[
    \tau\arg\left(\frac14-m+i\tau/2\right)
    =\frac{\pi\tau}{2}+\tau\arctan\left(\tfrac{m-\frac14}{\tau/2}\right)
    \le \frac{\pi\tau}{2}+2m,
\]
we have
\[
    \frac1{|\Gamma\left(\frac14-m+i\frac{\tau}{2}\right)|^2}\
    \ll
    \frac{\left((1+m)(1+\tau)\right)^{2m+1/2}}{2^{2m}}e^{\pi\tau/2}.
\]
Then we obtain the desired estimate.
\end{proof}

\subsection{Zelditch's effective equidistribution with a uniform error bound}
In this subsection, we revisit Zelditch's proof of \eqref{eqn:Equidist_k} in \cite{Zelditch_EquidistCompact} to obtain an estimate that is uniform in $\tau$. Our argument relies on Theorem~\ref{thm:TripleProduct}, which is a new result proved in this paper.
\begin{proposition}\label{prop:Zelditch_Equidist}
    Let $X$ be a compact hyperbolic surface. Let $0=\lambda_0< \lambda_1\le \dots$ be the eigenvalues of the Laplace--Beltrami operator $-\Delta_X$ such that $\phi_j$ denotes the $L^2$-normalized eigenfunction with eigenvalue $\lambda_j$. Let $\lambda_j=\tfrac14+t_j^2$. Let $\varphi=\phi_k$ and $\tau=t_k$ with $k\ge1$. Then, for all $T\gg_X1$, we have
    \begin{align*}
        \sum_{l(\gamma)<T}\int_{\gamma_0}\varphi \,ds
        &=\sum_{j:\lambda_j<3/16}\gamma_{k,j}\langle\varphi,\phi_j^2\rangle\,e^{(\frac12+\Im(t_j))T}+e^{\pi|\tau|/2}(1+|\tau|)^3\,O_X(\sqrt{T}e^{3T/4})\\
        &= e^{\pi|\tau|/2}(1+|\tau|)^3\,O_X(\sqrt{T}e^{(1-\epsilon')T}),
    \end{align*}
    where $\epsilon'=\min\{\tfrac14,\tfrac12-|\Im(t_1)|\}$.
\end{proposition}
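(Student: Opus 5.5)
We apply Zelditch's trace formula (Theorem~\ref{thm:ZTF}) with the geometric test function $\psi=\psi_{T,\epsilon}$ from \eqref{eq:psi_Tepsilon}. Since $M\psi_T(0)=0$, we have $M\psi_{T,\epsilon}(0)=0$, so the theorem applies. The geometric side is
\[
\sum_{\{\gamma\}}\psi_{T,\epsilon}\bigl(e^{l(\gamma)}-2+e^{-l(\gamma)}\bigr)\int_{\gamma_0}\varphi\,ds .
\]
By \eqref{eqn:psi_T,epsilon} this equals $\sum_{T/2<l(\gamma)<T}\int_{\gamma_0}\varphi\,ds-2\sum_{T/4<l(\gamma)<T/2}\int_{\gamma_0}\varphi\,ds$, up to boundary layers of width $O(\epsilon)$ in $l(\gamma)$. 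Using $|\int_{\gamma_0}\varphi|\le l(\gamma_0)\|\varphi\|_\infty\ll (1+|\tau|)^{1/2}l(\gamma_0)$ and \eqref{eqn:PrimLengthSum}, the boundary layers contribute $O((1+|\tau|)^{1/2}(\epsilon e^T+\sqrt T e^{3T/4}))$. We will take $\epsilon=e^{-T/4}$.

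The spectral side is $\sum_j\langle\varphi\phi_j,\phi_j\rangle h(t_j)$ with
\[
h(t_j)=\frac1{2\pi i}\int_{(\sigma)}\Theta_\tau^{t_j}(s)M\psi_{T,\epsilon}(s)\,ds .
\]
We shift this contour to $\Re s=1/4$. We pick up only the residues at $s=\frac12\pm it_j$ for the exceptional $j$ with $\lambda_j<3/16$, which is exactly where $\frac12+\Im t_j>\frac14$. These residues produce, via \eqref{eqn:Mpsi_T_detail} and $M\varrho(\epsilon s)=1+O(\epsilon)$, the main terms $\gamma_{k,j}\langle\varphi,\phi_j^2\rangle e^{(\frac12+\Im t_j)T}$, up to $O(e^{(\frac12+\Im t_j)T/2})$ corrections. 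One checks that the residue constant from \eqref{eqn:res_1/2+it_j-m_raw} with $m=0$, after the duplication formula, equals $\gamma_{k,j}$ times $(\frac12+\Im t_j)$, and the factor $1/s$ in $M\psi_T$ supplies the rest.

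The remaining integral over $\Re s=\frac14$ has $|M\psi_{T,\epsilon}(s)|\ll e^{T/4}|s|^{-1}|M\varrho(\epsilon s)|$, and the latter decays like $|\epsilon s|^{-n}$. Bounding $|\Theta^{t_j}_\tau(\frac14+iy)|$ by Stirling, the $\cosh(\pi t_j)$ cancels against the numerator Gamma factors. This leaves $\exp\bigl(\frac\pi2(|y+\tau/2|+|y-\tau/2|-|y+t_j|-|y-t_j|)\bigr)$ times powers. Multiplying by Theorem~\ref{thm:TripleProduct}, and summing over $j$ with Weyl's law (Lemma~\ref{lem:sum(1+t)^delta}) and over $y$ up to the cutoff $|y|\lesssim \epsilon^{-1}$, gives at most $e^{\pi|\tau|/2}(1+|\tau|)^3 e^{T/4}\epsilon^{-c}$. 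Tuning $\epsilon$ and the line location yields $\sqrt T e^{3T/4}$ overall. \textbf{This is the main obstacle:} we must check that the exponential in $y$ and $t_j$ is compensated by the triple-product decay, uniformly. For $|t_j|\le|\tau|/2$ the combined exponent is at most $\frac\pi2|\tau|$. For $|t_j|>|\tau|/2$, Theorem~\ref{thm:TripleProduct} is trivial, but the $\Theta$ factor then decays. I will split into these two ranges and, if the bound is insufficient, instead shift only to $\Re s=\frac12+\delta'$ and interpolate.

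The previous step controls the window identity $S(T)-S(T/2)-2(S(T/2)-S(T/4))$, where $S(T)=\sum_{l(\gamma)<T}\int_{\gamma_0}\varphi$. Writing $S(T)=\text{Main}(T)+E(T)$, we obtain $E(T)-3E(T/2)+2E(T/4)=O(\cdot)$. We then telescope over dyadic scales $T,T/2,T/4,\dots$ down to $T\asymp 1$, where $|E|\ll(1+|\tau|)^{1/2}$ trivially. The geometric growth $e^{3T/4}$ dominates the factors $3^k$ since $3<2^{3/2}$, so $E(T)$ inherits the bound $e^{\pi|\tau|/2}(1+|\tau|)^3O(\sqrt Te^{3T/4})$. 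For the second displayed equality, we bound the main terms using $|\langle\varphi,\phi_j^2\rangle|$ from Theorem~\ref{thm:TripleProduct} and $|\gamma_{k,j}|\ll e^{\pi|\tau|/2}(1+|\tau|)^{1/2}$ by Stirling. For $j\ge1$ these terms are $O(e^{(1-\epsilon')T})$ by definition of $\epsilon'$. The $j=0$ term vanishes because $\langle\varphi,1\rangle=0$.
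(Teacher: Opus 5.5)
\textbf{There is a genuine gap on the spectral side.}

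\emph{The contour shift misses residues.} You move the contour from $(\sigma)$, with $\frac12+\Im(t_1)<\sigma<1$, to $\Re s=\frac14$. This crosses the poles $s=\frac12\pm it_j$ of $\Theta^{t_j}_\tau$ for every $j$, not just the exceptional ones:
\begin{itemize}
\item for real $t_j$ the poles sit on $\Re s=\frac12$;
\item for $3/16<\lambda_j<1/4$ both $\frac12\pm\Im(t_j)$ exceed $\frac14$.
\end{itemize}
Your criterion $\frac12+\Im(t_j)>\frac14$ holds for all $j$. You have conflated ``the pole lies to the right of the new line'' with ``the main term beats $e^{3T/4}$''.

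The tempered residues cannot be discarded. Each is ${\rm Res}_{s=\frac12\pm it_j}\Theta^{t_j}_\tau\cdot M\psi_{T,\epsilon}(\frac12\pm it_j)\,\langle\varphi\phi_j,\phi_j\rangle$. Here $|M\psi_{T,\epsilon}(\frac12+it_j)|$ is of size $e^{T/2}(1+t_j)^{-1}$ for $t_j<1/\epsilon$, and gains only a factor $\epsilon^{-2}(1+t_j)^{-2}$ beyond. By Weyl's law their sum is therefore controlled only by $\epsilon^{-1}e^{T/2}$, times powers of $1+|\tau|$. This is exactly the spectral error that, balanced against the smoothing error $\epsilon e^T$ on the geometric side, forces $\epsilon=e^{-T/4}$ and produces $e^{3T/4}$.

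It is also where Theorem~\ref{thm:TripleProduct} is genuinely needed. The bound $|{\rm Res}_{s=\frac12\pm it_j}\Theta^{t_j}_\tau|\ll(1+|\tau|)^{1/2}\exp\bigl(\frac\pi2(|t_j+\frac\tau2|+|t_j-\frac\tau2|-2|t_j|)\bigr)$ is exponentially large for $|t_j|<|\tau|/2$. The exponential factor in the triple-product bound cancels it exactly.

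\emph{The line $\Re s=\frac14$ is the wrong place to stop.} For $|y|<|t_j|$ the exponentials in $t_j$ cancel. The factors $|\Gamma(-\frac14+i(y\pm t_j))|$ then give $|\Theta^{t_j}_\tau(\frac14+iy)|$ only $(1+|t_j|)^{-3/2}$ decay. Theorem~\ref{thm:TripleProduct} gives nothing once $|t_j|\ge|\tau|/2$. So the sum over $j$ of your absolute bounds diverges by Weyl's law, which requires decay beyond $(1+|t_j|)^{-2}$.

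The paper instead shifts to $\Re s=-\eta$ with $0<\eta<\min\{\frac14,\frac12-\Im(t_1)\}$. The pole at $s=0$ contributes nothing because $M\psi_{T,\epsilon}(0)=0$. On that line:
\begin{itemize}
\item the Gamma factors give decay faster than $(1+|t_j|)^{-2}$;
\item $|M\psi_T(-\eta+iy)|\ll e^{-\eta T/4}(1+|y|)^{-1}$ by \eqref{eqn:Mpsi_T};
\item the exponential factor of $\Theta$ is at most $e^{\pi|\tau|/2}$, since the $t_j$-part of the exponent is nonpositive on any vertical line.
\end{itemize}
Your exponent drops the $1/\sin\pi s$ and $\cosh\pi t_j$ factors. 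So the ``main obstacle'' you identify is not one, and the trivial bound $|\langle\varphi\phi_j,\phi_j\rangle|\ll(1+|\tau|)^{1/2}$ suffices there. The result is $e^{\pi|\tau|/2}(1+|\tau|)^3\epsilon^{-(5/2+3\eta)}e^{-\eta T/4}$, which equals $e^{\pi|\tau|/2}(1+|\tau|)^3e^{(5/8+\eta/2)T}$ for $\epsilon=e^{-T/4}$. Your fallback of shifting to $\Re s=\frac12+\delta'$ makes the $t_j$-decay worse, not better.

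\emph{Minor point on the geometric side.} The dyadic telescoping is unnecessary. Its ``$3<2^{3/2}$'' justification also misreads the scales, which are $e^T,e^{T/2},e^{T/4}$, not $x,x/2,x/4$. The part of the geometric side with $\nu_\gamma<e^{T/2+\epsilon}$ is simply bounded trivially by $(1+|\tau|)^{1/2}e^{T/2}$.
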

\begin{proof}
    We apply Theorem \ref{thm:ZTF} with $\psi=\psi_{T,\epsilon}$.
    
    We first assume $T\gg1$ and $\epsilon<1$. We will later set $\epsilon=e^{-\frac14T}$ and explain why this is the optimal choice.

    Recall
    $\|\varphi\|_\infty\ll (1+|\tau|)^{1/2}$ (\cite{SoggeZelditch}).
    For notational convenience, we set
    \[
        \nu_t := \left(2\sinh\left(t/2\right)\right)^2
        \qquad
        \text{and}
        \qquad
        \nu_\gamma := \nu_{l(\gamma)}.
    \]
    \noindent\emph{Case 1: $e^{T}\le(1+|\tau|)^{10}$}
    
    By \eqref{eqn:PrimLengthSum}, we have
    \[
        \left|\sum_{l(\gamma)<T}\int_{\gamma_0}\varphi\, ds\right|
        \,\ll\, \sum_{l(\gamma)<T}(1+|\tau|)^{1/2}\,l(\gamma_0)
        \,\ll\, (1+|\tau|)^{1/2} e^T
        \,\ll\, (1+|\tau|)^3 e^{\frac34T}.
    \]

    \noindent\emph{Case 2: $e^T > (1+|\tau|)^{10}$}
    
    Let us estimate the geometric side. By using \eqref{eqn:PrimLengthSum} and
    \[
        \nu_{t+\delta_t}=e^t
        \qquad
        \text{where }
        \delta_t:=2\arsinh(e^{t/2}/2)-t\asymp e^{-t},
    \]
    one can show that
    \begin{align*}
        &\left|
        \sum_{\{\gamma\}}  \psi_{T,\epsilon}\left(\nu_{\gamma}\right)\,\int_{\gamma_0}\varphi(s)\,ds
        -
        \sum_{\nu_\gamma < e^T}   \int_{\gamma_0}\varphi(s)\,ds\right| \\
        &\ll (1+|\tau|)^{1/2}
        \left(\sum_{\nu_\gamma <e^{T/2+\epsilon}}l(\gamma_0)+\sum_{\nu_\gamma \in(e^{T-\epsilon},e^{T+\epsilon})}l(\gamma_0)\right)\\
        &\ll (1+|\tau|)^{1/2}\left(e^{T/2}+\epsilon e^T+\sqrt{T}e^{\frac34T}\right).
    \end{align*}
    By \eqref{eq:PGT}, we also have
    \begin{align*}
        \left|\left(\sum_{\nu_\gamma<e^T}-\sum_{l(\gamma) < T}\right)\int_{\gamma_0}\varphi(s)\,ds\right|
        &\ll
        (1+|\tau|)^{1/2}\sum_{T\le l(\gamma)<T+\delta_T}l(\gamma_0)\\
        &\ll (1+|\tau|)^{1/2}\left(\delta_Te^T+\sqrt{T} e^{\frac34 T}\right)\\
        &\ll (1+|\tau|)^{1/2}\sqrt{T}e^{\frac34T}.
    \end{align*}
    Therefore, 
    \begin{equation}\label{eqn:ZTF_geom_side}
    \left|
        \mathrm{Tr}(\mathrm{Op}(\varphi)\circ T_K)
        -
        \sum_{{l(\gamma)} < T} \int_{\gamma_0}\varphi(s)ds
    \right|
    = (1+|\tau|)^{1/2} O\left(\max\left\{\epsilon e^T,\sqrt{T}\,e^{\frac34 T}\right\}\right).
    \end{equation}
In Lemma \ref{lem:ZTF_spectral_side}, we show that the spectral side yields
\begin{align}
\begin{split}\label{eqn:ZTF_spectral_side}
    \mathrm{Tr}(\mathrm{Op}(\varphi)\circ T_K)
    &=\left(\sum_{j:\,0<\lambda_j<1/4} \gamma_{k,j}\langle\varphi,\phi_j^2\rangle e^{(\frac12+\Im(t_j))T}\right)\\
    &\quad+
    (1+|\tau|)^3 \,O_X\left(\max\{\epsilon e^{(\frac12+\Im(t_1))T},Te^{\frac12 T},\epsilon^{-1}e^{\frac12 T}\}\right)\\
    &\quad+e^{\pi|\tau|/2}(1+|\tau|)^3\,O_{X,\eta}\left(\epsilon^{-(5/2+3\eta)}e^{-\eta T/4}\right)
\end{split}
\end{align}
for any $\eta\in (0,\frac12-\Im(t_1))$.

Since we have $O(\epsilon e^T)$ in \eqref{eqn:ZTF_geom_side} and $O(\epsilon^{-1} e^{\frac12 T})$ in \eqref{eqn:ZTF_spectral_side}, the choice $\epsilon=e^{-\frac14T}$ is optimal, yielding $O(e^{\frac34T})$. With this choice, we obtain $\epsilon^{-(5/2+3\eta)}e^{-\eta T/4}=e^{(\frac58+\frac12\eta)T}$. Thus, we choose $\eta\in(0,\min\{\frac14,\frac12-\Im(t_1)\})$.

Then, from \eqref{eqn:ZTF_geom_side} and \eqref{eqn:ZTF_spectral_side}, we obtain
\begin{align*}
    \sum_{l(\gamma) < T}   \int_{\gamma_0}\varphi(s)ds
    &=
    \left(\sum_{0\neq t_j \in i\mathbb{R}} \gamma_{k,j}\langle\varphi,\phi_j^2\rangle e^{(\frac12+\Im(t_j))T}\right)
    +
    (1+|\tau|)^3e^{\pi|\tau|/2}O_X\left(\sqrt{T}\, e^{\frac34T}\right).
\end{align*}
By using \eqref{eqn:Stirling_vertical}, one can show $\gamma_{k,j}\ll_j e^{\pi|\tau|/2}(1+|\tau|)^{1/2-2\Im(t_j)}$. Then, by using Theorem~\ref{thm:TripleProduct}, uniformly for all $0\neq t_j\in i\mathbb{R}$, we obtain
\begin{equation}\label{eqn:gamma_kj TriplePDT}
    |\gamma_{k,j}\langle\varphi\phi_j,\phi_j\rangle|\ll (1+|\tau|)^{3/2}.
\end{equation}
Hence, the desired estimate follows.
\end{proof}

% We remark that if $\epsilon=e^{-(\frac14-\delta)T}$ for a small $\delta>0$, then
% \begin{equation}\label{eqn:Zeldith_equidistrib_epsilon=-1/4T+delta}
%     \sum_{l(\gamma) < T}   \int_{\gamma_0}\varphi(s)ds
%     =
%     \left(\sum_{0\neq t_j \in i\mathbb{R}} \gamma_{k,j}\langle\varphi,\phi_j^2\rangle e^{(\frac12+|t_j|)T}\right)
%     +
%     (1+|\tau|)^3e^{\pi|\tau|/2}O_{X,\delta}\left(e^{(\frac34+\delta)T}\right).
% \end{equation}

\begin{lemma}\label{lem:ZTF_spectral_side}
    Equation \eqref{eqn:ZTF_spectral_side} holds.
\end{lemma}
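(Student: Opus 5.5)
We start from the spectral side of Theorem~\ref{thm:ZTF} with $\psi=\psi_{T,\epsilon}$:
\[
\mathrm{Tr}(\mathrm{Op}(\varphi)\circ T_K)=\sum_{j\ge1}\langle\varphi\phi_j,\phi_j\rangle\,\frac{1}{2\pi i}\int_{(\sigma)}\Theta^{t_j}_\tau(s)M\psi_{T,\epsilon}(s)\,ds ,\qquad \tfrac12+\Im(t_1)<\sigma<1 .
\]
For each $j$ we move the contour from $(\sigma)$ to a line $(-\eta')$ with $\eta'>0$ small; the precise value of $\eta'$ is tied to $\eta$ below. The residues crossed are at $s=\frac12\pm it_j$ and at $s=0$. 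At $s=0$, $\Theta$ has a simple pole from $1/\sin(\pi s)$, and since $M\psi_T(0)=0$ the product $\Theta^{t_j}_\tau M\psi_{T,\epsilon}$ is regular there. So only the poles at $\frac12\pm it_j$ (the $m=0$ case) contribute. We sort them into three groups.

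\textbf{Small eigenvalues ($t_j\in i\mathbb{R}$, $t_j\ne0$).} The residue at $s=\frac12+|t_j|$ gives the main term. Using \eqref{eqn:Mpsi_T_detail} together with $M\varrho(\epsilon s)=1+O(\epsilon)$ from Lemma~\ref{lem:varrho}, it equals $\gamma_{k,j}\langle\varphi,\phi_j^2\rangle e^{(\frac12+\Im t_j)T}$ up to an error $O(\epsilon e^{(\frac12+\Im t_j)T}+e^{(\frac12+\Im t_j)T/2})$. To check the constant $\gamma_{k,j}$, we compute the residue explicitly from \eqref{eqn:res_1/2+it_j-m_raw} and apply the duplication formula. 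The companion pole at $\frac12-|t_j|$ contributes $O(e^{T/2})$. The coefficients $\langle\varphi\phi_j,\phi_j\rangle\gamma_{k,j}$ are bounded by $(1+|\tau|)^{3/2}$ via \eqref{eqn:gamma_kj TriplePDT}, which rests on Theorem~\ref{thm:TripleProduct}. This produces the first line and the $\epsilon e^{(\frac12+\Im t_1)T}$ error term.

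\textbf{Tempered $t_j\in\mathbb R$ (and $t_j=0$).} The poles lie on $\Re s=\frac12$. Lemma~\ref{lem:res Theta at s=1/2+it_j-m, t_j nonzero} with $m=0$ bounds the residue by $(1+|\tau|)^{1/2}\exp(-\frac\pi2(2t_j-|t_j+\frac\tau2|-|t_j-\frac\tau2|))$. Theorem~\ref{thm:TripleProduct} bounds $\langle\varphi\phi_j,\phi_j\rangle$ by $(1+|\tau|)^{1+}\exp(-\frac\pi4(|2t_j+\tau|+|2t_j-\tau|-4t_j))$. The two exponentials cancel exactly, so each term is $\ll(1+|\tau|)^{3/2+}|M\psi_{T,\epsilon}(\frac12\pm it_j)|$. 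To sum over $j$ we need decay of $M\psi_{T,\epsilon}$ in $t_j$. We use $|M\psi_T(s)|\ll e^{T/2}/|s|$ together with Lemma~\ref{lem:varrho} at $n=2$, which gives $|M\varrho(\epsilon s)|\ll\min(1,(\epsilon|s|)^{-2})$. Splitting the sum at $|t_j|=\epsilon^{-1}$ and applying Weyl's law through Lemma~\ref{lem:sum(1+t)^delta} gives $\ll\epsilon^{-1}e^{T/2}$. The $t_j=0$ case is handled by Lemma~\ref{lem:res Theta at s=1/2+it_j-m, t_j zero} and contributes $Te^{T/2}$.

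\textbf{Remaining integral on $(-\eta')$.} This is the main obstacle. On this line, $\Theta^{t_j}_\tau(s)\cosh(\pi t_j)$ behaves like $|s|^{-2}$-type Gamma ratios times $\exp(\frac\pi2|\tau|)$ when $|t_j|\gtrsim|\tau|$. The difficulty is that for $|t_j|\gg|\Im s|$ the factor $\Gamma(s-\frac12\pm it_j)\cosh(\pi t_j)$ grows only polynomially, like $|t_j|^{2\Re s-1}=|t_j|^{-1-2\eta'}$, and the triple product gives no help there. We would therefore proceed as follows.
\begin{enumerate}
\item Use the crude bound $|\langle\varphi\phi_j,\phi_j\rangle|\le\|\varphi\|_\infty\ll(1+|\tau|)^{1/2}$.
\item Apply Stirling to bound $|\Theta^{t_j}_\tau(-\eta'+iy)|\ll e^{\pi|\tau|/2}(1+|\tau|)^{A}(1+|t_j|+|y|)^{-1-2\eta'}\cdots$.
\item Bound $M\psi_{T,\epsilon}$ on the line by $e^{-\eta'T/4}|s|^{-1}|M\varrho(\epsilon s)|$, using \eqref{eqn:Mpsi_T}.
\item Use Lemma~\ref{lem:varrho} with $n=3$ to gain $(\epsilon|s|)^{-3}$.
\end{enumerate}
The double sum over $j$ and integral over $y$ then converges, at the cost of powers of $\epsilon^{-1}$. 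Balancing the divergence of $\sum_j|t_j|^{-1-2\eta'}$ (by Lemma~\ref{lem:sum(1+t)^delta}, of size $\epsilon^{-(1-2\eta')}$ after truncating at $|t_j|\sim\epsilon^{-1}$) against the $y$-integral should produce the $\epsilon^{-(5/2+3\eta)}$ factor. The step I expect to require the most care is getting exponent bookkeeping that is uniform in both $\tau$ and $t_j$, in particular the Stirling estimate for $\Theta$ when $\tau$, $t_j$ and $y$ are all large. This term gives the last line of \eqref{eqn:ZTF_spectral_side}.
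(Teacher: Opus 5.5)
Your contour shift and your treatment of the residues at $s=\frac12\pm it_j$ match the paper's proof. This covers the vanishing at $s=0$ because $M\psi_T(0)=0$, and the main terms and $\gamma_{k,j}$ from the small eigenvalues. It also covers the exact cancellation of the exponential in the $m=0$ residue bound against Theorem~\ref{thm:TripleProduct} for real $t_j$, the split at $t_j=\epsilon^{-1}$ using Lemma~\ref{lem:varrho} with $n=2$, and the separate $t_j=0$ case.

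The gap is in the remaining integral $h_2(t_j)$ on the line $(-\eta)$; the paper simply takes your $\eta'$ equal to $\eta$. You correctly flag this as the crux, but you do not resolve it, and your diagnosis is wrong. For $|t_j|\gg|\Im s|$, \eqref{eqn:Stirling_vertical} gives $|\Gamma(s-\frac12+it_j)\Gamma(s-\frac12-it_j)|\cosh(\pi t_j)\asymp|t_j|^{2\Re s-2}=|t_j|^{-2-2\eta}$, not $|t_j|^{2\Re s-1}$. So in that regime the sum over $j$ converges absolutely by Lemma~\ref{lem:sum(1+t)^delta}, and no truncation in $j$ is needed. Conversely, if a non-summable power of $t_j$ really occurred there, your remedy could not work. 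On the line $(-\eta)$ the factor $M\psi_{T,\epsilon}(s)$ depends only on $s$, so the decay of $M\varrho(\epsilon s)$ gives no truncation in $t_j$ while $|\Im s|$ stays bounded. As it stands, the factor $\epsilon^{-(5/2+3\eta)}$ is asserted rather than derived, and the mechanism you give for it is not the one that operates.

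Decay in $t_j$ is genuinely lost on the diagonal $|t'|\approx|t_j|$, writing $s=-\eta+it'$. There one of the factors $(1+|t_j\pm t'|)^{-1-\eta}$ is of size about $1$, so only $(1+|t_j|)^{-1-\eta}$ survives. Meanwhile the denominator $\Gamma(s-\kappa_+)\Gamma(s-\kappa_-)$ contributes growth $(1+|t'|)^{3/2+2\eta}$. The paper handles this uniformly in $t'$ with the elementary inequality $(1+|t_j+t'|)(1+|t_j-t'|)(1+|t'|)\gg(1+|t_j|)^2$, which trades a power of $1+|t'|$ for a power of $1+|t_j|$:
\[ ((1+|t_j+t'|)(1+|t_j-t'|))^{-1-\eta}\ll(1+|t_j|)^{-2-2\eta}(1+|t'|)^{1+\eta}. \]
Combine this with $1+|t'\pm\frac\tau2|\le(1+|t'|)(1+|\tau|)$ and the exponential bound $\ll e^{\pi\tau/2}$, and you get \eqref{eqn:h_2_poly_large_t_j}. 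The $j$-sum is then absolutely convergent even with the crude bound $|\langle\varphi\phi_j,\phi_j\rangle|\ll(1+|\tau|)^{1/2}$, and all of the $\epsilon$-loss sits in the $t'$-integral. Since $|M\psi_T(-\eta+it')|\ll(1+|t'|)^{-1}e^{-\eta T/4}$, the range $|t'|<\epsilon^{-1}$ contributes $\int_{|t'|<1/\epsilon}(1+|t'|)^{3/2+3\eta}\,dt'\asymp\epsilon^{-(5/2+3\eta)}$. The range $|t'|\ge\epsilon^{-1}$ contributes the same once Lemma~\ref{lem:varrho} is used with $n>5/2+3\eta$. The paper takes $n=6$; with this bookkeeping your $n=3$ only covers $\eta<1/6$.

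Alternatively, you can rescue your truncation idea by applying it only on the diagonal, where $|s|\asymp|t_j|$ and so $M\varrho(\epsilon s)$ does decay in $t_j$. Either way, this trade between decay in $t'$ and decay in $t_j$ is the step your proposal is missing.
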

\begin{proof}
We assume $\tau>2$. The case $|\tau|\le 2$ follows similarly.

Recall from Lemma \ref{lem:psi_to_h} that
\[
    \mathrm{Tr}(\mathrm{Op}(\varphi)\circ T_K)=\sum_{j\ge1}h(t_j)\langle\varphi,\phi_j^2\rangle,
\]
where 
\[
    h(t) =  \frac{1}{2\pi i}\int_{(\sigma)}
    \underbrace{\frac{2^{2-2s} \sqrt{\pi}}{\sin(\pi s)}\frac{\Gamma\left(-\frac{1}{2}+it+s\right)\Gamma\left(-\frac{1}{2}-it+s\right)}{\Gamma\left(-\frac{1}{2}\left(\frac{1}{2}+i\tau\right)+s\right)\Gamma\left(-\frac{1}{2}\left(\frac{1}{2}-i\tau\right)+s\right)}\cosh(\pi t)}_{=\Theta^t_\tau(s)} M \psi_{T,\epsilon}(s) ds.
\]

We shift the contour $(\sigma)$ with $1/2+\Im(t_1)<\sigma<1$ to $(-\eta)$ with $\Im(t_1)-1/2<-\eta<0$.
We pick up the residues at $s=\tfrac12\pm it_j$ and $s=0$. Since $M\psi_{T,\epsilon}(0)=0$, the residue at $s=0$ is zero. Then
\[
    h(t_j)=
    \underbrace{\sum_{\pm}{\rm Res}_{s=\frac12\pm it_j}\left(\Theta^{t_j}_\tau(s)M\psi_{T,\epsilon}(s)\right)}_{:=h_1(t_j)}
    +
    \underbrace{\frac{1}{2\pi i}\int_{(-\eta)}\Theta^{t_j}_\tau(s)M\psi_{T,\epsilon}(s)\,ds}_{:=h_2(t_j)},
\]
where
\[
    {\rm Res}_{s=\frac{1}{2}\pm it_j}\left(\Theta^{t_j}_\tau(s)M\psi(s)\right)
    =
    \underbrace{\frac{2^{1\mp2i t_j}\sqrt{\pi}\,\Gamma\left(\pm2i t_j\right)}{\Gamma\left(\frac14\pm it_j+ \frac12i\tau\right)\Gamma\left(\frac14\pm it_j-\frac12i\tau\right)}}_{={\rm Res}_{s=\frac12\pm it_j}\Theta_{\tau}^{t_j}(s)}
    M\psi_{T,\epsilon}(\tfrac12\pm it_j).
\]
When $t_j=0$, exceptionally, 
\[
    h_1(0):={\rm Res}_{s=\frac12}\left(\Theta^{0}_\tau(s)M\psi_{T,\epsilon}(s)\right).
\]
The leading term will be obtained from $h_1(t_j)$ with $t_j\in i\mathbb{R}$.
\medskip

\noindent (i) We prove
\begin{align}
\begin{split}\label{eqn:h1_sum}
    &\sum_{j\ge1} h_1(t_j)\langle\varphi,\phi_j^2\rangle\\
    &=
    \left(\sum_{0\neq t_j\in i\mathbb{R}} \gamma_{k,j}\langle\varphi,\phi_j^2\rangle e^{(\frac12+\Im t_j)T}\right)
    +
    (1+|\tau|)^3\,O_X\left(\max\{\epsilon e^{(\frac12+\Im(t_1))T},Te^{T/2},\epsilon^{-1}e^{T/2}\}\right).
\end{split}
\end{align}

{\it Case 1. $t_j\in i\mathbb{R}$.} We show
\begin{equation}\label{eqn:h_1_sum_Im}
    \sum_{t_j\in i\mathbb{R}} h_1(t_j)\langle\varphi,\phi_j^2\rangle
    =
    \left(\sum_{0\neq t_j \in i\mathbb{R}} \gamma_{k,j}\langle\varphi,\phi_j^2\rangle e^{(\frac12+\Im(t_j))T}\right)
    +
    (1+|\tau|)^3\,O_X\left(\max\{\epsilon e^{(\frac12+\Im(t_1))T}, T e^{\frac12 T}\}\right).
\end{equation}

First, we consider the case when $t_j\neq 0$. It follows from Lemma \ref{lem:res Theta at s=1/2+it_j-m, t_j nonzero} for $m=0$ that
\begin{equation}\label{eqn:res_frac12+it_j_t_j_Im}
    \left|{\rm Res}_{s=\frac12+ it_j}\Theta_{\tau}^{t_j}(s)\right|
    \ll (1+|\tau|)^{3/2}e^{\pi\tau/2}.
\end{equation}

Since $\Im(t_j)\gg_X1$, by \eqref{eqn:Mpsi_T} and \eqref{eqn:Mpsi_T_detail}, we have
\begin{align*}
    M\psi_T(\tfrac12-it_j)&= \frac{e^{(\frac12+\Im(t_j))T}}{\frac{1}{2}+\Im(t_j)}+O(e^{(\frac14+\frac12\Im(t_j))T})\\
    M\psi_T(\tfrac12+it_j)&=O(e^{(\frac12-\Im(t_j))T}).
\end{align*}

The $\epsilon$ is small enough so that $|\epsilon(\tfrac12\pm it_j)|<1$. By Lemma \ref{lem:varrho}, we obtain
\[
    M\varrho(\epsilon(\tfrac12\pm it_j) )=1+ O_{X}(\epsilon).
\]
Thus,
\begin{align}
\begin{split}\label{eqn:Mpsi_frac12pmit_j_t_j_Im}
    M\psi_{T,\epsilon}(\tfrac12-it_j)&= \frac{e^{(\frac12+\Im(t_j))T}}{\frac{1}{2}+\Im(t_j)}+O\left(\epsilon e^{(\frac12+\Im(t_j))T}+e^{(\frac14+\frac12\Im(t_j))T}\right)\\
    M\psi_{T,\epsilon}(\tfrac12+it_j)&=O\left( e^{(\frac12- \Im(t_j))T}\right).
\end{split}
\end{align}

We define
\begin{align*}
    \gamma_{k,j}
    &:=\left({\rm Res}_{s=\frac12-it_j}\Theta_\tau^{t_j}(s)\right)\frac{1}{\frac12-it_j}\\
    &= \frac{2^{1-2\Im (t_j)}\sqrt{\pi}\,\Gamma\left(2\Im (t_j)\right)}{\Gamma\left(\frac14+\Im (t_j)+\frac12i t_k\right)\Gamma\left(\frac14+\Im (t_j)-\frac12i t_k \right)\left(\frac12+\Im (t_j)\right)}.
\end{align*}
Then, by \eqref{eqn:res_frac12+it_j_t_j_Im} and \eqref{eqn:Mpsi_frac12pmit_j_t_j_Im}, for $0\neq t_j\in i\mathbb{R}$, we have
\[
    h_1(t_j)=\gamma_{k,j}e^{(\frac12+\Im (t_j))T}+(1+\tau)^{3/2}e^{\pi|\tau|/2}\,O_X\left(\max\{\epsilon e^{(\frac12+\Im (t_j))T},e^{(\frac14+\frac12\Im(t_j))T}, e^{(\frac12-\Im(t_j))T}\}\right).
\]

When $t_j=0$, by Lemma \ref{lem:res Theta at s=1/2+it_j-m, t_j zero} with $m=0$, we obtain
\[
    h_1(0)= (1+|\tau|)^{1/2}(1+\log(1+|\tau|))e^{\pi|\tau|/2} O_X\left( Te^{\frac12 T}\right).
\]
By Theorem \ref{thm:TripleProduct}, for $t_j\in i\mathbb{R}$, we have
\[
    \langle\varphi,\phi_j^2\rangle=(1+|\tau|)(1+\log(1+|\tau|)) e^{-\pi|\tau|/2}O_X(1).
\]
Then, we obtain \eqref{eqn:h_1_sum_Im}.
\medskip

{\it Case 2. $t_j\in \mathbb{R}$ with $t_j\neq 0$.} We show
\begin{equation}\label{eqn:h_1_sum_Re}
    \sum_{t_j>0}h_1(t_j)\langle\varphi,\phi_j^2\rangle
    \ll_X (1+|\tau|)^2\,\epsilon^{-1}e^{T/2}.
\end{equation}

It follows from Lemma \ref{lem:res Theta at s=1/2+it_j-m, t_j nonzero} that
\begin{equation}\label{eqn:res_frac12+it_j_t_j_Re}
    \left|{\rm Res}_{s=\frac12\pm it_j}\Theta_{\tau}^{t_j}(s)\right|
    \ll (1+|\tau|)^{1/2} \exp\left(-\tfrac{\pi}{2}(2|t_j|-|t_j+\tfrac12\tau|-|t_j-\tfrac12\tau|)\right).
\end{equation}

By \eqref{eqn:Mpsi_T}, we have
\[
    |M\psi_T(\tfrac12+it_j)|
    =(1+t_j)^{-1}O(e^{T/2}).
\]
By using Lemma \ref{lem:varrho} with $n=2$ for $t_j\ge 1$, we obtain
\[
    |M\varrho(\epsilon(\tfrac{1}{2}+it_j))|
    \ll
    \begin{cases}
        e^{|\Re\epsilon(\frac{1}{2}+it_j)|}=O_{X}(1)&t_j<1/\epsilon\\
        \epsilon^{-2}(1+ t_j)^{-2}&t_j\ge 1/\epsilon.
    \end{cases}
\]
Therefore
\begin{equation}\label{eqn:Mpsi_frac12pmit_j_t_j_Re}
    |M\psi_{T,\epsilon}(\tfrac{1}{2}+it_j)|
    =
    \begin{cases}
        (1+t_j)^{-1}O_X(e^{T/2})& t_j< 1/\epsilon\\
        \epsilon^{-2}(1+ t_j)^{-3}O_X(e^{T/2})&t_j\ge 1/\epsilon.
    \end{cases}
\end{equation}
Therefore, by \eqref{eqn:res_frac12+it_j_t_j_Re}, \eqref{eqn:Mpsi_frac12pmit_j_t_j_Re}, and Theorem \ref{thm:TripleProduct}, we obtain
\[
    \sum_{t_j>0}h_1(t_j)\langle\varphi,\phi_j^2\rangle
    \ll (1+|\tau|)^{3/2}(1+\log(1+|\tau|)) \left(
    \sum_{0< t_j <1/\epsilon}(1+t_j)^{-1}
    +
    \epsilon^{-2}\sum_{1/\epsilon\le t_j}
        (1+t_j)^{-3}\right)O_{X}(e^{T/2}).
\]
By Lemma \ref{lem:sum(1+t)^delta}, we have
\[
    \sum_{0< t_j <1/\epsilon}(1+t_j)^{-1}
    +
    \epsilon^{-2}\sum_{1/\epsilon\le t_j} (1+t_j)^{-3}
    \ll
    \epsilon^{-1},
\]
and thus we obtain \eqref{eqn:h_1_sum_Re}.
\medskip

\noindent (ii) We show
\begin{equation}\label{eqn:h_2}
    \left|\sum_{j\ge1}h_2(t_j)\langle\varphi,\phi_j^2\rangle\right| \ll_X e^{\pi|\tau|/2}(1+|\tau|)^3 \, \epsilon^{-(5/2+3\eta)}e^{-\eta T/4},
\end{equation}
where $0<\eta<1/2-\Im(t_1)$.

Let us first estimate $\Theta_\tau^{t_j}(-\eta+it')$. See \eqref{eqn:Theta} for the formula of $\Theta^t_\tau(s)$. On the vertical contour $(-\eta)$, we use \eqref{eqn:Stirling_vertical} to obtain
\begin{align}
\begin{split}\label{eqn:Theta_contour_shift}
    |\Theta_\tau^{t_j}(-\eta+it')|
    &\ll \exp(-\tfrac\pi2(|t_j+t'|+|t_j-t'|-2|t_j|-|t'+\tfrac\tau2|-|t'-\tfrac\tau2|+2|t'|))\\
    &\hspace{20pt}\times \left((1+|t_j+t'|)(1+|t_j-t'|)\right)^{-\eta-1}\left((1+|t'+\tau/2|)(1+|t'-\tau/2|)\right)^{3/4+\eta}.
\end{split}
\end{align}

We use
\[
    -\frac\pi2\left(|t_j+t'|+|t_j-t'|-2|t_j|\right)
    =
    \left\{
    \begin{aligned}
        &\pi\left(|t_j|-|t'|\right)&&|t'|\ge t_j\\
        &0&&|t'|<t_j
    \end{aligned}
    \right.
\]
and
\[
    -\frac\pi2\left(2|t'|-|t'+\tfrac\tau2|-|t'-\tfrac\tau2|\right)
    =
    \left\{
    \begin{aligned}
        &0&&|t'|\ge \tau/2\\
        &\pi\left(|\tfrac\tau2|-|t'|\right)&&|t'|<\tau/2
    \end{aligned}
    \right.
\]
to obtain
\begin{equation}\label{eqn:h_2_exp}
    \text{ (exponential~term~in~\eqref{eqn:Theta_contour_shift})} \ll e^{\pi\tau/2}.    
\end{equation}
By using
\[
    (1+|t_j+t'|)(1+|t_j-t'|)(1+|t'|)\gg (1+|t_j|)^2,
\]
and 
\[
    (1+|t'+\tau/2|)(1+|t'-\tau/2|)\ll (1+|t'|)^2(1+|\tau|)^2,
\]
we have
\begin{equation}\label{eqn:h_2_poly_large_t_j}
    \text{ (polynomial~term~in~\eqref{eqn:Theta_contour_shift})}
    \ll
    (1+|t_j|)^{-2-2\eta}
    \left(
    1+|t'|
    \right)^{5/2+3\eta}\left(
    1+|\tau|
    \right)^{3/2+2\eta}.
\end{equation}
When $|t_j|=O(1)$, we use
\[
    (1+|t_j+t'|)(1+|t_j-t'|)\gg (1+|t'|)^2
\]
and obtain
\begin{equation}\label{eqn:h_2_poly_small_t_j}
    \text{(polynomial~term~in~\eqref{eqn:Theta_contour_shift})}
    \ll_{t_j}
    (1+|t'|)^{-1/2}
    \left(
    1+|\tau|
    \right)^{3/2+2\eta}.
\end{equation}

By \eqref{eqn:Mpsi_T} and Lemma \ref{lem:varrho}, we have
\begin{align*}
    |M\psi_T(-\eta+it')|&\ll_\eta \frac1{(1+|t'|)}e^{-\eta T/4}\\
    |M\varrho(\epsilon(-\eta+it'))|\,\,&\ll_{\eta,n} \frac{1}{|[-\epsilon\eta+i\epsilon t']_n|}e^{|\epsilon \eta|}=
    \begin{cases}
        O(1)&|t'|<1/\epsilon\\
        \epsilon^{-n}(1+|t'|)^{-n}&|t'|\ge 1/\epsilon
    \end{cases}
\end{align*}
and thus
\begin{equation}\label{eqn:Mpsi_t'}
    |M\psi_{T,\epsilon}(s)|=
    \begin{cases}
        (1+|t'|)^{-1}O(e^{-\eta T/4} )&|t'|<1/\epsilon\\
        \epsilon^{-n}(1+|t'|)^{-(n+1)}O_n(e^{-\eta T/4})&|t'|\ge 1/\epsilon.
    \end{cases}
\end{equation}

\noindent {\it Case 1: $t_j\in i\mathbb{R}$}.

By \eqref{eqn:h_2_poly_small_t_j} and \eqref{eqn:Mpsi_t'}, we have
\begin{align*}
    h_2(t_j)
    &=e^{\pi|\tau|/2}(1+|\tau|)^{3/2+2\eta}\left(\int_\mathbb{R} (1+|t'|)^{-3/2}\,dt'\right)O(e^{-\eta T/4})\\
    &=e^{\pi|\tau|/2}(1+|\tau|)^{3/2+2\eta}O(e^{-\eta T/4}).
\end{align*}

\noindent {\it Case 2: $t_j\in \mathbb{R}$}

By \eqref{eqn:h_2_poly_large_t_j} and \eqref{eqn:Mpsi_t'}, we obtain
\begin{align*}
    h_2(t_j)
    &= e^{\pi|\tau|/2}(1+t_j)^{-2-\eta}(1+|\tau|)^{3/2+2\eta}\\
    &\qquad
    \times
    \left(
    \int_{|t'|<1/\epsilon} (1+|t'|)^{3/2+3\eta}\,dt'
    +
    \epsilon^{-n}\int_{|t'|\ge1/\epsilon} (1+|t'|)^{3/2+3\eta-n}\,dt'
    \right)
    O_n(e^{-\eta T/4})\\
    &= e^{\pi|\tau|/2}(1+t_j)^{-2-\eta}(1+|\tau|)^{3/2+2\eta}\,O\left(\epsilon^{-(5/2+3\eta)}e^{-\eta T/4}\right),
\end{align*}
where we take $n>7/2+3\eta$, say $n=6$.

By using $|\langle\varphi\phi_j,\phi_j\rangle|\ll (1+|\tau|)^{1/2}$ and Lemma \ref{lem:sum(1+t)^delta}, we obtain \eqref{eqn:h_2}.
\end{proof}

\subsection{Weighted length sum}
Note that
\begin{equation}\label{eqn:abs/cond_conv}
    \sum_{\{\gamma\}}e^{-\delta l(\gamma)}\int_{\gamma_0} |\varphi|\, ds
    \le (1+|\tau|)^{1/2}\left(\sum_{\{\gamma\}}e^{-\delta l(\gamma)}\,l(\gamma_0)\right).
\end{equation}
Hence, by Lemma \ref{lem:wPrimLengthSum}, the series $\sum_{\{\gamma\}}e^{-\delta l(\gamma)}\int_{\gamma_0} \varphi\, ds$ is absolutely convergent if $\delta>1$. However, when $\delta=1$, the right-hand side of \eqref{eqn:abs/cond_conv} diverges. By using Proposition \ref{prop:Zelditch_Equidist}, we show the convergence of the series $\sum_{\{\gamma\}}e^{-l(\gamma)}\int_{\gamma_0} \varphi\, ds$ when ordered by length.

\begin{proposition}\label{prop:Weighted_Length_Sum_marginal}
    We have
    \[
        \sum_{\{\gamma\}}e^{- l(\gamma)}\int_{\gamma_0} \varphi \, ds=O_X\left( (1+|\tau|)^3\right),
    \]
    where $\sum_{\{\gamma\}}=\lim_{T\to \infty}\sum_{l(\gamma)<T}$.
\end{proposition}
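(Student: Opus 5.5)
The plan is to use partial summation against the counting function
\[
S(x):=\sum_{l(\gamma)<x}\int_{\gamma_0}\varphi\,ds .
\]
Writing the weighted sum as a Riemann--Stieltjes integral, as in the proof of Lemma~\ref{lem:wPrimLengthSum}, gives
\[
\sum_{l(\gamma)<T}e^{-l(\gamma)}\int_{\gamma_0}\varphi\,ds=\int_0^T e^{-x}\,dS(x)=e^{-T}S(T)+\int_0^T e^{-x}S(x)\,dx .
\]
It therefore suffices to prove two things. First, $e^{-T}S(T)\to0$. Second, $\int_0^\infty e^{-x}|S(x)|\,dx$ converges, up to a convergent main-term integral, with total size $O_X((1+|\tau|)^3)$. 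Convergence of the limit $T\to\infty$ then follows at once.

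The difficulty is that the error term in Proposition~\ref{prop:Zelditch_Equidist} carries a factor $e^{\pi|\tau|/2}$, which we cannot afford. I would therefore split the range at a threshold
\[
T_0=T_0(\tau):=C\bigl(|\tau|+\log(2+|\tau|)\bigr),
\]
with $C$ a large constant, say $C=4\pi$ suffices.

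For $x\le T_0$, I use only the trivial bound. By $\|\varphi\|_\infty\ll(1+|\tau|)^{1/2}$ and \eqref{eqn:PrimLengthSum}, $|S(x)|\ll(1+|\tau|)^{1/2}e^{x}$. Hence
\[
\int_0^{T_0}e^{-x}|S(x)|\,dx\ll(1+|\tau|)^{1/2}\,T_0\ll(1+|\tau|)^{3/2}.
\]
The boundary term $e^{-T}S(T)$ is likewise $O((1+|\tau|)^{1/2})$ for $T\le T_0$.

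For $x>T_0$, I use the first identity of Proposition~\ref{prop:Zelditch_Equidist}, which writes $S(x)$ as a main term plus an error.
\begin{itemize}
\item The main term is $\sum_{j:\lambda_j<3/16}\gamma_{k,j}\langle\varphi,\phi_j^2\rangle e^{(\frac12+\Im t_j)x}$. This is a finite sum depending only on $X$. By \eqref{eqn:gamma_kj TriplePDT}, which rests on Theorem~\ref{thm:TripleProduct}, each coefficient satisfies $|\gamma_{k,j}\langle\varphi\phi_j,\phi_j\rangle|\ll(1+|\tau|)^{3/2}$ uniformly in $\tau$.
\item Since $\Im t_j<\frac12$, each main term multiplied by $e^{-x}$ is $e^{-(\frac12-\Im t_j)x}$ times a bounded coefficient. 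This is integrable on $(T_0,\infty)$ with integral $\ll_X(1+|\tau|)^{3/2}$, and it tends to $0$ in the boundary term.
\item The error term times $e^{-x}$ is $\ll e^{\pi|\tau|/2}(1+|\tau|)^3\sqrt{x}\,e^{-x/4}$. Integrating over $x>T_0$ gives $\ll e^{\pi|\tau|/2}(1+|\tau|)^3\sqrt{T_0}\,e^{-T_0/4}$. With $T_0$ as above, $e^{-T_0/4}\le e^{-\pi|\tau|}(2+|\tau|)^{-\pi}$, so this contribution is $O(1)$, and the corresponding boundary term also tends to $0$.
\end{itemize}
Note that Proposition~\ref{prop:Zelditch_Equidist} requires $T\gg_X1$; choosing $C$ large also makes $T_0$ exceed that absolute threshold.

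Collecting the pieces gives convergence of the length-ordered sum and the bound $O_X((1+|\tau|)^{3/2})$, which is stronger than the claimed $(1+|\tau|)^3$.

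The main obstacle is precisely the $e^{\pi|\tau|/2}$ loss in the error term of Proposition~\ref{prop:Zelditch_Equidist}. It is handled by noting that, with the crude trivial bound, the cost of waiting until $x\approx T_0\asymp|\tau|$ is only linear in $T_0$. Beyond that point the saving $e^{-x/4}$ in the error term absorbs the exponential loss. The only other nontrivial input is the uniform coefficient bound \eqref{eqn:gamma_kj TriplePDT}, which ensures that the finitely many exceptional main terms do not reintroduce exponential growth in $\tau$.
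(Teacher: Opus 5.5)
Your proposal is correct and follows essentially the same route as the paper. You split the length range at a threshold of size $\asymp |\tau|+\log(2+|\tau|)$. Below it you use the trivial bound $\|\varphi\|_\infty\ll(1+|\tau|)^{1/2}$ together with the length-sum asymptotics; above it you use Proposition~\ref{prop:Zelditch_Equidist} with partial summation, so that the exponential saving absorbs the $e^{\pi|\tau|/2}$ loss.

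The only difference is in which form of Proposition~\ref{prop:Zelditch_Equidist} is used. The paper applies its cruder second form, where the main terms are absorbed into $e^{\pi|\tau|/2}(1+|\tau|)^3\sqrt{T}e^{(1-\epsilon')T}$ and the threshold is scaled by $1/\epsilon'$. You instead keep the main terms separate via \eqref{eqn:gamma_kj TriplePDT}, which is why you obtain the slightly stronger bound $(1+|\tau|)^{3/2}$.
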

\begin{proof}
Assume $\tau\in \mathbb{R}$. The case when $\tau\in i\mathbb{R}$ follows similarly.

Let
\[
    L=1+\frac{\pi\tau}{2\epsilon'}+\frac{\log((1+|\tau|))}{\epsilon'},
\]
where $\epsilon'=\min\{\tfrac14,\tfrac12-|\Im(t_1)|\}$ in Proposition \ref{prop:Zelditch_Equidist}. Then, we have
\[
    \sqrt{L}e^{-\epsilon'L}\ll e^{-\pi\tau/2}.
\]

By Lemma \ref{lem:wPrimLengthSum} and $\|\varphi\|_\infty\ll (1+|\tau|)^{0.5}$, we have
\[
    \left|\sum_{l(\gamma)<L}e^{-l(\gamma)}\int_{\gamma_0}\varphi(s)\,ds\right|
    \ll(1+|\tau|)^{0.5}\sum_{l(\gamma)<L}l(\gamma_0)e^{-l(\gamma)}\ll (1+|\tau|)^{0.5}L\ll (1+|\tau|)^{1.5}. 
\]

Let $\Psi_k(T):=\sum_{l(\gamma)<T}\int_{\gamma_0}\varphi\,ds$. By Proposition~\ref{prop:Zelditch_Equidist}, we have $\Psi_k(T)=e^{\pi\tau/2}(1+|\tau|)^3O(\sqrt{T}e^{(1-\epsilon')T})$. It follows that
\[
    \sum_{l(\gamma)\ge L} e^{-l(\gamma)}\int_{\gamma_0}\varphi(s)\,ds
    \,=\,
    \int_{L}^\infty e^{-x}d\Psi_k(x)
    \,=\,
    -e^{-L}\Psi_k(L)+\int_{L}^\infty e^{-x}\Psi_k(x)\,dx.
\]
By using $\sqrt{L}e^{-\epsilon'L}\ll_X e^{-\pi\tau/2}$ and $L>1$, we obtain
\[
    e^{-L}\Psi_k(L)=O((1+|\tau|)^3),
\]
and
\[
    \int_{L}^\infty e^{-x}|\Psi_k(x)|\,dx
    \,\ll\, e^{\pi\tau/2}(1+|\tau|)^3\int_L^\infty \sqrt{x}\,e^{-\epsilon'x}dx
    \,\ll\, e^{\pi\tau/2}(1+|\tau|)^3 \sqrt{L}e^{-\epsilon'L}
    \,=\,O_X\left((1+|\tau|)^3\right).\qedhere
\]
\end{proof}

\subsection{\texorpdfstring{Transforming $t\mapsto M \psi\left( -m\right) {\rm Res}_{s=-m} \Theta^{t}_\tau(s)$ to the geometric side as $\psi_m: u\mapsto \psi_m(u)$}{Transformation of Mpsi Res to the geometric term}}\label{subsec:Invert_res}
Recall we assume $\tau>20/\delta$, $M=\lfloor\tau\rfloor$, and $\epsilon=e^{(-\frac14+\delta)T}.$

\begin{lemma}\label{lem:psi_m}
For $m\ge 1$ and
\[
    h_m(t):=\left({\rm Res}_{s=-m} \Theta^t_\tau(s)\right)M \psi_{T,\epsilon}\left( -m\right),
\]
define $\psi_m$ as the inverse Selberg/Harish-Chandra transform of $h_m(t)$. Suppose $e^{T}\ge(1+|\tau|)^A$, $A\ge32$, and $M\epsilon<1$. Then, we have
\[
    \sum_{m=1}^{M}\psi_m(u)
    =C\,u^{-1}+(1+|\tau|)^{-2}O(u^{-1.5}),
\]
where $C=O((1+|\tau|)^{-3})$. The implied constants are independent of $\tau$.
\end{lemma}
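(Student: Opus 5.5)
The plan is to compute $h_m$ in closed form, transfer it to the geometric side through its Mellin transform, and read off $\psi_m(u)$ from the first two singularities of $M\psi_m$.

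\textbf{Step 1: closed form for $h_m$.} At $s=-m$ the only singular factor of $\Theta^t_\tau$ is $1/\sin(\pi s)$, so
\[
{\rm Res}_{s=-m}\Theta^t_\tau(s)=\frac{(-1)^m2^{2+2m}}{\sqrt\pi}\,\frac{\Gamma(-\tfrac12-m+it)\Gamma(-\tfrac12-m-it)\cosh(\pi t)}{\Gamma(-m-\kappa_+)\Gamma(-m-\kappa_-)}.
\]
Apply the reflection formula \eqref{eqn:reflection} to $z=-\frac12-m\pm it$, using $|\sin\pi z|=\cosh\pi t$. Then use $|\Gamma(\frac32+m+it)|^2=\frac{\pi}{\cosh \pi t}\prod_{k=0}^m\bigl(t^2+(k+\tfrac12)^2\bigr)$. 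Together these give
\[
h_m(t)=\frac{c_m\,M\psi_{T,\epsilon}(-m)}{\prod_{k=0}^{m}\bigl(t^2+(k+\frac12)^2\bigr)},\qquad c_m=\frac{\pm 2^{2+2m}\sqrt\pi}{\Gamma(-m-\kappa_+)\Gamma(-m-\kappa_-)}.
\]
So $h_m$ is an even rational function with simple poles at $t=\pm i(k+\frac12)$. It satisfies the hypotheses of Lemma~\ref{lem:h_to_Mpsi} with $\epsilon_0=0$, but it is not admissible because of the poles at $\Im t=\pm\frac12$. This is the source of the $u^{-1}$ tail.

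\textbf{Step 2: Mellin transform of $\psi_m$ and the expansion in $u$.} I will compute $M\psi_m(s)$ by the representation \eqref{eqn:h_to_Mpsi_1}, closing the $s'$-contour to the left, where $h_m(-is')$ is rational. An equivalent route is to note that $g_m(r)=\sum_k a_k e^{-(k+\frac12)|r|}$ is an explicit partial-fraction sum, and to push each exponential through $g\mapsto Q\mapsto k\mapsto \psi=H_\tau k$ using Proposition~\ref{prop:philambda}. Either way, $M\psi_m(s)$ continues meromorphically to $\Re s<2$, with simple poles only at $s=1$ (from $k=0$) and $s=\frac32$ (from $k=1$).

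Inverting the Mellin transform on $(\sigma)$ with $\frac12<\sigma<1$ and shifting to $\Re s=\frac32+\delta'$ gives
\[
\psi_m(u)=-{\rm Res}_{s=1}M\psi_m(s)\,u^{-1}+O\Bigl(u^{-3/2}\int|M\psi_m(\tfrac32+\delta'+iy)|\,dy\Bigr)
\]
(the $s=\frac32$ residue is also $O(u^{-3/2})$). Thus $C=-\sum_{m\le M}{\rm Res}_{s=1}M\psi_m$.

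\textbf{Step 3: uniform sizes and summation over $m$.} By \eqref{eqn:Mpsi_T} and Lemma~\ref{lem:varrho}, using $M\epsilon<1$,
\[
|M\psi_{T,\epsilon}(-m)|\ll m^{-1}e^{-mT/4}\le m^{-1}(1+|\tau|)^{-8m},
\]
since $e^T\ge(1+|\tau|)^{32}$. Next I will bound $|c_m|$ by Stirling \eqref{eqn:Stirling} via the reflection formula, as in the proof of Lemma~\ref{lem:res Theta at s=1/2+it_j-m, t_j zero}. Since $m\le M\le\tau$, this gives
\[
|c_m|\ll e^{O(m)}(1+|\tau|)^{2m+1/2}e^{\pi\tau/2}\quad\text{roughly}.
\]
The prefactor in \eqref{eqn:h_to_Mpsi_1} is $\Gamma(s-\kappa_+)\Gamma(s-\kappa_-)$ at $s=1$ or on $\Re s=\frac32+\delta'$. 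Its size is of order $(1+|\tau|)^{O(1)}e^{-\pi\tau/2}$ near $y=0$, and it cancels the $e^{\pi\tau/2}$.

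The sum over $m$ is then a geometric series in $(1+|\tau|)^{-6}$ or better. It is dominated by $m=1$, which yields $C=O((1+|\tau|)^{-3})$ and the remainder $(1+|\tau|)^{-2}O(u^{-3/2})$, with constants independent of $\tau$.

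\textbf{Main obstacle.} I expect the hardest step to be the uniform control of the vertical integral $\int|M\psi_m(\frac32+\delta'+iy)|\,dy$. On this line, $\Gamma(s-\kappa_\pm)$ has exponential factors $e^{-\frac\pi2(|y-\tau/2|+|y+\tau/2|)}$. These must be balanced against the growth in $y$ of the $s'$-integral, and against $1/\Gamma(-m-\kappa_\pm)$ in $c_m$, uniformly in both $m$ and $\tau$. I would first try the exponential bookkeeping used for \eqref{eqn:Theta_contour_shift}. The expectation is that the exponential part is at most $e^{-\pi\tau/2}$ and the polynomial part is integrable in $y$ with at most $(1+|\tau|)^{O(1)}$ loss, which the factor $(1+|\tau|)^{-8m}$ absorbs.
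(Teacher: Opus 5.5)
Your proof is correct. It computes $M\psi_m$ by a different and more elementary route than the paper.

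\textbf{How the two routes differ.} The paper feeds $h_m$ into \eqref{eqn:h_to_Mpsi_2} and evaluates the $s'$-integral with the Barnes-type identity of Lemma~\ref{lem:Diff-Contours}, which uses Jantzen's formula and a telescoping sum of residues at $s'=\frac12+k$. This gives the closed form $M\psi_m(s)=P_m\,2^{2s}\Gamma(1-s)\Gamma(s-\kappa_+)\Gamma(s-\kappa_-)/\bigl(\Gamma(s)(m+s)\bigr)$, after which the paper shifts to $\Re s=\frac32$.

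You instead use \eqref{eqn:h_to_Mpsi_1} and close the $s'$-contour to the left. This is legitimate: on the left, $h_m(-is')$ is rational of size $|s'|^{-2m-2}$, while $\Gamma(\frac12-s-s')/\Gamma(\frac12+s-s')$ grows only polynomially. It is exactly the mechanism of Lemma~\ref{lem:psi_bad_r}, applied to the partial fractions of Lemma~\ref{lem:h_m satisfies conditions}. You obtain the common prefactor times
\[
\sum_{k=0}^{m}\frac{(-1)^k(2k+1)}{(m-k)!\,(m+k+1)!}\,\frac{\Gamma(1+k-s)}{\Gamma(1+k+s)} .
\]
This sum telescopes to the paper's $\Gamma(1-s)/\bigl((m!)^2\Gamma(s)(m+s)\bigr)$, but you never need that identity. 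The $u^{-1}$ coefficient comes only from $k=0$, and it agrees with the paper's $P_mB_m$. Bounding the terms individually costs only $\sum_k|C_{m,k}|\le 4^m/(2m)!$, which is within a factor $\ll\sqrt m$ of $1/(m!)^2$.

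The paper's route gives a compact formula whose pole structure is transparent. Yours avoids Barnes integrals entirely. Because you sum the geometric series in $m$ rather than multiplying by $M\le\tau$, you even get $C=O((1+|\tau|)^{-4})$.

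\textbf{Three small corrections.}
\begin{enumerate}
\item The $k$-th term has poles only at $s=k+1,k+2,\dots$. So there is no pole at $s=\frac32$, and only $s=1$ is crossed when you shift the contour.
\item The power in $|c_m|$ is $(1+|\tau|)^{2m+3/2}$, because $|\Gamma(-\kappa_+)\Gamma(-\kappa_-)|^{-1}\asymp(1+|\tau|)^{3/2}e^{\pi|\tau|/2}$. This still leaves $(1+|\tau|)^{2-6m}$ for the $u^{-1}$ coefficient and $(1+|\tau|)^{3-6m}$ for the remainder, so the conclusion is unaffected.
\item The obstacle you anticipate does not arise. Once the contour is closed, no $s'$-integral remains. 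On $\Re s=\frac32$ one has $|\Gamma(1+k-s)/\Gamma(1+k+s)|=|(k-\frac12+iy)(k+\frac12+iy)(k+\frac32+iy)|^{-1}$, which is integrable in $y$ uniformly in $k$. Moreover $\sup_y|\Gamma(s-\kappa_+)\Gamma(s-\kappa_-)|\ll(1+|\tau|)^{3/2}e^{-\pi|\tau|/2}$, which cancels the factor $e^{\pi|\tau|/2}$ in $c_m$.
\end{enumerate}
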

\begin{proof}
    By the formula of $\Theta^t_\tau(s)$ in Theorem \ref{thm:ZTF}, we have
    \[
        {\rm Res}_{s=-m} \Theta^t_\tau(s)
        =
        (-1)^m \frac{2^{2m+2}\Gamma(-\frac{1}{2}-m+it)\Gamma(-\frac{1}{2}-m-it)\cosh{\pi t}}{\sqrt{\pi}\,\Gamma(-\kappa_+-m)\Gamma(-\kappa_--m)},
    \]
    where $\kappa_\pm = \frac{1}{2}\left(\frac{1}{2}\pm i\tau\right)$. Note that it has simple poles at $t=\pm i/2$, and $h_m(t)$ is holomorphic on the strip $|\Im(t)|<1/2$.

    Applying \eqref{eqn:h_to_Mpsi_2} for $h_m(t)$, we obtain
    \[
        M\psi_m(s)
        =\frac{2^{2s-1}}{\pi^{3/2}}\Gamma(s-\kappa_+)\Gamma(s-\kappa_-)\sin(\pi s)
        \left(2\,{\rm Res}_{s'=\frac12-s}F_m(s')-
        \frac{1}{2\pi i}\int_{(0)} F_m(s') ds' \right)M\psi_{T,\epsilon}(-m),
    \]
    where
    \[
        F_m(s')
        :=
        \frac{(-1)^m2^{2m+2}}{\sqrt{\pi}\,\Gamma(-\kappa_+-m)\Gamma(-\kappa_--m)}G_m(s'),
    \]
    and
    \[
        G_m(s'):=\Gamma(-\tfrac{1}{2}-m+s')\Gamma(-\tfrac{1}{2}-m-s')\Gamma(\tfrac12-s-s')\Gamma(\tfrac12-s+s')s' \sin(\pi s')\cos(\pi s'),
    \]
    and
    $1/2<\Re(s)<1$.

    By Lemma \ref{lem:Diff-Contours}, we have
    \[
        \left(2\,{\rm Res}_{s'=\frac12-s}F_m(s')-
        \frac{1}{2\pi i}\int_{(0)} F_m(s') ds'\right)
        =
        \frac{(-1)^m2^{2m+2}}{\sqrt{\pi}\,\Gamma(-\kappa_+-m)\Gamma(-\kappa_--m)}
        \frac{\pi\,\Gamma(1-s)^2}{2(m!)^2(m+s)},
    \]
    and thus, by using the reflection formula \eqref{eqn:reflection}, we obtain
    \begin{align*}
        M\psi_m(s)
        &=
        \frac{2^{2s-1}}{\pi^{3/2}}\Gamma(s-\kappa_+)\Gamma(s-\kappa_-)\sin(\pi s)
        \frac{(-1)^m2^{2m+2}}{\sqrt{\pi}\,\Gamma(-\kappa_+-m)\Gamma(-\kappa_--m)}
        \frac{\pi\,\Gamma(1-s)^2}{2(m!)^2(m+s)}M\psi_{T,\epsilon}(-m)\\
        &=\left(\frac{(-1)^m 2^{2m}}{(m!)^2\Gamma(-\kappa_+-m)\Gamma(-\kappa_--m)}\right)
        \left(\frac{2^{2s}\Gamma(1-s)\Gamma(-\kappa_++s)\Gamma(-\kappa_-+s)}{\Gamma(s)(m+s)}\right)M\psi_{T,\epsilon}(-m).
    \end{align*}
    The Mellin inversion formula yields
    \[
        \psi_m(u)
        =
        \underbrace{\left(\frac{(-1)^m 2^{2m}M\psi_{T,\epsilon}(-m)}{(m!)^2\Gamma(-\kappa_+-m)\Gamma(-\kappa_--m)}\right)}_{:=P_m}
        \left(
        \frac{1}{2\pi i}
        \int_{(\sigma)}
        \underbrace{\frac{2^{2s}\Gamma(1-s)\Gamma(-\kappa_++s)\Gamma(-\kappa_-+s)}{\Gamma(s)(m+s)}u^{-s}}_{:=A_m(s)}\,ds
        \right).
    \]

    Let us first estimate $\frac{1}{2\pi i}\int_{(\sigma)}A_m(s)\,ds$.  We shift the contour $(\sigma)$ to $(1.5)$, picking up the residue at $s=1$,
    \[
        {\rm Res}_{s=1}A_m(s)=-4\frac{\Gamma(1-\kappa_+)\Gamma(1-\kappa_-)}{m+1}u^{-1}.
    \]
    On the other hand, by \eqref{eqn:Stirling_vertical}, we have
    \begin{align*}
        |A_m(1.5+it)|
        &\ll \left|\frac{\Gamma(-0.5-it)}{\Gamma(1.5+it)(m+1.5+it)}\Gamma(\tfrac54+i(t+\tfrac\tau2))\Gamma(\tfrac54+i(t-\tfrac\tau2))u^{-1.5}\right|\\
        &\ll \frac{1}{(1+|t|)^2(m+1)} \left(1+\left|t-\frac{\tau}{2}\right|\right)^{3/4}\left(1+\left|t+\frac{\tau}{2}\right|\right)^{3/4}\exp\left(-\frac{\pi}{2}\left(\left|t+\frac\tau2\right|+\left|t-\frac\tau2\right|\right)\right)u^{-1.5}.
    \end{align*}
    Note that
    \begin{align*}
         &\left(1+\left|t-\frac{\tau}{2}\right|\right)^{3/4}\left(1+\left|t+\frac{\tau}{2}\right|\right)^{3/4}\exp\left(-\frac{\pi}{2}\left(\left|t+\frac\tau2\right|+\left|t-\frac\tau2\right|\right)\right)\\
         &\ll
         \begin{cases}
             \left((1+|\frac{\tau}{2}|)^2-t^2\right)^{3/4} e^{-\frac{\pi|\tau|}{2}} & |\frac\tau2|>|t|\\
             \left((1+|t|)^2-|\frac{\tau}{2}|^2\right)^{3/4} e^{-\pi|t|} & |\frac\tau2|\le|t|
         \end{cases}\\
         &\ll (1+|\tau|)^{3/2} e^{-\frac{\pi|\tau|}{2}}.
    \end{align*}
    Hence,
    \[
        \int_{(1.5)}
        |A_m(s)|\,ds\ll \frac{(1+|\tau|)^{3/2}}{e^{\pi\tau/2}(1+m)}u^{-1.5}.
    \]
    Therefore,
    \[
        \frac{1}{2\pi i}\int_{(\sigma)}A_m(s)\,ds=B_m u^{-1}+ e^{-\pi\tau/2}(1+|\tau|)^{3/2}O(u^{-1.5}),
    \]
    where the implicit constant is absolute and
    \[
        B_m
        =4\frac{\Gamma(1-\kappa_+)\Gamma(1-\kappa_-)}{m+1}.
    \]
    By using \eqref{eqn:Stirling_vertical}, we obtain
    \[
        |B_m|\asymp(1+|\tau|)^{1/2}(1+m)^{-1}e^{-\pi\tau/2}.
    \]

    Next, we show the following estimate of the prefactor $P_m$. Since $m\le \tau$, we have
    \[
        \frac{1}{\left|\Gamma(-\kappa_+-m)\Gamma(-\kappa_--m)\right|}
        =
        \frac{\prod_{j=1}^{m}\left((j+\frac14)^2+\frac{\tau^2}4\right)}{|\Gamma(-\kappa_+)\Gamma(-\kappa_-)|}\ll \left(\frac{5}{4}\right)^m(1+|\tau|)^{2m+3/2}e^{\pi\tau/2}.
    \]
    By $\epsilon M<1$ and Lemma \ref{lem:varrho}, we have $M\varrho(-\epsilon m)\ll e^{\epsilon m}\ll 1$. Hence,
    \[
        |M\psi_{T,\epsilon}(-m)|=|M\psi_T(-m)M\varrho(-\epsilon m)| \ll \frac{e^{-mT/4}}{m}\ll \frac{1}{(1+|\tau|)^{mA/4}}.
    \]
    For all $m\ge1$, we have
    \[
        |P_m|\ll \frac{5^m}{(m!)^2}(1+\tau)^{2m+3/2-mA/4}e^{\pi\tau/2}\ll (1+\tau)^{3/2-m(A/4-2)}e^{\pi\tau/2}.
    \]

    Therefore, 
    \begin{align*}
        \sum_{m=1}^{M}\psi_m(u)
        &= \sum_{m=1}^{M}P_m\left(B_m u^{-1}+ e^{-\pi\tau/2}(1+\tau)^{3/2}O(u^{-1.5})\right)\\
        &= \left(\sum_{m=1}^{M}P_mB_m\right) u^{-1}+ \left(\sum_{m=1}^{M}|P_m|\right)e^{-\pi\tau/2}(1+\tau)^{3/2}O(u^{-1.5}).
    \end{align*}
    Since $|P_mB_m|\ll (1+|\tau|)^{2-m(A/4-2)}\ll (1+|\tau|)^{4-A/4}$ and $M\le \tau$, we have
    \[
        \left|\sum_{m=1}^{M}P_mB_m\right|= O\left((1+|\tau|)^{5-A/4}\right)
    \]
    and
    \[
        \left(\sum_{m=1}^{M}|P_m|\right)e^{-\pi\tau/2}(1+\tau)^{3/2} =O\left((1+\tau)^{6-A/4}\right).
    \]
    Since $m\ge 1$, we obtain the desired conclusion.
\end{proof}

\begin{lemma}\label{lem:Diff-Contours}
For $\frac{1}{2}<\Re(s)<1$ and
\[
    G_m(s'):=\Gamma(-\tfrac{1}{2}-m+s')\Gamma(-\tfrac{1}{2}-m-s')
    \Gamma(\tfrac12-s-s')\Gamma(\tfrac12-s+s')s' \sin(\pi s')\cos(\pi s'),
\]
we have
\[
    \frac{1}{2\pi i} \int_{(0)}
    G_m(s')\, ds'
    =2\,{\rm Res}_{s'=\frac{1}{2}-s}G_m(s')-
    \frac{\pi\,\Gamma(1-s)^2}{2(m!)^2(m+s)}.
\]
\end{lemma}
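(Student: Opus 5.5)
The plan is to evaluate the contour integral in closed form, by rewriting $G_m$ as a Barnes-type ratio of Gamma functions and summing residues.

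\emph{Rewriting $G_m$.} First I will record that $G_m$ is even in $s'$. Indeed, the two Gamma pairs are each invariant under $s'\mapsto -s'$, and $s'\sin(\pi s')\cos(\pi s')$ is even. Next I will check decay on vertical lines. By \eqref{eqn:Stirling_vertical}, the four Gamma factors decay like $e^{-2\pi|\Im s'|}$, which exactly compensates the growth of $\sin(\pi s')\cos(\pi s')$. A polynomial factor of size roughly $|\Im s'|^{-2m-2\Re(s)-2}\cdot|\Im s'|$ then remains, which is integrable, so all vertical integrals converge absolutely. To expose the structure I will use the reflection formula \eqref{eqn:reflection} twice:
\[
s'\sin(\pi s')=-\frac{\pi}{\Gamma(s')\Gamma(-s')},\qquad
\cos(\pi s')\,\Gamma(-\tfrac12-m-s')=\frac{(-1)^{m+1}\pi}{\Gamma(\tfrac32+m+s')}.
\]
This turns $G_m$ into a constant multiple of
\[
\frac{\Gamma(\tfrac12-s-s')\,\Gamma(\tfrac12-s+s')\,\Gamma(-\tfrac12-m+s')}{\Gamma(s')\,\Gamma(-s')\,\Gamma(\tfrac32+m+s')},
\]
which is a Barnes integrand of ${}_3F_2$ type at argument $1$.

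\emph{Shifting contours.} The $(0)$-line separates the poles $s'=\tfrac12-s-n$ ($\Re<0$) from $s'=s-\tfrac12+n$ and the half-integer poles of $\Gamma(-\tfrac12-m+s')$. I do not want to pick the ``wrong'' residue at $s'=\tfrac12-s$ by accident, so I will exploit evenness. Substituting $s'\mapsto -s'$ gives $\int_{(-c)}G_m=\int_{(c)}G_m$. Hence moving the line from $(0)$ to either side and averaging shows that the residue at $s'=\tfrac12-s$ and its mirror $s'=s-\tfrac12$ (which are negatives of each other) enter with the combined weight $2$. This should account for the term $2\,{\rm Res}_{s'=\frac12-s}G_m$. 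What remains is the integral over a line to the left of $\tfrac12-s$ (equivalently, to the right of $s-\tfrac12$), which I will then evaluate by closing the contour to the left. The horizontal connecting segments vanish by the Stirling decay above, with a standard check that the sums converge, the parameter excess being $>0$ since $\Re s>\tfrac12$. In the rewritten form, the only poles on the left are those of $\Gamma(\tfrac12-s+s')$ at $s'=s-\tfrac12-n$, relabelled after the shift, together with the poles of $\Gamma(-\tfrac12-m+s')$ that are not cancelled by $1/\Gamma(s')\Gamma(-s')$. Summing these residues produces a terminating or well-poised ${}_3F_2(1)$.

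\emph{Main obstacle.} The hard part is summing this ${}_3F_2(1)$ in closed form and matching it to $-\frac{\pi\,\Gamma(1-s)^2}{2(m!)^2(m+s)}$. I would first try to recognize it as balanced, so that Pfaff--Saalschütz applies, or as an instance of Barnes' second lemma applied directly to the rewritten integrand. The factor $1/(m+s)$ strongly suggests a ratio like $\Gamma(m+s)/\Gamma(m+s+1)$ arising from such a summation. As a fallback, I would verify the identity for $m=0$ by direct computation and then prove it for general $m$ by induction. For the induction step, the contiguous relation obtained from $\Gamma(-\tfrac12-m+s')=\Gamma(\tfrac12-m+s')/(-\tfrac12-m+s')$ should relate the $m$ and $m-1$ integrals up to an explicitly computable residue.
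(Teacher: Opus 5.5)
There is a genuine gap, and it sits exactly at the step meant to produce the factor $2$.

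Shifting $(0)$ to $(-c)$ with $\Re(s)-\tfrac12<c<\tfrac12$ crosses only the pole $s'=\tfrac12-s$. This gives $\frac{1}{2\pi i}\int_{(0)}G_m=\mathrm{Res}_{s'=\frac12-s}G_m+\frac{1}{2\pi i}\int_{(-c)}G_m$. The mirror shift to $(c)$ picks up $-\mathrm{Res}_{s'=s-\frac12}G_m$, which by evenness is the same number. So averaging gives weight $1$, not $2$.

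Hence the integral you close to the left must equal $\mathrm{Res}_{s'=\frac12-s}G_m-\frac{\pi\Gamma(1-s)^2}{2(m!)^2(m+s)}$. Since $\mathrm{Res}_{s'=\frac12-s}G_m=\frac{\pi(s-\frac12)}{2\Gamma(2s)}\Gamma(-m-s)\Gamma(s-1-m)\neq0$, the identity you set out to match is false.

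Your pole bookkeeping is also off:
\begin{itemize}
\item $\Gamma(\tfrac12-s-s')$ has poles at $\tfrac12-s+n$, and $\Gamma(\tfrac12-s+s')$ has poles at $s-\tfrac12-n$. So $(0)$ does not separate these sequences: the first pole of each lies on the wrong side.
\item The half-integer poles below $-m-\tfrac12$ are removed by $1/\Gamma(\tfrac32+m+s')$, not by $1/(\Gamma(s')\Gamma(-s'))$, which vanishes only at integers.
\end{itemize}

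The missing idea is how the left residues split.
\begin{itemize}
\item The finite part, at $s'=-\tfrac12,\dots,-m-\tfrac12$, equals exactly $-\frac{\pi\Gamma(1-s)^2}{2(m!)^2(m+s)}$, by evenness and a telescoping identity.
\item The infinite part, at $s'=s-\tfrac12-n$ with $n\ge1$, must supply the second copy of $\mathrm{Res}_{s'=\frac12-s}G_m$. That is, one needs $\sum_{n\ge0}\mathrm{Res}_{s'=s-\frac12-n}G_m=0$.
\end{itemize}
After normalization this series is the very-well-poised ${}_3F_2(1-2s,\tfrac32-s,-m-s;\tfrac12-s,2+m-s;1)$. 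It is not balanced, so Pfaff--Saalsch\"utz and Barnes' second lemma do not apply. Its vanishing is a degenerate case of Dougall's summation.

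The paper obtains this vanishing without series:
\begin{itemize}
\item It integrates over a contour $C$ that genuinely separates $\{\tfrac12-s+n\}$ from $\{s-\tfrac12-n\}\cup\{m+\tfrac12-n\}$.
\item It rewrites $s'\Gamma(-\tfrac12-m-s')\cos(\pi s')$ as a combination of $1/\Gamma(\tfrac32+m+s')$ and $1/\Gamma(\tfrac12+m+s')$.
\item It evaluates both pieces by a Barnes-type integral with weight $\sin(\pi z)$, which reduces to Gauss's ${}_2F_1(1)$ sum, and finds that they cancel.
\end{itemize}
Passing from $C$ to $(0)$ then crosses both mirror poles $\pm(\tfrac12-s)$, which is where the $2$ comes from. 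It also crosses the poles $\tfrac12,\dots,m+\tfrac12$, whose residues telescope to the $\Gamma(1-s)^2$ term.
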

\begin{proof}
Suppose $C$ is a contour between $-i\infty$ and $i\infty$ that separates the poles of $\Gamma \left(\frac{1}{2}-s-s'\right)$, which are
\[
    \left\{-s+\tfrac{1}{2}, -s+\tfrac{1}{2}+1, \dots \right\},
\]
from the poles of $\Gamma\left(-\frac{1}{2}-m+s'\right) \Gamma\left(\frac{1}{2}-s+s'\right)$, which are
\[
    \left\{m+\tfrac{1}{2}, m+\tfrac{1}{2}-1,\dots \right\} \cup \left\{ s-\tfrac{1}{2}, s-\tfrac{1}{2}-1,\dots \right\}.
\]
Using
\begin{align*}
    s'\Gamma(-\tfrac12-m-s')
    &=-(m+\tfrac12)\Gamma(-\tfrac12-m-s')-\Gamma(1-\tfrac12-m-s')\\
    &=-\frac{(m+\frac12)\pi}{\sin(\pi(-\frac12-m-s'))\, \Gamma(\frac32+m+s')}-\frac{\pi}{\sin(\pi(\frac12-m-s'))\, \Gamma(\frac12+m+s')}\\
    &=\frac{(-1)^m(m+\frac12)\pi}{\cos(\pi s')\, \Gamma(\frac32+m+s')}-\frac{(-1)^{m}\pi}{\cos(\pi s')\, \Gamma(\frac12+m+s')},
\end{align*}
we obtain
\begin{align}
\begin{split}\label{eqn:Barnes_add_lemma_cancellation}
    &\frac{1}{2\pi i} \int_{C}
    G_m(s') ds'\\
    &= \frac{(-1)^{m}\pi}{2\pi i} \left(m+\frac{1}{2}\right)
    \int_{C}\frac{\Gamma\left(-\frac{1}{2}-m+s'\right) \Gamma\left(\frac{1}{2}-s+s'\right)\Gamma \left(\frac{1}{2}-s-s'\right)\sin{\pi s'}}{\Gamma\left(\frac{3}{2}+m+s'\right)}ds'\\
    &\hspace{10pt}-\frac{(-1)^{m}\pi}{2\pi i}
    \int_{C}\frac{\Gamma\left(-\frac{1}{2}-m+s'\right) \Gamma\left(\frac{1}{2}-s+s'\right)\Gamma \left(\frac{1}{2}-s-s'\right) \sin{\pi s'}}{\Gamma\left(\frac{1}{2}+m+s'\right)} ds'.
\end{split}
\end{align}

We show $\frac{1}{2\pi i} \int_{C} G_m(s') ds'=0$ by using the following identity \cite[Eq.\@ (22)]{Jantzen_NewProofsBarnesLem}:
\begin{equation}\label{eqn:add_lemma}
    \frac{1}{2i\pi}\int_{-i\infty}^{i\infty} e^{\pm i\pi z}\frac{\Gamma(\alpha-z)\Gamma(\beta_1+z)\Gamma(\beta_2+z)}{\Gamma(\gamma+z)}dz
    =
    e^{\pm i \pi \alpha} \frac{\Gamma(\alpha+\beta_1)\Gamma(\alpha+\beta_2)\Gamma(\gamma-\alpha-\beta_1-\beta_2)}{\Gamma(\gamma-\beta_1)\Gamma(\gamma-\beta_2)}.
\end{equation}
Since the identity holds for both $+$ and $-$ signs in the exponential powers, we can replace $e^{\pm i \pi z}$ and $e^{\pm i\pi\alpha}$ with $\sin \pi z$ and $\sin \pi\alpha$.
As discussed in \cite{Jantzen_NewProofsBarnesLem}, this integral representation remains valid whenever $\Re(\gamma-\alpha-\beta_1-\beta_2)>0$, provided that the contour $C$ separates the left sequence of poles from the right sequence of poles. Therefore, for the first integral on the right-hand side in \eqref{eqn:Barnes_add_lemma_cancellation}, we may take $\alpha=\tfrac12-s$, $\beta_1=\tfrac12-s$, $\beta_2=-\tfrac12-m$, and $\gamma=\tfrac32+m$.
For the second integral, the same argument applies with $\gamma$ replaced by $\tfrac12+m$. It is then immediate that the two resulting expressions cancel, and hence \eqref{eqn:Barnes_add_lemma_cancellation} is equal to $0$.

Hence,
\begin{align*}
    \frac{1}{2\pi i} \int_{(0)} G_m(s')
     ds'
    &=\frac{1}{2\pi i} \int_{(0)-C} G_m(s')
     ds'\\
    &=\left({\rm Res}_{s'=\frac{1}{2}-s}G_m(s')-{\rm Res}_{s'=-\frac{1}{2}+s}G_m(s')\right)
    - \sum\limits_{0\le k \le m}{\rm Res}_{s'=\frac{1}{2}+k} G_m(s')\\
    &=2\,{\rm Res}_{s'=\frac{1}{2}-s}G_m(s')
     - \sum\limits_{0\le k \le m}{\rm Res}_{s'=\frac{1}{2}+k} G_m(s').
\end{align*}
We have
\begin{align*}
    {\rm Res}_{s'=\frac{1}{2}+k}G_m(s')
    &=
    -\pi \left(k+\frac{1}{2}\right) \frac{\Gamma\left(-s-k\right) \Gamma\left(1-s+k\right)}{\left(m-k\right)! \left( 1+m+k\right)!}\\
    &=\frac{-\pi}{2(m+s)}(g_{k+1}-g_k),
\end{align*}
where
\[
    g_k:=\frac{\Gamma(1-s-k)\Gamma(1-s+k)}{(m-k)!(m+k)!}
\]
with $g_{m+1}=0$.
Therefore
\[
    \sum\limits_{0\le k \le m}{\rm Res}_{s'=\frac{1}{2}+k} G_m(s')=\frac{\pi\,\Gamma(1-s)^2}{2(m+s)(m!)^2}.\qedhere
\]
% On the other hand, by using $\Gamma(1-2s)\sin(2\pi s)=\tfrac\pi{\Gamma(2s)}$, we have
% \begin{align*}
%     2\,{\rm Res}_{s'=\frac{1}{2}-s}G_m(s')
%     &=\Gamma(-m-s)\Gamma(-1-m+s)\Gamma(1-2s)\left(s-\frac12\right)\sin(2\pi s)\\
%     &=\Gamma(-m-s)\Gamma(-1-m+s)\frac{\pi\left(s-\frac12\right)}{\Gamma(2s)}\\
%     &=\frac{\pi\Gamma(-m-s)\Gamma(-1-m+s)}{2\Gamma(2s-1)}.
% \end{align*}
% By applying the reflection formula, we factor out $\Gamma(1-s)^2$ as follows.
% \begin{align*}
%     \Gamma(-m-s)\Gamma(-1-m+s)
%     &=\frac{\pi^2}{\sin^2(\pi s)\Gamma(m+s+1)\Gamma(m-s+2)}\\
%     &=\frac{\Gamma(s)^2\Gamma(1-s)^2}{\Gamma(m+s+1)\Gamma(m-s+2)}.
% \end{align*}
% Therefore,
% \[
%     2\,{\rm Res}_{s'=\frac{1}{2}-s}G_m(s')
%     =\frac{\pi\Gamma(s)^2\Gamma(1-s)^2}{2\Gamma(2s-1)\Gamma(m+s+1)\Gamma(m-s+2)}.\qedhere
% \]
\end{proof}

\subsection{\texorpdfstring{Spectral test functions $h(t)$ having simple poles at $t=\pm i/2$}{Spectral test functions h(t) having simple poles at t=±i/2}}\label{subsec:h_has_poles_at_i/2}

As a function of $t$, the residue ${\rm Res}_{s=-m}\Theta^{t}_\tau(s)$ has simple poles at $t=\pm i/2$, violating the condition (2) of the admissibility of spectral test functions. Hence, we need to verify that the conclusion of
Theorem~\ref{thm:ZTF} remains valid for $h_m(t):=M\psi(-m)\operatorname{Res}_{s=-m}\Theta_\tau^t(s)$ and the corresponding geometric test function $\psi_m$ defined in
Section~\ref{subsec:Invert_res}.

\begin{proposition}\label{prop:ZTF_test_ftn_poles_pmi/2}
    Suppose that
    \[
        h(t)=\frac{C}{t^2+1/4}+h^{good}(t), \quad 0\neq C\in \mathbb{C}
    \]
    such that
    \begin{enumerate}
        \item $h^{good}(t)$ is an even holomorphic function on the strip $\{t:|\Im(t)|<1/2+\epsilon_1\}$, and
        \item $|h(t)|\ll (1+|t|)^{-(2+\delta)}$ as $|t|\to \infty$ within the strip $\{t:|\Im(t)|<1/2+\epsilon_1\}$, where $\delta>0$.
    \end{enumerate}
    Let $\psi$ denote the geometric test function corresponding to $h$, obtained from Lemma~\ref{lem:h_to_Mpsi}. Then Theorem~\ref{thm:ZTF} holds for $\psi$ and $h$, i.e., for a non-constant eigenfunction $\varphi$, we have
    \[
        \sum_{j\ge1}\langle\varphi\phi_j,\phi_j\rangle h(t_j)
        =
        \sum_{\{\gamma\}} \psi\left(\frac{(e^{l(\gamma)}-1)^2}{e^{l(\gamma)}}\right)\int_{\gamma_0}\varphi(s)\,ds,
    \]
    where the right-hand side conditionally converges for $\sum_{\{\gamma\}}=\lim_{T\to \infty}\sum_{l(\gamma)<T}$.
\end{proposition}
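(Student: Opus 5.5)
We regularize the pole and pass to a limit. For $\delta'\in(0,\epsilon_1)$ small, replace the singular part $C/(t^2+1/4)$ by an admissible approximant. The cleanest choice is a difference that kills the pole from the spectral side while keeping admissibility, namely
\[
h_a(t):=\frac{C}{t^2+1/4}-\frac{C}{t^2+a^2}+h^{good}(t),\qquad a>\tfrac12 .
\]
Then $h-h_a=C/(t^2+a^2)$, and the Selberg transform of $C/(t^2+a^2)$ is explicit: $g(r)=\frac{C}{2a}e^{-a|r|}$. Alternatively, use $C/(t^2+(1/2+\eta)^2)$ with $\eta\downarrow0$.

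The cleanest route is to write $h=\lim_{\eta\to0^+}h^{(\eta)}$ with
\[
h^{(\eta)}(t)=\frac{C}{t^2+(1/2+\eta)^2}+h^{good}(t).
\]
Each $h^{(\eta)}$ is even and holomorphic in $|\Im t|<1/2+\min(\eta,\epsilon_1)$ with the required decay, so it is admissible. Hence the usual Selberg pre-trace formula, multiplied by $\varphi$ and integrated as in Section~\ref{sec:Zelditch_Trace_Formula}, gives
\[
\sum_j h^{(\eta)}(t_j)\langle\varphi\phi_j,\phi_j\rangle=\sum_{\{\gamma\}}\psi^{(\eta)}(\nu_\gamma)\int_{\gamma_0}\varphi\,ds,
\]
with absolute convergence on both sides. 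One point needs checking here: the derivation of the geometric side only uses that $k$ is a point-pair invariant with sufficient decay, not that $\psi\in C_c^\infty$. By Lemma~\ref{lem:h_to_Mpsi} (with $\epsilon_0=\eta$), $M\psi^{(\eta)}$ is holomorphic on $1/2<\Re s<1+\eta$. Shifting the Mellin inversion contour across $\Re s=1$ then shows $\psi^{(\eta)}(u)=c_\eta u^{-1-\eta}+O(u^{-1-\eta'})$, which is enough for absolute convergence by Lemma~\ref{lem:wPrimLengthSum} with $\delta=1+\eta>1$.

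Next we pass $\eta\to0$ on the spectral side. Since $t_j\in\mathbb R\cup i(0,1/2)$ and $\lambda_j>0$, we have $t_j^2+1/4\neq0$ for $j\ge1$, and $h^{(\eta)}(t_j)\to h(t_j)$ with the uniform domination $|h^{(\eta)}(t_j)|\ll(1+|t_j|)^{-2-\delta}$. The bound $|\langle\varphi\phi_j,\phi_j\rangle|\le\|\varphi\|_\infty$ together with Lemma~\ref{lem:sum(1+t)^delta} then gives convergence by dominated convergence.

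The main obstacle is the geometric side, where $\psi=\lim\psi^{(\eta)}$ behaves like $c\,u^{-1}$ and the sum is only conditionally convergent. The plan is to use formula \eqref{eqn:h_to_Mpsi_1} to decompose, uniformly in $\eta$,
\[
\psi^{(\eta)}(u)=c_\eta u^{-1-\eta}+r_\eta(u),\qquad |r_\eta(u)|\ll u^{-1-\kappa}
\]
for some fixed $\kappa>0$, with $c_\eta\to c$, $r_\eta\to r$ pointwise. The coefficient $c_\eta$ comes from the residue of $M\psi^{(\eta)}$ at $s=1+\eta$, which is produced by the pole of $h^{(\eta)}$ at $t=\pm i(1/2+\eta)$; the remainder is obtained by pushing the contour to $\Re s=1+\kappa$ on the $h^{good}$ part, using that $h^{good}$ extends to width $1/2+\epsilon_1$. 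The $r_\eta$-terms converge by dominated convergence, since $\sum_\gamma l(\gamma_0)e^{-(1+\kappa)l(\gamma)}<\infty$. For the main term, $\nu_\gamma^{-1-\eta}=e^{-(1+\eta)l(\gamma)}(1+O(e^{-l(\gamma)}))$, and the error is again absolutely summable. It therefore remains to show
\[
\lim_{\eta\to0}\sum_{\gamma}e^{-(1+\eta)l(\gamma)}\int_{\gamma_0}\varphi\,ds=\lim_{T\to\infty}\sum_{l(\gamma)<T}e^{-l(\gamma)}\int_{\gamma_0}\varphi\,ds .
\]
This is an Abelian-type statement. Writing both sides as Stieltjes integrals against $\Psi(x)=\sum_{l(\gamma)<x}\int_{\gamma_0}\varphi\,ds$ and integrating by parts, the bound $\Psi(x)=O(\sqrt{x}\,e^{(1-\epsilon')x})$ from Proposition~\ref{prop:Zelditch_Equidist} makes $\int_0^\infty e^{-(1+\eta)x}\Psi(x)\,dx$ converge uniformly in $\eta\ge0$. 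The limit therefore holds and equals the conditionally convergent series of Proposition~\ref{prop:Weighted_Length_Sum_marginal}. Combining the two limits gives the claimed identity.
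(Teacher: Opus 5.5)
Your argument is correct and is essentially the paper's proof. The paper uses the same deformation $h_r=C/(t^2+(r+\frac12)^2)+h^{good}$ and passes to the limit on the spectral side by the same $O\bigl(r(1+|t|)^{-4}\bigr)$ domination. On the geometric side it splits off $\psi_r^{bad}(u)=C_ru^{-1-r}+O(|C|u^{-2})$, computed explicitly via a Barnes integral, from the $r$-independent $\psi^{good}$. It then controls the tails by partial summation against $\Psi$ and Proposition~\ref{prop:Zelditch_Equidist}, which is your Abelian step phrased as pointwise convergence plus uniform tail decay.

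Drop the opening paragraph with $h_a$: contrary to what it says, $h_a$ still has the poles at $t=\pm i/2$ and so is not admissible.
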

\begin{proof}
Consider a deformation family $(h_r(t))_{r\in(0,\epsilon_1)}$ of meromorphic functions on $\{t : |\Im(t)|<1/2+\epsilon_1\}$  defined by
\[
    h_r(t)=h^{bad}_{r}(t)+h^{good}(t),
\]
where
\[
    h^{bad}_{r}(t):=\frac{C}{t^2+(r+1/2)^2}.
\]
Then,
\[
    R_r:={\rm Res}_{t=i(r+1/2)}h_r(t)=-\frac{iC}{2r+1},
\]
and thus $|R_r|=O(1)$.

Fix $r\in(0,\epsilon_1)$. The function $h_r$ is holomorphic on the strip $\{t: |\Im(t)|<1/2+r\}$. By using
\begin{equation}\label{eqn:h_r-h_0}
    |h_r(t)-h(t)|=r\left|\frac{C(1+r)}{(t^2+(r+1/2)^2)(t^2+(1/2)^2)}\right|=r\,O_X\!\left((1+|t|)^{-4}\right)
\end{equation}
as $|t|\to \infty$ and assumption (2), we obtain $|h_r(t)|\ll_{r,r'}(1+|t|)^{-\min\{2+\delta,4\}}$ uniformly in the strip $|\Im(t)|<\frac12+r'$, where $r'\in(0,r)$. Hence, for all $r\in (0,\epsilon_1)$, the function $h_r$ is admissible, and thus both the Selberg and Zelditch trace formulas apply to $h_r$. Let $K_r$ denote the automorphic kernel associated with $h_r$, and let $k_r$ and $\psi_r$ denote the corresponding geometric test functions in the Selberg and Zelditch trace formulas, respectively. 

By using \eqref{eqn:h_r-h_0}, we obtain the convergence on the spectral side
\begin{equation}\label{eqn:conv_spec_side}
    {\rm Tr}({\rm Op}(\varphi)\circ T_{K_r})=\sum_{j=1}^\infty \langle \varphi\phi_j,\phi_j\rangle h_r(t_j)\to \sum_{j=1}^\infty \langle \varphi\phi_j,\phi_j\rangle h(t_j)
    \quad
    \text{as }
    r\to 0.
\end{equation}

Note that, by Proposition \ref{prop:Weighted_Length_Sum_marginal}, the series
\[
    \sum_{\{\gamma\}} \psi\left(\frac{(e^{l(\gamma)}-1)^2}{e^{l(\gamma)}}\right)\int_{\gamma_0}\varphi(s)\,ds
\]
is (conditionally) convergent, where $\sum_{\{\gamma\}}=\lim_{T\to \infty}\sum_{l(\gamma)<T}$.

Now, by \eqref{eqn:conv_spec_side}, to verify the Zelditch trace formula for $h$ and $\psi$, it suffices to show that
\begin{equation}\label{eqn:conv_geom_side}
    \lim_{r\to 0}\left(\sum_{\{\gamma\}} \psi_r\left(\frac{(e^{l(\gamma)}-1)^2}{e^{l(\gamma)}}\right)\int_{\gamma_0}\varphi(s)\,ds\right)
    =
    \sum_{\{\gamma\}} \psi\left(\frac{(e^{l(\gamma)}-1)^2}{e^{l(\gamma)}}\right)\int_{\gamma_0}\varphi(s)\,ds.
\end{equation}

We show \eqref{eqn:conv_geom_side} by verifying
\begin{enumerate}
    \item the pointwise convergence, that is,  $\lim_{r\to 0}\psi_r(u)=\psi(u)$ for all $u\in (0,\infty)$, and
    \item the uniform decay of tails for $\psi_r$, that is, 
    \[
        \sup_{r\in(0,\epsilon_1)}\left|\sum_{l(\gamma)>T} \psi_r\left(\frac{(e^{l(\gamma)}-1)^2}{e^{l(\gamma)}}\right)\int_{\gamma_0}\varphi(s)ds\right|
        \to 0
        \quad
        \text{as }
        T\to \infty.
    \]
\end{enumerate}

Let $\psi_r^{bad}$ and $\psi^{good}$ denote the geometric test functions of the Zelditch trace formula associated with $h_r^{bad}$ and $h^{good}$, respectively. Since $h_r^{bad}$ and $h^{good}$ are not admissible, we do not claim that they satisfy the trace formula. However, we can still apply the inverse Selberg/Harish-Chandra transform (Lemma~\ref{lem:h_to_Mpsi}) to $h_r^{bad}$ and $h^{good}$. In particular, we have $\psi_r^{bad}$ and $\psi^{good}$.

By Lemma~\ref{lem:psi_bad_r}, (1) follows immediately.

Let us show (2). For $r\in (0,\epsilon_1)$, by Lemma \ref{lem:psi_bad_r},
\[
    \sum_{\{\gamma\}} \psi^{bad}_r\left(\frac{(e^{l(\gamma)}-1)^2}{e^{l(\gamma)}}\right)\int_{\gamma_0}\varphi(s)\,ds
\]
is absolutely convergent. Since $h_r$ is admissible, its geometric side in the Zelditch trace formula is absolutely convergent. Therefore, 
\[
    \sum_{\{\gamma\}} \psi^{good}\left(\frac{(e^{l(\gamma)}-1)^2}{e^{l(\gamma)}}\right)\int_{\gamma_0}\varphi(s)\,ds
\]
is also absolutely convergent. Hence we may replace $\psi_r$ in the statement (2) with $\psi_r^{bad}$. We have
\begin{align*}
    \left|\sum_{l(\gamma)\ge T} \psi^{bad}_r\left(\frac{(e^{l(\gamma)}-1)^2}{e^{l(\gamma)}}\right)\int_{\gamma_0}\varphi(s)ds\right|
    &\ll_{\epsilon_1,\tau}
    \left|\sum_{l(\gamma)\ge T}e^{-(1+r)l(\gamma)}\int_{\gamma_0}\varphi(s)ds\right|+\sum_{l(\gamma)\ge T}e^{-2l(\gamma)}\int_{\gamma_0}|\varphi(s)|ds\\
    &\ll_\tau \left|\int_T^\infty e^{-(1+r)x}d\Psi_\tau(x)\right|+\sum_{l(\gamma)\ge T}l(\gamma_0)e^{-2l(\gamma)},
\end{align*}
where $\Psi_\tau(T)=\sum_{l(\gamma)<T}\int_{\gamma_0}\varphi\, ds$. By Lemma \ref{lem:wPrimLengthSum}, we have
\[
    \sum_{l(\gamma)\ge T}l(\gamma_0)e^{-2l(\gamma)}=O_X(e^{-T}).
\]
Recall from Proposition~\ref{prop:Zelditch_Equidist} that $|\Psi_\tau(T)|\ll_\tau e^{(1-\eta_X)T}$ for some $\eta_X>0$. Hence,
\[
    \left|\int_T^\infty e^{-(1+r)x}d\Psi_\tau(x)\right|
    \ll_{\tau,\epsilon_1} \left|e^{-(1+r)T}\Psi_\tau(T)\right|+ \left|\int_T^\infty e^{-(1+r)x}\Psi_\tau(x) dx\right|
    \ll_X e^{-(r+\eta_X)T},
\]
and thus we obtain
\[
    \sup_{r\in(0,\epsilon_1)}\left|\sum_{l(\gamma)>T} \psi^{bad}_r\left(\frac{(e^{l(\gamma)}-1)^2}{e^{l(\gamma)}}\right)\int_{\gamma_0}\varphi(s)\,ds\right|=O_{X,\epsilon_1,\tau}(e^{-\eta_XT}).
\]
\end{proof}

\begin{lemma}\label{lem:psi_bad_r}
For all $r\in(0,\epsilon_1)$ and
\[
    h^{bad}_{r}(t):=\frac{C}{t^2+(r+1/2)^2},
\]
where $0\neq C\in \mathbb{C}$, we have
\begin{equation}\label{eqn:psi_bad_r}
    \psi^{bad}_r(u)=\frac{C}{4\sqrt{\pi}}\left(\frac{4}{u}\right)^{1+r}
    \frac{\Gamma(1+r-\kappa_+)\Gamma(1+r-\kappa_-)}{\Gamma(2+2r)}{}_2F_1\left(1+r-\kappa_+, 1+r-\kappa_-, 2+2r ; -\frac{4}{u}\right).
\end{equation}
Consequently, for all $r\in (0,\epsilon_1)$, we have
\[
    \psi_r^{bad}(u)=C_ru^{-1-r}+O_{\epsilon_1}(|C|u^{-2})
\]
as $u\to \infty$, where
\[
    C_r\ll_{\epsilon_1} |C|\,(1+|\tau|)^{\frac12+2\epsilon_1}e^{-\pi|\tau|/2}.
\]
In particular, the implicit constants are independent of $r$.
\end{lemma}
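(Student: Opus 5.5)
We compute $M\psi^{bad}_r$ from Lemma~\ref{lem:h_to_Mpsi} and invert the Mellin transform explicitly. Since $h^{bad}_r$ is even and holomorphic on $|\Im t|<\tfrac12+r$, with decay $(1+|t|)^{-2}$, formula \eqref{eqn:h_to_Mpsi_1} applies with $\epsilon_0=r$ and $\Re(s)\in(\tfrac12,1+r)$. Substituting $h^{bad}_r(-is')=C/((r+\tfrac12)^2-s'^2)$ into $A(s')$, we close the contour $(-\eta)$ to the left. The integrand $\Gamma(\tfrac12-s-s')s'/\Gamma(\tfrac12+s-s')$ has poles only at $s'=\tfrac12-s+n$ to the right, so no poles of the Gamma factor are met on the left. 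Hence the only pole picked up is $s'=-(r+\tfrac12)$ from $h^{bad}_r$. After checking that the arc contribution vanishes (the Gamma ratio decays like $|s'|^{-2\Re s}$ and $h$ like $|s'|^{-2}$), we get
\[
M\psi^{bad}_r(s)=c\,C\,2^{2s}\,\Gamma(s-\kappa_+)\Gamma(s-\kappa_-)\frac{\Gamma(1+r-s)}{\Gamma(s+r)}
\]
for an explicit absolute constant $c$. The residue at $s'=-(r+\tfrac12)$ supplies the factors $\Gamma(1-s+r)/\Gamma(s+r)$. By analytic continuation, this identity holds on all of $\tfrac12<\Re s<1+r$.

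Next we apply Mellin inversion $\psi^{bad}_r(u)=\frac{1}{2\pi i}\int_{(\sigma)}M\psi^{bad}_r(s)u^{-s}\,ds$ and shift the contour to the right. The poles lie at $s=1+r+n$, $n\ge0$, coming from $\Gamma(1+r-s)$. Summing the residues gives a series in $(4/u)^{1+r+n}$ whose coefficients are
\[
\frac{(-1)^n}{n!}\,\frac{\Gamma(1+r+n-\kappa_+)\Gamma(1+r+n-\kappa_-)}{\Gamma(1+2r+n)}.
\]
This is exactly the ${}_2F_1$ series $(4/u)^{1+r}\frac{\Gamma(1+r-\kappa_+)\Gamma(1+r-\kappa_-)}{\Gamma(2+2r)}\,{}_2F_1(1+r-\kappa_+,1+r-\kappa_-;2+2r;-4/u)$. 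The series converges for $u>4$, and \eqref{eqn:psi_bad_r} follows by analytic continuation in $u$. Alternatively, one can recognize the Mellin–Barnes integral for ${}_2F_1$ directly, as in NIST 15.6.6. The main obstacle is bookkeeping the constant, which must come out to $\frac{C}{4\sqrt\pi}$. We expect it to assemble from $2^{2s-1}/\pi^{3/2}$, the factor $\pi$ in $A$, the factor $-1/(2r+1)$ from the residue of $h$, and the duplication formula \eqref{eqn:duplication}. We would also justify the contour shifts using Stirling \eqref{eqn:Stirling_vertical} on vertical lines, where the exponential factors $e^{-\pi|\Im s|}$ balance.

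For the asymptotics, write ${}_2F_1(\cdots;-4/u)=1+O(1/u)$ for $u\ge 8$, uniformly for $r\in(0,\epsilon_1)$. This gives $C_r=\frac{C\,4^{1+r}}{4\sqrt\pi}\frac{\Gamma(1+r-\kappa_+)\Gamma(1+r-\kappa_-)}{\Gamma(2+2r)}$ and an error term $O(|C|u^{-2-r})\le O(|C|u^{-2})$. The error coefficient is the first Taylor coefficient times the Gamma prefactor, which is bounded via \eqref{eqn:Stirling_vertical} by $|C|(1+|\tau|)^{O(1)}e^{-\pi|\tau|/2}$ and is in particular $O_{\epsilon_1}(|C|)$ if $\tau$ is regarded as fixed. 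Since $\kappa_\pm=\tfrac14\pm\tfrac{i\tau}{2}$, Stirling gives $|\Gamma(\tfrac34+r\pm\tfrac{i\tau}2)|\asymp(1+|\tau|)^{1/4+r}e^{-\pi|\tau|/4}$. Multiplying the two factors yields $C_r\ll|C|(1+|\tau|)^{1/2+2r}e^{-\pi|\tau|/2}\le|C|(1+|\tau|)^{1/2+2\epsilon_1}e^{-\pi|\tau|/2}$. All Gamma quotients are continuous in $r\in[0,\epsilon_1]$, so the constants are uniform in $r$. Pointwise convergence as $r\to0$ then also follows from this formula.
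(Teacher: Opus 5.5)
Your route is the paper's: apply \eqref{eqn:h_to_Mpsi_1}, close $(-\eta)$ to the left where the only pole is $s'=-r-\frac12$ from $h^{bad}_r(-is')$, recognize the Mellin inverse as a Barnes integral (or sum residues at $s=1+r+n$ for $u>4$), and bound $C_r$ by Stirling.

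\textbf{Off-by-one error.} As written, the residue step is off by one. At $s'=-r-\frac12$ one has $\Gamma(\frac12+s-s')=\Gamma(1+r+s)$, not $\Gamma(s+r)$. In fact
\[
{\rm Res}_{s'=-r-\frac12}A(s')=\frac{C\pi\,(-r-\frac12)}{2r+1}\cdot\frac{\Gamma(1+r-s)}{\Gamma(1+r+s)}=-\frac{C\pi}{2}\,\frac{\Gamma(1+r-s)}{\Gamma(1+r+s)},
\]
so that
\[
M\psi^{bad}_r(s)=\frac{C}{4\sqrt{\pi}}\,4^{s}\,\Gamma(s-\kappa_+)\Gamma(s-\kappa_-)\frac{\Gamma(1+r-s)}{\Gamma(1+r+s)}.
\]
With your $\Gamma(s+r)$, the residues at $s=1+r+n$ carry $\Gamma(1+2r+n)$ in the denominator. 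That is the series of ${}_2F_1(\cdot,\cdot;1+2r;\cdot)$, not of ${}_2F_1(\cdot,\cdot;2+2r;\cdot)$. So your sentence ``this is exactly the ${}_2F_1$ series'' is false as written; it becomes true only after the correction, which gives denominators $\Gamma(2+2r+n)$.

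The correction also settles the constant you left open. The factor $2r+1$ cancels against $s'=-r-\frac12$. The prefactor $-2^{2s-1}\pi^{-3/2}$ times $-\frac{C\pi}{2}$ gives $\frac{C}{4\sqrt\pi}4^s$, and no duplication formula is involved.

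\textbf{Remainder term.} You separate the Gamma prefactor from ${}_2F_1(\cdots;-4/u)=1+O(1/u)$ and bound only the first Taylor coefficient. The constant in $1+O(1/u)$ grows with $\tau$, since for real $\tau$ the coefficients $[1+r-\kappa_+]_n[1+r-\kappa_-]_n$ have modulus at least $(\tau^2/4)^n$. So you get the $u^{-2}$ term only with a $\tau$-dependent constant or threshold, as you concede. This is enough for Proposition~\ref{prop:ZTF_test_ftn_poles_pmi/2}, where all bounds may depend on $\tau$. But the lemma states $O_{\epsilon_1}(|C|u^{-2})$.

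The paper keeps the prefactor inside the series and uses $|\Gamma(x+iy)|\le\Gamma(x)$. This gives
\[
\frac{|\Gamma(n+1+r-\kappa_+)\Gamma(n+1+r-\kappa_-)|}{\Gamma(n+2+2r)\,n!}\ll_{\epsilon_1}(1+n)^{-3/2}
\]
uniformly in $\tau$ and $r$. Hence for $u>4$ the whole tail $n\ge1$ is $O_{\epsilon_1}(u^{-1})$ at once. This yields $\psi^{bad}_r(u)-C_ru^{-1-r}=O_{\epsilon_1}(|C|u^{-2-r})$ with no dependence on $\tau$ or $r$.
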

\begin{proof}
By \eqref{eqn:h_to_Mpsi_1}, we have
\[
    M\psi_r^{bad}(s)=-\frac{2^{2s-1}}{\pi^{3/2}}\Gamma(s-\kappa_+)\Gamma(s-\kappa_-)\left(\frac{1}{2\pi i}\int_{(-\eta)}A(s')\, ds'\right),
\]
where $1/2<\Re(s)<1$, $\Re(s)-1/2<\eta<1/2$, and
\[
    A(s')= \frac{C\pi s'}{(r+1/2)^2-s'^2}\frac{\Gamma(\tfrac12-s-s')}{\Gamma(\frac{1}{2}+s-s')}.
\]
On the left of the contour $(-\eta)$, the function $A(s')$ has only one pole, at $s'=-r-\frac{1}{2}$. Therefore,
\[
    \frac{1}{2\pi i}\int_{(-\eta)}A(s')\, ds'={\rm Res}_{s'=-r-\frac{1}{2}}A(s')=-\frac{C\pi}{2}\frac{\Gamma(1+r-s)}{\Gamma(1+r+s)}.
\]
Thus,
\[
    M\psi_r^{bad}(s)=\frac{C}{\sqrt{\pi}}2^{2s-2}\Gamma(s-\kappa_+)\Gamma(s-\kappa_-)\frac{\Gamma(1+r-s)}{\Gamma(1+r+s)}.
\]
By the Mellin inversion formula,
\[
    \psi^{bad}_r(u)=\frac{C}{4\sqrt{\pi}}\frac{1}{2\pi i}\int_{(\sigma)}  \frac{\Gamma(s-\kappa_+)\Gamma(s-\kappa_-)\Gamma(r+1-s)}{\Gamma(r+1+s)}\left(\frac{4}{u}\right)^s\,ds.
\]
By the change of variable $w=s-r-1$, we obtain
\begin{align*}
    \psi^{bad}_r(u)=\frac{C}{4\sqrt{\pi}}\left(\frac{4}{u}\right)^{r+1}\frac{1}{2\pi i}\int_{(\sigma-r-1)}  \frac{\Gamma(1+r-\kappa_++w)\Gamma(1+r-\kappa_-+w)\Gamma(-w)}{\Gamma(2r+2+w)}\left(\frac{4}{u}\right)^w\,dw.
\end{align*}
Recall $\kappa_\pm=\tfrac14\pm\tfrac12i\tau$. Then, the Barnes integral formula yields \eqref{eqn:psi_bad_r}.

Let
\[
    G_r(u):=\frac{\Gamma(1+r-\kappa_+)\Gamma(1+r-\kappa_-)}{\Gamma(2+2r)}{}_2F_1\left(1+r-\kappa_+, 1+r-\kappa_-, 2+2r ; -\frac{4}{u}\right).
\]
When $u>4$, 
\[
    G_r(u)
    =\sum_{n\ge0}\frac{\Gamma(n+1+r-\kappa_+)\Gamma(n+1+r-\kappa_-)}{\Gamma(n+2+2r)}\frac{(-4)^n}{n!}u^{-n}.
\]
Since $|\Gamma(x+iy)|\le \Gamma(x)$ for all $x>0$ and $y\in \mathbb{R}$, we have
\[
    \frac{\Gamma(n+1+r-\kappa_+)\Gamma(n+1+r-\kappa_-)}{\Gamma(n+2+2r)}
    \ll
    \frac{\Gamma(n+\frac34+r+\frac12\Im(\tau))\Gamma(n+\frac34+r-\frac12\Im(\tau))}{\Gamma(n+2+2r)}
    \ll_{\epsilon_1}
    n!\,(1+n)^{-3/2}.
\]
Therefore,
\[
    G_r(u)=\frac{\Gamma(1+r-\kappa_+)\Gamma(1+r-\kappa_-)}{\Gamma(2+2r)}+O_{\epsilon_1}(u^{-1})
\]
as $u\to \infty$, where the implicit constant is independent of $r$. Then
\[
    C_r=\frac{C4^r}{\sqrt{\pi}}\frac{\Gamma(1+r-\kappa_+)\Gamma(1+r-\kappa_-)}{\Gamma(2+2r)}\ll_{\epsilon_1}C\,(1+|\tau|)^{\frac12+2\epsilon_1}e^{-\pi|\tau|/2}.
\]

\end{proof}

\begin{lemma}\label{lem:h_m satisfies conditions}
    For $m\ge 1$ and $\epsilon_1<1$, the function $h_m(t)=M\psi(-m)\operatorname{Res}_{s=-m}\Theta_\tau^t(s)$ satisfies the assumptions for $h(t)$ in Proposition~\ref{prop:ZTF_test_ftn_poles_pmi/2}.
\end{lemma}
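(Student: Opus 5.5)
We have to show that $h_m(t)=M\psi(-m)\operatorname{Res}_{s=-m}\Theta_\tau^t(s)$ splits as $\frac{C}{t^2+1/4}+h^{good}(t)$, with $h^{good}$ even and holomorphic on $|\Im t|<1/2+\epsilon_1$, and with $h_m$ decaying like $(1+|t|)^{-(2+\delta)}$ in that strip. The plan is to work from the explicit residue formula in the proof of Lemma~\ref{lem:psi_m}:
\[
\operatorname{Res}_{s=-m}\Theta^t_\tau(s)=c_{m,\tau}\,\Gamma(-\tfrac12-m+it)\Gamma(-\tfrac12-m-it)\cosh(\pi t),
\qquad c_{m,\tau}=\frac{(-1)^m2^{2m+2}}{\sqrt\pi\,\Gamma(-\kappa_+-m)\Gamma(-\kappa_--m)}.
\]
The factor $M\psi(-m)$ is a constant in $t$, so only the $t$-dependence of the Gamma product matters. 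Evenness is immediate because the expression is symmetric under $t\mapsto -t$.

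\emph{Poles.} Using the reflection formula \eqref{eqn:reflection}, I would write
\[
\Gamma(-\tfrac12-m+it)\Gamma(-\tfrac12-m-it)\cosh(\pi t)=\pm\frac{\pi\,\Gamma(-\tfrac12-m+it)}{\Gamma(\tfrac32+m+it)}.
\]
This holds because $\sin\pi(-\tfrac12-m-it)=\pm\cosh(\pi t)$, so the $\cosh(\pi t)$ cancels. The result is a rational function of $t$:
\[
\pm\frac{\pi}{\prod_{k=-m-1/2}^{m+1/2}(k+it)},
\]
where the product runs over half-integers $k$ from $-m-\tfrac12$ to $m+\tfrac12$. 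Up to sign this is
\[
\pi\prod_{j=0}^{m}\bigl((j+\tfrac12)^2+t^2\bigr)^{-1}.
\]
Its poles lie at $t=\pm i(j+\tfrac12)$, $0\le j\le m$. In the strip $|\Im t|<1/2+\epsilon_1$ with $\epsilon_1<1$, the only poles are $t=\pm i/2$, and both are simple.

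\emph{Decomposition.} I would take the partial fraction part at $t^2=-1/4$, namely
\[
C=M\psi(-m)\,c_{m,\tau}\,\pi\prod_{j=1}^{m}\bigl((j+\tfrac12)^2-\tfrac14\bigr)^{-1}\cdot(\pm1),
\]
and set $h^{good}=h_m-\frac{C}{t^2+1/4}$. Then $h^{good}$ is even and rational, and its remaining poles satisfy $|\Im t|\ge 3/2>1/2+\epsilon_1$, so it is holomorphic on the strip. We have $C\neq0$ provided $M\psi(-m)\neq0$. If instead $M\psi(-m)=0$, then $h_m\equiv0$ and there is nothing to prove; I would treat that degenerate case separately.

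\emph{Decay.} The rational function has denominator of degree $2(m+1)\ge4$ for $m\ge1$. On the strip the denominator factors satisfy $|(j+\tfrac12)^2+t^2|\gg(1+|t|)^2$ for large $|t|$, so $|h_m(t)|\ll(1+|t|)^{-4}$, which gives condition (2) with $\delta=2$.

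The main obstacle is getting the reflection bookkeeping right: checking that the $\cosh(\pi t)$ exactly cancels, that no pole from the numerator $\Gamma$ survives, and that the sign is consistent. Everything after that is elementary.
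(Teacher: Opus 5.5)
Your proposal is correct and follows essentially the paper's proof. Both arguments rewrite $\operatorname{Res}_{s=-m}\Theta^t_\tau(s)$ as a $t$-independent constant times $\prod_{j=0}^{m}\bigl(t^2+(j+\tfrac12)^2\bigr)^{-1}$, read off the poles $\pm i(j+\tfrac12)$, split off the $t^2+\tfrac14$ partial fraction, and get the decay from the degree of the denominator; the only difference is that you cancel $\cosh(\pi t)$ with one reflection while the paper shifts down to $\Gamma(\tfrac12\pm it)$. Your extra check that $C\neq0$ is harmless, and the degenerate case in fact never occurs, since $M\varrho(-\epsilon m)>0$, $M\psi_T(-m)\neq0$ for $T>0$, and $1/\Gamma(-\kappa_\pm-m)\neq0$.
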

\begin{proof}
    Note that the poles of $h_m(t)$ are at $t=\pm i(\frac12+n)$, where $n=0,1,2,\dots,m$. Hence $\pm i/2$ are the only poles in the strip $|\Im(t)|<1/2+\epsilon_1$.
    
    Since
    \begin{align*}
        \Gamma(-\tfrac{1}{2}-m+it)\Gamma(-\tfrac{1}{2}-m-it)
        &=\frac{\Gamma(\tfrac12+it)\Gamma(\tfrac12-it)}{\prod_{j=0}^m (-\frac12-j+it)(-\frac12-j-it)}\\
        &=\frac{\pi/\cosh(\pi t)}{\prod_{j=0}^m (t^2+(\frac12+j)^2)},
    \end{align*}
    we have
    \begin{align*}
        {\rm Res}_{s=-m} \Theta^t_\tau(s)
        &=
        (-1)^m \frac{2^{2m+2}\Gamma(-\frac{1}{2}-m+it)\Gamma(-\frac{1}{2}-m-it)\cosh{\pi t}}{\sqrt{\pi}\,\Gamma(-\kappa_+-m)\Gamma(-\kappa_--m)}\\
        &=\frac{(-1)^m2^{2m+2}\sqrt{\pi}}{\Gamma(-\kappa_+-m)\Gamma(-\kappa_--m)} \frac{1}{\prod_{j=0}^m (t^2+(\frac12+j)^2)}.
    \end{align*}
    Therefore, it suffices to show that
    \[
        P_m(t):=\frac{1}{\prod_{j=0}^m \left(t^2+(\frac12+j)^2\right)}
    \]
    satisfies assumptions~(1) and~(2) for $h(t)$ in Proposition~\ref{prop:ZTF_test_ftn_poles_pmi/2}. It is immediate that
    \[
        |P_m(t)|\ll_M (1+|t|)^{-(2m+2)}
    \]
    as $|t|\to\infty$ within the strip $\{t:|\Im(t)|<M\}$. Hence assumption~(2) holds. 
    Assumption~(1) follows from the partial fraction decomposition
    \[
        P_m(t)
        =
        \sum_{k=0}^m \frac{C_{m,k}}{t^2+(\frac12+k)^2},
    \]
    where
    \[
        C_{m,k}
        =
        \frac{1}{
        \prod_{\scriptstyle 0\le j\le m,\;j\neq k}
        \left((j+\frac12)^2-(k+\frac12)^2\right)
        }.\qedhere
    \]
\end{proof}

\subsection{
\texorpdfstring{Proof of Theorem \ref{theorem:main} assuming \eqref{eqn:Spec_Side}}
{Proof of Main Theorem}
}\label{subsec:proof_main_thm}

In this subsection, we prove Theorem \ref{theorem:main} assuming a technical estimate \eqref{eqn:Spec_Side}. The proof of \eqref{eqn:Spec_Side} will be given in the subsequent subsections.

Recall that we use the notation
\[
    \phi_k=\varphi
    \qquad
    \text{and}
    \qquad
    t_k=\tau.
\]

\noindent\textbf{Constants $\epsilon,\delta,T,A(\delta)$.} We will take $\delta>0$ as an arbitrarily small number and $\epsilon=e^{-(\frac14-\delta)T}$. We define
\[
    A(\delta)=\lfloor  20/\delta\rfloor.
\]

The parameter $\delta$ will be replaced with $\epsilon$ in the statement of Theorem \ref{theorem:main}.
\medskip

\noindent{\it Step 1: Estimate of $\sum_{l(\gamma)<T} \int_{\gamma_0} \phi_k ds$.}

We first show
\begin{equation}\label{eqn:Equidistribution_phi_k_polynomial}
    \sum_{l(\gamma)<T} \int_{\gamma_0} \phi_k ds
    =
    \sum_{0<\lambda_j<3/16} \gamma_{k,j} \langle\phi_k,\phi_j^2\rangle\,e^{(\frac12+\Im(t_j))T}
    +(1+|t_k|)^{A(\delta)} O_{X,\delta}\left( e^{(\frac34+\delta)T}\right).
\end{equation}

Proposition \ref{prop:Zelditch_Equidist} proves \eqref{eqn:Equidistribution_phi_k_polynomial} for finitely many $\tau$. Hence, in what follows, we may assume that $\tau>20/\delta$.
\medskip

\noindent{\it Case 1: $e^{T}< (1+|\tau|)^{A(\delta)}$.}

We have
\[
    e^{(\frac12+\Im(t_j))T}< (1+|\tau|)^{A(\delta)(\Im(t_j)-\frac14)}e^{\frac34T},
\]
and, by \eqref{eqn:gamma_kj TriplePDT}, for all $j$'s with $0<\lambda_j<1/4$,
\[
    |\gamma_{k,j}\langle\phi_k,\phi_j^2\rangle|\ll (1+|\tau|)^{3/2}.
\]

Since $1-(\Im(t_j)-\frac14)\gg_X 1$, for all sufficiently small $\delta\ll_X1$, the main term can be absorbed in the error term, that is,
\[
    \sum_{j:\lambda_j<3/16} \gamma_{k,j} \langle\phi_k,\phi_j^2\rangle\,e^{(\frac12+\Im(t_j))T}=(1+|\tau|)^{A(\delta)}O_{X}\left(e^{\frac34T}\right).
\]
By \eqref{eqn:PrimLengthSum} and $\|\varphi\|_\infty\ll_X(1+|\tau|)^{1/2}$, we have
\begin{equation}\label{eqn:Equidist_small_T}
    \left|\sum_{l(\gamma)<T} \int_{\gamma_0} \varphi(s)\, ds\right|
    \ll(1+|\tau|)^{1/2}\sum_{l(\gamma)<T} l(\gamma_0)
    \ll (1+|\tau|)^{1/2}\,e^T
    \ll (1+|\tau|)^{A(\delta)}\,e^{\frac34T}.
\end{equation}
\medskip

\noindent{\it Case 2: $e^{T}\ge (1+|\tau|)^{A(\delta)}$.}

We apply Theorem \ref{thm:ZTF} with $\psi=\psi_\tau=\psi_{T,\epsilon}-\sum_{m=1}^M \psi_m$. Note that, by Proposition~\ref{prop:ZTF_test_ftn_poles_pmi/2} and Lemma~\ref{lem:h_m satisfies conditions}, Theorem~\ref{thm:ZTF} holds for $\psi_m$ and $h_m$.

The geometric side is different from the geometric side of Proposition~\ref{prop:Zelditch_Equidist} by
\[
    \text{(Difference)}
    =
    \sum_{\{\gamma\}} \left(\sum_{m=1}^M\psi_m\left(\frac{(e^{l(\gamma)}-1)^2}{e^{l(\gamma)}}\right)\right)\int_{\gamma_0}\varphi(s)\,ds,
\]
and, by substituting $\epsilon=e^{-(\frac14-\delta)T}$ in \eqref{eqn:ZTF_geom_side}, we have
\[
    \text{(Geometric side)}=\sum_{l(\gamma) < T}   \int_{\gamma_0}\varphi(s)ds
    +
    (1+|\tau|)^{1/2}O_{X,\delta}\left(e^{(\frac34+\delta)T}\right)
    -
    (\text{Difference}).
\]
By Lemma \ref{lem:psi_m}, we have
\begin{align*}
    \text{(Difference)}
    =
    C \sum_{\{\gamma\}} e^{-l(\gamma)}\int_{\gamma_0}\varphi(s)\,ds
    +
    (1+|\tau|)^{-2}\,O\left((1+|\tau|)^{1/2}
    \sum_{\{\gamma\}} \,e^{-(1.5)l(\gamma)} l(\gamma_0)
    \right),
\end{align*}
where $C=O\left((1+|\tau|)^{-3}\right)$. We apply Proposition~\ref{prop:Weighted_Length_Sum_marginal} to the first term and Lemma~\ref{lem:wPrimLengthSum} to the big-O term and obtain
\[
    \text{(Difference)}=O_X(1).
\]

Suppose we obtain 
\begin{equation}\label{eqn:Spec_Side}
    \text{(Spectral side)} = \sum_{0<\lambda_j<3/16} \gamma_{k,j} \langle\varphi\phi_j,\phi_j\rangle\,e^{(\frac12+\Im(t_j))T} +(1+|\tau|)^3\,O_{X,\delta}\left(e^{(\frac34+\delta) T}\right),
\end{equation}
uniformly for all $e^{T}>(1+|\tau|)^{A(\delta)}$.

Then, by combining the spectral side and the geometric side, for all $e^{T}>(1+|\tau|)^{A(\delta)}$, we obtain
\begin{equation}\label{eqn:Equidist_large_T}
    \sum_{l(\gamma)<T} \int_{\gamma_0} \varphi \, ds
    =
    \sum_{0<\lambda_j<3/16} \gamma_{k,j} \langle\varphi\phi_j,\phi_j\rangle\,e^{(\frac12+\Im(t_j))T}
    +(1+|\tau|)^3 O_{X,\delta}\left(e^{(\frac34+\delta)T}\right).
\end{equation}

By combining \eqref{eqn:Equidist_small_T} and \eqref{eqn:Equidist_large_T}, we see that \eqref{eqn:Equidistribution_phi_k_polynomial} holds for all $T\gg_X1$.
\medskip

\noindent{\it Step 2: Estimate of $\sum_{l(\gamma)<T} \int_{\gamma_0} f ds$.}

For all $f\in C^\infty(X)$, by the spectral decomposition, we have
\begin{align*}
    \sum_{l(\gamma)<T} \int_{\gamma_0} f ds &= \sum_{l(\gamma)<T} \int_{\gamma_0} \left(\sum_{k\ge 0} \langle f,\phi_k \rangle \phi_k(s) \right) ds\\
    & = \left(\sum_{l(\gamma)<T} \int_{\gamma_0} 1 ds\right) \frac{\int_X f d\mu}{{\rm Vol}(X)}
    + \sum_{k\ge 1} \langle f,\phi_k \rangle \sum_{l(\gamma)<T}\int_{\gamma_0} \phi_k(s)ds.
\end{align*}
By \eqref{eqn:Equidistribution_phi_k_polynomial}, we have
\begin{align*}
    &\sum_{k\ge 1} \langle f,\phi_k \rangle \sum_{l(\gamma)<T}\int_{\gamma_0} \phi_k(s)ds\\
    &=\sum_{k\ge1} \,\sum_{j:\lambda_j<3/16} \langle f,\phi_k\rangle\gamma_{k,j} \langle\phi_k,\phi_j^2\rangle\,e^{(\frac12+\Im(t_j))T}
    + \sum_{k\ge1}(1+|t_k|)^{A(\delta)} \langle f,\phi_k\rangle O_{X,\delta}( e^{(\frac34+\delta)T}).
\end{align*}

Note that for all $f\in C^{\infty}(X)$, $k>0$, and $m\ge 0$, we have
\[
    |\lambda_k^m \langle f,\phi_k\rangle| \ll_{X,m}\|f\|_{C^{2m}}.
\]
By using \eqref{eqn:Stirling_vertical} and Theorem \ref{thm:TripleProduct}, we obtain
\[
    \gamma_{k,j}\ll_j
    \, (1+ |t_k|)^{-2\Im(t_j)+\frac12}
    \exp(\pi t_k/2)
    \quad
    {\rm and}
    \quad
    \langle\phi_k\phi_j,\phi_j\rangle
    \ll_j
    (1+|t_k|)(1+\log(1+|t_k|))
    \exp(-\pi t_k/2)
\]
as  $k\to\infty$.
Hence, by Lemma \ref{lem:sum(1+t)^delta} and $\lambda_k=1/4+t_k^2$, we obtain
\[
    E(f;j):=\sum_{k\ge1} \langle f,\phi_k\rangle \gamma_{k,j}\langle\phi_k,\phi_j^2\rangle\ll_j \sum_{k\ge1} (1+\log(1+|t_k|))(1+|t_k|)^{-5/2-2\Im(t_j)}\lambda_k^2|\langle f,\phi_k\rangle|  \ll \|f\|_{C^4}.
\]
We also have
\[
    \sum_{k\ge1}(1+|t_k|)^{A(\delta)} |\langle f,\phi_k\rangle|
    \ll
    \sum_{k\ge1}(1+|t_k|)^{-3} \left|\lambda_k^{\lceil(A(\delta)+3)/2\rceil}\langle f,\phi_k\rangle\right|
    \ll\|f\|_{C^{D(\delta)}},
\]
where $D(\delta)=2\left\lceil\frac{A(\delta)+3}{2}\right\rceil$. Note that $D(\delta)\sim 20/\delta$.

Therefore,
\begin{align*}
    \sum_{l(\gamma)<T} \int_{\gamma_0} f ds 
     - \left(\sum_{l(\gamma)<T} \int_{\gamma_0} 1 ds\right) \frac{\int_X f d\mu}{{\rm Vol}(X)}&=\sum_{j:\lambda_j<3/16} E(f;j)\,e^{(\frac12+\Im(t_j))T}+\|f\|_{C^{D(\delta)}}O_{X,\delta}\left(e^{(\frac34+\delta)T}\right)\\
     &\ll_{X,\delta}\|f\|_{C^{D(\delta)}}\,\max\{e^{(\frac12 +\Im(t_1))T},e^{(\frac34+\delta)T}\}.
\end{align*}
By dividing both sides by $\sum_{l(\gamma)<T} \int_{\gamma_0} 1 ds$ and using \eqref{eqn:PrimLengthSum}, we obtain Theorem~\ref{theorem:main}-(1).

For the sharpness of $e^{(\frac12+|t_1|)T}$, we only need to show that 
\[
    \sum_{j:\lambda_j=\lambda_1}E(f;j)\neq  0.
\]
Since $\gamma_{k,j}$ depends only on $t_k$ and $t_j$, we have
\begin{align*}
    \sum_{j:\lambda_j=\lambda_1}E(f;j)
    &=\sum_{j:\lambda_j=\lambda_1} \sum_{k\ge1}\langle f,\phi_k\rangle\gamma_{k,j}\langle \phi_{k},\phi_j^2\rangle\\
    &=\sum_{k\ge1} \langle f,\phi_k\rangle \gamma_{k,1} \left\langle \phi_k,\sum_{j:\lambda_j=\lambda_1} \phi_j^2\right\rangle.
\end{align*}
Therefore, if $\sum_{j:\lambda_j=\lambda_1} \phi_j^2$ is not a constant function, then there exists $k_0>0$ with $\left\langle \phi_{k_0},\sum_{j:\lambda_j=\lambda_1} \phi_j^2\right\rangle\neq0$. Then, by taking $f=\phi_{k_0}$, we have $\sum_{j:\lambda_j=\lambda_1}E(\phi_{k_0};j)\neq 0$. Note that if $\lambda_1$ is simple, then $\sum_{j:\lambda_j=\lambda_1} \phi_j^2=\phi_1^2$ is not a constant function. Hence, we obtain Theorem~\ref{theorem:main}-(2).

% {\color{red} Incorrect

% \subsection{
% \texorpdfstring{Genericity of $\sum_{j:\lambda_j=\lambda_1}\phi_j^2\neq {\rm const.}$}
% {Genericity of the simplicity of lambda1}
% }\label{subsec:generic_simple_eval}
% Note that if $\lambda_1$ is simple, then $\sum_{j:\lambda_j=\lambda_1}\phi_j^2$ is non-constant.

% % Consider the thin part $\mathcal{T}^{<\epsilon}(S_g)$ of the Teichm\"uller space $\mathcal{T}(S_g)$, consisting of compact hyperbolic surfaces of genus $g$ whose shortest closed geodesic has length $<\epsilon$. By the main theorem of \cite{SWY_Eval_Lengths}, we have $\lambda_1\ll_g\epsilon$. Hence, for sufficiently small $\epsilon>0$, we have $\Im(t_1)$ very close to $1/2$ so that $e^{-(\frac12+\Im(t_1))T}$ dominates $e^{(-\frac24+\epsilon)T}$ in the error term of  

% Eigenvalue branches depend real analytically on the hyperbolic structure; see \cite[Lemma 3.15]{Berger_LowEval}. Hence the locus
% \[
% V_{1,2}:=\{X\in\mathcal{T}(S_g):\lambda_1(X)=\lambda_2(X)\}
% \]
% is a real analytic subset of $\mathcal{T}(S_g)$. Consequently, either $V_{1,2}=\mathcal{T}(S_g)$ or $V_{1,2}$ has empty interior. By \cite[Theorem 2]{Berger_Small_Eval_Bulletin}, the former alternative does not occur, and thus $V_{1,2}$ is a proper real analytic subset of $\mathcal{T}(S_g)$.

% Therefore, the first nonzero eigenvalue $\lambda_1$ is simple on an open dense subset of the thin part $\mathcal{T}^{<\epsilon}(S_g)$. In particular,
% \[
%     \sum_{j:\lambda_j=\lambda_1}\phi_j^2
% \]
% is non-constant for hyperbolic surfaces lying in this open dense subset.
% }

\subsection{Estimate of the spectral side (\ref{eqn:Spec_Side})}\label{subsec:Spec_Side}

Recall we assume $\tau>20/\delta$ and $M=\lfloor\tau\rfloor$. By Theorem \ref{thm:ZTF},
\[
    h(t) =  \frac{1}{2\pi i}\int_{(\sigma)}\Theta_\tau^t (s) M \psi_{T,\epsilon}(s) ds,
\]
where $\frac12+\Im(t_1)<\sigma<1$ and
\[
    \Theta^t_\tau(s)=\frac{2^{2-2s} \sqrt{\pi}}{\sin(\pi s)}\frac{\Gamma\left(-\frac{1}{2}+it+s\right)\Gamma\left(-\frac{1}{2}-it+s\right)}{\Gamma\left(-\frac{1}{2}\left(\frac{1}{2}+i\tau\right)+s\right)\Gamma\left(-\frac{1}{2}\left(\frac{1}{2}-i\tau\right)+s\right)}\cosh(\pi t).    
\]

By shifting the contour $(\sigma)$ to $(-(M+\nu))$ with $\nu\in(0,\frac12-\Im(t_1))$, we have
\begin{align}
    h_\tau(t_j)
    &:=h(t_j) -\sum_{m=0}^{M} \underbrace{{\rm Res}_{s=-m} \Theta^{t_j}_\tau(s)M\psi_{T,\epsilon}\left(s\right)}_{=:h_m}\\
    &=\underbrace{\sum_{m=0}^{M}\sum_{\pm} {\rm Res}_{s=\frac{1}{2}\pm i t_j -m} \Theta^{t_j}_\tau(s)M\psi_{T,\epsilon}\left(s\right)}_{=:h_{\tau,1}(t_j)}
    + \underbrace{\frac{1}{2 \pi i} \int_{(-(M+\nu))} \Theta_\tau^{t_j} (s) M\psi_{T,\epsilon}(s) ds}_{=:h_{\tau,2}(t_j)}.
\end{align}
When $t_j=0$,
\[
    h_{\tau,1}(0):=\sum_{m=0}^{M}{\rm Res}_{s=\frac{1}{2}-m} \Theta^{0}_\tau(s)M\psi_{T,\epsilon}\left(s\right).
\]
Hence, on the spectral side, we estimate
\[
    \sum_{j\ge1} h_\tau(t_j)\langle\varphi\phi_j,\phi_j\rangle = \sum_{j\ge1} h_{\tau,1}(t_j)\langle\varphi\phi_j,\phi_j\rangle+\sum_{j\ge1} h_{\tau,2}(t_j)\langle\varphi\phi_j,\phi_j\rangle.
\]
Equation \eqref{eqn:Spec_Side} follows from Lemmas~\ref{lem:h_tau1} and~\ref{lem:h_tau2}.

\begin{lemma}\label{lem:h_tau1}
    Assume $\tau>20/\delta$ and $M=\lfloor \tau\rfloor$. For all $e^{T}>(1+|\tau|)^{A(\delta)}$, we have
    \[
        \sum_{j\ge 1} h_{\tau,1}(t_j)\langle\varphi\phi_j,\phi_j\rangle=\sum_{j:\lambda_j<3/16} \gamma_{k,j} \langle\varphi\phi_j,\phi_j\rangle\,e^{(\frac12+\Im(t_j))T} +(1+|\tau|)^3\,O_{X,\delta}(e^{(\frac34+\delta)T}),
    \]
    where $\tau=t_k$ and
    \[
        \gamma_{k,j}= \frac{2^{1-2\Im(t_j)}\sqrt{\pi}\,\Gamma\left(2\Im(t_j)\right)}{\Gamma\left(\frac14+\Im(t_j)+\frac12it_k\right)\Gamma\left(\frac14+\Im(t_j)-\frac12it_k\right)\left(\frac12+\Im(t_j)\right)}>0.
    \]
\end{lemma}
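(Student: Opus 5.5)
We estimate $\sum_{j\ge1}h_{\tau,1}(t_j)\langle\varphi\phi_j,\phi_j\rangle$ by splitting the residues into the $m=0$ terms and the terms with $1\le m\le M$. We then split the spectrum into $t_j\in i\mathbb{R}$, $t_j=0$ and $t_j>0$, and treat each piece with the residue bounds of Lemmas~\ref{lem:res Theta at s=1/2+it_j-m, t_j nonzero} and~\ref{lem:res Theta at s=1/2+it_j-m, t_j zero} together with Theorem~\ref{thm:TripleProduct}.

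\textbf{The $m=0$ terms.} These are exactly $h_1(t_j)$ from the proof of Lemma~\ref{lem:ZTF_spectral_side}, and we reuse \eqref{eqn:h1_sum} with $\epsilon=e^{-(\frac14-\delta)T}$.

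\begin{itemize}
\item The exceptional $t_j$ with $3/16\le\lambda_j<1/4$ satisfy $\frac12+\Im(t_j)\le\frac34$. Their main terms are therefore absorbed into the error, using \eqref{eqn:gamma_kj TriplePDT}.
\item The $\epsilon$-dependent errors become $\epsilon e^{(\frac12+\Im t_1)T}\le e^{(\frac34+\delta)T}$ and $\epsilon^{-1}e^{T/2}=e^{(\frac34-\delta)T}$. Both are within $(1+|\tau|)^3O(e^{(\frac34+\delta)T})$.
\end{itemize}

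\textbf{The terms $1\le m\le M$.} The key input is $|M\psi_{T,\epsilon}(\frac12\pm it_j-m)|\ll\frac1m e^{\frac14(\frac12-m+|\Im t_j|)T}$ from Lemma~\ref{lem:res Theta at s=1/2+it_j-m, t_j nonzero}. Here $M\epsilon<1$, so the $M\varrho$ factor is $O(1)$.

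\begin{itemize}
\item For $t_j\in i\mathbb{R}\setminus\{0\}$ (finitely many), the residue is $\ll e^{2m}(1+\tau)^{2m+3/2}e^{\pi\tau/2}$. The triple product is $\ll(1+\tau)^{1+o(1)}e^{-\pi\tau/2}$, so the exponential factors cancel. What remains is
\[
(1+\tau)^{3}\Big(e^{2}(1+\tau)^{2}e^{-T/4}\Big)^m e^{\frac14(\frac12+\Im t_j)T}.
\]
Since $e^T>(1+\tau)^{A(\delta)}$ with $A(\delta)\ge 20/\delta\gg 8$, the factor $(1+\tau)^2e^{-T/4}$ is $\le e^{-cT}$. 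The sum over $m$ is therefore a convergent geometric series dominated by $e^{T/4}$, which is far below $e^{3T/4}$.
\item For $t_j=0$, Lemma~\ref{lem:res Theta at s=1/2+it_j-m, t_j zero} gives the same structure with an extra factor $T+\log(1+\tau)$. This is harmless.
\end{itemize}

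\textbf{Real $t_j>0$.} This is the step I expect to be the main obstacle, because now the sum over infinitely many $j$ must also converge. We combine the residue bound with Theorem~\ref{thm:TripleProduct}.

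\begin{itemize}
\item The exponentials $\exp(-\frac\pi2(2t_j-|t_j+\frac\tau2|-|t_j-\frac\tau2|))$ from the residue and $\exp(-\frac\pi4(|2t_j+\tau|+|2t_j-\tau|-4t_j))$ from the triple product cancel exactly.
\item What remains is a polynomial factor. For $\tau\ge2t_j$ it is $(1+\tau)^{3/2+o(1)}\big(2e^2(1+\tau)^2/(1+m)\big)^m$, and for $\tau<2t_j$ it is $\big(12e^2(1+t_j)/(1+m)\big)^m$.
\item This is multiplied by $\frac1m e^{\frac14(\frac12-m)T}$.
\end{itemize}

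In the range $\tau\ge 2t_j$ there are only $O(\tau^2)$ eigenvalues, and the geometric series in $m$ converges as above. In the range $t_j>\tau/2$ the factor $(1+t_j)^m$ grows in $j$, so summability fails without further decay. We obtain the extra decay from the $M\varrho$ factor: apply Lemma~\ref{lem:varrho} with $n$ large (say $n=2m+4$), which gives
\[
|M\varrho(\epsilon(\tfrac12-m+it_j))|\ll_n(\epsilon t_j)^{-n}\quad\text{for } t_j\ge1/\epsilon.
\]
The sum then splits into two parts:
\begin{itemize}
\item For $t_j<1/\epsilon$, bound $(1+t_j)^m\le\epsilon^{-m}=e^{(\frac14-\delta)mT}$. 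This nearly cancels $e^{-mT/4}$ and leaves $e^{-\delta mT}$ times $(12e^2)^m/m!$-type factors. With the Weyl-law count from Lemma~\ref{lem:sum(1+t)^delta}, whose weight $\sum_{t_j<1/\epsilon}(1+t_j)^{-1}$ is $\ll\epsilon^{-1}$, the total is $\ll e^{T/8}\epsilon^{-1}\ll e^{(\frac38+\delta)T}$.
\item For $t_j\ge 1/\epsilon$, the extra decay $(\epsilon t_j)^{-n}$ makes the $j$-sum converge and gives a contribution of the same size.
\end{itemize}
The delicate point is to keep the $n$-dependence of the constants in Lemma~\ref{lem:varrho}, namely $(n\log^2 n)^n$, under control uniformly in $m\le M\approx\tau$. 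If that proves awkward, I would instead take $n$ fixed and use the $1/(1+m)^m$ decay together with $e^{-\delta mT}$ to dominate the $(\log)$ growth, summing $m$ geometrically. Collecting all pieces yields the stated formula.
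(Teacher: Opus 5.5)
Your proposal is correct and follows essentially the paper's route: you reduce the $m=0$ residues to \eqref{eqn:h1_sum} with $\epsilon=e^{-(\frac14-\delta)T}$, bound the residues with $1\le m\le M$ by the residue lemmas, and cancel their exponential factors exactly against Theorem~\ref{thm:TripleProduct}. The $m$-sum is then geometric because $e^T>(1+|\tau|)^{A(\delta)}$, and for large real $t_j$ the missing decay comes from $M\varrho$ with $n$ growing linearly in $m$. The paper switches at $t_j=e^{(\frac14-\frac\delta2)T}$ and takes $n=m\lceil 10/\delta\rceil$; your switch at $1/\epsilon$ with $n=2m+4$ also works, since the factor $\epsilon^{-n}$ is recovered from the tail sum over $t_j\ge 1/\epsilon$. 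One caveat concerns your fallback of taking $n$ fixed: it would fail, because the factor $(1+t_j)^{m-n}$ makes the sum over $j$ diverge once $m\ge n$, so $n$ must grow with $m$ exactly as in your main plan.
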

\begin{proof}

For $h_1(t_j)$ defined in Lemma \ref{lem:ZTF_spectral_side}, we have
\[
    \Delta_1(t_j):=h_{\tau,1}(t_j)-h_1(t_j)=
    \begin{cases}
        \sum_{m=1}^{M}\sum_{\pm} {\rm Res}_{s=\frac{1}{2}\pm i t_j -m}M\psi_{T,\epsilon}(s)\,  \Theta^{t_j}_\tau(s) & t_j\neq 0\\
        \sum_{m=1}^{M}
        \operatorname{Res}_{s=\frac12-m}
        \left(M\psi_{T,\epsilon}(s)\Theta_\tau^0(s)\right)
        & t_j=0.
    \end{cases}
\]
By \eqref{eqn:h1_sum} with $\epsilon=e^{-(\frac14-\delta)T}$, it suffices to show
\[
    \sum_{j\ge1} \Delta_1(t_j)\,\langle\varphi\phi_j,\phi_j\rangle=(1+|\tau|)^3\, O_X(e^{\frac34T}).
\]

\noindent {\it Case 1. $t_j\in i\mathbb{R}$.}

Recall $\Im(t_j)<1/2$ for all $t_j\in i\mathbb{R}$. By Lemmas \ref{lem:res Theta at s=1/2+it_j-m, t_j nonzero} and \ref{lem:res Theta at s=1/2+it_j-m, t_j zero}, for all $m>0$ and $t_j\in i\mathbb{R}$, including the case when $t_j=0$, we have
\[
    |{\rm Res}_{s=\frac{1}{2}\pm it_j -m} M\psi_{T,\epsilon}(s)\,\Theta^{t_j}_\tau(s)|
    \ll
    (1+\tau)^{3/2}\left(\frac{e^{2}(1+\tau)^{2}}{e^{\frac14T}}\right)^m e^{\pi\tau/2}\,Te^{(\frac18+\frac14\Im(t_j))T}.
\]

On the other hand, by Theorem \ref{thm:TripleProduct}, for all $t_j\in i\mathbb{R}$, we have
\[
    \langle\varphi,\phi_j^2\rangle\ll (1+|\tau|)(1+\log(1+|\tau|))e^{-\pi\tau/2}.
\]

Therefore, by using $e^{T}>(1+|\tau|)^{A(\delta)}$ and $\Im(t_j)<1/2$, we obtain
\begin{align*}
    \sum_{t_j\in i\mathbb{R}}\Delta_1(t_j)\langle\varphi,\phi_j^2\rangle
    &\ll_{X,\delta} \sum_{t_j\in i\mathbb{R}}(1+|\tau|)^3
    \left(
    \sum_{m=1}^{M} \left(\frac{e^2(1+|\tau|)^2}{e^{\frac14T}}\right)^{m}
    \right) Te^{\frac14T}\\
    &\ll_{X,\delta} (1+|\tau|)^3\,Te^{\frac14T}.
\end{align*}

\medskip

\noindent {\it Case 2. $t_j\in \mathbb{R}$ and $t_j\neq 0$.}

By \eqref{eqn:Mpsi_T}, for all $m>0$, we have
\[
    M\psi_T(\tfrac12\pm it_j-m)=O(e^{\frac14(\frac{1}{2}-m)T}).
\]
Recall from Lemma \ref{lem:res Theta at s=1/2+it_j-m, t_j nonzero} that, for all $m>0$, we have
\[
    |{\rm Res}_{s=\frac{1}{2}\pm it_j -m}\Theta^{t_j}_\tau(s)|\\
    \ll
    \sqrt{1+\tau} \exp(-\tfrac{\pi}{2}(2|t_j|-|t_j+\tfrac\tau2|-|t_j-\tfrac\tau2|))
            \left(\frac{12e^2\max\{(1+t_j),(1+\tau)^2\}}{(1+m)}\right)^m.
\]

When $t_j>T$, the decay $e^{-\frac14mT}$ may not cancel the term $(1+t_j)^m$ in the residue. Hence, we will obtain the decay $(1+t_j)^{-m-3}$ from $M\varrho(\epsilon s)$, instead of using an easy bound $M\varrho(\epsilon s)=O(1)$. The additional exponent $-3$ is required for the convergence of the sum over $j\ge 1$; see Lemma \ref{lem:sum(1+t)^delta}.

\noindent {\it Case 2-1: $t_j< \epsilon^{-\frac{1/4-\delta/2}{1/4-\delta}}=e^{(\frac14-\frac\delta2)T}$.}

We use an easy bound $M\varrho(\epsilon s)=O(1)$ and
\[
    M\psi_{T,\epsilon}((\tfrac{1}{2}-m+it_j))=
    O_{X,\delta}(e^{\frac14(\frac12-m)T}),
\]
which was shown in Lemma \ref{lem:res Theta at s=1/2+it_j-m, t_j nonzero}. Recall $e^{T}>(1+|\tau|)^{A(\delta)}$.
It follows that
\begin{align}
\begin{split}\label{eqn:Delta_real_1}
    |\Delta_1(t_j)|
    &\ll (1+|\tau|)\exp\left(-\tfrac{\pi}{2}(2|t_j|-|t_j+\tfrac{\tau}{2}|-|t_j-\tfrac{\tau}{2}|)\right)
    \left(
    e^{T/8}\sum_{m=1}^{M} \left(\frac{12e^2\max\{(1+\tau)^2,(1+t_j)\}}{e^{\frac14T}}\right)^{m}
    \right)\\
    &\ll_\delta (1+|\tau|)\exp\left(-\frac{\pi}{2}(2|t_j|-|t_j+\frac{\tau}{2}|-|t_j-\frac{\tau}{2}|)\right)
    e^{T/8}.
\end{split}
\end{align}
\noindent {\it Case 2-2: $t_j\ge \epsilon^{-\frac{1/4-\delta/2}{1/4-\delta}}=e^{(\frac14-\frac\delta2)T}$.}

By Lemma \ref{lem:varrho} and using $|\epsilon(\tfrac{1}{2}-m+it_j)|\ge \epsilon t_j$, for all $n\ge 2$, we have
\[
    |M\varrho(\epsilon(\tfrac{1}{2}-m+it_j))|
    \ll \frac{1}{|[\epsilon (\frac{1}{2}-m+it_j)]_n|}e^{|\epsilon(\frac{1}{2}-m)+n|} \left(n(\log (n+1))^2\right)^{n}
    \ll \left(\frac{n^{1.5}}{\epsilon t_j} \right)^n.
\]
Note that $\epsilon t_j\ge t_j^{\frac{\delta/2}{1/4-\delta/2}}\ge t_j^{2\delta}$ for $\delta<1/8$.
Let $n=m\lceil10/\delta\rceil$. Since $m\le 1+\tau$ and $\lceil10/\delta\rceil\le \tau$, we have
\[
    |M\varrho(\epsilon(\tfrac{1}{2}-m+it_j))|
    \ll_\delta \left(\frac{(1+\tau)^{15/\delta+1.5}(1+\lceil10/\delta\rceil)^{15/\delta+1.5}}{(1+t_j)^{20}} \right)^m
    \ll
    \left(\frac{(1+\tau)^{30/\delta+3}}{(1+t_j)^{20}} \right)^m.
\]
Therefore, for all $t_j\ge e^{(\frac14-\frac\delta2)T}$ and $m>0$, we have
\[
    |M\psi_{T,\epsilon}((\tfrac{1}{2}-m+it_j))|
    \ll_{X,\delta}
    e^{\frac{1}{8}T} \left(\frac{(1+\tau)^{30/\delta+3}}{(1+t_j)^{20}e^{\frac14T}} \right)^m.
\]
Since $1+t_j\ge e^{(\frac14-\frac\delta2)T}>(1+\tau)^2$, we have
\[
    |\Delta_1(t_j)| \ll_{X} (1+|\tau|)\exp\left(-\tfrac{\pi}{2}(2|t_j|-|t_j+\tfrac{\tau}{2}|-|t_j-\tfrac{\tau}{2}|)\right)
    \underbrace{
    \left(
        e^{\frac{1}{8}T}\sum_{m=1}^{M}\left(\frac{12e^2(1+\tau)^{30/\delta+3}}{(1+t_j)^{19}e^{\frac14T}}\right)^m  
    \right)
    }_{:=I_j}.
\]
Since $A(\delta)=\lfloor20/\delta\rfloor$, we have
\[
    (1+t_j)^{16}>e^{16(\frac14-\frac\delta2)T}>(1+\tau)^{A(\delta)16(\frac14-\frac\delta2)}>2\left(12e^2(1+\tau)^{30/\delta+3}\right),
\]
which yields
\[
    I_j\ll_\delta (1+t_j)^{-3}.
\]
It follows that
\begin{equation}\label{eqn:Delta_real_2}
    |\Delta_1(t_j)| \ll_{X} (1+|\tau|)(1+t_j)^{-3}\exp\left(-\tfrac{\pi}{2}(2|t_j|-|t_j+\tfrac{\tau}{2}|-|t_j-\tfrac{\tau}{2}|)\right).
\end{equation}
\medskip

Therefore, by using \eqref{eqn:Delta_real_1}, \eqref{eqn:Delta_real_2}, and Theorem \ref{thm:TripleProduct}, we obtain
\begin{align*}
    \sum_{t_j>0} \left|\Delta_{1}(t_j)\langle\varphi\phi_j,\phi_j\rangle\right|
    &=\sum_{t_j<e^{(\frac14-\frac\delta2)T}} \left|\Delta_{1}(t_j) \langle\varphi\phi_j,\phi_j\rangle\right|
    +
    \sum_{t_j\ge e^{(\frac14-\frac\delta2)T}}
    \left|\Delta_{1}(t_j)\langle\varphi\phi_j,\phi_j\rangle\right|\\
    &\ll_\delta(1+\tau)^3\,e^{T/8} \left(\sum_{t_j<e^{(\frac14-\frac\delta2)T}}1\right) + (1+\tau)^3 \sum_{t_j>e^{(\frac14-\frac\delta2)T}}(1+t_j)^{-3}.
\end{align*}
By Lemma \ref{lem:Weyl Law} and Lemma \ref{lem:sum(1+t)^delta}, we have
\[
    \sum_{t_j<e^{(\frac14-\frac\delta2)T}}1=O_{X,\delta}(e^{(\frac12-\delta)T})
    \quad{\rm and}\quad
    \sum_{t_j\ge e^{(\frac14-\frac\delta2)T}}(1+t_j)^{-3}=O_{X,\delta}(e^{-(\frac14-\frac\delta2)T}). 
\]
Therefore,
\[
    \sum_{t_j>0} \left|\Delta_{1}(t_j)\langle\varphi\phi_j,\phi_j\rangle\right|\ll_\delta (1+\tau)^3\,e^{(\frac58-\delta)T}.\qedhere
\]
\end{proof}

\begin{lemma}\label{lem:h_tau2}
    Suppose $\tau>20/\delta$. For all sufficiently small $\delta>0$ and all $e^T>(1+|\tau|)^{A(\delta)}$, we have
    \[
        \left|\sum_{j} h_{\tau,2}(t_j)\langle\varphi\phi_j,\phi_j\rangle\right|= O_{X,\delta}(1).
    \]
\end{lemma}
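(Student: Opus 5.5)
The plan is to bound $h_{\tau,2}(t_j)$ pointwise in $t_j$ by a quantity that is summable against $|\langle\varphi\phi_j,\phi_j\rangle|\ll(1+|\tau|)^{1/2}$ via Lemma~\ref{lem:sum(1+t)^delta}, with all $\tau$-dependence killed by the factor $e^{-(M+\nu)T/4}$ coming from $M\psi_T$. Write $s=-(M+\nu)+it'$. Since $\nu\notin\mathbb Z$, the factor $1/\sin(\pi s)$ is $\ll_\nu e^{-\pi|t'|}$, and $2^{2-2s}=O(4^{M})$.

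\textbf{Step 1: bounding $\Theta$ on the shifted line.} I would estimate the four Gamma factors of $\Theta^{t_j}_\tau(s)$ there. For this I would apply the reflection formula \eqref{eqn:reflection} to move each argument to the right half-plane, and then Stirling \eqref{eqn:Stirling}. The numerator then contributes roughly
\[
\bigl((1+|t'+t_j|)(1+|t'-t_j|)\bigr)^{-(M+\nu+1)},
\]
and the reciprocal denominator roughly
\[
\bigl((1+|t'+\tfrac\tau2|)(1+|t'-\tfrac\tau2|)\bigr)^{M+\nu+3/4},
\]
times $C^M$ and $(M!)$-type constants. The exponential factors (including $\cosh\pi t_j$ and $1/\sin\pi s$) combine exactly as in \eqref{eqn:h_2_exp}, so they are $\ll e^{\pi\tau/2}$.

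\textbf{Step 2: bounding $M\psi_{T,\epsilon}$.} By \eqref{eqn:Mpsi_T} we have $|M\psi_T(s)|\ll e^{-(M+\nu)T/4}/|s|$. Lemma~\ref{lem:varrho} gives $|M\varrho(\epsilon s)|\ll e^{\epsilon(M+\nu)}=O(1)$, and for $|t'|\ge1/\epsilon$ it gives the extra decay $(n^{1.5}/(\epsilon|t'|))^{n}$.

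\textbf{Step 3: splitting the $t'$-integral.} In the range $|t'|\ge 2|t_j|$ and $|t'|\ge\tau$, the numerator decay $(1+|t'|)^{-2M-2\nu-2}$ beats the denominator growth $(1+|t'|)^{2M+2\nu+3/2}$, leaving an integrable $(1+|t'|)^{-3/2}$. When $|t'|\ll t_j$ is away from $\pm t_j$, the numerator is $\ll(1+t_j)^{-2M-2}$, which absorbs the denominator's $(1+t_j+\tau)^{2M+2}$. The difficult region is $t'\approx\pm t_j$. There the numerator gives only $(1+t_j)^{-M-1}$ against a denominator of size $(1+t_j)^{2M}(1+\tau)^{O(M)}$, a net loss of $(1+t_j)^{M}$.

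\textbf{Step 4: the region $t'\approx\pm t_j$ (main obstacle).} This is where I expect the real work. I would treat two cases.
\begin{itemize}
\item If $t_j<1/\epsilon$, I bound the loss by $\epsilon^{-M}=e^{(1/4-\delta)MT}$. Combined with $e^{-(M+\nu)T/4}$ this gives $e^{-\delta MT-\nu T/4}(1+\tau)^{O(M)}$. This is tiny because $e^{\delta T}>(1+\tau)^{A(\delta)\delta}\ge(1+\tau)^{19}$.
\item If $t_j\ge1/\epsilon$, I use Lemma~\ref{lem:varrho} with $n=2M+5$. This gives $(\epsilon t_j)^{-n}n^{1.5n}$, so the net factor is $t_j^{-M-5}\epsilon^{-2M-5}n^{O(n)}\le(1+t_j)^{-3}\epsilon^{-M}\tau^{O(\tau)}$. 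The same comparison with $e^{-(M+\nu)T/4}$ and $e^{T}>(1+\tau)^{A(\delta)}$ (recall $M=\lfloor\tau\rfloor$ and $\tau>20/\delta$) again absorbs the $\tau^{O(\tau)}$ and $e^{\pi\tau/2}$ factors.
\end{itemize}

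\textbf{Step 5: summing over $j$.} Collecting the bounds, I expect
\[
|h_{\tau,2}(t_j)|\ll_{X,\delta}(1+|t_j|)^{-3}(1+\tau)^{-1}.
\]
Summing over $j$ with Lemma~\ref{lem:sum(1+t)^delta} and $|\langle\varphi\phi_j,\phi_j\rangle|\ll(1+|\tau|)^{1/2}$ then yields $O_{X,\delta}(1)$. The exceptional $t_j\in i\mathbb R$ are finitely many and handled as in Case 1 of Lemma~\ref{lem:ZTF_spectral_side}.

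The step I would check most carefully is the bookkeeping of the exponent of $\epsilon$ versus $e^{-(M+\nu)T/4}$ near $t'=\pm t_j$. Choosing $\epsilon=e^{-(1/4-\delta)T}$ rather than $e^{-T/4}$ is exactly what leaves the margin $e^{-\delta MT}$.
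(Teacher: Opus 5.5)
Your overall architecture matches the paper's. You aim for a pointwise bound $|h_{\tau,2}(t_j)|\ll(1+\tau)^{-c}(1+|t_j|)^{-3}$, let $e^{-\frac14(M+\nu)T}$ absorb every $(1+\tau)^{O(M)}$, $C^M$, $M^{O(M)}$ and $e^{\pi\tau/2}$, and control polynomial losses either by trading them against powers of $\epsilon^{-1}$ or by the decay of $M\varrho(\epsilon s)$.

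\textbf{The gap is in Step~3.} The regions you treat as harmless do not produce the decay in $t_j$ that Step~5 relies on.
\begin{itemize}
\item On $|t'|\ge\max\{2t_j,\tau\}$ your integrand is $(1+|t'|)^{-3/2}$ times a factor independent of $t_j$. But $\int_{|t'|\ge 2t_j}(1+|t'|)^{-3/2}\,dt'\asymp(1+t_j)^{-1/2}$.
\item On $|t'|\le c\,t_j$ you only show that the Gamma ratio is bounded. After integrating against $|M\psi_T(s)|\ll(1+|t'|)^{-1}$ this gives $O(\log(2+t_j))$. Sharper absolute-value bounds still give only about $(1+t_j)^{-1/2}$ from $|t'|\asymp t_j$ when $t_j<1/\epsilon$.
\end{itemize}
Weyl's law gives $\#\{j:t_j\le R\}\asymp R^2$, so bounds of size $(1+t_j)^{-1/2}$ or $\log(2+t_j)$ are not summable over $j$. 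Hence $(1+|t_j|)^{-3}$ in Step~5 is established only in the window $t'\approx\pm t_j$ handled in Step~4.

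\textbf{How to close it.} The factor $(1+t_j)^{-3}$ has to be extracted on the whole line $\Re s=-(M+\nu)$, not only near $t'=\pm t_j$.
\begin{itemize}
\item For $t_j<1/\epsilon$ you can trade $(1+t_j)^{3}\le\epsilon^{-3}$ against $e^{-\frac14(M+\nu)T}$.
\item For $t_j\ge1/\epsilon$ you must combine the $M\varrho$-decay on $|t'|\ge1/\epsilon$ with the numerator bound $(1+t_j)^{-2M-2}$ on $|t'|\le t_j/2$. This works but needs some care.
\end{itemize}

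\textbf{The paper avoids this region analysis.} With $a=M+\frac32+\nu$ it uses
$|(a+i(t'+t_j))(a+i(t'-t_j))|\ge a\sqrt{a^2+2t'^2+2t_j^2}\ge(1+M)\max\{1+|t'|,1+t_j\}$.
From this, the two numerator Gamma factors together with $\cosh(\pi t_j)$ are $\ll e^{2M}(1+M)^{-M-1-\nu}(1+t_j)^{-3}(1+|t'|)^{-M+2-\nu}$, uniformly in $t'$. This gives away half of the numerator's decay and leaves a crude integrand growth $(1+|t'|)^{M+\frac72+\nu}$ for $|t'|>\tau/2$. In exchange, the only split needed is at $|t'|=e^{(\frac14-\frac\delta2)T}$:
\begin{itemize}
\item below it, the growth is absorbed by $e^{-\frac14(M+\nu)T}$ with a margin of roughly $e^{-\frac\delta2 MT}$;
\item above it, $M\varrho$ with $n=M\lceil 10/\delta\rceil$ supplies $(1+|t'|)^{-20M}$.
\end{itemize}
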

\begin{proof}
By Lemma \ref{lem:sum(1+t)^delta} and $|\langle\varphi\phi_j,\phi_j\rangle|\le \|\varphi\|_\infty\ll_X(1+|\tau|)^{1/2}$ (\cite{SoggeZelditch}), it suffices to show
\[
    |h_{\tau,2}(t_j)|\ll_{X,\delta}(1+\tau)^{-2}(1+|t_j|)^{-3}.
\]

By Lemma \ref{lem:psi_to_h}, we have
\[
    h_{\tau,2} (t_j)
    = \frac{\cosh\pi t_j}{2 \pi} \int_{\mathbb{R}}
    \left(
    \begin{aligned}
        &\frac{2^{2+2M+2\nu-2it'} \sqrt{\pi}}{\sin\pi(-M-\nu+it')} M\psi_{T,\epsilon}(-M-\nu+it')\\
        &\times \frac{\Gamma\left(-M-\frac{1}{2}-\nu+i(t'+t_j)\right)\Gamma\left(-M-\frac{1}{2}-\nu+i(t'-t_j)\right)}{\Gamma\left(-M-\frac{1}{4}-\nu+i\left(t'+\frac{\tau}{2}\right)\right)\Gamma\left(-M-\frac{1}{4}-\nu+i\left(t'-\frac{1}{2}\tau\right)\right)}
    \end{aligned}
    \right)
    dt',
\]
where $M=\lfloor \tau \rfloor$ and $\nu\in(0,\frac12-\Im(t_1))$.

If $t_j \in i\mathbb{R}$, then $|t_j|\le 1/2$. The approximation for small $t_j \in \mathbb{R}$ also applies to $t_j \in i\mathbb{R}$. Therefore, in what follows, we assume $t_j \in \mathbb{R}$.

\medskip
{\it (Step 1) Estimate of $|M\psi_{T,\epsilon}(-M-\nu+it')|$.}
Similarly to Case 2-2 in the proof of Lemma \ref{lem:h_tau1}, one can obtain
\begin{align}
    \left|M\psi_{T,\epsilon}(-M-\nu+it')\right|
    &\ll_{X,\delta} \left\{
    \begin{aligned}
        &\left(\frac{(1+\tau)^{30/\delta+3}}{(1+|t'|)^{20}} \right)^M e^{-\frac14(M+\nu)T}
        &|t'|\ge e^{(\frac14-\frac\delta2)T}\\
        &e^{-\frac14(M+\nu)T}& |t'| < e^{(\frac14-\frac\delta2)T}
    \end{aligned}
    \right..
\end{align}

\medskip
{\it (Step 2) Estimate of gamma functions.}

By \eqref{eqn:Stirling}, \eqref{eqn:reflection}, and $\arctan(\theta)=\frac\pi2-\arctan(1/\theta)$, we have
\begin{align*}
    &\left|\Gamma\left(-M-\tfrac{1}{2}-\nu+i(t'\pm t_j)\right)\right|\\
    &\asymp \left|\frac{1}{\sin(\pi(-M-\tfrac{1}{2}-\nu+i(t'\pm t_j)))\Gamma(M+\frac32+\nu-i(t'\pm t_j))}\right|\\
    &\ll e^{-\pi|t'\pm t_j|}\left|(M+\tfrac32+\nu)+i(t'\pm t_j)\right|^{-M-1-\nu}\exp\left(M+(t'\pm t_j)\arctan{\frac{t'\pm t_j}{M+\frac32+\nu}}\right)\\
    &\ll e^{-\frac{\pi}{2}|t'\pm t_j|} e^M\left|(M+\tfrac32+\nu)+i(t'\pm t_j)\right|^{-M-1-\nu}.
\end{align*}
By using
\[
    |(a+i(b+c))(a+i(b-c))|\ge |a|\sqrt{a^2+2b^2+2c^2} \quad \forall a,b,c\in \mathbb{R},
\]
we obtain
\begin{align*}
    \left|(M+\tfrac32+\nu)+i(t'+t_j)\right|\left|(M+\tfrac32+\nu)+i(t'- t_j)\right|
    &\ge (M+\tfrac32+\nu)\sqrt{(M+\tfrac32+\nu)^2+2t'^2+2t_j^2}\\
    &\ge (1+M)\max\{1+|t'|,1+t_j\}.
\end{align*}
It follows that
\[
    \left(\left|(M+\tfrac32+\nu)+i(t'+t_j)\right|\left|(M+\tfrac32+\nu)+i(t'- t_j)\right|\right)^{M+1+\nu}\ge (1+M)^{M+1+\nu}(1+t_j)^3(1+|t'|)^{M-2+\nu}.
\]
By using
\[
    \exp\left(\tfrac{\pi}{2}\left(2|t_j|-|t'+ t_j|-|t'-  t_j|\right)\right)
    =\left\{
    \begin{aligned}
        &\exp(\pi(t_j-|t'|))\le 1&& |t'|\ge t_j\\
        &1&& |t'|< t_j,
    \end{aligned}
    \right.
\]
we obtain
\begin{align*}
    &\exp(\pi t_j)\left|\Gamma\left(-M-\tfrac{1}{2}-\nu+i(t'+ t_j)\right)\Gamma\left(-M-\tfrac{1}{2}-\nu+i(t'- t_j)\right)\right|\\
    &\ll \exp\left(\tfrac\pi2(2|t_j|-|t'+t_j|-|t'-t_j|)\right)e^{2M}(1+M)^{-M-1-\nu}(1+t_j)^{-3}(1+|t'|)^{-M+2-\nu}\\
    &\ll e^{2M}(1+M)^{-M-1-\nu}(1+t_j)^{-3}(1+|t'|)^{-M+2-\nu}.
\end{align*}

On the other hand, since $0<\nu<1/2$, we have
\begin{align*}
    &\left|\Gamma\left(-M-\tfrac{1}{4}-\nu+i\left(t'\pm\tfrac{\tau}{2}\right)\right)\right|^{-1}\\
    &\ll |\sin(\pi(-M-\tfrac14-\nu+i(t'\pm\tfrac\tau2)))||\Gamma(\tfrac54+\nu+M-i(t'\pm\tfrac\tau2))|\\
    &\ll \exp(\pi|t'\pm\tfrac\tau2|)\left|M+\tfrac{5}{4}+\nu+i|t'\pm \tfrac{\tau}{2}|\right|^{M+\frac34+\nu} \exp\left(-M-|t'\pm \tfrac\tau2| \arctan\left(\frac{|t'\pm\frac\tau2|}{M+\frac54+\nu}\right)\right)\\
    &\ll\exp(\tfrac\pi2|t'\pm\tfrac\tau2|)\left|M+\tfrac{5}{4}+\nu+i|t'\pm \tfrac{\tau}{2}|\right|^{M+\frac34+\nu} \exp\left(-M+|t'\pm \tfrac\tau2| \arctan\left(\frac{M+\frac54+\nu}{|t'\pm\frac\tau2|}\right)\right)\\
    &\ll \exp(\tfrac\pi2|t'\pm\tfrac\tau2|)\left|M+2+i|t'\pm \tfrac{\tau}{2}|\right|^{M+\frac34+\nu}.
\end{align*}
Using $2ab\le a^2+b^2$ and $(a+b)^2\le 2a^2+2b^2$ for all $a,b>0$, we obtain
\begin{align*}
    &\left|\left(M+2+i|t'+ \tfrac{\tau}{2}|\right)\left(M+2+i|t'- \tfrac{\tau}{2}|\right)\right|^2\\
    =&\left(M+2\right)^4+2\left(M+2\right)^2\left(t'^2+\frac{\tau^2}{4}\right)+\left(t'^2-\frac{\tau^2}{4}\right)^2\\
    \le&2\left(M+2\right)^4+2\left(t'^2+\frac{\tau^2}{4}\right)^2\\
    \le&32\left(\frac{M}2+1\right)^4+2\left(2t'^4+2\left(\frac{\tau^2}{4}\right)^2\right)\\
    \le& 4(1+|t'|)^4 + 36 (1+\tfrac\tau2)^4\\
    \le& 40\left(\max\left\{1+|t'|,1+\tfrac\tau2\right\}\right)^4,
\end{align*}
where we used $M\le \tau$.

Hence, since $M=\lfloor\tau\rfloor$ and $\tau>20/\delta$, we obtain
\begin{align*}
    &\left|\Gamma\left(-M-\tfrac{1}{4}-\nu +i\left(t'+\tfrac{\tau}{2}\right)\right)\Gamma\left(-M-\tfrac{1}{4}-\nu +i\left(t'-\tfrac{\tau}{2}\right)\right)\right|^{-1}\exp\left(-\pi|t'|\right)\\
    &\ll \exp\left(\tfrac{\pi}{2}\left(|t'+ \tfrac{\tau}{2}|+|t'- \tfrac{\tau}{2}|-2|t'|\right)\right) 40^{M/2}\left(\max\left\{1+|t'|,1+\tfrac\tau2\right\}\right)^{2M+\frac32+2\nu}\\
    &\ll
    \left\{
    \begin{aligned}
        &\exp\left(\tfrac\pi2 \tau \right) 40^{M/2} \left(1+\tfrac\tau2\right)^{2M+\frac32+2\nu} && |t'|\le \tfrac\tau2\\
        &40^{M/2}\left(1+|t'|\right)^{2M+\frac32+2\nu} && |t'|>\tfrac\tau2.
    \end{aligned} 
    \right.
\end{align*}

\medskip
{\it (Step 3) Show $|h_{\tau,2}(t_j)|\ll(1+\tau)^{-2}(1+t_j)^{-3}$}

By using $\cosh(\pi t_j)\asymp e^{\pi|t_j|}$ and $|\sin\pi(\sigma+i t')|\asymp e^{\pi |t'|}$, we have
\[
    |h_{\tau,2} (t_j)|\ll \int_{\mathbb{R}}
    \left(
    \begin{aligned}
        &e^{\pi(|t_j|-|t'|)}(4^M)|M\psi_{T,\epsilon}(-M-\nu+it')|\\
        &\times \left|\frac{\Gamma\left(-M-\frac{1}{2}-\nu+i(t'+t_j)\right)\Gamma\left(-M-\frac{1}{2}-\nu+i(t'-t_j)\right)}{\Gamma\left(-M-\frac{1}{4}-\nu+i\left(t'+\frac{\tau}{2}\right)\right)\Gamma\left(-M-\frac{1}{4}-\nu+i\left(t'-\frac{1}{2}\tau\right)\right)} \right|
    \end{aligned}
    \right) dt'.
\]

For simplicity, let us denote the integrand by $(\dots)$. By (Steps 1 and 2), the estimates of the terms in the integrand depend on whether $|t'|<\tau/2$ and $|t'|<e^{(\frac14-\frac\delta2)T}$. 

We first estimate $\int_{|t'|<e^{(\frac14-\frac\delta2)T}}(...) dt'$. By $2^2e^240^{1/2}\le 200$, we obtain
\begin{align*}
    &\int_{|t'|<e^{(\frac14-\frac\delta2)T}}(...) dt'\\
    &\ll \frac{(200/(1+M))^M}{(1+t_j)^{3} e^{\frac14(M+\nu)T}}
    \left(\begin{aligned}
    &e^{\pi\tau/2}(1+\tfrac\tau2)^{2M+\frac32+2\nu}\int_{|t'|<\tau/2} (1+|t'|)^{-M+2-\nu} dt'\\
    &\quad +
    \int_{\tau/2<|t'|<e^{(\frac14-\frac\delta2)T}} \left(1+|t'|\right)^{M+\frac72+\nu} dt'\end{aligned}\right).
\end{align*}

Since $M=\lfloor\tau\rfloor$ and $\tau>20/\delta$, we have $\int_{|t'|<\tau/2} (1+|t'|)^{-M+2-\nu} dt' = O(1)$. By using $A(\delta)\sim 20/\delta$ and $e^{T}>(1+|\tau|)^{A(\delta)}$, we have
\begin{align*}
    \frac{e^{\pi\tau/2} (200/(1+M))^M(1+\frac\tau2)^{2M+\frac32+2\nu}}{(1+t_j)^{3} e^{\frac14(M+\nu)T}}
    &\le (1+t_j)^{-3} e^{-\nu T/4}\left( \frac{e^{\pi/2} 200 (1+\tfrac\tau2)^{2+\frac3M}}{(1+M)(1+|\tau|)^{A(\delta)/4}}\right)^M\\
    &\le (1+t_j)^{-3}(1+\tau)^{-2}
\end{align*}
for all sufficiently small $\delta>0$.

On the other hand,
\[
    \int_{|t'|<e^{(\frac14-\frac\delta2)T}} \left(1+|t'|\right)^{M+\frac72+\nu} dt' \ll  e^{(\frac14- \frac\delta2)(M+4)T} \int_{|t'|<e^{(\frac14-\frac\delta2)T}} 1dt'\ll e^{(\frac14- \frac\delta2)(M+5)T}.
\]
Since $M=\lfloor\tau\rfloor$ and $\tau> 20/\delta$, we have
\[
    \frac{200^Me^{(\frac14- \frac\delta2)(M+5)T}}{(1+M)^M(1+t_j)^{3} e^{\frac14(M+\nu)T}}
    < (1+t_j)^{-3}(1+\tau)^{-2} 200^M e^{(5-\nu-\tfrac\delta2(M+5))T}.
\]
Then, by $A(\delta)\sim 20/\delta$
\[
    200^Me^{(5-\nu-\delta(M+5)/2)T}<e^{(5-\nu-\delta (M+5)/4)T}\left(\frac{200}{e^{\delta T/4}}\right)^M< \frac{200}{(1+|\tau|)^{A(\delta)\delta/4}}<1
\]
for sufficiently small $\delta>0$.

Therefore, we have
\[
    \int_{|t'|<e^{(\frac14-\frac\delta2)T}}(...) dt'\ll_\delta (1+\tau)^{-2}(1+t_j)^{-3}.
\]

Next, we estimate $\int_{|t'|>e^{(\frac14-\frac\delta2)T}}(...) dt'$. Since $\nu\in(0,1/2)$, we have
\begin{align*}
    &\int_{|t'|>e^{(\frac14-\frac\delta2)T}}(...) dt'\\
    &\ll \frac{200^M(1+\tau)^{30M/\delta+3M}}{(1+M)^Me^{\frac14(M+\nu)T}}  (1+t_j)^{-3}\int_{|t'|>e^{(\frac14-\frac\delta2)T}} (1+|t'|)^{(-20M)+(-M+2-\nu)+(2M+\frac32+2\nu)} dt'\\
    &\ll (1+\tau)^{30M/\delta+3M}(1+t_j)^{-3} \int_{|t'|>e^{(\frac14-\frac\delta2)T}} |t'|^{-19M+4} dt'\\
    &\ll (1+\tau)^{30M/\delta+3M}(1+t_j)^{-3}\, e^{(-19M+5)(\frac14-\frac\delta2)T}.
\end{align*}
Note that
\[
    e^{(-19M+5)(\frac14-\frac\delta2)T}\le (1+\tau)^{A(\delta)(\frac14-\frac\delta2)(5-19M)}.
\]
Since $A(\delta)\sim 20/\delta$, it follows that
\[
    A(\delta)\left(\frac14-\frac\delta2\right)(19M-5)> 5+30M/\delta+3M
\]
for sufficiently small $\delta>0$. Hence, we obtain
\[
    \int_{|t'|>e^{(\frac14-\frac\delta2)T}}(...) dt'\ll_{\delta}(1+\tau)^{-2}(1+t_j)^{-3}.
\]
\end{proof}

\newpage
\appendix

\section{
\texorpdfstring{Lower bound of the least zeros of cylinder functions with $\alpha=1/4$}
{Lower bound of the least zeros of cylinder functions with alpha=1/4}
}\label{sec:Cylinder Ftn Zero}

For $\nu\ge0$, a {\it cylinder function} $\mathscr{C}_{\nu,\alpha}(x)$ is defined by
\[
    \mathscr{C}_{\nu,\alpha}(x)=J_\nu(x)\cos(\pi\alpha)+Y_\nu(x)\sin(\pi\alpha).
\]
We define $X_\nu>0$ as the least zero of $J_\nu(x)+Y_\nu(x)=\sqrt{2}\;\mathscr{C}_{\nu,1/4}(x)$. There exists an asymptotic expansion
\[
    X_\nu=\nu-\frac{c}{2^{1/3}}\nu^{1/3}+\frac{3}{10}\left(\frac{c}{2^{1/3}}\right)^2\nu^{-1/3}+O(\nu^{-1}),
\]
where $c\approx-0.36604655\dots$ is the negative solution of ${\rm Ai}(x)={\rm Bi}(x)$ with the least absolute value. We define $\hat{\nu}$ as the non-negative real number satisfying
\[
    \nu:=\hat{\nu}-\frac{c}{2^{1/3}}\hat{\nu}^{1/3}.
\]
Then
\[
    X_{\hat\nu}=\nu+\frac{3}{10}\left(\frac{c}{2^{1/3}}\right)^2\nu^{-1/3}+O(\nu^{-1}),
\]
and thus we obtain $X_{\hat\nu}>\nu$ for all sufficiently large $\nu>0$. In this section, we prove $X_{\hat\nu}>\nu$ for all $\nu\ge0$. Since $\nu>\hat\nu$, this also implies $X_\nu>\nu$ for all $\nu>0$.
\begin{lemma}\label{lem:lower bound of zero of cylinder ftn}
    For all $\nu\ge 0$, we have $X_{\hat{\nu}}>\nu$.
\end{lemma}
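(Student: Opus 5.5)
Since $\hat\nu\mapsto\hat\nu-\frac{c}{2^{1/3}}\hat\nu^{1/3}$ is increasing (note $c<0$), the statement $X_{\hat\nu}>\nu$ is equivalent to saying that $J_{\hat\nu}+Y_{\hat\nu}$ has no zero in $(0,\nu]$. I will establish this by splitting into a large-$\nu$ regime, handled by asymptotics with explicit error bounds, and a bounded regime, handled by monotonicity and direct verification.

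\emph{Large $\nu$.} Here I use the uniform Airy-type (Olver) expansion of $J_\mu$ and $Y_\mu$ in the transition region $x=\mu+a\mu^{1/3}$. It gives
\[
J_\mu(x)+Y_\mu(x)\approx -\bigl(\tfrac{2}{\mu}\bigr)^{1/3}\bigl(\mathrm{Bi}(-2^{1/3}a)-\mathrm{Ai}(-2^{1/3}a)\bigr)
\]
up to sign conventions, with Olver's explicit error bounds. I will apply this with $\mu=\hat\nu$.

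\begin{itemize}
\item For $x\le\hat\nu$ (the monotone, non-oscillatory region), $Y_{\hat\nu}<0$ and $|Y_{\hat\nu}|>J_{\hat\nu}$. Together with a Debye-type comparison, this shows that $J_{\hat\nu}+Y_{\hat\nu}$ stays strictly negative there.
\item On $[\hat\nu,\nu]$, the endpoint $\nu$ corresponds to $2^{1/3}a=-c$. The expansion to second order reproduces the correction $\frac{3}{10}(c/2^{1/3})^2\nu^{-1/3}$. Quantifying Olver's remainder shows this positive gap beats the error for $\nu\ge\nu_0$, with $\nu_0$ explicit.
\end{itemize}

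\emph{Bounded $\nu\le\nu_0$.} I combine three ingredients.

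\begin{itemize}
\item \textbf{Monotonicity of zeros.} Zeros of cylinder functions with fixed $\alpha$ increase with the order, since $dX/d\nu>0$ by the Watson/Nicholson integral formula $\frac{dx}{d\nu}=\frac{2x}{\mathscr{C}'^2}\int_0^\infty K_0(2x\sinh t)e^{-2\nu t}\,dt>0$. So $X_{\hat\nu}$ is increasing in $\hat\nu$, and it suffices to compare $X_{\hat\nu}$ against $\nu$ on a finite grid of subintervals. On each subinterval, use $X_{\hat\nu}\ge X_{\hat\nu_{\mathrm{left}}}$ and $\nu\le\nu_{\mathrm{right}}$.
\item \textbf{Explicit values.} The grid values $X_{\hat\nu_{\mathrm{left}}}$ are computed rigorously, for instance by interval arithmetic on the series for $J$ and $Y$.
\item \textbf{Small $\nu$.} Near $\nu=0$ one has $\hat\nu\approx(\nu\,2^{1/3}/|c|)^{3}$, which is tiny, and $X_0\approx0.66$ (the first zero of $J_0+Y_0$). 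So the claim is immediate there.
\end{itemize}

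The main obstacle is making Olver's error terms fully explicit so that $\nu_0$ is small enough for the finite verification to be feasible.
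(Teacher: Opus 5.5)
Your plan is viable, but it takes a genuinely different and much heavier route than the paper's. As written it is a computer-assisted proof whose quantitative core is not yet carried out. That core has two parts: explicit Olver remainder constants sharp enough to make $\nu_0$ moderate, and a certified sign computation on a grid up to $\nu_0$. The grid mesh must stay below the margin $X_{\hat\nu}-\nu\approx 0.025\,\hat\nu^{-1/3}$, which is very small.

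The paper needs none of this. It uses the exact form of Watson's formula, not just its sign:
$\frac{dX_\mu}{d\mu}=2X_\mu\int_0^\infty K_0(2X_\mu\sinh t)e^{-2\mu t}\,dt$.
Since $K_0$ is decreasing and $\sinh t\ge t$, this gives
$\frac{dX_\mu}{d\mu}\le\int_0^\infty K_0(u)e^{-u\cos\vartheta}\,du=\frac{\vartheta}{\sin\vartheta}$, where $\cos\vartheta=\mu/X_\mu$.

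At any point where $X_{\hat\nu}=\nu$, one has $\cos\vartheta=\hat\nu/\nu=1/(1+C)$ with $C=-c\,2^{-1/3}\hat\nu^{-2/3}>0$. The inequality $\vartheta/\sin\vartheta<1+C/3$ is equivalent to Huygens' inequality $2\sin\vartheta+\tan\vartheta>3\vartheta$. Since $1+C/3=\frac{d\nu}{d\hat\nu}$, the graph of $\hat\nu\mapsto X_{\hat\nu}$ can only cross the curve $\hat\nu\mapsto\nu$ downward. Taking the largest crossing point then contradicts $X_{\hat\nu}>\nu$ for large $\nu$. That last fact is the only asymptotic input, taken from the qualitative three-term expansion with an unspecified $O(\nu^{-1})$.

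So this barrier argument replaces, in one stroke, your explicit large-$\nu$ error analysis, the grid verification, and the separate small-$\nu$ step. Your approach would in exchange give explicit numerical margins, which the lemma does not need. Note that you already invoke Watson's formula; the missing idea is to use its quantitative upper bound rather than only its positivity.

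Two small slips:
\begin{itemize}
\item The least zero of $J_0+Y_0$ is $X_0\approx 0.23$, not $0.66$. This only shrinks the range your small-$\nu$ step covers.
\item Watson's formula has no factor $(\mathscr{C}')^{-2}$. This is harmless for the sign, but the exact normalization is precisely what makes the paper's comparison work.
\end{itemize}
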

\begin{proof}
    Suppose that $X_{\hat\nu}\le \nu$ for some $\nu>0$.   Consider $X_{\hat\nu}$ and $\nu$ as functions of $\nu\ge0$. Since $X_0>0$ and $X_{\hat\nu}>\nu$ for sufficiently large $\nu>0$, there exist $\nu$'s satisfying $X_{\hat\nu}=\nu$. Let $\nu_0$ denote the largest number with $X_{\hat{\nu}_0}=\nu_0$. Then, we have
    \begin{equation}\label{eqn:Cylinder Zero_inequality_1}
        \frac{d X_{\hat\nu}}{d\hat\nu}(\hat\nu_0)\ge\frac{d\nu}{d\hat\nu}(\hat{\nu}_0)=1-\frac{c}{3\cdot 2^{1/3}}\hat{\nu}_0^{-2/3}.
    \end{equation}

    We use the identity
    \[
        \frac{\vartheta}{\sin \vartheta}=\int_0^\infty K_0(u)e^{-u\cos\vartheta}du,
    \]
    and the Watson formula \cite[(3) on p.\@ 508]{Watson_BesselFunctions}
    \[
        \frac{dX_\nu}{d\nu}=2X_\nu\int_0^\infty K_0(2X_\nu \sinh t)e^{-2\nu t}dt,
    \]
    where $K_0$ denotes the modified Bessel function of the second kind. Since $K_0(x)$ is monotonically decreasing, we have
    \begin{align}
        \begin{split}\label{eqn:Cylinder Zero_inequality_2}
            \frac{d X_{\hat\nu}}{d\hat\nu}(\hat{\nu}_0)
            &\le 2 X_{\hat{\nu}_0}\int_0^\infty
            K_0(2X_{\hat{\nu}_0} t)e^{-2\hat{\nu}_0 t}\,dt\\
            &\le \int_0^\infty
            K_0(u)e^{-u\frac{\hat{\nu}_0}{X_{\hat{\nu}_0}}}\,du\\
            &=\frac{\vartheta}{\sin \vartheta},    
        \end{split}
    \end{align}
    where
    \[
        \cos\vartheta=\frac{\hat\nu_0}{X_{\hat\nu_0}}=\frac{\hat{\nu}_0}{\nu_0}=\frac{1}{1-\frac{c}{2^{1/3}}\hat{\nu}_0^{-2/3}}<1.
    \]
    It is easy to show that
    \[
        \frac{\phi}{\sin\phi}<1+\frac{C}{3},
    \]
    where $\cos\phi=\frac{1}{1+C}$ with $C>0$. By substituting $\phi$ and $C$ with $\vartheta$ and $-\frac{c}{2^{1/3}}\hat{\nu}_0^{-2/3}>0$, we obtain a contradiction from \eqref{eqn:Cylinder Zero_inequality_1} and \eqref{eqn:Cylinder Zero_inequality_2}.

\end{proof}
\section{Estimates of conical and Bessel functions}\label{AppendixConical}
Throughout the following four subsections, we estimate conical functions by Bessel functions and then Bessel functions by exponential functions. Our approach relies on work of Dunster, as well as his collaborations with Boyd, Gil, and Segura, based on the Liouville transformation.

The most technical step in controlling the error terms is to obtain upper bounds for the total variations of certain functions along {\it progressive paths}. We provide a detailed and rigorous proof in Section \ref{subsec:Conical by J-Bessel} and omit analogous details in the subsequent subsections.

\subsection{
\texorpdfstring{Estimate of Conical functions by $J$-Bessel functions, $\alpha$ bounded}
{Estimate of Conical functions by J-Bessel functions, alpha bounded}
}\label{subsec:Conical by J-Bessel}

\begin{lemma}\label{lem:Conical by JBessel}
Fix $A>0$. For all $\alpha=m/\tau\in[0,A]$ and $z=\cosh(x+i(\pi/2-1/T))$ with $x\in \mathbb{R}$ and $T\gg_A 1$, we have
\[
    |P^{-m}_{-\frac{1}{2}+i\tau}(z)|= \left|\frac{\Gamma(1/2+i\tau)}{\Gamma(1/2+m+i\tau)} \left(\frac{\zeta-\alpha^2}{z^2-1-\alpha^2}\right)^{1/4}J_{m}(\tau\zeta^{1/2})(1+O_A(\tau^{-1}))\right|.
\]
\end{lemma}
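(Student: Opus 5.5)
The proof will follow Dunster's Liouville--Green analysis in \cite{Dunster_Concial}: the conical equation is transformed into a perturbed Bessel equation, and the perturbation is controlled along progressive paths that reach the shifted line $\Im r=\pi/2-1/T$.

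\emph{Step 1: the Bessel normal form.} The function $w(z)=(z^2-1)^{1/2}P^{-m}_{-1/2+i\tau}(z)$ satisfies
\[
w''=\Bigl(-\tau^2\frac{1}{z^2-1}+\tau^2\frac{\alpha^2}{(z^2-1)^2}+\frac{\text{bounded}}{(z^2-1)^2}\Bigr)w ,
\]
with large parameter $\tau$ and $m=\alpha\tau$. There is a simple turning point at $z=\sqrt{1+\alpha^2}$ and a double pole at $z=1$.

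We use the Liouville variable $\zeta$ of Lemma~\ref{lem:Dunster_mappings}, which maps $z=1$ to $\zeta=0$ and the turning point to $\zeta=\alpha^2$. Set
\[
W=\Bigl(\frac{\zeta-\alpha^2}{z^2-1-\alpha^2}\Bigr)^{-1/4}w .
\]
Then $W$ solves $\frac{d^2W}{d\zeta^2}=\bigl(-\frac{\tau^2}{4\zeta}+\frac{m^2-1}{4\zeta^2}+\frac{\psi(\zeta)}{\zeta}\bigr)W$, where $\psi$ is analytic and bounded uniformly for $\alpha\in[0,A]$ on the relevant domain. The comparison equation has the solution $\zeta^{1/2}J_m(\tau\zeta^{1/2})$, and Dunster's Theorem~\cite[eq.~4.14]{Dunster_Concial} gives
\[
W=\zeta^{1/2}J_m(\tau\zeta^{1/2})+\varepsilon(\tau,\zeta),
\]
with the error bounded by $\tau^{-1}\exp\bigl(\tau^{-1}\mathcal V(F)\bigr)-1$ times an envelope function. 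Here $\mathcal V(F)$ is the total variation of an explicit error-control function $F$ along a $\zeta$-progressive path starting at $\zeta=0$.

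\emph{Step 2: fixing the constant.} Recessive behaviour at $z=1$ pins down the solution. Comparing $P^{-m}_{-1/2+i\tau}(z)\sim\frac{((z-1)/2)^{m/2}}{\Gamma(1+m)}$ with $J_m(\tau\zeta^{1/2})\sim(\tau\zeta^{1/2}/2)^m/m!$, and using the relation between $\zeta$ and $z-1$ as $z\to1$, produces the prefactor $\Gamma(1/2+i\tau)/\Gamma(1/2+m+i\tau)$ up to modulus. Stirling's formula enters at this point. Only moduli are needed, which explains the absolute values in the statement.

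\emph{Step 3: converting the envelope bound into a relative error.} We need the error to be relative, i.e.\ the bound $(1+O_A(\tau^{-1}))$. This requires two things on the line $z=\cosh(x+i(\pi/2-1/T))$:
\begin{itemize}
\item the envelope (modulus) function of $J_m$ is comparable to $|J_m(\tau\zeta^{1/2})|$, which holds away from the real zeros of $J_m$;
\item $\mathcal V(F)=O_A(1)$ uniformly.
\end{itemize}
The image of the shifted line in the $\zeta$-plane stays in a sector bounded away from the positive real axis. There $J_m$ is dominated by a single exponential, by the Debye/Dunster--Gil--Segura expansion (Lemma~\ref{lem:J-Bessel by Exp}), so the envelope and $|J_m|$ agree up to constants. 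The turning point $\alpha^2$ is not on the path for $T\gg_A1$. Near the real axis close to the smallest zero we would invoke the zero bound of Appendix~\ref{sec:Cylinder Ftn Zero} (Lemma~\ref{lem:lower bound of zero of cylinder ftn}). That lemma guarantees the relevant cylinder-function zeros lie beyond the range reached, so the ratio of envelope to $|J_m|$ stays bounded.

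\emph{Main obstacle.} The hardest step is bounding the total variation uniformly in $x\in\mathbb R$, $\alpha\in[0,A]$, and $T$. We must exhibit explicit progressive paths: go from $\zeta=0$ along a ray, then follow the image of the horizontal line. We must also check monotonicity of $\Re$ or $\Im$ of $\tau\zeta^{1/2}$ along these paths, and integrability of $|\psi(\zeta)\zeta^{-1/2}|\,|d\zeta|$ at $\zeta=0$ and at infinity.

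The plan is to parametrize by $r=x+i\sigma_T$ and write $d\zeta/dr$ explicitly from Lemma~\ref{lem:Dunster_mappings}. As $x\to\infty$, $\zeta\sim(\log z)^2$ grows only logarithmically and $\psi(\zeta)=O(\zeta^{-1})$, which makes the integral converge. Near $x=0$ the factor $1/T$ keeps the path off the cut, and the bounds are taken as the one-sided limits described earlier. Finally, symmetry $x\mapsto-x$ gives the case $x<0$.
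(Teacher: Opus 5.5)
Your outline has the same architecture as the paper's proof. Both use Dunster's $J$-Bessel expansion with the Boyd--Dunster error bound. Both take the constant $c_{1,1}=(\tau^2+m^2)^{-m/2}e^{m-\tau\arctan(m/\tau)}$ and compare it with the Gamma ratio by Stirling; your matching at $z=1$, via $\zeta\sim 2(1+\alpha^2)e^{2\arctan(\alpha)/\alpha-2}(z-1)$, reproduces this constant correctly. Both bound the envelope quotient by $|J_m(\tau\zeta^{1/2})|$. And both estimate the variation along the path from $\zeta=0$ down the negative real axis (the image of $u\in[0,i\sigma_T]$, $z=\cosh u$), then along the image of the shifted line.

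The gap is in the step you call the main obstacle. You use the wrong progressive-path condition, and as a result you put the one delicate input in the wrong place. The uniform-in-$m$ result being applied (Boyd--Dunster, Theorem~3) needs paths along which $\Im\Phi_m^{(0)}(\tau\zeta^{1/2})$ is non-increasing. Here $\Phi_m^{(0)}(\tau\zeta^{1/2})=\int_{X_{\hat m}}^{\tau\zeta^{1/2}}(t^2-X_{\hat m}^2)^{1/2}t^{-1}\,dt$ is based at $X_{\hat m}$, the least positive zero of $J_{\hat m}+Y_{\hat m}$. Its control function is $-\int_{\alpha^2}^{\zeta}(\xi-\alpha^2)^{-1/2}\psi(\alpha,\xi)\,d\xi$, not an integral of $\xi^{-1/2}\psi$. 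On the horizontal piece
\[
d\,\Im\Phi_m^{(0)}=\tau\,\Im\Bigl(\frac{\sqrt{\zeta-\hat\alpha^2}}{\sqrt{\zeta-\alpha^2}}\,\frac{\sqrt{\sinh^2u-\alpha^2}}{\sinh u}\,du\Bigr),\qquad \hat\alpha=X_{\hat m}/\tau .
\]
The paper's proof that this is negative rests on $\hat\alpha>\alpha$, i.e.\ $X_{\hat m}>m$. That is exactly what Lemma~\ref{lem:lower bound of zero of cylinder ftn} supplies.

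Monotonicity of $\Re$ or $\Im$ of $\tau\zeta^{1/2}$ cannot detect this. For $\alpha=0$ the Liouville map is $\zeta=u^2$, so $\Im(\tau\zeta^{1/2})=\pm\tau\sigma_T$ is constant on the shifted line. There the entire decrease of $\Im\Phi_m^{(0)}$ comes from $\hat\alpha>0$. Conversely, the zero bound is not needed where you invoke it. $\Im\zeta^{1/2}$ stays bounded away from $0$ on the shifted line (it equals $\sigma_T$ when $\alpha=0$), so $\tau\zeta^{1/2}$ stays at distance $\gg_A\tau$ from the real zeros of $J_m$. The envelope comparison is then just Boyd--Dunster's (5.19).

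Smaller corrections:
\begin{itemize}
\item The Liouville normalization must be $W=(d\zeta/dz)^{1/2}w=\sqrt2\,\zeta^{1/2}(z^2-1)^{-1/2}\bigl(\frac{\zeta-\alpha^2}{z^2-1-\alpha^2}\bigr)^{-1/4}w$. Your $W$ omits the factor $\sqrt2\,\zeta^{1/2}(z^2-1)^{-1/2}$, so it does not satisfy the stated $\zeta$-equation. Matching it with $\zeta^{1/2}J_m(\tau\zeta^{1/2})$ would leave a spurious factor $\zeta^{1/2}(z^2-1)^{-1/2}$ in front of $J_m$.
\item The image of the shifted line is not in a sector bounded away from the positive real $\zeta$-axis. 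For $\alpha=0$, $\zeta=(x+i\sigma_T)^2$, so $\arg\zeta\to0$ as $x\to\infty$. The correct statement is the lower bound on $\Im\zeta^{1/2}$, i.e.\ distance from the real axis in the $\zeta^{1/2}$-plane.
\item The $J$-range is Dunster's eq.~(3.1). Eq.~(4.14) is the $K$-range.
\end{itemize}
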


\subsection*{Liouville transformation}
The function $w:=(z^2-1)^{1/2}P^{-m}_{-\frac{1}{2}+i\tau}(z)$ solves the equation
\begin{equation}\label{eqn:Conical_w}
    \frac{d^2w}{dz^2}=\left(\tau^2f(\alpha,z)+g(z)\right)w,
\end{equation}
where $\alpha=m/\tau$, $f(\alpha,z)=\frac{(1-z^2)+\alpha^2}{(z^2-1)^2}$, and $g(z)=-\frac{z^2+3}{4(z^2-1)^2}$.

We use the following transformation between $z$ and $\zeta$:
\begin{equation}\label{eqn:LG-transform_Conical to J}
    \int_{\alpha^2}^\zeta \frac{(\xi-\alpha^2)^{1/2}}{2\xi} d\xi =\int_{\sqrt{1+\alpha^2}}^z\frac{(t^2-1-\alpha^2)^{1/2}}{t^2-1} dt,
\end{equation}
which is in \cite[Section 6]{Dunster_Prep}; see also the explanation around \cite[Equation (3.1)]{Dunster_Concial}.

The function $\zeta(\alpha,z)$ is continuous in $\alpha$ and analytic in $z$. The joint continuity for $\alpha\in[0,A]$ and $z\in \{z:\Re(z)>0,z\neq  \sqrt{1+\alpha^2},1\}$ follows from the implicit function theorem and \eqref{eqn:LG-transform_Conical to J}. It also extends jointly continuously to the imaginary axis $\Re(z)=0$ and $z\in  \{\sqrt{1+\alpha^2},1\}$.

From the asymptotics of the integrands in \eqref{eqn:LG-transform_Conical to J} near $\zeta=0$ and $z=1$, one can obtain
\[
    \zeta(\alpha,z)=C(\alpha)(z-1)+O_A(|z-1|^2)
\]
for all $|z-1|\le r_A$, where
\[
    C(\alpha)=2(1+\alpha^2)\exp\left(\frac{2\arctan\alpha}{\alpha}-2\right)
\]
with $C(0)=2$. This yields the joint continuity of $\zeta(\alpha,z)$ at $z=1$. We have $C(\alpha)=2+O_A(\alpha^2)$. By the joint continuity of $\zeta(\alpha,z)$ and compactness, we can extend the range of validity of the estimate as follows: for all $z$ with $\Re z\ge0$ and $|z|\le R$
\begin{equation}\label{eqn:zeta by z near zeta=0}
    \zeta(\alpha,z)=2(z-1)+O_A(\alpha^2|z-1|+|z-1|^2).
\end{equation}

 A similar argument applies around $z=\sqrt{1+\alpha^2}$ and $\zeta(\alpha,\sqrt{1+\alpha^2}):=\alpha^2$ and yields the joint continuity of $\zeta(\alpha,z)$ at $z=\sqrt{1+\alpha^2}$.

Since
\[
    \frac{(\xi-\alpha^2)^{1/2}}{2\xi}= \frac{1}{2\sqrt{\xi}}+O_A(|\xi|^{-3/2}), 
    \qquad
    \frac{(t^2-1-\alpha^2)^{1/2}}{t^2-1}= \frac{1}{\sqrt{t^2-1}}+O_A(|t^2-1|^{-3/2}),
\]
we have
\begin{equation}\label{eqn:zeta and z large asymp}
    \left(\zeta(\alpha,z)\right)^{1/2}={\rm arcosh}(z)+O_A(1)=\log(z)+O_A(1),
\end{equation}
for $|z|,|\zeta|\gg1$.

% \begin{lemma}\label{lem:dZeta/dZ}
%     We have
%     \[
%         \frac{d\zeta}{dz}\Big|_{z=\sqrt{1+\alpha^2}}=2(1+\alpha^2)^{1/6}.
%     \]
% \end{lemma}
% \begin{proof}
% By differentiating \eqref{eqn:LG-transform_Conical to J} about $z$, we obtain
% \begin{equation}\label{eqn:dzeta/dz}
%     \frac{\sqrt{\zeta-\alpha^2}}{2\zeta}\frac{d\zeta}{dz}=\frac{\sqrt{z^2-1-\alpha^2}}{z^2-1}.
% \end{equation}
% Since $\sqrt{1+\alpha^2}$ in the $z$-plane corresponds to $\alpha^2$ in the $\zeta$-plane, we have
% \[
%     \frac{d\zeta}{dz}\Big|_{z=\sqrt{1+\alpha^2}}=2\lim_{z\to \sqrt{1+\alpha^2}} \frac{\sqrt{z^2-1-\alpha^2}}{\sqrt{\zeta-\alpha^2}}=\frac{2}{d\zeta/dz|_{z=\sqrt{1+\alpha^2}}\;}\lim_{z\to \sqrt{1+\alpha^2}} \frac{\sqrt{\zeta-\alpha^2}}{\sqrt{z^2-1-\alpha^2}},
% \]
% where the second equality follows from the L'hospital rule. Then, we obtain
% \[
%     \frac{d\zeta}{dz}\Big|_{z=\sqrt{1+\alpha^2}}=2(1+\alpha^2)^{1/6}.\qedhere
% \]
% \end{proof}

For $W=\left(\frac{d\zeta}{dz}\right)^{1/2}w$, the equation \eqref{eqn:Conical_w} is transformed into 
\begin{equation}\label{eqn:Conical_W}
    \frac{d^2W}{d\zeta^2}=\left\{\tau^2\left(\frac{\alpha^2}{4\zeta^2}-\frac{1}{4\zeta}\right)+\frac{\psi(\alpha,\zeta)}{\zeta}-\frac{1}{4\zeta^2}\right\}W,
\end{equation}
where
\begin{align}\label{eqn:psi_conical to J-Bessel}
\begin{split}
    \psi(\alpha,\zeta)&=\frac{1}{16}\frac{(\zeta+4\alpha^2)}{(\zeta-\alpha^2)^2}+\frac{1}{64}\frac{(\zeta-\alpha^2)(5\dot{f}^2-4f\ddot{f}-16gf^2)}{\zeta f^3}\\
    &=\frac{\zeta+4\alpha^2}{16(\zeta-\alpha^2)^2}-\frac{(z^2-1)(\zeta-\alpha^2)(\alpha^4-1+z^2(1+4\alpha^2))}{16\zeta(z^2-1-\alpha^2)^3}.
\end{split}
\end{align}
By \cite[Equation (3.1)]{Dunster_Concial} with $n=0$ and \cite[Theorem 3]{BoydDunster_EstimateLegendra}, we have
\[
    P^{-m}_{-\frac{1}{2}+i\tau}(z)=c_{1,1}\left(\frac{\zeta-\alpha^2}{z^2-1-\alpha^2}\right)^{1/4} \left[J_{m}(\tau\zeta^{1/2})+\zeta^{-1/2}\epsilon^{(0)}_1(\tau,\alpha,\zeta)\right],
\]
where, by \cite[Equation (2.8)]{Dunster_Concial},
\[
    c_{1,1}=(\tau^2+m^2)^{-m/2}\exp\left(m-\tau \arctan\left(\frac{m}{\tau}\right)\right).
\]
\begin{lemma}
Fix $A>0$. For all $m,\tau>0$ with  $m/\tau\in[0,A]$ and $\tau \gg_A 1$, we have
\[
    \left|\frac{\Gamma(1/2+i\tau)}{\Gamma(1/2+m+i\tau)}\right|=
    {(\tau^2+m^2)^{-m/2}\exp\left(m-\tau\arctan\left(\frac{m}{\tau}\right)\right)}(1+O_A(\tau^{-1})).
\]
\end{lemma}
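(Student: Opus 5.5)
We need to prove that $|\Gamma(1/2+i\tau)/\Gamma(1/2+m+i\tau)| = (\tau^2+m^2)^{-m/2}\exp(m-\tau\arctan(m/\tau))(1+O_A(\tau^{-1}))$ for $0\le m\le A\tau$, $\tau\gg_A 1$. The plan is to take logarithms and apply Stirling's formula with explicit remainder to both Gamma factors. Both arguments $z_0=\frac12+i\tau$ and $z_1=\frac12+m+i\tau$ lie in the right half-plane with modulus at least $\tau$. We use the form
\[
\log\Gamma(z)=(z-\tfrac12)\log z - z+\tfrac12\log(2\pi)+O(|z|^{-1}),
\]
which is valid uniformly for $\Re z\ge \frac12$. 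Since $|z_0|,|z_1|\gg\tau$, both remainders are $O(\tau^{-1})$, and exponentiating a difference of $O(\tau^{-1})$ terms gives the factor $1+O_A(\tau^{-1})$. Taking real parts, we must show
\[
\Re\bigl[(i\tau)\log z_0 - z_0\bigr]-\Re\bigl[(m+i\tau)\log z_1 - z_1\bigr]
= -\tfrac m2\log(\tau^2+m^2)+m-\tau\arctan\tfrac m\tau + O_A(\tau^{-1}).
\]

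The linear terms contribute $\Re(z_1-z_0)=m$ exactly. For the logarithmic terms, note that $\Re[(a+ib)\log w]=a\log|w|-b\arg w$. This gives $\Re[i\tau\log z_0]=-\tau\arg(\frac12+i\tau)$ and $\Re[(m+i\tau)\log z_1]=m\log|z_1|-\tau\arg(\frac12+m+i\tau)$. The difference therefore reduces to
\[
-m\log\bigl|\tfrac12+m+i\tau\bigr| + \tau\bigl[\arg(\tfrac12+m+i\tau)-\arg(\tfrac12+i\tau)\bigr].
\]
We now compare each piece with the corresponding piece in which $\frac12$ is deleted, that is, with $-\frac m2\log(m^2+\tau^2)$ and $\tau[\arg(m+i\tau)-\pi/2]=-\tau\arctan(m/\tau)$.

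This comparison is the main step. A naive Taylor expansion gives $m\log|\frac12+m+i\tau|-\frac m2\log(m^2+\tau^2)=\frac{m^2}{2(m^2+\tau^2)}+O(\tau^{-1})$. This is an $O(1)$ quantity, not $O(\tau^{-1})$, so it must cancel against the argument terms. Indeed, we have $\tau[\arg(\frac12+m+i\tau)-\arg(m+i\tau)]=-\frac{\tau^2}{2(m^2+\tau^2)}+O(\tau^{-1})$ and $\tau[\frac\pi2-\arg(\frac12+i\tau)]=\frac12+O(\tau^{-2})$. Adding the three contributions gives
\[
-\frac{m^2}{2(m^2+\tau^2)}-\frac{\tau^2}{2(m^2+\tau^2)}+\frac12=0
\]
up to $O_A(\tau^{-1})$, as required. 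To make this rigorous, I would write $g(x)=-m\log|x+m+i\tau|+\tau\arg(x+m+i\tau)-\tau\arg(x+i\tau)$. Then the quantity to control is $g(\frac12)-g(0)$, and $g'(x)=\frac{-m(x+m)+\tau^2}{|x+m+i\tau|^2}-\frac{\tau^2}{x^2+\tau^2}$. At $x=0$ this equals $\frac{\tau^2-m^2}{m^2+\tau^2}-1+\frac{m^2}{m^2+\tau^2}\cdot 0$; more carefully, $g'(0)=-m^2/(m^2+\tau^2)-\tau^2/(m^2+\tau^2)+1=0$ after simplification. Moreover $g''(x)=O_A(\tau^{-1})$ on $[0,\frac12]$ because $m\le A\tau$. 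Taylor's theorem then yields $g(\frac12)-g(0)=O_A(\tau^{-1})$, which proves the lemma. The only real obstacle is checking this cancellation, which hinges on $g'(0)=0$. I expect it to be a short algebraic verification, with uniformity in $m\in[0,A\tau]$ following from the second-derivative bound.
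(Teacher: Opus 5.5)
Your proposal is correct and is essentially the paper's argument. The paper applies Stirling to $\Gamma(\frac12+m+i\tau)$ (Lemma~\ref{lem:Gamma(0.5+m+itau)}), Taylor-expands $\log|\frac12+m+i\tau|$ and $\arg(\frac12+m+i\tau)$ in the shift $\frac12$, uses the same cancellation $\frac{m^2}{m^2+\tau^2}+\frac{\tau^2}{m^2+\tau^2}-1=0$, and then divides by $|\Gamma(\frac12+i\tau)|=\sqrt{2\pi}e^{-\pi\tau/2}(1+O(\tau^{-2}))$; this is exactly your $g'(0)=0$ plus a second-order remainder.

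One slip to fix: since $\frac{d}{dx}\arg(x+c+i\tau)=-\tau/((x+c)^2+\tau^2)$, the correct derivative is $g'(x)=-\frac{m(x+m)+\tau^2}{|x+m+i\tau|^2}+\frac{\tau^2}{x^2+\tau^2}$. Your displayed formula has both argument terms with the wrong sign, which is why your first evaluation at $x=0$ did not vanish. With the corrected formula, $g'(0)=0$ and $g''=O(\tau^{-1})$ on $[0,\frac12]$ hold as you claim.
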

\begin{proof}
This follows from Lemma \ref{lem:Gamma(0.5+m+itau)} and the well-known formula
\[
    \log|\Gamma(1/2+i\tau)|=-\frac{\pi}{2}\tau+\frac{1}{2}\log(2\pi)+O(\tau^{-2}).\qedhere
\]
\end{proof}

\begin{lemma}\label{lem:Gamma(0.5+m+itau)}
Fix $A,B>0$. For $m,\tau\ge0$, we have
\[
    |\Gamma(\tfrac12+m+i\tau)|=\sqrt{2\pi}(m^2+\tau^2)^{m/2}e^{-m-\tau\arctan(\tau/m)}\times
    \left\{
        \begin{aligned}
            &(1+O_A(\tau^{-1}))&&{\rm if}\; m/\tau\in[0,A],\, \tau\gg_A1\\
            &(1+O_B(m^{-1}))&&{\rm if}\;\tau/m\in[0,B],\, m\gg_B1
        \end{aligned}
    \right. .
\]
\end{lemma}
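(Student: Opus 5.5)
The plan is to apply Stirling's formula \eqref{eqn:Stirling} directly to $z=\tfrac12+m+i\tau$, keeping track of the error term uniformly in the two regimes. We use the refined form
\[
\log\Gamma(z)=\left(z-\tfrac12\right)\log z - z+\tfrac12\log(2\pi)+O(|z|^{-1}),
\]
valid uniformly for $\Re z\ge \tfrac12$. Since $|z|\ge \max\{\tau,m\}$, the $O(|z|^{-1})$ term is $O(\tau^{-1})$ in the first regime and $O(m^{-1})$ in the second.

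The first step is to take real parts. With $z=\tfrac12+m+i\tau$ we have $z-\tfrac12=m+i\tau$, so
\[
\log|\Gamma(z)|=m\log|z|-\tau\arg z-\tfrac12-m+\tfrac12\log(2\pi)+O(|z|^{-1}).
\]

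The second step is to replace $|z|$ and $\arg z$ by the corresponding quantities for $w:=m+i\tau$, since $z=w+\tfrac12$. We have
\[
\log|z|=\log|w|+\Re\log\!\left(1+\tfrac{1}{2w}\right)=\log|w|+\frac{m}{2|w|^2}+O(|w|^{-2}),
\qquad
\arg z=\arg w-\frac{\tau}{2|w|^2}+O(|w|^{-2}).
\]
Substituting these, the corrections to $m\log|z|-\tau\arg z$ amount to
\[
\frac{m^2+\tau^2}{2|w|^2}+O\!\left(\frac{m+\tau}{|w|^2}\right)=\tfrac12+O(|w|^{-1}).
\]
This $\tfrac12$ cancels the $-\tfrac12$ from Stirling's formula exactly. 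Writing $\arg w=\arctan(\tau/m)$ then gives the main term
\[
\tfrac m2\log(m^2+\tau^2)-m-\tau\arctan(\tau/m)+\tfrac12\log(2\pi).
\]
Exponentiating yields the claim with relative error $\exp(O(|w|^{-1}))=1+O(|w|^{-1})$.

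The third step is to check uniformity. In the regime $m/\tau\le A$ we have $|w|\ge\tau$, and in the regime $\tau/m\le B$ we have $|w|\ge m$, so the errors are $O(\tau^{-1})$ and $O(m^{-1})$ respectively.

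The main obstacle is the boundary case $m=0$ with $\tau$ large. There $\arctan(\tau/m)$ must be read as $\pi/2$, and the expansion of $\log(1+1/(2w))$ is still valid because $|w|=\tau$ is large. Similarly $\tau=0$ is fine, since $|w|=m$ is large. In every case the expansions only require $|w|\gg1$, which both regimes guarantee; the dependence on $A$ and $B$ enters only through the thresholds $\tau\gg_A1$ and $m\gg_B1$.
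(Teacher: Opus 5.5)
Your proposal is correct and follows the paper's proof essentially verbatim: apply the logarithmic form of Stirling's formula to $z=\tfrac12+m+i\tau$, expand $\log|z|$ and $\arg z$ about $m+i\tau$, and observe that the resulting $\tfrac12$ cancels the $-\tfrac12$ from Stirling's formula. Your bookkeeping in terms of $|w|=\sqrt{m^2+\tau^2}$ handles both regimes at once and shows the error is $O(|w|^{-1})$ independently of $A$ and $B$; the paper writes out only the regime $m/\tau\le A$ and says the other regime is similar.
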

\begin{proof}
    We give a proof for $m/\tau\in[0,A]$; the case $\tau/m\in[0,B]$ follows by a similar argument.
    
    By Stirling's approximation \eqref{eqn:Stirling}, we have
    \[
        \log|\Gamma(1/2+m+i\tau)|=m \log\abs{\frac{1}{2}+m+i\tau}-\tau\,\arg(\frac{1}{2}+m+i\tau)-m-\frac{1}{2}+\frac{1}{2}\log(2\pi)+O(\tau^{-1}).
    \]
    Note that we have
    \begin{align*}
        \log\abs{\frac{1}{2}+m+i\tau}&=\frac{1}{2}\log|\tau^2+(m+1/2)^2|\\
        &=\frac{1}{2}\log(m^2+\tau^2)+\frac12\log\left(1+\frac{m+1/4}{m^2+\tau^2}\right)\\
        &=\frac12\log(m^2+\tau^2)+\frac12\frac{m}{m^2+\tau^2}+O_A(\tau^{-2}),
    \end{align*}
    and, by using the Taylor series of $\arctan(x)$ near $x=m/\tau$, we obtain
    \begin{align*}
        -\arg(\frac{1}{2}+m+i\tau)
        &=-\frac\pi2+\arctan\left(\frac{1/2+m}{\tau}\right)\\
        &=-\frac\pi2+\arctan\frac{m}{\tau}+\frac{1}{2}\frac{\tau}{m^2+\tau^2}+O_A(\tau^{-2}).
    \end{align*}
    Hence, by the cancellation $\tfrac{m^2}{m^2+\tau^2}+\tfrac{\tau^2}{m^2+\tau^2}-1=0$, we obtain
    \[
        \log|\Gamma(1/2+m+i\tau)|=\frac{m}{2}\log(m^2+\tau^2)-\tau\left(\frac{\pi}{2}-\arctan\frac{m}\tau\right)-m+\frac{1}{2}\log(2\pi)+O_A(\tau^{-1}).\qedhere
    \]
\end{proof}

Moreover, by \cite[Theorem 3]{BoydDunster_EstimateLegendra},
\begin{multline*}
    |\epsilon_1^{(0)}(\tau,\alpha,\zeta)|\le \frac{|\zeta|^{1/2}}{\tau}\left|E_{m}^{(0)}(\tau\zeta^{1/2})^{-1}M_m^{(\pm1)}(\tau\zeta^{1/2})\right|\\
    \times \mathscr{V}_{\mathscr{P}^{(0)}}\left((\zeta-\alpha^2)^{1/2}B_0(\alpha,\zeta)\right)\exp\left(\frac{\kappa}{\tau}\mathscr{V}_{\mathscr{P}^{(0)}}\left((\zeta-\alpha^2)^{1/2}B_0(\alpha,\zeta)\right)\right),
\end{multline*}
where $\kappa$ is a constant and the sign $\pm$ is determined as $+$ if $\zeta\in S_\alpha^{(0)}\cup S_\alpha^{(2)}$ and $-$ if $\zeta\in S_\alpha^{(0)}\cup S_\alpha^{(1)}$. The notation $\mathscr{V}_{\mathscr{P}^{(0)}}(f)$ denotes the total variation of $f$ along a progressive path and $\mathscr{P}^{(0)}$ denotes the set of progressive paths. 

\subsection*{Progressive paths} An {\it $R_2$ arc} is a $C^2$-embedding of an interval. A progressive path is a finite concatenation of $R_2$ arcs along which a certain function is monotonic. In the context of Lemma \ref{lem:Conical by JBessel}, the imaginary part of the function
\begin{equation}\label{eqn:Phi_m(0)}
    \Phi^{(0)}_m (\tau\zeta^{1/2})=\int_{X_{\hat{m}}}^{\tau\zeta^{1/2}} \frac{(t^2-(X_{\hat{m}})^2)^{1/2}}{t}dt
\end{equation}
is non-increasing along progressive paths, where
\[
    X_{\hat{m}}=m+O(m^{-1/3})\ge m
\]
as $m\to \infty$. The big-O is independent of $\tau$. More precisely, $X_m$ is the least positive zero of $J_m(x)+Y_m(x)$. See Section \ref{sec:Cylinder Ftn Zero}. The inequality $X_{\hat{m}}\ge m$ follows from Lemma \ref{lem:lower bound of zero of cylinder ftn}.

\begin{figure}
    \centering
       
        \includegraphics[width=0.3\textwidth]{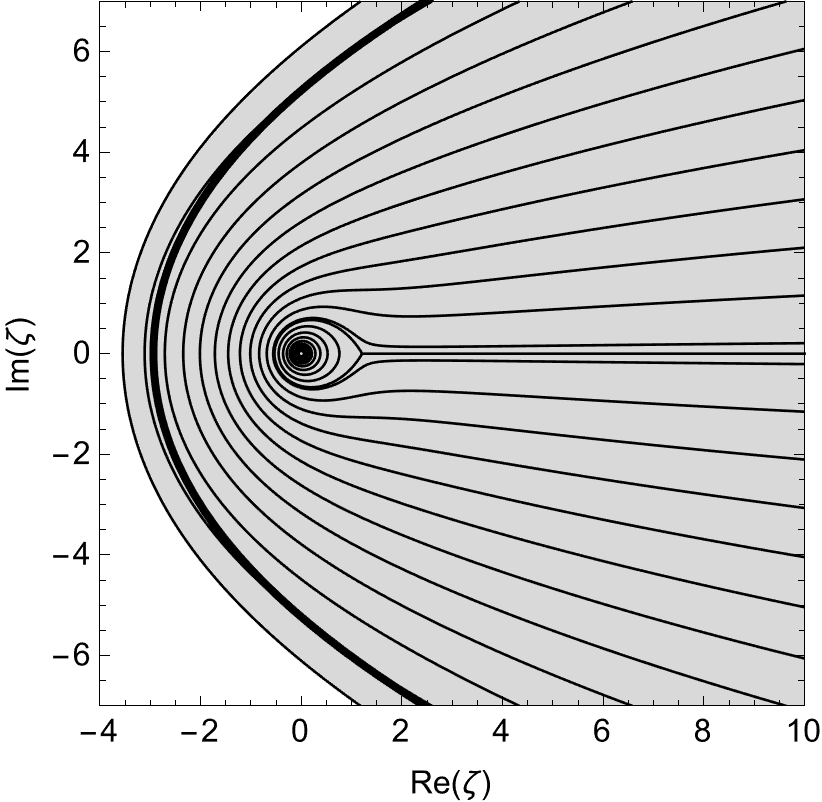}
        \caption{Level sets of $\Im(\Phi_m^{(0)})$ with the locus $\Im(r)=\pi/2-0.1$ with $\alpha=1$ in the $\zeta$-plane. The shaded region corresponds to the right half-plane $\Re(z)>0$ in the $z$-plane. We have $\Im(\Phi_m^{(0)})>0$ in the teardrop-shaped region $\mathscr{T}^{(0)}$, $\Im(\Phi_m^{(0)})=0$ on $\partial\mathscr{T}^{(0)}\cup[\hat{\alpha}^2,\infty)$, and $\Im(\Phi_m^{(0)})<0$ elsewhere.}
        \label{fig:zeta plane}    
\end{figure}

We consider the following three coordinates:
\begin{center}
    \begin{tikzcd}
    \{u:\Im(u)\in(-\frac{\pi}{2},\frac{\pi}{2})\} \arrow[rr,"z=\cosh(u)"]
    && \{z:\Re(z)>0\} \arrow[rr,"{\rm Eq.}\eqref{eqn:LG-transform_Conical to J}"]
    && \{\zeta:{\rm Figure}~\ref{fig:zeta plane}\}.
    \end{tikzcd}
\end{center}
The initial and terminal points of our progressive path are $0$ and $x+i(\pi/2-1/T)$ in the $u$-plane. If $x>0$, we define a curve $\Gamma= \Gamma_1 \cup \Gamma_2$ in the $u$-plane parametrized by
\[
\begin{aligned}
\Gamma_1(s) &= is, \qquad && s \in \left[0,\pi/2-1/T\right], \\
\Gamma_2(s) &= s + \left(\pi/2-1/T\right)i, \qquad && s \in [0,x],
\end{aligned}
\]
and consider its images in the $z$- and $\zeta$-planes. If $x<0$, then we define $\Gamma_2$ by
\[
\begin{aligned}
\Gamma_2(s) &= -s + \left(\pi/2-1/T\right)i, \qquad && s \in [0,-x].
\end{aligned}
\]
\begin{lemma}
    The curve $\Gamma$ is a progressive path. 
\end{lemma}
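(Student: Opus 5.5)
To prove that $\Gamma$ is progressive, I would use the definition directly. The path is a concatenation of two $C^2$ arcs, namely the vertical segment $\Gamma_1$ and the horizontal segment $\Gamma_2$. Their images under $z=\cosh u$ and the Liouville map \eqref{eqn:LG-transform_Conical to J} remain $C^2$ embedded arcs, because both maps are analytic and locally injective away from $z=1$ (i.e.\ $u=0$) and $z=\sqrt{1+\alpha^2}$. What has to be checked is that $\Im\Phi^{(0)}_m(\tau\zeta^{1/2})$ is monotone (non-increasing) along each arc. Differentiating \eqref{eqn:Phi_m(0)} with the chain rule, and using the derivative relation coming from \eqref{eqn:LG-transform_Conical to J},
\[
\frac{d\zeta}{dz}\,\frac{(\zeta-\alpha^2)^{1/2}}{2\zeta}=\frac{(z^2-1-\alpha^2)^{1/2}}{z^2-1},\qquad \frac{dz}{du}=\sinh u,
\]
one gets
\[
\frac{d}{du}\Phi^{(0)}_m=\tau\,\frac{(\zeta-\hat\alpha^2)^{1/2}}{(\zeta-\alpha^2)^{1/2}}\cdot\frac{(z^2-1-\alpha^2)^{1/2}}{\sinh u},
\]
where $\hat\alpha=X_{\hat m}/\tau\ge\alpha$ by Lemma~\ref{lem:lower bound of zero of cylinder ftn}. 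So the task reduces to determining the sign of the imaginary part of this expression times $du/ds$ along each segment.

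On $\Gamma_1$ we have $u=is$, so $z=\cos s\in(0,1]$ is real, $\sinh u=i\sin s$, and $z^2-1-\alpha^2<0$. In this range $\zeta$ is real and $\le 0$ (for $z<1$), so $\zeta-\alpha^2$ and $\zeta-\hat\alpha^2$ are both negative, and their square roots are purely imaginary with matching branches, making their ratio positive. The remaining factor $(z^2-1-\alpha^2)^{1/2}\,i/(i\sin s)$ is then purely imaginary with a fixed sign that the branch conventions determine. I would carry out this sign check, being careful that the branch continues from the real interval $Z>\sqrt{1+\alpha^2}$ as specified before Lemma~\ref{lem:Dunster_mappings}, and conclude that $\Im\Phi$ is monotone on $\Gamma_1$. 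This is consistent with Figure~\ref{fig:zeta plane}: the image of $\Gamma_1$ runs along the negative real $\zeta$-axis out of the teardrop region.

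On $\Gamma_2$ the point $z$ lies in the first quadrant for $x>0$, and I would treat $x<0$ by conjugation symmetry. Here I would factor $\sinh u=-i\,z'(s)$-type expressions and compare arguments. The key inputs are the argument bounds already proved in Lemma~\ref{lem:sqrt_geometry} and in the proof of Lemma~\ref{lem:J_exponential_ratio}, which show that $s_\alpha(Z)$ and $Z$ lie in the same quadrant. Together with $\Arg\sinh u\in(0,\pi/2)$ on the line $\Im u=\sigma_T$, these should pin down $\Arg\bigl(d\Phi/ds\bigr)$ inside a half-plane on which the imaginary part has a fixed sign.

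The hard part will be the factor $\bigl((\zeta-\hat\alpha^2)/(\zeta-\alpha^2)\bigr)^{1/2}$, since $\hat\alpha\neq\alpha$. Its argument is small but nonzero, and it must not tip the total argument across the real axis. My first approach would be to show that its argument lies between $0$ and $\Arg(\zeta-\alpha^2)$, which follows from $\hat\alpha^2\ge\alpha^2$, so that it only moves the total argument further into the correct half-plane; the argument is analogous to the $\Re(Z/p)>0$ argument in Lemma~\ref{lem:ratio_bounds_for_B}. If that fails near $x=0$, I would fall back on the monotonicity of $\Im\Phi$ across the level-set picture in Figure~\ref{fig:zeta plane}, using the fact that $\Gamma_2$ stays outside the teardrop region $\mathscr{T}^{(0)}$.
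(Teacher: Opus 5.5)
Your reduction is the same as the paper's: the chain-rule factorization $d\Phi^{(0)}_m=\tau\,\frac{(\zeta-\hat\alpha^2)^{1/2}}{(\zeta-\alpha^2)^{1/2}}\cdot\frac{(\sinh^2u-\alpha^2)^{1/2}}{\sinh u}\,du$, with $\hat\alpha>\alpha$ from Lemma~\ref{lem:lower bound of zero of cylinder ftn}. However, the lemma consists of nothing but the sign check that you defer, and the inputs you name do not settle it.

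\textbf{Orientation.} The sign of $d\Phi^{(0)}_m$ is determined only by the branch of $(\zeta-\hat\alpha^2)^{1/2}$. The other two roots enter only through the identity $\frac{(\zeta-\alpha^2)^{1/2}}{2\zeta}d\zeta=\frac{(z^2-1-\alpha^2)^{1/2}}{z^2-1}dz$, which is unchanged if both are flipped. That branch is fixed by Boyd--Dunster's normalization $\Im\Phi^{(0)}_m>0$ on $\mathscr{S}^{(0)}$. This forces $\sqrt{-1}=-i$, i.e.\ all the square roots have arguments in $[-\pi,0]$. You never fix this branch. The convention you point to, continuation from the real interval $Z>\sqrt{1+\alpha^2}$ with $s_\alpha(Z)$ in the same quadrant as $Z$, is the principal branch. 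In the upper half-plane it is the negative of Boyd--Dunster's. With it, the same computation gives $\Im\Phi^{(0)}_m$ increasing along $\Gamma_2$, which is the wrong monotonicity.

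\textbf{Argument bounds on $\Gamma_2$} (where $\Im\zeta,\Im z>0$ for $x>0$). Your proposed bound for the ratio factor, argument between $0$ and $\Arg(\zeta-\alpha^2)$, does not follow from $\hat\alpha^2\ge\alpha^2$. For $\zeta$ just above the real axis between $\alpha^2$ and $\hat\alpha^2$, the argument of $\bigl((\zeta-\hat\alpha^2)/(\zeta-\alpha^2)\bigr)^{1/2}$ is close to $\pi/2$, while $\Arg(\zeta-\alpha^2)$ is close to $0$. Moreover, near $x=0$ the point $\zeta$ approaches the negative real axis, so $\Arg(\zeta-\alpha^2)$ is close to $\pi$; there such a bound could not keep the total argument off the real axis anyway.

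What $\hat\alpha>\alpha$ and $\Im\zeta>0$ actually give is $0<\Arg(\zeta-\hat\alpha^2)-\Arg(\zeta-\alpha^2)<\pi$. Hence the ratio of roots has argument in $(0,\pi/2)$.

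For the other factor, knowing that $s_\alpha(Z)$ and $\sinh u$ both lie in the first quadrant only confines their quotient to a half-plane. That is too weak once the ratio's argument is added. The missing comparison is $\Arg(\sinh^2u-\alpha^2)\ge 2\Arg\sinh u$, since subtracting $\alpha^2$ from a point in the upper half-plane increases its argument. Together with $\Arg\sinh u\in(0,\pi/2)$, this puts $\arg\bigl((\sinh^2u-\alpha^2)^{1/2}/\sinh u\bigr)$ in $(-\pi,-\pi/2)$ on Boyd--Dunster's branch.

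The total argument then lies in $(-\pi,0)$, and $du=ds$ gives $dH/ds<0$; this is exactly the paper's proof. Your fallback to Figure~\ref{fig:zeta plane} does not substitute for it: staying outside $\mathscr{T}^{(0)}$ says nothing about the monotonicity of $\Im\Phi^{(0)}_m$ along $\Gamma_2$.
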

\begin{proof}
    Since $\Gamma_1$ and $\Gamma_2$ are obviously $R_2$-arcs, let us show $H:=\Im(\Phi_m^{(0)})$ is decreasing along $\Gamma$, i.e., $dH/ds<0$. It is easy to show $dH/ds<0$ along $\Gamma_1$, so we only discuss $\Gamma_2$. 

    Let $\hat{\alpha}=\frac{X_{\hat{m}}}{\tau}$. By Lemma \ref{lem:lower bound of zero of cylinder ftn}, we have $\hat{\alpha}>\alpha$ for all $\alpha>0$.

    By \eqref{eqn:Phi_m(0)}, we have
    \begin{align*}
        dH&=\Im \left( \tau\frac{\sqrt{\zeta-\hat{\alpha}^2}}{2\zeta}\;d\zeta\right)\\
        &=\tau\; \Im\left( \frac{\sqrt{\zeta-\hat{\alpha}^2}}{\sqrt{\zeta-\alpha^2}}\frac{\sqrt{z^2-1-\alpha^2}}{z^2-1}\;dz\right)\\
        &=\tau\; \Im\left( \frac{\sqrt{\zeta-\hat{\alpha}^2}}{\sqrt{\zeta-\alpha^2}}\frac{\sqrt{\sinh^2 u-\alpha^2}}{\sinh u}\; du\right).
    \end{align*}
    According to \cite[p.\@ 436]{BoydDunster_EstimateLegendra}, the branch of $(\zeta-\hat{\alpha}^2)^{1/2}$ is taken so that $\Im(\Phi_m^{(0)}(\tau\zeta^{1/2}))>0$ for $\zeta\in\mathscr{S}^{(0)}$. This implies that $\sqrt{-1}=-i$, and $\arg\sqrt{\zeta-\alpha^2},\arg\sqrt{\zeta-\hat{\alpha}^2},$ and $\arg\sqrt{\sinh^2u-\alpha^2}$ are contained in $[-\pi,0]$.
    
    We first consider the case $\Re(u) > 0$. In this case, we have $\Im(\zeta), \Im(z)>0$. Since $\hat{\alpha} > \alpha > 0$, it follows that
    \[
        \arg\!\left(\frac{\sqrt{\zeta - \hat{\alpha}^2}}{\sqrt{\zeta - \alpha^2}}\right) \in (0, \pi/2).
    \]
    For sufficiently large $T > 0$, we also have $\pi/4 < \arg(\sinh u) < 3\pi/4$.
    It is then straightforward to see that
    \[
        \arg\!\left(\frac{\sqrt{\sinh^2 u - \alpha^2}}{\sinh u}\right) \in (-\pi, -\pi/2).
    \]
    Since $u(s) = s + i(\pi/2 - 1/T)$, we have $du = ds$, and hence $dH/ds < 0$.

    When $\Im(\zeta), \Im(z), \Re(u)< 0$, we let $u(s) = -s + i(\pi/2 - 1/T)$. Then, compared with the previous case, the quantity
    \[
        \frac{\sqrt{\zeta-\hat{\alpha}^2}}{\sqrt{\zeta-\alpha^2}}\frac{\sqrt{\sinh^2 u-\alpha^2}}{\sinh u}
    \]
    is replaced by its complex conjugate, and $du$ changes sign. Hence, we still have $dH/ds<0$.
\end{proof}

\begin{lemma}\label{lem:prog.path_unif_away_turningPt}
    Fix a large $T_0$. For all $T\ge T_0$, there exists $\epsilon\gg_{T_0}1$ such that for all $\alpha\in[0,A]$, every point of $\Gamma_2$ in the $z$-plane is $\epsilon$-away from $\sqrt{1+\alpha^2}$, and every point of $\Gamma_2$ in the $\zeta$-plane is $\epsilon$-away from $\alpha^2$. In particular, the constant $\epsilon=\epsilon(A)$ depends only on $A$.
\end{lemma}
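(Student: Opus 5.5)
The plan is to work first in the $u$-plane, where $\Gamma_2$ is explicit, and then transfer to the $z$- and $\zeta$-planes by continuity and compactness. On $\Gamma_2$ we have $u=s\pm i\sigma_T$ with $\sigma_T=\pi/2-1/T$, so
\[
z=\cosh u=\cosh s\sin(1/T)\pm i\sinh s\cos(1/T).
\]

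\textbf{Step 1: separation in the $z$-plane.} The turning point $\sqrt{1+\alpha^2}$ is real and lies in $[1,\sqrt{1+A^2}]$, so I split into two regimes.
\begin{itemize}
\item For $|s|\ge s_0$ with $s_0$ a fixed small constant, $|\Im z|\ge \sinh s_0\cos(1/T_0)$. This gives a uniform distance from the real axis, and hence from every turning point.
\item For $|s|\le s_0$, $|z|\le\cosh s_0\sin(1/T)+\sinh s_0\le 1/2$ once $s_0$ is small and $T_0$ is large. These points are then at distance at least $1/2$ from $[1,\infty)$.
\end{itemize}
Together these give a uniform $\epsilon_z(A,T_0)>0$. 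The same two bounds also keep $z$ away from $1$ (where $\zeta$ vanishes) and from $-1$.

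\textbf{Step 2: transfer to the $\zeta$-plane.} The separation there should follow from the joint continuity of $\zeta(\alpha,z)$ established after \eqref{eqn:LG-transform_Conical to J}, together with the fact that $\zeta(\alpha,z)=\alpha^2$ only at $z=\sqrt{1+\alpha^2}$. That uniqueness comes from $\zeta$ being a conformal bijection of the region, which is a consequence of the monotonicity of both sides of \eqref{eqn:LG-transform_Conical to J} used by Dunster.

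The difficulty is that $\Gamma_2$ is unbounded. I would handle it by splitting at a large $|s|\le S$.
\begin{itemize}
\item \textbf{Compact part ($|s|\le S$).} The set $K=\{(\alpha,z):\alpha\in[0,A],\ z\in\Gamma_2\text{-image},\ |s|\le S,\ T\ge T_0\}$ has compact closure; I include the limit $T=\infty$, that is $\sigma=\pi/2$. On this closure, $|\zeta(\alpha,z)-\alpha^2|$ is continuous and nonvanishing by Step 1, so it has a positive minimum.
\item \textbf{Tail ($|s|>S$).} Here \eqref{eqn:zeta and z large asymp} gives $\zeta^{1/2}=\log z+O_A(1)$. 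Hence $|\zeta|\gg S^2$, which is far from $\alpha^2\le A^2$ once $S$ is large depending on $A$.
\end{itemize}

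\textbf{Main obstacle.} The delicate point is ensuring that no point of $\Gamma_2$ is mapped near $\alpha^2$ from the other side of a branch cut. I would rule this out by arguing that, for $\Re z>0$ off the real segment, the image stays in the shaded region of Figure~\ref{fig:zeta plane}, where the uniform estimates hold. The compactness argument must be run on the closure, with the boundary values taken from the correct quadrant. This is exactly what the joint continuity up to $\Re z=0$ provides.
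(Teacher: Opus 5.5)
Your proposal is correct and follows the paper's proof: the paper also treats the $z$-plane as an explicit check near the imaginary axis. For the $\zeta$-plane it likewise splits $\Gamma_2$ into a tail handled by \eqref{eqn:zeta and z large asymp} and a compact piece over $(s,1/T,\alpha)\in[-R,R]\times[0,T_0^{-1}]\times[0,A]$, using joint continuity of $\zeta$ and the fact that $\Gamma_2$ avoids the turning point. One soft spot: monotonicity of the two sides of \eqref{eqn:LG-transform_Conical to J} along the real axis does not by itself imply that $\zeta(\alpha,z)=\alpha^2$ only at $z=\sqrt{1+\alpha^2}$ on the complex region, but the paper assumes this same fact without proof, so your argument is no weaker than the paper's.
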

\begin{proof}
    Since $\Gamma_2(s)=\cosh(s+(\pi/2-1/T)i)$ in the $z$-plane is independent of $\alpha$ and close to the $y$-axis, the statement for the $z$-plane is easy to prove.

    Consider the $\zeta$-plane. By \eqref{eqn:zeta and z large asymp} and $\alpha\in[0,A]$, the $\zeta(z)$ is uniformly $\epsilon$-away from $\alpha^2$ for all $|s|> R=R(T_0)$. For $|s|\le R$, by using the compactness of $[-R,R]$, $[0,T_0^{-1}]$ and $[0,A]$, the joint continuity of $\zeta(z)$, and the fact that $\Gamma_2$ does not cross the turning point $\alpha^2$, we obtain the uniform distance $\epsilon\gg_{T_0}1$ from $\alpha^2$.
    % Note that the Liouville transformation \eqref{eqn:LG-transform_Conical to J} between $z$ and $\zeta$ depends continuously on $\alpha$. Since $\Gamma_2$ gets farther from $\alpha^2$ as $T$ increases, we may fix $T=T_0$. Since $|\Gamma_2(s)|\to \infty$ as $|s|\to \infty$, for each $\alpha\in[0,A]$, there exists $B\gg_{T_0} 1$ such that $\Gamma_2(s)$ is $\epsilon_1$-away from $\alpha^2$ for all $s$ with $|s|\ge B$. By the compactness of $[0,A]$, we can choose $B$ and $\epsilon_1$ to be independent of $\alpha$. Then by the compactness of $\{s:|s|\le B\}$ and $[0,A]$, we have $\Gamma_2(s)$ is $\epsilon_2$-away from $\alpha^2$ for all $|s|\le B$ and $\alpha\in[0,A]$. Then we define $\epsilon=\min\{\epsilon_1,\epsilon_2\}$.
\end{proof}
\medskip

\subsection*{Error estimate}
Note that in \cite[Equation (3.1)]{Dunster_Concial}, we have $A_0(\alpha,\zeta)\equiv1$ and
\[
    B_0(\alpha,\zeta)=(\zeta-\alpha^2)^{-1/2}\int_{\alpha^2}^\zeta (\xi-\alpha^2)^{-1/2}(-\psi(\alpha,\xi))\; d\xi.
\]
See \cite[p.\@ 426]{BoydDunster_EstimateLegendra} for $B_0$, where $\zeta-\alpha^2$ and $\xi-\alpha^2$ are taken in absolute value. For complex $\zeta$, we use analytic continuation.

By \cite[Equation (5.19)]{BoydDunster_EstimateLegendra} and the discussion following it, we have
\[
    \left|E_{m}^{(0)}(\tau\zeta^{1/2})^{-1}M_m^{(\pm1)}(\tau\zeta^{1/2})\right|\ll |J_m(\tau\zeta^{1/2})|.
\]

Therefore, to show Lemma \ref{lem:Conical by JBessel}, it is sufficient to show
\[
     \mathscr{V}_{\mathscr{P}^{(0)}}\left((\xi-\alpha^2)^{1/2}B_0(\xi)\right)=O_A(1).
\]
Recall the definition of our progressive path $\Gamma=\Gamma_1\cup\Gamma_2\in \mathscr{P}^{(0)}$.

Now we estimate
\[
    \mathscr{V}_{\Gamma}\left((\xi-\alpha^2)^{1/2}B_0(\xi)\right).
\]
Set $F(\xi):=(\xi-\alpha^2)^{1/2}B_0(\alpha,\xi)$. The variation $\mathscr{V}_\Gamma(f)$ of $f$ along the path $\Gamma$ satisfies  $\mathscr{V}_\Gamma(f)\le \int_\Gamma |f'|$. Hence, we have
\begin{align*}
    \mathscr{V}_{\Gamma}(F) &\le \int_{\Gamma} |\zeta-\alpha^2|^{-1/2}\,|\psi(\alpha,\zeta)|\,|d\zeta|\\
    &= \int_\Gamma \left|\frac{\zeta+4\alpha^2}{16(\zeta-\alpha^2)^{5/2}}-\frac{(z^2-1)(\zeta-\alpha^2)^{1/2}(\alpha^4-1+z^2(1+4\alpha^2))}{16\zeta(z^2-1-\alpha^2)^3}\right| |d\zeta|.
\end{align*}

We first estimate 
\[
    (I)_\alpha:=\int_{\Gamma_1} |\xi-\alpha^2|^{-1/2}\,|\psi(\alpha,\xi)|\,|d\xi|.
\]
In the $z$-plane, $\Gamma_1$ is a path from $1$ to $\sin(1/T)$. In the $z$-plane, $\Gamma_1$ is contained in $(0,1]$, and hence
\[
    |z^2-1-\alpha^2|\asymp |z^2-1|+\alpha^2
    \qquad
    \forall z\in \Gamma_1.
\]
Substituting the estimate for $\zeta$ \eqref{eqn:zeta by z near zeta=0} into \eqref{eqn:psi_conical to J-Bessel}, we obtain
\[
    |\psi(\alpha,\zeta)|=O_A(1)
    \qquad
    \forall\zeta\in \Gamma_1,
\]
and hence
\[
    (I)_\alpha\ll \int_{\Gamma_1} |\xi-\alpha^2|^{-1/2} |d\xi|=O_A(1).
\]

% (Alternative soft argument)
% Since $\zeta(z,\alpha)$ is jointly continuous on $z$ and $\alpha\in[0,A]$, so is $(\alpha,z)\mapsto \psi(\alpha,\zeta(z))$. Hence, we only need to consider the case $\alpha=0$. Then
% \[
%     (I)_0=\frac{1}{16}\int_{0}^{\zeta(\sin(1/T))}
%     \left|
%         \frac{1}{\xi^{3/2}}-\frac{1}{\xi^{1/2}(z(\xi)^2-1)}
%     \right||d\xi|.
% \]
% Here $z(\xi)$ is the point in the $z$-plane corresponding to $\xi$. Integrating \eqref{eqn:LG-transform_Conical to J} for $\alpha=0$ yields
% \[
%     \sqrt{\zeta}={\rm arcosh}(z).
% \] 
% The point $\zeta(0)$ corresponding to $z=0$ is a negative real number.

% Since $\frac1{\sinh^2 x}=\frac1{x^2}-\frac13+O(x^2)$ and $\zeta(0)<\zeta(\sin(1/T))$, we obtain
% \[
%     (I)_0 \le\frac{1}{16}\int_0^{\zeta(0)}
%     \left|
%         \frac{1}{\xi^{3/2}}-\frac{1}{\xi^{1/2}\sinh^2{\xi^{1/2}}}
%     \right||d\xi|
%     \le \frac{1}{16} \int_0^{\zeta(0)}
%     \frac{1}{3}|\xi|^{-1/2}+O(|\xi|^{1/2})\;|d\xi|
%     =O(1).
% \]

% Although $\zeta(0)$ depends on $\alpha$, we still have $\zeta(0)=O_A(1)$. When $\alpha>0$, the denominators in $(I)_\alpha$ are non-zero along $\xi\in \Gamma_1$.  Then, by continuity of $(\alpha,\zeta)\mapsto \psi(\alpha,\zeta)$ and the compactness of $[0,A]$, we can conclude that $(I)_\alpha=O_A(1)$ for all $\alpha\in[0,A]$.

Next, we consider
\[
    (II)_\alpha:=\int_{\Gamma_2} |\xi-\alpha^2|^{-1/2}\,|\psi(\alpha,\xi)|\,|d\xi|.
\]
Since $\Gamma_2$ is uniformly $\epsilon$-away from $\alpha^2$ (Lemma \ref{lem:prog.path_unif_away_turningPt}), we obtain
\[
    \left|\frac{\zeta+4\alpha^2}{16(\zeta-\alpha^2)^{5/2}}\right| \ll_A
    |\zeta+1|^{-3/2},
\]
and, by additionally using Lemma \ref{lem:prog.path_unif_away_turningPt}, we obtain 
\[
    \left|\frac{(z^2-1)(\zeta-\alpha^2)^{1/2}(\alpha^4-1+z^2(1+4\alpha^2))}{16\zeta(z^2-1-\alpha^2)^3}\right|
    \ll_A |1+\zeta|^{-1/2}|1+z|^{-2},
\]
and by \eqref{eqn:zeta and z large asymp}
\[
    |1+z|^{-2} \ll e^{-C|\zeta|^{1/2}}
\]
for some $C\gg_A1$.
Hence, we have
\[
    (II)_\alpha \ll_A \int_{\Gamma_2} |1+\xi|^{-3/2}+|1+\xi|^{-1/2}e^{-C|\xi|^{1/2}}\;|d\xi|.
\]
From \eqref{eqn:LG-transform_Conical to J}, we have
    \[
        \frac{(\zeta-\alpha^2)^{1/2}}{2\zeta}d\zeta=\frac{(z^2-1-\alpha^2)^{1/2}}{z^2-1}dz.
    \]
    Since $dz=\sinh u\,ds$ along $\Gamma_2(s)$, where $u(s)=s+i(\pi/2-1/T)$, we have
    \[
        \left|\frac{d\zeta}{ds}\right|
        =
        \left|\frac{2\zeta\,(\sinh^2u-\alpha^2)^{1/2}}{(\zeta-\alpha^2)^{1/2}\sinh u}\right|\ll_A|\zeta|^{1/2}.
    \]
    By \eqref{eqn:zeta and z large asymp}, we have $|\zeta|^{1/2}=\log|z|+O_A(1)=|s|+O_A(1)$ along $\Gamma_2$. Then,
    \[
        |d\zeta/ds|\ll_A 1+|s|.
    \]
    Hence,
    \[
        (II)_\alpha\ll_A \int_{\Gamma_2} (1+|s|)^{-2}+e^{-C|s|}\;|ds|=O(1).
    \]

\subsection{
\texorpdfstring{Estimate of $J$-Bessel functions by exponential functions}
{Estimate of J-Bessel functions by exponential functions}
}\label{subsec:J-Bessel by Exponential}

In this subsection, we use \cite[Theorem 2.1]{DunsterGilSegura_BesselbyAiry}, where the following Liouville transform is applied:
\begin{equation}\label{eqn:LG_Transform_J-Bessel by Exp}
    \xi=\frac{2}{3}\zeta^{3/2}=\int_1^z-\frac{\sqrt{1-t^2}}{t}dt=\log\left\{\frac{1+(1-z^2)^{1/2}}{z}\right\}-(1-z^2)^{1/2}.
\end{equation}
The branch is chosen such that $z\in(0,1)$ maps to $\xi(z)>0$
\cite[Section 5]{DunsterGilSegura_BesselbyAiry}, that is, $\sqrt{1-t^2},(1-z^2)^{1/2}>0$ for all $t,z\in(0,1)$.

\begin{lemma}\label{lem:J-Bessel by Exp}
    Suppose $m>0$. Let
    \[
        D_\epsilon:=\{z\in \mathbb{C}:\Re(z)>0~{\rm and}~d(z,[1,\infty))>\epsilon\}.
    \]
    For all $z\in D_\epsilon$, we have  
    \[
        J_m(mz)=\frac{1}{\sqrt{2\pi m}}\left(\frac{1}{1-z^2}\right)^{1/4}\;\exp(-m\xi)\;(1+O_\epsilon(m^{-1})),
    \]
    where $\xi$ is determined by \eqref{eqn:LG_Transform_J-Bessel by Exp}.
\end{lemma}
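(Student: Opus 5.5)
This lemma is the Debye-type (Liouville–Green) approximation of $J_m(mz)$ away from the turning point $z=1$. I would derive it from \cite[Theorem 2.1]{DunsterGilSegura_BesselbyAiry}, which gives the uniform Airy-type expansion
\[
J_m(mz)=\left(\frac{4\zeta}{1-z^2}\right)^{1/4}\left\{\frac{\mathrm{Ai}(m^{2/3}\zeta)}{m^{1/3}}\bigl(1+\epsilon_1\bigr)+\frac{\mathrm{Ai}'(m^{2/3}\zeta)}{m^{5/3}}\,\epsilon_2\right\},
\]
with $\xi=\tfrac23\zeta^{3/2}$ as in \eqref{eqn:LG_Transform_J-Bessel by Exp}, and with explicit error bounds $\epsilon_{1,2}=O(m^{-1})$. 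These bounds are expressed through total variations of the Liouville--Green error-control function along progressive paths. The plan then has three steps.

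\emph{Step 1: geometry of $\zeta$ on $D_\epsilon$.} I will show that the map $z\mapsto\zeta$ sends $D_\epsilon$ into a region where $|\arg\zeta|\le\pi-\delta(\epsilon)$ and $|\zeta|\ge c(\epsilon)>0$. This requires two facts:
\begin{itemize}
\item The cut $[1,\infty)$ in $z$ corresponds to $\zeta\le0$, the oscillatory side, which is excluded by the distance $\epsilon$.
\item The imaginary axis $\Re z=0$ maps to a curve that stays in a sector strictly inside $|\arg\zeta|<\pi$. At infinity, $\xi\sim\mp i z$ and $\arg\xi\to\pm\pi/2$, so $\arg\zeta\to\pm\pi/3$. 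Near $z=0$, $\xi\sim\log(2/z)\to+\infty$.
\end{itemize}
By continuity and compactness on bounded pieces, together with the two asymptotic regimes, $\zeta$ stays uniformly away from the negative real axis and from $0$.

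\emph{Step 2: Airy asymptotics.} In the sector $|\arg\zeta|\le\pi-\delta$ with $|m^{2/3}\zeta|$ large, the Airy function satisfies $\mathrm{Ai}(w)=\tfrac{1}{2\sqrt\pi}w^{-1/4}e^{-\frac23w^{3/2}}(1+O(w^{-3/2}))$, and $\mathrm{Ai}'(w)/\mathrm{Ai}(w)=O(|w|^{1/2})$. Since $m^{2/3}\zeta$ has modulus at least $c\,m^{2/3}$, this gives $\tfrac23 w^{3/2}=m\xi$. It also gives the relative correction $O(m^{-1}|\zeta|^{-3/2})=O_\epsilon(m^{-1})$, and the $\mathrm{Ai}'$ term contributes a relative error $O(m^{-1})$. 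Inserting these, the prefactor simplifies:
\[
\left(\frac{4\zeta}{1-z^2}\right)^{1/4}\frac{m^{-1/3}}{2\sqrt\pi}(m^{2/3}\zeta)^{-1/4}=\frac{1}{\sqrt{2\pi m}}(1-z^2)^{-1/4},
\]
which is exactly the claim. For $m$ bounded, the claim is trivial, as it only asserts an $O(1)$ relative error. In that range I would instead use continuity, or simply restrict to $m\ge m_0$ as the applications require.

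\emph{Step 3: uniform error bounds (main obstacle).} The main obstacle is checking that the $O(m^{-1})$ bounds from \cite{DunsterGilSegura_BesselbyAiry} are uniform on all of $D_\epsilon$, including $|z|\to\infty$ and $z\to0$. This needs uniform control of the variation of the error-control function along progressive paths. I would follow the pattern of Section~\ref{subsec:Conical by J-Bessel}:
\begin{itemize}
\item Construct explicit progressive paths, from $\zeta=+\infty$ (the point $z=0$) for the recessive solution, consisting of a real segment followed by a horizontal piece, along which $\Re\xi$ is monotone.
\item Bound the variation by $\int|\zeta|^{-1/2}|\psi(\zeta)|\,|d\zeta|$.
\item Use that $\psi(\zeta)=O(|\zeta|^{-2})$ as $\zeta\to\infty$, which comes from the $z\to0$ and $z\to\infty$ asymptotics, so the integral converges uniformly once the turning point is avoided by distance $\gg_\epsilon1$.
\end{itemize}
The delicate point is verifying the monotonicity of $\Re\xi$ along paths near the imaginary $z$-axis for large $|z|$. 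There I would use $d\xi/dz=-\sqrt{1-z^2}/z\approx\pm i$, checking the sign directly.
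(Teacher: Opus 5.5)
Your route, an Airy-type uniform expansion followed by the large-argument asymptotics of $\mathrm{Ai}$, differs from the paper's. As written it has a gap at Step~1, and Step~2 depends on it. The claim that $\zeta(D_\epsilon)$ lies in $\{|\arg\zeta|\le\pi-\delta(\epsilon)\}$ is false.

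You only examined $z\to0$ and $z\to\infty$ along the imaginary axis, but $D_\epsilon$ is also unbounded along the cut. Take $z_R=R+2i\epsilon\in D_\epsilon$. On the upper edge of the cut, $\xi(x+i0)=i(\sqrt{x^2-1}-\arcsec x)$, which corresponds to $\arg\zeta=-\pi$. There $d\xi/dz=-\sqrt{1-z^2}/z=i+O(|z|^{-2})$, hence
\[
\xi(z_R)=-2\epsilon+i\bigl(R-\tfrac{\pi}{2}\bigr)+O(R^{-1}),\qquad \arg\zeta(z_R)=-\pi+O(\epsilon/R)\to-\pi,\qquad |\zeta(z_R)|\asymp R^{2/3}.
\]
So distance $\epsilon$ from the cut in the $z$-plane gives no angular, or even Euclidean, separation from the negative $\zeta$-axis. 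The one-exponential asymptotic $\mathrm{Ai}(w)\sim(2\sqrt\pi)^{-1}w^{-1/4}e^{-\frac23 w^{3/2}}$ is uniform only for $|\arg w|\le\pi-\delta$ (Stokes phenomenon). Therefore Step~2 does not give a uniform $O_\epsilon(m^{-1})$ on this part of $D_\epsilon$.

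Your imaginary-axis asymptotics are also off, though harmlessly. In the upper quadrant $\xi=iz-i\pi/2+O(|z|^{-1})$, so at $z=iY$ one has $\xi\approx-Y-i\pi/2$ and $\arg\zeta\to-2\pi/3$, not $\pm\pi/3$.

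The lemma is still true in that region, and your approach can be repaired:
\begin{itemize}
\item Near $\arg\zeta=\pm\pi$, use the connection formula $\mathrm{Ai}(w)=e^{\pi i/3}\mathrm{Ai}(we^{-2\pi i/3})+e^{-\pi i/3}\mathrm{Ai}(we^{2\pi i/3})$. This produces a second exponential of relative size $|e^{2m\xi}|=e^{2m\Re\xi}$.
\item You must then show $\Re\xi\le-c(\epsilon)<0$ wherever $\arg\zeta$ approaches $\pm\pi$. This holds, since $\Re\xi\approx-|\Im z|$ for large $|z|$.
\item You must also check that the envelope functions bounding the Airy-expansion errors are comparable to the dominant exponential there.
\item The $\mathrm{Ai}'$ term has relative size $|\zeta|^{1/2}|\epsilon_2|/m$. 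Uniformity as $|\zeta|\to\infty$ therefore needs $|\zeta|^{1/2}\epsilon_2=O(1)$, which holds because $B_0(\zeta)=O(|\zeta|^{-1/2})$. The bound $\epsilon_2=O(m^{-1})$ alone is not enough.
\end{itemize}

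The paper avoids all of this. It applies the exponential-form Liouville--Green expansion \cite[Theorem~2.1, $n=1$]{DunsterGilSegura_BesselbyAiry} to the solution recessive at $z=0$, namely $W_0=\zeta^{-1/4}e^{-m\xi}(1+\eta_{1,0})$, and normalizes via \cite[(5.19)]{DunsterGilSegura_BesselbyAiry} and Stirling. It then bounds $\omega_{1,0}$ along the path running vertically from $0$ to $i\Im z$ and then horizontally to $z$. This path is progressive because $\Re\bigl(\sqrt{1-z^2}/z\bigr)>0$ in the right half-plane off $[1,\infty)$, right up to the cut. The integrand $|t(t^2+4)|/(8|t^2-1|^{5/2})$ has integral $O_\epsilon(1)$ over this path. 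That error bound requires only such a path, with no sector condition on $\zeta$, so the Stokes issue never arises.
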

\begin{proof}
Suppose $m>0$. Consider the equation
\[
    \frac{d^2w}{dz^2}=\left(m^2f(z)+g(z)\right)w,
\]
where
\[
    f(z)=\frac{1-z^2}{z^2}
    \quad
    \text{and}
    \quad
    g(z)=-\frac{1}{4z^2}.
\]
By \cite[Theorem 2.1 with $n=1$]{DunsterGilSegura_BesselbyAiry}, for
\[
    W_0=\left(\frac{1-z^2}{\zeta z^2}\right)^{1/4} w_0(z),
\]
we obtain
\[
    W_0(m,\zeta)=\frac{1}{\zeta^{1/4}}e^{-m\xi}(1+\eta_{1,0}(m,z)),
\]
where
\[
    |\eta_{1,0}(m,z)|\le \frac{1}{m}\omega_{1,0}(m,z)\exp\left(\frac{1}{m}\omega_{1,0}(m,z)\right)
\]
and 
\[
    \omega_{1,0}(m,z)=2\int_{z^{(0)}}^z\left| \hat{F}_1(t)f^{1/2}(t)\;dt\right|,
\]
and, by \cite[eq.\@ (5.7)]{DunsterGilSegura_BesselbyAiry},
\[
    \hat{F}_1(z)=\frac{z^2(z^2+4)}{8(z^2-1)^3}.
\]
We take $z^{(0)}=0$. The integral is taken along a progressive path. By \cite[(5.19)]{DunsterGilSegura_BesselbyAiry}, we have
\[
    J_m(mz)=\frac{m^m}{e^m\Gamma(m+1)}\left(\frac{\zeta}{1-z^2}\right)^{1/4} W_0(m,\zeta).
\]
Since $\frac{m^m}{e^m\Gamma(m+1)}=\frac{1}{\sqrt{2\pi m}}(1+O(m^{-1}))$, it suffices to show $|\omega_{1,0}(m,z)|=O_\epsilon(1)$.

The function 
$\Re(\xi)$ is non-increasing along a progressive path. From \eqref{eqn:LG_Transform_J-Bessel by Exp}, we have
\[
    d\xi= \frac{-\sqrt{1-z^2}}{z} dz,
\]
where $\sqrt{1-z^2}>0$ for $z\in(0,1)$.
Then, we have $\Re(d\xi)<0$ along the paths
\[
\begin{aligned}
    &\Gamma_1: s\mapsto {\rm sgn}(\Im(z))\,is, &&0\le s\le |\Im z|\\
    &\Gamma_2: s\mapsto s+i\,\Im z, &&0\le s\le \Re(z),
\end{aligned}
\]
Therefore, for $z\in D_\epsilon$, the path $\Gamma:=\Gamma_1\cup \Gamma_2$ is a progressive path with $\Gamma\subset \overline{D_\epsilon}$. See Figure \ref{fig:Level_sets_JBessel_exp}.
\begin{figure}
    \centering
        \def\svgwidth{0.3\textwidth}
        %% Creator: Inkscape 1.3.2 (091e20e, 2023-11-25, custom), www.inkscape.org
%% PDF/EPS/PS + LaTeX output extension by Johan Engelen, 2010
%% Accompanies image file 'Level_sets_JBessel_exp.pdf' (pdf, eps, ps)
%%
%% To include the image in your LaTeX document, write
%%   \input{<filename>.pdf_tex}
%%  instead of
%%   \includegraphics{<filename>.pdf}
%% To scale the image, write
%%   \def\svgwidth{<desired width>}
%%   \input{<filename>.pdf_tex}
%%  instead of
%%   \includegraphics[width=<desired width>]{<filename>.pdf}
%%
%% Images with a different path to the parent latex file can
%% be accessed with the `import' package (which may need to be
%% installed) using
%%   \usepackage{import}
%% in the preamble, and then including the image with
%%   \import{<path to file>}{<filename>.pdf_tex}
%% Alternatively, one can specify
%%   \graphicspath{{<path to file>/}}
%% 
%% For more information, please see info/svg-inkscape on CTAN:
%%   http://tug.ctan.org/tex-archive/info/svg-inkscape
%%
\begingroup%
  \makeatletter%
  \providecommand\color[2][]{%
    \errmessage{(Inkscape) Color is used for the text in Inkscape, but the package 'color.sty' is not loaded}%
    \renewcommand\color[2][]{}%
  }%
  \providecommand\transparent[1]{%
    \errmessage{(Inkscape) Transparency is used (non-zero) for the text in Inkscape, but the package 'transparent.sty' is not loaded}%
    \renewcommand\transparent[1]{}%
  }%
  \providecommand\rotatebox[2]{#2}%
  \newcommand*\fsize{\dimexpr\f@size pt\relax}%
  \newcommand*\lineheight[1]{\fontsize{\fsize}{#1\fsize}\selectfont}%
  \ifx\svgwidth\undefined%
    \setlength{\unitlength}{312.28196512bp}%
    \ifx\svgscale\undefined%
      \relax%
    \else%
      \setlength{\unitlength}{\unitlength * \real{\svgscale}}%
    \fi%
  \else%
    \setlength{\unitlength}{\svgwidth}%
  \fi%
  \global\let\svgwidth\undefined%
  \global\let\svgscale\undefined%
  \makeatother%
  \begin{picture}(1,0.9768518)%
    \lineheight{1}%
    \setlength\tabcolsep{0pt}%
    \put(0,0){\includegraphics[width=\unitlength,page=1]{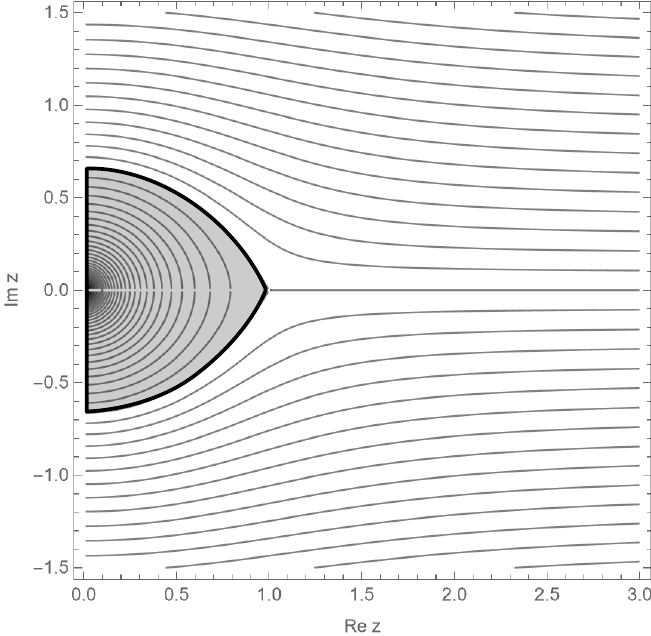}}%
    \put(0.18537367,0.51913906){\color[rgb]{0,0,0}\makebox(0,0)[lt]{\lineheight{1.25}\smash{\begin{tabular}[t]{l}$\mathscr{T}^{(0)}$\end{tabular}}}}%
  \end{picture}%
\endgroup%

        \caption{Level sets of $\Re(\xi)$ in the right half-plane of the $z$-plane. The tripod is the level set $\Re(\xi)=0$; moreover, $\Re(\xi)>0$ in $\mathscr{T}^{(0)}$ and $\Re(\xi)<0$ elsewhere.}
        \label{fig:Level_sets_JBessel_exp}
\end{figure}

Then, we obtain
\[
    \omega_{1,0}(m,z)=2\int_\Gamma\left| \frac{t(t^2+4)}{8(t^2-1)^{5/2}}\;dt \right|=O_\epsilon(1),
\]
and thus
\[
    |\eta_{1,0}(m,z)|=O_
    \epsilon(m^{-1}).\qedhere
\]
\end{proof}

\subsection{
\texorpdfstring{Expansions of Conical functions by $K$-Bessel functions, $\beta:=1/\alpha$ bounded}
{Expansions of Conical functions by K-Bessel functions, beta bounded}
}\label{subsec:Conical by K-Bessel}

Suppose $m>0$. Let $\beta:=1/\alpha=\tau/m$.

\begin{lemma}\label{lem:Conical by KBessel}
Suppose that $m>0$ and $\beta\in(0,B]$. Let $z=\cosh(x+i(\pi/2-1/T))$. For all $x\in \mathbb{R}$ and $T\gg_B 1$, we have
\[
    |P^{-m}_{-1/2+i\tau}(z)|= \left(\frac{2}{\pi}\right)^{1/2}\left|\frac{1}{\Gamma(1/2+m+i\tau)} \left(\frac{\eta-\beta^2}{1+\beta^2-z^2\beta^2}\right)^{1/4}K_{i\tau}(m \eta^{1/2})\right|\big(1+O_B(m^{-1})\big).
\]    
\end{lemma}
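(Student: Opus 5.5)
The plan is to mirror the argument of Lemma~\ref{lem:Conical by JBessel}, with the roles of the two large parameters exchanged.

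\textbf{Differential equation and Liouville transformation.} Write $\beta=\tau/m$ and use $m$ as the large parameter. The function $w=(z^2-1)^{1/2}P^{-m}_{-1/2+i\tau}(z)$ satisfies
\[
\frac{d^2w}{dz^2}=\bigl(m^2\tilde f(\beta,z)+g(z)\bigr)w,
\qquad
\tilde f(\beta,z)=\frac{1+\beta^2(1-z^2)}{(z^2-1)^2}.
\]
This is \eqref{eqn:Conical_w} rewritten in terms of $m$. Its turning point is $Z_\beta=\sqrt{1+\beta^{-2}}$, and there is a double pole at $z=1$. Introduce $\eta$ through the relation \eqref{UBeta} of Lemma~\ref{lem:Dunster_mappings}. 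After setting $W=(d\eta/dz)^{1/2}w$, the equation takes the form
\[
\frac{d^2W}{d\eta^2}=\Bigl\{m^2\frac{\eta-\beta^2}{4\eta^2}-\frac{1}{4\eta^2}+\frac{\tilde\psi(\beta,\eta)}{\eta}\Bigr\}W .
\]
Its approximating solutions are $\eta^{1/2}K_{i\tau}(m\eta^{1/2})$ and $\eta^{1/2}I_{i\tau}(m\eta^{1/2})$. The error function $\tilde\psi$ is given explicitly, analogously to \eqref{eqn:psi_conical to J-Bessel}.

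Next apply Dunster's $K$-Bessel expansion from \cite[eq.~4.14]{Dunster_Concial} with $n=0$, together with the Boyd--Dunster error bounds \cite{BoydDunster_EstimateLegendra}. This gives
\[
P^{-m}_{-1/2+i\tau}(z)=c\left(\frac{\eta-\beta^2}{1+\beta^2-\beta^2z^2}\right)^{1/4}\Bigl[K_{i\tau}(m\eta^{1/2})+\eta^{-1/2}\epsilon(m,\beta,\eta)\Bigr].
\]
The constant $c$ is identified with $(2/\pi)^{1/2}/|\Gamma(\tfrac12+m+i\tau)|$ up to a factor $1+O_B(m^{-1})$. This uses the second case of Lemma~\ref{lem:Gamma(0.5+m+itau)}, which is valid for $\tau/m\le B$ and $m\gg_B1$.

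\textbf{Error estimate.} The remaining work is to show $|\epsilon|\ll m^{-1}|\eta^{1/2}K_{i\tau}(m\eta^{1/2})|$. By the Boyd--Dunster bound, this reduces to showing that the total variation of $F(\eta)=(\eta-\beta^2)^{1/2}B_0(\beta,\eta)$ along a progressive path is $O_B(1)$.

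\begin{itemize}
\item For the path I would take the same $\Gamma=\Gamma_1\cup\Gamma_2$ as before: in the $u$-plane, the vertical segment from $0$ to $i\sigma_T$ followed by the horizontal segment to $x+i\sigma_T$.
\item I would verify progressiveness by differentiating the relevant phase, now $\Re$ or $\Im$ of the $K$-Bessel Liouville--Green phase attached to $\eta$. Its sign follows from an argument computation like the one in the $J$-case, using Lemma~\ref{lem:sqrt_geometry} to place $q_\beta(Z)/(\beta Z)$ in the right half-plane.
\item By compactness in $\beta\in(0,B]$ and joint continuity of $\eta(\beta,z)$, I expect a uniform distance from the turning point $\beta^2$, playing the role of Lemma~\ref{lem:prog.path_unif_away_turningPt}. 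This works because the contour $z=\cosh(x+i\sigma_T)$ stays near the imaginary axis, while the turning point $Z_\beta>1$ lies on the real axis.
\end{itemize}

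Then I split the variation integral $\int_\Gamma|\eta-\beta^2|^{-1/2}|\tilde\psi|\,|d\eta|$ into its $\Gamma_1$ and $\Gamma_2$ parts. On $\Gamma_1$, where $z\in(0,1]$, the expansion $\eta\sim\mathrm{const}\cdot(z-1)^{-1}$ near the pole gives boundedness. On $\Gamma_2$, the large-$|z|$ asymptotics of $\eta$ give integrable decay; here $\eta\to 0$ as $\xi=(z^2-1)^{-1}\to0$, so I would use the behaviour near $\eta=0$ in place of \eqref{eqn:zeta and z large asymp}. One must also handle $\beta\to0$, where $\tilde f$ degenerates. There the coefficient $\tilde\psi$ should remain bounded uniformly, via the explicit formula.

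\textbf{Main obstacle.} I expect the hardest step to be the uniformity in $\beta\in(0,B]$ near $\beta=0$, together with the progressive-path verification near the endpoint $z=\cos\sigma_T$. In that regime $\eta$ is small and the order $i\tau$ is small relative to $m$. I would first attempt a direct estimate of $\tilde\psi$ from its closed form, in the manner used for $(I)_\alpha$. Failing that, I would fall back on a compactness argument in $(\beta,z)$ after rescaling $\eta$ by $\beta^2$.
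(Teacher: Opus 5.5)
Your overall architecture matches the paper: the Liouville transformation to $K$-Bessel form, Dunster's expansion with $n=0$, identification of the constant via Lemma~\ref{lem:Gamma(0.5+m+itau)}, and reduction to an $O_B(1)$ total-variation bound along a progressive path. The gap is in the error estimate, where transplanting the $J$-case argument fails in two places.

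\textbf{Separation from the turning point.} The uniform separation from $\beta^2$ that you expect is false, and the paper says so explicitly. As $x\to\infty$ the contour tends to $z=\infty$, so $\xi=(z^2-1)^{-1}\to0$ and $\eta\to0$. The turning point $\eta=\beta^2$ also tends to $0$ as $\beta\to0$. Taking $x\approx\log(1/\beta)$ gives points with $|\eta|\asymp|\eta-\beta^2|\asymp\beta^2$. Your $z$-plane picture hides this because $Z_\beta\to\infty$ as well. Without separation, estimating the two terms of \eqref{eqn:hatpsi} separately fails. For $\beta^2\ll|\eta|\ll1$ each term has size $\asymp|\eta|^{-1}$, so the variation would be of order $\int_{\beta^2}^1\rho^{-3/2}\,d\rho\asymp\beta^{-1}$. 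Two ingredients are needed, and you only gesture at the first:
\begin{itemize}
\item the cancellation between the two terms, which follows from the uniform small-$\eta$ expansion \eqref{eqn:xi(eta) small asympototic}, $\xi=\eta+O_B(\max\{\beta^2|\eta|,|\eta|^2\})$, and gives $\hat\psi=O_B(1)$;
\item the bound $|\eta-\beta^2|\ge\tfrac{1}{\sqrt2}(|\eta|+\beta^2)$ on $\Re\eta<0$, so that the integrand is $\ll|\eta|^{-1/2}$ uniformly in $\beta$.
\end{itemize}

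\textbf{Progressiveness of your path.} Progressiveness of your $u$-plane path is not established and may fail. In \cite[Theorem 3a]{Dunster_Prep} the monotone quantity is $\Re\Phi^{(0)}_\tau(m\eta^{1/2})$ from \eqref{eqn:Phi_tau(0)}, which is built with $X_{\hat\tau}$, not $\tau$. Put $a=X_{\hat\tau}/m$. On $r=s+i\sigma_T$ one finds
\[
\frac{d}{ds}\Phi^{(0)}_\tau=-m\left(\frac{\eta-a^2}{\eta-\beta^2}\right)^{1/2}\frac{q_\beta(Z)}{\sinh r}.
\]
At the corner $r=i\sigma_T$, the factor $q_\beta(Z)/\sinh r$ is a negative multiple of $i$, and for small $s$ it stays within angle $O(s/T)$ of $-i$. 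So at and near the corner, the sign of $\Re\,d\Phi^{(0)}_\tau/ds$ is set by the argument of the square-root ratio. Since $\Im\eta<0$ on the upper contour, this is the sign of $X_{\hat\tau}-\tau$: if $X_{\hat\tau}>\tau$, then $\Re\Phi^{(0)}_\tau$ increases on an initial stretch of the horizontal segment.

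In the $J$-case this sign came from Lemma~\ref{lem:lower bound of zero of cylinder ftn}. Here only $X_{\hat\tau}=\tau+O(\tau^{-1/3})$ is known. Lemma~\ref{lem:sqrt_geometry} concerns $q_\beta(Z)/Z$, not $q_\beta(Z)/\sinh r$, so it does not help.

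\textbf{How the paper avoids both problems.} The paper builds the path in the $\eta$-plane instead. It starts in $\mathscr T^{(3)}_\tau$ with horizontal and vertical segments where $|\eta|\gg1$. It then follows an arc of the level curve of $\Re\Phi^{(0)}_\tau$ through $\eta_*$. That arc is progressive for free and lies in $\Re\eta<0$, and the level-curve equation gives $|d\eta|\le\sqrt2\,d|\eta|$.

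Two minor points. Near $z=1$ it is $\xi$, not $\eta$, that behaves like $(z-1)^{-1}$, since $\eta^{1/2}\sim\tfrac12\log\xi$. The error bound you need is \cite[Theorem 3a]{Dunster_Prep}, not the Boyd--Dunster $J$-type bound.
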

\begin{proof}
By the following preliminary Liouville transform
\[
    \xi=\frac{1}{z^2-1} \quad{\rm and}\quad \hat{w}:=\left(\frac{z^{1/2}}{z^2-1}\right)w,
\]
the equation \eqref{eqn:Conical_w} is transformed into
\begin{equation}\label{eqn:Conical_hatw}
    \frac{d^2\hat{w}}{d\xi^2}=\left(m^2\hat{f}(\beta,\xi)+\hat{g}(\xi)\right)\hat{w},
\end{equation}
where $\hat{f}(\beta,\xi)=\frac{\xi-\beta^2}{4\xi^2(1+\xi)}$ and $\hat{g}(\xi)=-\frac{4\xi^2+5\xi+4}{16\xi^2(1+\xi)^2}$. See \cite[Section 4]{Dunster_Concial}. We use the following Liouville transformation:
\begin{equation}\label{eqn:eta-xi transform}
    \int_{\beta^2}^\eta \frac{(t-\beta^2)^{1/2}}{2t}\,dt =\int_{\beta^2}^\xi(\hat{f}(\beta,t))^{1/2}\,dt=\int_{\beta^2}^\xi \frac{1}{2} \left(\frac{t-\beta^2}{t^2 (1+t)}\right)^{1/2}\,dt.
\end{equation}
Then, for
\[
    \hat{W}(\beta,\xi)=\left(\frac{\eta}{\xi}\right)^{1/2}\left(\frac{\xi-\beta^2}{(1+\xi)(\eta-\beta^2)}\right)^{1/4}\hat{w}(\beta,\xi),
\]
the equation \eqref{eqn:Conical_hatw} is transformed into 
\begin{equation}\label{eqn:Conical_hatW}
    \frac{d^2\hat{W}}{d\eta^2}=\left\{m^2\left(\frac{1}{4\eta}-\frac{\beta^2}{4\eta^2}\right)-\frac{1}{4\eta^2}+\frac{\hat{\psi}(\beta,\eta)}{\eta}\right\}\hat{W},
\end{equation}
where
\begin{equation}\label{eqn:hatpsi}
    \hat{\psi}(\beta,\eta)  =\frac{\eta+4\beta^2}{16(\eta-\beta^2)^2}-\frac{(\eta-\beta^2)\xi(\xi+4\beta^2\xi+\beta^4+4\beta^2)}{16\eta(\xi-\beta^2)^3}.
\end{equation}

We consider the following four coordinates:
\begin{center}
    \begin{tikzcd}
    \{u:\Im(u)\in(-\frac{\pi}{2},\frac{\pi}{2})\} \arrow[rr,"z=\cosh(u)"]
    && \{z:\Re(z)>0\} \arrow[r, "\xi=\frac{1}{z^2-1}"]
    & \{\xi:{\rm Figure}~\ref{fig:z_xi_eta_planes}\} \arrow[r]
    & \{\eta:{\rm Figure}~\ref{fig:z_xi_eta_planes}\}.
\end{tikzcd}
    
\end{center}

\begin{figure}
	\centering
    \begin{subfigure}[b]{0.3\textwidth}
        \centering
        \def\svgwidth{\textwidth}
        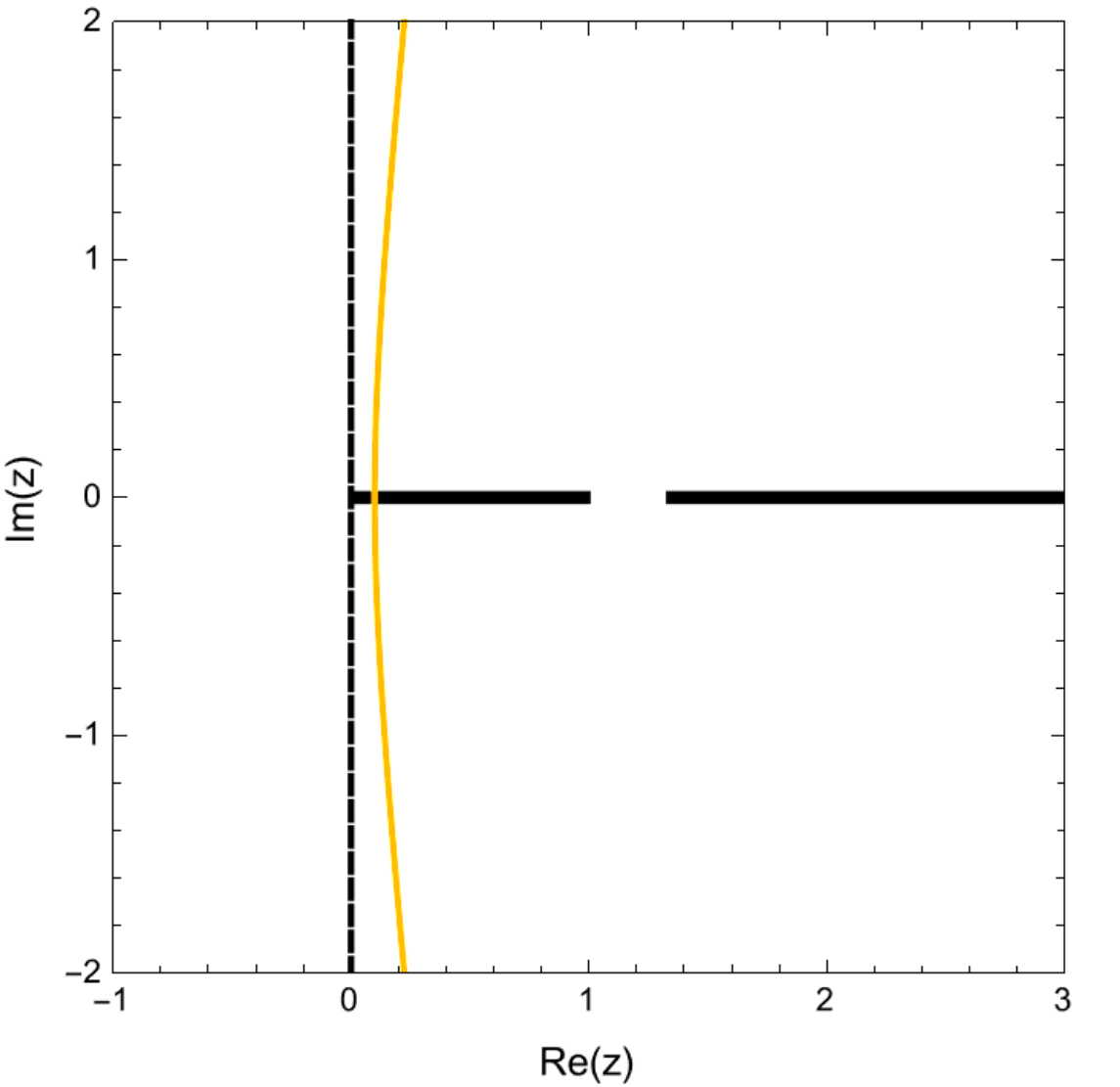
        \caption{$z$-plane}
    \end{subfigure}
    \hspace{10pt}
        \begin{subfigure}[b]{0.3\textwidth}
        \centering 
        \def\svgwidth{\textwidth}
        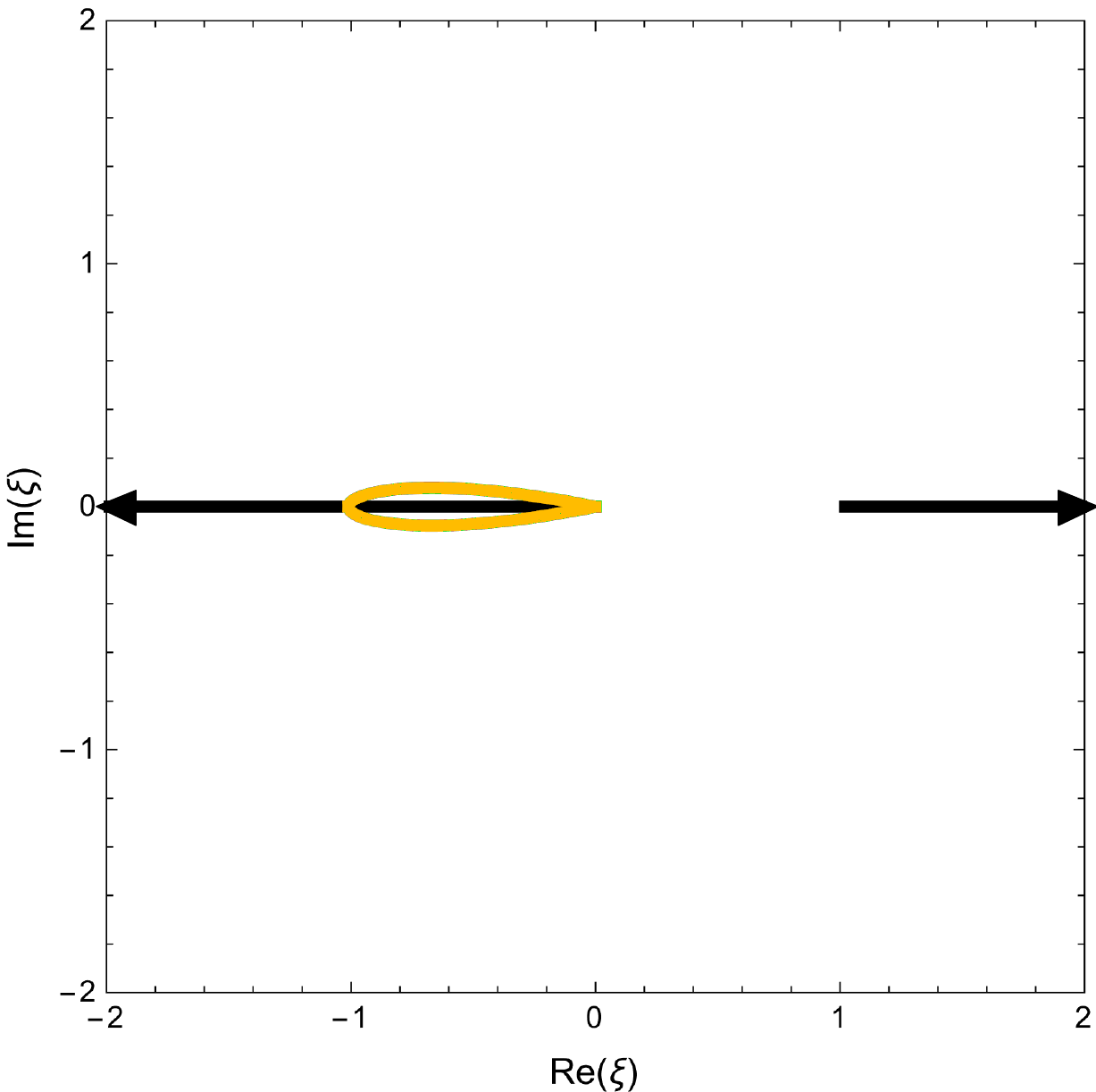
        \caption{$\xi$-plane}
    \end{subfigure}
    \hspace{10pt}
    \begin{subfigure}[b]{0.3\textwidth}
        \centering
        \def\svgwidth{\textwidth}
        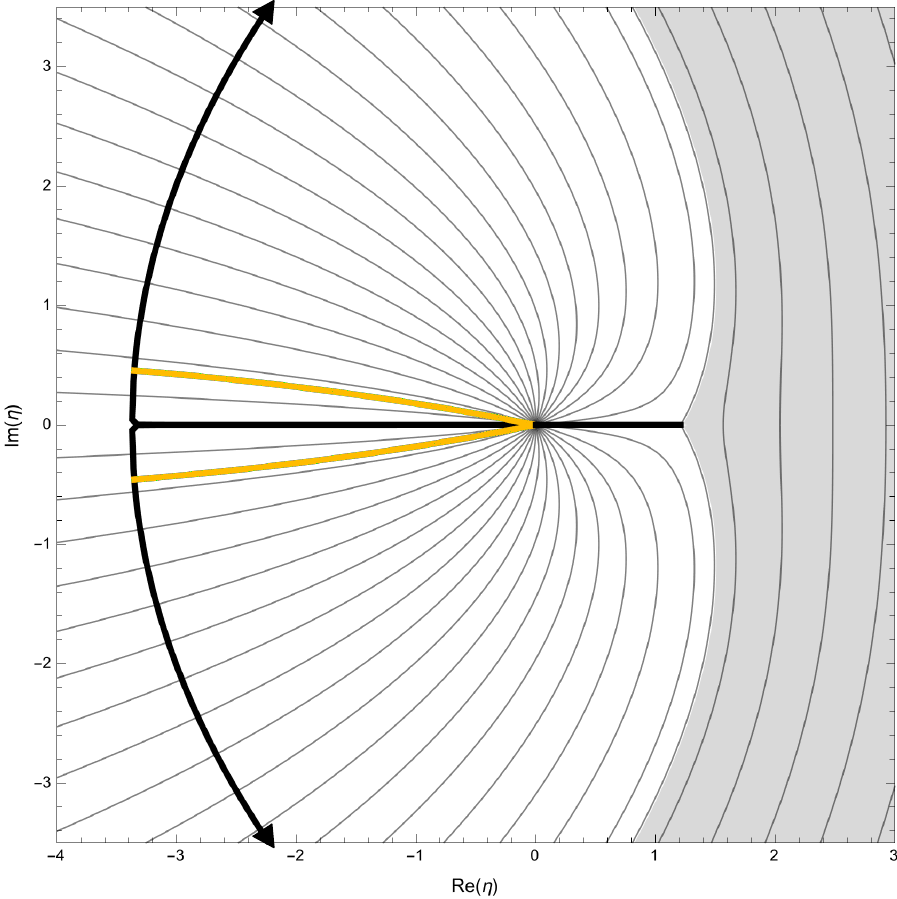
        \caption{$\eta$-plane}
    \end{subfigure}
    \caption{Coordinate changes among the $z$-, $\xi$-, and $\eta$-planes with $\beta\approx 1$. The thick curves are the image of $\Im(r)=\pi/2-0.1$, where $z=\cosh(r)$. Level curves of $\Re(\Phi^{(0)}_\tau(m\eta^{1/2}))$ are shown in the $\eta$-plane. The shaded region in the $\eta$-plane denotes the domain $\mathscr{T}^{(3)}$ where progressive paths start.}
    \label{fig:z_xi_eta_planes}
\end{figure}

By \cite[Equations (4.14),(4.15)]{Dunster_Concial} with $n=0$ and \cite[Theorem 3a]{Dunster_Prep}, we have
\[
    P^{-m}_{-\frac{1}{2}+i\tau}(z)=\hat{c}^{(1)}_{1}\left(\frac{\eta-\beta^2}{1+\beta^2-z^2\beta^2}\right)^{1/4} \left[K_{i\tau}(m\eta^{1/2})+\eta^{-1/2}\hat{\epsilon}^{(1)}_1(m,\beta,\eta)\right],
\]
where
\[
    \hat{c}_1^{(1)}=\sqrt{\frac{2}{\pi}}\frac{m^{m-1/2}e^{m\beta\arctan(\beta)}}{(m^2+\tau^2)^{m/2}\Gamma(m)},
\]
and 
\begin{multline*}
    |\hat{\epsilon}^{(1)}_1(m,\beta,\eta)|\le  \left|\mu \frac{\eta^{1/2}}{m}K_{i\tau}(m\eta^{1/2})
    \mathscr{V}_{\mathscr{P}^{(0)}}\left((\eta-\beta^2)^{1/2}B_0(\beta,\eta)\right)
    \exp\left(\frac{\kappa}{m}\mathscr{V}_{\mathscr{P}^{(0)}}\left((\eta-\beta^2)^{1/2}B_0(\beta,\eta)\right)\right)\right|.
\end{multline*}
By using Lemma \ref{lem:Gamma(0.5+m+itau)}, one can show 
\[
    \hat{c}_1^{(1)}=
    \sqrt{\frac{2}{\pi}}\,
    \frac1{|\Gamma(m+\frac12+i\tau)|}
    \left(1+O_B\!\left(\frac1m\right)\right).
\]
The superscript index $(j)$ is (1) in \cite{Dunster_Concial} and in this article, but (3) in \cite{Dunster_Prep}. Also, in \cite[Theorem 3a]{Dunster_Prep}, Dunster used $\kappa^{(j)}, \mu^{(j)}$ for $j=1,2,3$, which are real numbers, but we use $\kappa:=\max_{j=1,2,3} \kappa^{(j)}$ and $\mu:=\max_{j=1,2,3} \mu^{(j)}$. 

Note that in \cite[Equations (2.13--2.14)]{Dunster_Prep} and the accompanying discussion, we have $A_0(\beta,\eta)\equiv1$ and
\[
    B_0(\beta,\eta)=(\eta-\beta^2)^{-1/2}\int_{\beta^2}^\eta (t-\beta^2)^{-1/2}\hat{\psi}(\beta,t)\; dt.
\]
In \cite[eq. (2.13)]{Dunster_Prep} for $B_0$, the terms $\eta-\beta^2$ and $t-\beta^2$ are taken in absolute value. For complex $\eta$, we use analytic continuation.

It suffices to show that
\begin{align}\label{eqn:Variation_K-Bessel}
    \begin{split}
        &\mathscr{V}_{\mathscr{P}^{(0)}}\left(\int_{\beta^2}^\eta (t-\beta^2)^{-1/2}\hat\psi(\beta,t)\; d t\right)\\
        \le&\int_\Gamma \left|(t-\beta^2)^{-1/2}\hat\psi(\beta,t)\right| |d t|\\
        =& \int_\Gamma  \left|\frac{\eta+4\beta^2}{16(\eta-\beta^2)^{5/2}}-\frac{(\eta-\beta^2)\xi(\xi+4\beta^2\xi+\beta^4+4\beta^2)}{16\eta(\eta-\beta^2)^{1/2}(\xi-\beta^2)^3}\right||d\eta|\\
        =&O_B(1),
    \end{split}
\end{align}
where $\Gamma\in \mathscr{P}^{(0)}$ is a progressive path.

\subsection*{Progressive path}
In this case, the initial point of a progressive path is contained in $\mathscr{T}_\tau^{(3)}$, possibly at infinity. The terminal point corresponds to $u_*=x+i(\pi/2-1/T)$ in the $u$-plane. Let $\xi_*$ and $\eta_*$ denote these points in the $\xi$- and $\eta$-planes, respectively.

Along progressive paths, the function $\Re(\Phi_\tau^{(0)})$ is non-increasing, where
\begin{equation}\label{eqn:Phi_tau(0)}
    \Phi^{(0)}_\tau (m\eta^{1/2})=\int_{X_{\hat{\tau}}}^{m\eta^{1/2}} \frac{(t^2-(X_{\hat{\tau}})^2)^{1/2}}{t}dt,
\end{equation}
and $X_\tau$ denotes the largest positive real zero of $K_{i\tau}(x)-\tilde{I}_{i\tau}(x)$; see \cite[Eq.\@ (2.8), (2.20)]{Dunster_Prep}. We have $X_\tau=\tau+c(\tau/2)^{1/3}+O(\tau^{-1/3})$ \cite[Eq.\@ (2.21)]{Dunster_Prep}, where $c$ is the largest negative root of ${\rm Ai}(x)={\rm Bi}(x)$. Define $\hat{\tau}>0$ by $\tau=\hat\tau+c(\hat\tau/2)^{1/3}$. Then $X_{\hat\tau}=\tau+O(\tau^{-1/3})$. 

The condition $\Re(\Phi_\tau^{(0)}(z))>0$ for $z\in \mathscr{S}_\tau^{(3)}$ in \cite[p.~1606]{Dunster_Prep} implies that the square root of a positive real number is taken to be a positive real. Hence, we choose the branch so that $ \arg\sqrt{\cdot}\in\left(-\tfrac{\pi}{2},\tfrac{\pi}{2}\right)$.

We define a path $\Gamma=\Gamma_1\cup \Gamma_2\cup \Gamma_3$ in the $\eta$-plane satisfying the following: (i) $\Gamma_1$ is a horizontal line segment starting from a boundary point of $\mathscr{T}_\tau^{(3)}$, (ii) $\Gamma_2$ is a vertical line segment ending at the level set of $\Re(\Phi_\tau^{(0)}(m\eta^{1/2}))$ containing $\eta_*$, and (iii) $\Gamma_3$ is contained in the level curve containing $\eta_*$ such that $\Gamma_3$ ends at $\eta_*$. We can choose $\Gamma_1$ and $\Gamma_2$ so that $|\eta|\gg1$ and $|\eta-\beta^2|\gg|\eta|$ for all $\eta\in \Gamma_1\cup \Gamma_2$. Moreover, the entire path $\Gamma$ is contained in a ball of radius $O_B(1)$, and its Euclidean length is also $O_B(1)$. See Figure \ref{fig:z_xi_eta_planes}, where $\eta_*$ lies on the thick curve. Then one can show that $\Gamma$ is a progressive path. 
\medskip

A similar argument to that in Section \ref{subsec:Conical by J-Bessel} may not work because the progressive path cannot be uniformly away from the turning point $\beta^2$ in the $\eta$ plane, as $\beta\to 0$.

From \eqref{eqn:eta-xi transform}, for all $\eta$ with $|\eta|\ll_B 1$, we obtain
\begin{equation}\label{eqn:xi by eta_small}
    \xi(\beta,\eta)=\frac{\exp(R(\beta))}{1+\beta^2}\eta+O_B(\eta^2),
\end{equation}
where 
\[
    R(\beta)=2-\frac1\beta \arccos{\left(\frac{1-\beta^2}{1+\beta^2}\right)}= \frac23\beta^2+O_B(\beta^4);
\]
see \cite[(4.10),(4.11)]{Dunster_Concial}. Moreover, from \ref{eqn:eta-xi transform}, we have that $\xi(\beta,\eta)$ is jointly continuous for $\beta\in[0,B]$ and $\eta\neq0,\beta^2$. 

When $\eta\in \Gamma_1\cup \Gamma_2$, we have
\[
    \left|(\eta-\beta^2)^{-1/2}\hat{\psi}(\beta,\eta)\right|
    \ll_B |\eta|^{-3/2}+|\eta|^{-1/2}|\xi|^{-1},
\]
and thus the integral in \eqref{eqn:Variation_K-Bessel} along $\Gamma_1\cup \Gamma_2$ is uniformly $O_B(1)$.

Let us consider the integral along $\Gamma_3$. Since $\Gamma_3$ is contained in the half-planes $\Re(\xi)<0$ and $\Re(\eta)<0$, we have $|\eta-\beta^2|\asymp_B |\eta|+\beta^2$. By \eqref{eqn:xi by eta_small}, for all $\eta$ with $|\eta|<R_B$ for some $R_B\gg_B 1$, we have
\begin{equation}\label{eqn:xi(eta) small asympototic}
    \xi=\eta+O_B(\max\{\beta^2|\eta|,|\eta|^2\}).
\end{equation}
Then, by substituting \eqref{eqn:xi(eta) small asympototic} into \eqref{eqn:hatpsi}, we obtain $|\hat{\psi}(\beta,\eta)|=O_B(1)$ for all $\eta\in\Gamma_3\cap B(0,R_B)$.
On the other hand, since $\Gamma_3$ is contained in the left half-plane $\Re(\eta)<0$, we have $|\eta-\beta^2|,|\xi-\beta^2|\gg_B 1$ for all $\eta\in \Gamma_3\setminus B(0,R_B)$. Hence, by the joint continuity of $\xi(\beta,\eta)$ and the uniform boundedness of $\beta$ and $\Gamma_3$, we can extend the estimate to 
\[
    |\hat{\psi}(\beta,\eta)|=O_B(1)
    \qquad
    \forall
    \eta\in \Gamma_3.
\]
Let $a:=X_{\hat\tau}/m>0$ and write $\eta=\rho e^{i\theta}$ on
$\Gamma_3$. Since $\Gamma_3$ is a level curve of
$\Re(\Phi_\tau^{(0)}(m\eta^{1/2}))$, it follows from \eqref{eqn:Phi_tau(0)} that
\[
    0=
    \Re\left(
       \sqrt{\eta-a^2}
       \left(\frac{d\rho}{\rho}+i\,d\theta\right)
    \right)
    =
    \Re\left(\sqrt{\eta-a^2}\right)\frac{d\rho}{\rho}-\Im\left(\sqrt{\eta-a^2}\right)d\theta.
\]
Recall that $\Gamma_3$ is contained in the left half-plane $\Re(\eta)<0$ minus the real line. Since $a>0$, for all $\eta$ with $\Re(\eta)<0$ and $\eta\notin \mathbb{R}$, we have
\[
    0<\left|\Re\left(\sqrt{\eta-a^2}\right)\right|
    \leq\left|\Im\left(\sqrt{\eta-a^2}\right)\right|.
\]
It follows that $d\rho\neq0$, and thus $\rho$ is a monotone parameter on each level curve, and
\[
    \rho\left|\frac{d\theta}{d\rho}\right|\leq1,
    \qquad
    |d\eta|=|d\rho|\left|e^{i\theta}+ i\rho e^{i\theta}\frac{d\theta}{d\rho}\right|\le  \sqrt2\,|d\rho|.
\]
Hence, we obtain
\[
    \int_{\Gamma_3\cap B(0,R_B)} |\eta-\beta^2|^{-1/2}|\hat\psi(\beta,\eta)|\,|d\eta|
    \,\ll_B\,
    \int_0^{R_B}\rho^{-1/2}\,d\rho
    \,=\,
    O_B(1).\qedhere
\]
\end{proof}

\subsection{Expansion of K-Bessel functions by exponential functions}\label{subsec:K-Bessel by exponential} See \cite{Dunster_K_Bessel_by_Exp}. We estimate the term $K_{i\tau}(m\eta^{1/2})$ in Lemma~\ref{lem:Conical by KBessel}. Recall that for $z=\cosh(x+i(\pi/2-1/T))$, the corresponding $\eta$ is not a real number and lies in the left half-plane $\Re(\eta)<0$. Hence, $|\arg(\eta^{1/2})|\in (\pi/4,\pi/2)$.

We show the following lemma.

\begin{lemma}\label{KBesselExp}
Suppose $\tau>0$. For $\delta\in(0,\pi/2)$, define a set
\[
    D_\delta:=\{z\in \mathbb{C}:|\arg(z)|\in(\pi/2-\delta,\pi/2)\}.
\]
Then, for all $z\in D_\delta$, we have
\begin{equation}
    K_{i\tau}(\tau z)=\left(\frac{\pi}{2\tau}\right)^{1/2}\frac{1}{(z^2-1)^{1/4}}\exp\left(-\tau\xi-\frac{\pi \tau}{2}\right)\left(1+O_\delta(\tau^{-1})\right).
\end{equation}
\end{lemma}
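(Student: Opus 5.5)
We follow the template of Lemma~\ref{lem:J-Bessel by Exp}: apply a Liouville--Green transformation to the modified Bessel equation with large imaginary order, and then bound the error term by a total variation along a progressive path. The function $w(z)=z^{1/2}K_{i\tau}(\tau z)$ satisfies
\[
\frac{d^2w}{dz^2}=\left(\tau^2 f(z)+g(z)\right)w,
\qquad
f(z)=\frac{z^2-1}{z^2},\qquad g(z)=-\frac{1}{4z^2}.
\]
This has the same form as in Lemma~\ref{lem:J-Bessel by Exp}, with $f$ replaced by $-f$. We take the Liouville variable
\[
\xi=\int_1^z\frac{\sqrt{t^2-1}}{t}\,dt=(z^2-1)^{1/2}-\operatorname{arcsec} z,
\]
with the branch positive for $z>1$, continued into the sector $D_\delta$. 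Then \cite[Theorem 2.1 with $n=1$]{DunsterGilSegura_BesselbyAiry}, in the form used in \cite{Dunster_K_Bessel_by_Exp}, gives a recessive solution
\[
W(\tau,z)=\left(\frac{z^2}{z^2-1}\right)^{1/4}e^{-\tau\xi}\bigl(1+\eta_1(\tau,z)\bigr),
\qquad
|\eta_1|\le \tau^{-1}\omega(z)\,e^{\tau^{-1}\omega(z)},
\]
where $\omega(z)=2\int|\hat F_1(t)f^{1/2}(t)\,dt|$ is taken along a progressive path starting from the point at infinity where $\Re\xi\to+\infty$. Here $\hat F_1$ is the analogue of \cite[(5.7)]{DunsterGilSegura_BesselbyAiry}, with $(z^2-1)$ replacing $(1-z^2)$, and hence is $O(|z^2-1|^{-3}|z|^4)$.

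The next step is to identify $K_{i\tau}(\tau z)$ with a constant multiple of $z^{-1/2}W$. Both functions are recessive as $z\to+\infty$ along the positive real axis, where $K_{i\tau}(\tau z)\sim(\pi/2\tau z)^{1/2}e^{-\tau z}$. Since $\xi=z-\pi/2+O(1/z)$ as $z\to\infty$, matching the two asymptotics forces the constant to be $(\pi/2\tau)^{1/2}e^{-\pi\tau/2}$; the factor $e^{-\pi\tau/2}$ in the statement comes precisely from the $-\operatorname{arcsec}z\to-\pi/2$ term. To pass from the real axis to the sector $D_\delta$, we continue analytically. Uniqueness of the recessive solution in the sector $|\arg z|<\pi/2$ then gives the formula throughout $D_\delta$, provided $\omega$ is finite there.

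It then remains to show $\omega(z)=O_\delta(1)$ uniformly for $z\in D_\delta$. This is the main obstacle: $D_\delta$ hugs the imaginary axis, so paths from $+\infty$ must travel near the turning point $z=1$ and near the singularity at $z=0$. We use the explicit $d\xi=\frac{\sqrt{z^2-1}}{z}\,dz$ to construct the path, taking $z$ with $\Im z>0$ (the other case follows by conjugation). We start at $+\infty+i\Im z$ and move horizontally leftward to $z$. Along this segment $\Re(d\xi)=\Re\bigl(\sqrt{z^2-1}/z\bigr)\,ds$ with $ds<0$, and Lemma~\ref{lem:sqrt_geometry} gives $\Re\bigl(\sqrt{z^2-1}/z\bigr)>0$ off $(-1,1)$, so $\Re\xi$ is monotone and the path is progressive. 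If $|\Im z|$ is small, this segment passes near $z=1$. To avoid that, we first move the starting height up to $\Im z'\ge 1$, go horizontally, and then descend vertically within $\Re z>0$, checking the sign of $\Re(i\sqrt{z^2-1}/z)$ there; points with $|z|\to0$ inside $D_\delta$ are reached radially. On these paths $|t^2-1|\gg_\delta1$, so the integrand $|t|\,|t^2+4|\,|t^2-1|^{-5/2}$ is bounded near $0$ and decays like $|t|^{-2}$ at infinity. Hence $\omega=O_\delta(1)$, which gives the factor $1+O_\delta(\tau^{-1})$, and since $(z^2/(z^2-1))^{1/4}z^{-1/2}=(z^2-1)^{-1/4}$ the lemma follows.
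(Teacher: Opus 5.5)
Your overall route is the paper's: the same equation for $w=z^{1/2}K_{i\tau}(\tau z)$, the same variable $\xi=(z^2-1)^{1/2}-\arcsec z$, an Olver--Dunster error bound with $n=1$, and a variation estimate along a progressive path starting at $+\infty+iA$ with $A=\max\{1,\Im z\}$, chosen to avoid the turning point $t=1$. The gap is in the step you yourself call the main obstacle: the vertical descent is not progressive.

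Here progressive means that $\Re\xi$ is non-increasing as one travels from the starting point (where $\Re\xi=+\infty$) to $z$. For $t$ in the open first quadrant, $\arg(t^2-1)\in(2\arg t,\pi)$. Hence $\arg\bigl(\sqrt{t^2-1}/t\bigr)\in(0,\pi/2)$, so both $\Re\bigl(\sqrt{t^2-1}/t\bigr)$ and $\Im\bigl(\sqrt{t^2-1}/t\bigr)$ are positive.
\begin{itemize}
\item Moving left ($dt=-|ds|$) gives $\Re\,d\xi=-\Re\bigl(\sqrt{t^2-1}/t\bigr)|ds|<0$. This is why your horizontal segment is fine.
\item Moving down ($dt=-i\,|dy|$) gives $\Re\,d\xi=\Im\bigl(\sqrt{t^2-1}/t\bigr)|dy|>0$.
\end{itemize}
So $\Re\xi$ strictly increases along every downward vertical segment in the first quadrant. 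The sign check you defer comes out the wrong way, and the error bound is not available along that path.

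The fix is the device you already reserve for $|z|\to0$, applied to every $z$ with $\Im z<1$; this is exactly the paper's construction. Travel horizontally at height $1$ to the point $w'$ with $\arg w'=\arg z$, then move radially inward along that ray to $z$. Writing $t=\rho e^{i\theta}$ with $\theta$ fixed, one has $d\xi=\sqrt{t^2-1}\,d\rho/\rho$ and $\Re\sqrt{t^2-1}>0$, so $\Re\xi$ decreases as $\rho$ decreases.

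Since $\theta\in(\pi/2-\delta,\pi/2)$, the ray stays at distance at least $\sin\theta>\cos\delta$ from $t=1$, and the horizontal segment at height $\ge1$ stays at distance $\ge1$. So $|t|\,|t^2+4|\,|t^2-1|^{-5/2}$ is bounded on the whole path and $O(|t|^{-2})$ at infinity, which gives $\omega(z)=O_\delta(1)$.

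With that change the rest of your argument goes through. This includes your explicit matching of the constant $(\pi/2\tau)^{1/2}e^{-\pi\tau/2}$ via $\xi=z-\pi/2+O(1/z)$, which the paper simply reads off from \cite[Theorem 2.1]{Dunster_K_Bessel_by_Exp}.
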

\begin{proof}
The function
\[
    w(z)=z^{1/2}K_{i\tau}(\tau z)
\]
solves the equation
\[
    \frac{d^2w}{dz^2}=\left\{\tau^2\frac{z^2-1}{z^2}-\frac{1}{4z^2}\right\}w.
\]
We use the following Liouville transformation
\[
    \xi=\int_1^z\frac{(t^2-1)^{1/2}}{t}dt=(z^2-1)^{1/2}-\arcsec\,z.
\]
The branches are chosen so that $\sqrt{z^2-1},\sqrt[4]{z^2-1},$ and $\xi(z)$ are positive real for $z>1$.
By using \cite[Theorem 2.1 with $n=1$\footnote{\cite[Theorem 2.1]{Dunster_K_Bessel_by_Exp} is stated for $n\ge2$, but we can still use the same formula for $n=1$.}]{Dunster_K_Bessel_by_Exp},
we obtain
\[
    K_{i\tau}(\tau z)=\left(\frac{\pi}{2\tau}\right)^{1/2}\frac{1}{(z^2-1)^{1/4}}\exp\left(-\tau\xi-\frac{\pi \tau}{2}\right)\left(1+\eta_{1,0}(\tau,z)\right),
\]
where
\[
    |\eta_{1,0}(\tau,z)|\le \frac{1}{\tau}\Phi_{1,0}(\tau,z)\exp\left(\frac{1}{\tau}\Phi_{1,0}(\tau,z)\right),
\]
and
\[
    \Phi_{1,0}(\tau,z) = 2 \int_{\alpha_0}^{z} \left|t^{-1}(t^2-1)^{1/2}F_1(t)\,dt \right|,
\]
and 
\[
    F_1(z)=-\frac{1}{8}\frac{z^2(z^2+4)}{(z^2-1)^3}.
\]

The integral for $\Phi_{1,0}$ is taken along a progressive path. In this context, $\Re(\xi)$ is non-increasing along progressive paths. Note that
\[
    \Re(d\xi)=\Re\left(\frac{\sqrt{z^2-1}}{z}dz\right).
\]
In \cite{Dunster_K_Bessel_by_Exp}, $\alpha_0$ is taken to be $+\infty+i0$, whereas we will choose different $\alpha_0$ depending on $w\in D_\delta$.

Fix $w=re^{i\theta}\in D_\delta$. Since $K_{i\tau}(\tau \overline{z})=\overline{K_{i\tau}(\tau z)}$ for all $z\in D_\delta$, without loss of generality, we assume $\Im(w)\ge 0$.  Let $A=\max\{1,\Im(w)\}$ and $\alpha_0=+\infty+iA$. We define a path $\Gamma=\Gamma_1\cup \Gamma_2$ such that $\Gamma_1$ is a horizontal path from $\alpha_0=+\infty +iA$ to the point $w'$ with $\Im(w')=A$ and $\arg(w')=\arg(w)$. If $A=\Im(w)$, then $\Gamma_2=\emptyset$. If $A=1>\Im(w)$, then $\Gamma_2$ lies on the radial ray $\arg(z)=\arg(w)$ and connects $w'$ and $w$. The function $\Re(\xi)$ is non-increasing along $\Gamma$, and thus $\Gamma$ is a progressive path. Since $w\in D_\delta$, the path $\Gamma$ is uniformly $C(\delta)$-away from the point $1$. See Figure \ref{fig:Level_sets_KBessel_exp}.
\begin{figure}
    \centering
        \def\svgwidth{0.3\textwidth}
        %% Creator: Inkscape 1.3.2 (091e20e, 2023-11-25, custom), www.inkscape.org
%% PDF/EPS/PS + LaTeX output extension by Johan Engelen, 2010
%% Accompanies image file 'Level_sets_KBessel_exp.pdf' (pdf, eps, ps)
%%
%% To include the image in your LaTeX document, write
%%   \input{<filename>.pdf_tex}
%%  instead of
%%   \includegraphics{<filename>.pdf}
%% To scale the image, write
%%   \def\svgwidth{<desired width>}
%%   \input{<filename>.pdf_tex}
%%  instead of
%%   \includegraphics[width=<desired width>]{<filename>.pdf}
%%
%% Images with a different path to the parent latex file can
%% be accessed with the `import' package (which may need to be
%% installed) using
%%   \usepackage{import}
%% in the preamble, and then including the image with
%%   \import{<path to file>}{<filename>.pdf_tex}
%% Alternatively, one can specify
%%   \graphicspath{{<path to file>/}}
%% 
%% For more information, please see info/svg-inkscape on CTAN:
%%   http://tug.ctan.org/tex-archive/info/svg-inkscape
%%
\begingroup%
  \makeatletter%
  \providecommand\color[2][]{%
    \errmessage{(Inkscape) Color is used for the text in Inkscape, but the package 'color.sty' is not loaded}%
    \renewcommand\color[2][]{}%
  }%
  \providecommand\transparent[1]{%
    \errmessage{(Inkscape) Transparency is used (non-zero) for the text in Inkscape, but the package 'transparent.sty' is not loaded}%
    \renewcommand\transparent[1]{}%
  }%
  \providecommand\rotatebox[2]{#2}%
  \newcommand*\fsize{\dimexpr\f@size pt\relax}%
  \newcommand*\lineheight[1]{\fontsize{\fsize}{#1\fsize}\selectfont}%
  \ifx\svgwidth\undefined%
    \setlength{\unitlength}{311.57144381bp}%
    \ifx\svgscale\undefined%
      \relax%
    \else%
      \setlength{\unitlength}{\unitlength * \real{\svgscale}}%
    \fi%
  \else%
    \setlength{\unitlength}{\svgwidth}%
  \fi%
  \global\let\svgwidth\undefined%
  \global\let\svgscale\undefined%
  \makeatother%
  \begin{picture}(1,0.97685183)%
    \lineheight{1}%
    \setlength\tabcolsep{0pt}%
    \put(0,0){\includegraphics[width=\unitlength,page=1]{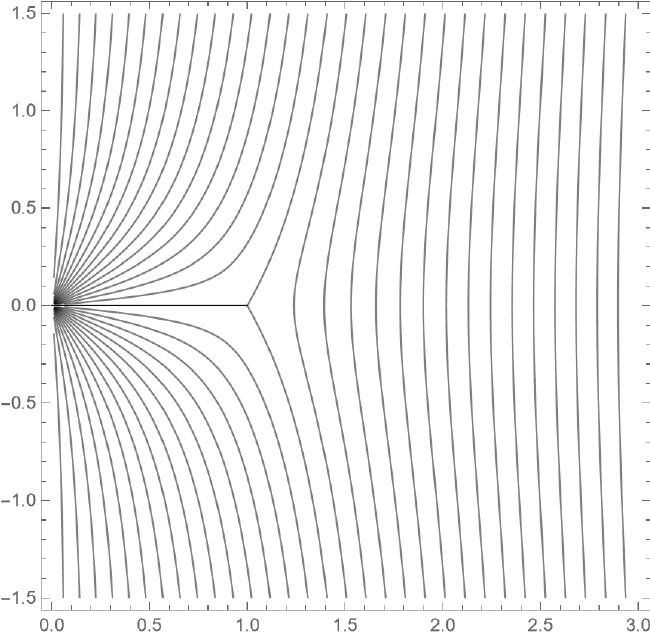}}%
    \put(0.61101982,0.49652436){\color[rgb]{0,0,0}\makebox(0,0)[lt]{\lineheight{1.25}\smash{\begin{tabular}[t]{l}$\mathscr{T}^{(0)}$\end{tabular}}}}%
    \put(0,0){\includegraphics[width=\unitlength,page=2]{Level_sets_KBessel_exp.pdf}}%
  \end{picture}%
\endgroup%

        \caption{Level sets of $\Re(\xi)$ in the right half-plane in the $z$-plane. $\Re(\xi)=0$ on the tripod level set, $\Re(\xi)>0$ on $\mathscr{T}^{(0)}$, and $\Re(\xi)<0$ elsewhere.}\label{fig:Level_sets_KBessel_exp}
\end{figure}

By estimating the integral along this progressive path, we obtain
\[
    \Phi_{1,0}(\tau,z)=O_\delta(1).
\]
Then, the lemma follows.
\end{proof}

\end{document}